\definecolor{my-blue}{rgb}{0.0,0.0,0.6}
\definecolor{my-red}{rgb}{0.5,0.0,0.0}
\definecolor{my-green}{rgb}{0.0,0.5,0.0}
\numberwithin{equation}{section}
\theoremstyle{definition}
\theoremstyle{remark}
\theoremstyle{plain}
\newtheorem{theorem}{Theorem}[section]
\newtheorem{lemma}{Lemma}[section]
\newtheorem{proposition}[lemma]{Proposition}
\newtheorem{corollary}{Corollary}[section]
\newcommand*{\beq}{\begin{equation}}
\newcommand*{\eeq}{\end{equation}}
\newcommand*{\bal}{\begin{aligned}}
\newcommand*{\eal}{\end{aligned}}
\newcommand*{\nn}{\nonumber}
\providecommand{\abs}[1]{\left\vert#1\right\vert}
\providecommand{\pp}[1]{\langle#1\rangle}
\newcommand*{\one}{{{\rm 1\mkern-1.5mu}\!{\rm I}}}
\newcommand*{\vard}{d_{\scriptscriptstyle\rm Var}}
\newcommand*{\lsup}{\varlimsup}
\newcommand*{\ord}{\kern0.1em o\kern-0.02em{}_{\ds\breve{}}\kern0.1em}
\newcommand*{\Ord}{\kern0.1em O\kern-0.02em{\ds\breve{}}\kern0.1em}
\newcommand*{\ds}{\displaystyle}
\newcommand*{\e}{\varepsilon}
\newcommand*{\si}{\sigma}
\newcommand*{\w}{\omega}
\newcommand*{\cA}{{\mathcal A}}
\newcommand*{\cE}{{\mathcal E}}
\newcommand*{\cF}{{\mathcal F}}
\newcommand*{\cG}{{\mathcal G}}
\newcommand*{\cH}{{\mathcal H}}
\newcommand*{\cI}{{\mathcal I}}
\newcommand*{\cL}{{\mathcal L}}
\newcommand*{\cP}{{\mathcal P}}
\newcommand*{\cS}{{\mathcal S}}
\newcommand*{\kD}{{\mathfrak D}}
\newcommand*{\kS}{{\mathfrak S}}
\newcommand*{\E}{{\mathbb E}}
\newcommand*{\M}{{\mathbb M}}
\newcommand*{\N}{{\mathbb N}}
\renewcommand*{\P}{{\mathbb P}} 
\newcommand*{\R}{{\mathbb R}}
\newcommand*{\V}{{\mathbb V}}
\newcommand*{\Z}{{\mathbb Z}}
\newcommand*{\bfE}{{\mathbf E}}
\newcommand*{\bfP}{{\mathbf P}}
\newcommand*{\bhat}{{\hat b}}
\newcommand*{\uhat}{{\hat u}}
\newcommand*{\vhat}{{\hat v}}
\newcommand*{\what}{{\hat w}}
\newcommand*{\Lbar}{{\bar L}}
\newcommand*{\Mbar}{{\bar M}}
\newcommand*{\Tbar}{{\bar T}}
\newcommand*{\Xbar}{{\bar X}}
\newcommand*{\Ybar}{{\bar Y}}
\newcommand*{\Zbar}{{\bar Z}}
\newcommand*{\Gammabar}{{\bar\Gamma}}
\newcommand*{\qbar}{{\bar q}}
\newcommand*{\wbar}{{\bar w}}
\newcommand*{\zbar}{{\bar z}}
\newcommand*{\betabar}{{\bar\beta}}
\newcommand*{\gabar}{{\bar\gamma}}
\newcommand*{\zetabar}{{\bar\zeta}}
\newcommand*{\rhobar}{{\bar\rho}}
\newcommand*{\ombar}{{\bar\omega}}
\newcommand*{\Btil}{{\widetilde B}}
\newcommand*{\Etil}{{\widetilde E}}
\newcommand*{\Mtil}{{\widetilde M}}
\newcommand*{\Xtil}{{\widetilde X}}
\newcommand*{\ntil}{{\tilde n}}
\newcommand*{\wtil}{{\tilde w}}
\newcommand*{\ztil}{{\tilde z}}
\newcommand*{\altil}{{\tilde\alpha}}
\newcommand*{\betil}{{\tilde\beta}}
\newcommand*{\gatil}{{\tilde\gamma}}
\newcommand*{\mutil}{{\tilde\mu}}
\newcommand*{\sitil}{{\tilde\sigma}}
\newcommand*{\tautil}{{\tilde\tau}}
\newcommand*{\wwtil}{{{\tilde\omega}}}
\newcommand*{\elltil}{{\tilde\ell}}
\def\fmonth{\ifcase\month\or Jan\or Feb\or Mar\or Apr
\or May\or Jun\or Jul\or Aug\or Sep
\or Oct\or Nov\or Dec\fi\ }
\def\mmddyyyy{\the\month.\the\day.\the\year}
\def\ddmmyyyy{\the\day.\the\month.\the\year}
\def\Mddyyyy{\fmonth~\the\day,~\the\year}
\renewcommand{\Ord}{{\mathcal O}}
\def\HM{S} 
\def\M{r_0}  
\def\HT{M} 
\def\HR{R} 
\def\wiener{W}  
\def\cona{b_0}
\def\conb{b}
\def\mom{(\HT) }
\def\step{(\HM) }
\def\reg{(\HR) }
\def\amom{(\HT)} 
\def\astep{(\HM)}
\def\areg{(\HR)}
\newcommand*\momp{{p_0}}  
\newcommand*\ppath{\sigma}  
\newcommand*\group{\mathbb{S}}  
\newcommand*{\h}{h}   
\begin{document}

\begin{frontmatter}
\title{Almost sure functional central limit theorem for ballistic 
random walk in random
environment}
\atltitle{Th\'eor\`eme limite central fonctionnel presque sure pour une marche al\'eatoire ballistique en 
milieu al\'eatoire} 
\runtitle{Quenched CLT for RWRE}

\begin{aug}
\author{\fnms{Firas} \snm{Rassoul-Agha$^1$}\ead[label=e1]{firas@math.utah.edu} \ead[label=u1,url]{http://www.math.utah.edu/$\sim$firas}}
\and
\author{\fnms{Timo} \snm{Sepp\"al\"ainen$^2$}\ead[label=e2]{seppalai@math.wisc.edu}\ead[label=u2,url]{http://www.math.wisc.edu/$\sim$seppalai}
}
\runauthor{F. Rassoul-Agha and T. Sepp\"al\"ainen}

\affiliation{University of Utah and University of Wisconsin-Madison}

\address{Department of Mathematics\\
University of Utah\\
155 South 1400 East\\
Salt Lake City, UT 84109\\
USA\\
\printead{e1}\\
\printead{u1}}

\address{Department of Mathematics\\
University of Wisconsin-Madison\\
419 Van Vleck Hall\\
Madison, WI 53706\\
USA\\
\printead{e2}\\
\printead{u2}}

\end{aug}
\footnotetext{Received January 2008; accepted February 2008.}
\footnotetext{$^1$Department of Mathematics, University of Utah.}
\footnotetext{$^1$Supported in part by NSF Grant DMS-0505030.}
\footnotetext{$^2$Mathematics Department, University of Wisconsin at Madison.}
\footnotetext{$^2$Supported in part by NSF Grants DMS-0402231 and
DMS-0701091.}

\begin{abstract}
We consider a multidimensional  random walk in a product
random environment with bounded steps,  transience in some
spatial direction, and high enough  
  moments on the  regeneration time.  
We prove  an invariance principle, or functional central
limit theorem,  under almost every  environment for the 
diffusively scaled centered walk.
The main point behind the invariance principle is that the quenched mean of the
 walk behaves subdiffusively.
\end{abstract}

\begin{abstract}[language=french]
Nous consid\'erons une marche al\'eatoire multidimensionnelle  en
environnement al\'eatoire produit. La marche est \`a pas born\'es, transiente dans
une direction spatiale donn\'ee, et telle que le temps
de r\'eg\'en\'eration poss\'ede un moment suffisamment haut. Nous prouvons un principe
d'invariance, ou un th\'eor\`eme limite central fonctionnel, sous presque tout
environnement pour la marche centr\'ee et diffusivement normalis\'ee. Le point principal
derri\`ere le principe d'invariance est que la moyenne tremp\'ee (quenched) de la marche
est sous-diffusive.
\end{abstract}

\begin{keyword}[class=AMS]
\kwd{60K37}
\kwd{60F05}
\kwd{60F17}
\kwd{82D30.}
\end{keyword}

\begin{keyword}
\kwd{random walk}
\kwd{ballistic}
\kwd{random environment}
\kwd{central limit theorem}
\kwd{invariance principle}
\kwd{point of view of the particle}
\kwd{environment process}
\kwd{Green function.}
\end{keyword}

\end{frontmatter}

\section{Introduction and main result}
We prove a quenched functional central limit theorem (CLT) for
ballistic random walk in random environment (RWRE) on the  $d$-dimensional
integer lattice $\Z^d$ in dimensions $d\ge 2$.
Here is a  general description of  the model,
fairly standard since quite a while.
An environment $\w$  is a configuration of probability vectors
$\w=(\w_x)_{x\in\Z^d}\in\Omega={\mathcal P}^{\Z^d},$  where
$\mathcal P=\{(p_z)_{z\in\Z^d}:p_z\ge0,\sum_z p_z=1\}$ is the simplex of all
probability  vectors on $\Z^d$.
Vector $\w_x=(\w_{x,z})_{z\in\Z^d}$ gives
the probabilities of jumps out of state $x$, and the transition
probabilities are 
denoted by $\pi_{x,y}(\w)=\w_{x,y-x}$.
To run the random walk, fix an environment $\w$ and
 an initial state $z\in\Z^d$.
 The random walk  $X_{0,\infty}=(X_n)_{n\geq0}$ in environment $\w$
started at $z$ is then the canonical
Markov chain with state space $\Z^d$
whose path measure  $P_z^\w$ satisfies
\begin{align*}
P_z^\w\{X_0=z\}=1\quad\text{and}\quad
P_z^\w\{X_{n+1}=y|X_n=x\}=\pi_{x,y}(\w).
\end{align*}

On the space $\Omega$ we put its  product
$\si$-field $\kS$,  natural shifts
$\pi_{x,y}(T_z\w)=\pi_{x+z,y+z}(\w)$, and
 a $\{T_z\}$-invariant probability measure $\P$ that makes the system
$(\Omega,\kS,(T_z)_{z\in\Z^d},\P)$ ergodic.  In this paper
 $\P$ is an  i.i.d.\  product  measure on ${\mathcal P}^{\Z^d}$.
In other words,    the vectors
$(\w_x)_{x\in\Z^d}$ are i.i.d.\ across the sites $x$ under $\P$.

Statements, probabilities and expectations
 under a fixed environment, such as the distribution  $P_z^\w$ above, are
called {\sl quenched}.  When also the environment is averaged out,
the notions are called {\sl averaged}, or also {\sl annealed}.
In particular,  the averaged distribution  $P_z(d x_{0,\infty})$ of the walk
is the marginal of the joint  distribution
$P_z(d x_{0,\infty},d\w)=P_z^\w(d{x}_{0,\infty})\P(d\w)$
on paths and environments.

Several excellent expositions on RWRE exist,
and we refer the reader to the lectures \citep{bolt-szni-dmv},
\citep{szni-trieste} and
\citep{zeit-stflour}. 

This paper  investigates the directionally transient situation. 
That is, we assume that there exists a vector  $\uhat\in\Z^d$ such that  
\begin{align}
\label{dir-trans}
P_0\{X_n\cdot\uhat\to\infty\}=1.
\end{align}
The key moment assumption (\HT) below is also 
expressed in terms of $\uhat$ so 
this vector needs to be fixed for the rest of the paper. 
There is no essential harm in assuming $\uhat\in\Z^d$ and this
is convenient. Appendix
\ref{vectorapp}  shows that at the expense of a larger
moment, an arbitrary $\uhat$ can be replaced by an integer
vector $\uhat$. 

The transience assumption provides regeneration times,
first defined and studied in the multidimensional setting by 
Sznitman and Zerner \citep{szni-zern-99}.
As a function of the path $X_{0,\infty}$ regeneration
time  $\tau_1$ is 
the first time at which 
\begin{align}
\label{tau}
\sup_{n<\tau_1} X_n\cdot\uhat<X_{\tau_1}\cdot\uhat\,=\inf_{n\ge\tau_1} X_n\cdot\uhat.
\end{align}
The benefit here is that 
 the past and the future of the walk lie in separate half-spaces.
Transience \eqref{dir-trans} is equivalent to $P_0(\tau_1<\infty)=1$
 \citep[Proposition 1.2]{szni-zern-99}. 

To be precise, 
\citep{szni-zern-99} is written under assumptions
of uniform ellipticity and  nearest-neighbor jumps.
  In an i.i.d.\ environment many properties established for
uniformly elliptic nearest-neighbor walks extend immediately to 
walks with bounded steps without ellipticity assumptions, 
the above mentioned equivalence among them.  
In such cases we treat the point simply as having been established
in earlier literature. 

In  addition
to the product form of $\P$, the following 
 three assumptions are used in this paper: a high
 moment \mom on $\tau_1$,
 bounded steps (S),
and some regularity (R).

\newtheorem*{hypothesisT}{\sc Hypothesis (\HT)}
\begin{hypothesisT}   
  $E_0(\tau_1^\momp)<\infty$ for some $\momp>176d$. 
\end{hypothesisT}

\newtheorem*{hypothesisM}{\sc Hypothesis (\HM)}
\begin{hypothesisM}
There exists a finite, deterministic, positive constant $\M$ such that 
$\P\{\pi_{0,z}=0\}=1$ whenever $|z|>\M$.
\end{hypothesisM}

\newtheorem*{hypothesisR}{\sc Hypothesis (\HR)}
\begin{hypothesisR}
Let ${\mathcal J}=\{z:\E\pi_{0,z}>0\}$ be the set of admissible steps
under $\P$.  Then
${\mathcal J}\not\subset\R u$  for all $u\in\R^d$,  and 
\beq
\P\{\exists z:\pi_{0,0}+\pi_{0,z}=1\}<1.\label{Yell5}
\eeq
\end{hypothesisR}

The bound on $\momp$ in Hypothesis \mom is of course meaningless
and only indicates that our result is true if $\momp$ is large enough.
We have not sought to tighten the exponent because in any case the final
bound would not be small with our current arguments.  
After the theorem we return to discuss the hypotheses further. 
These assumptions are strong enough to imply
a law of large numbers: there exists a  velocity $v\ne 0$ such that
\beq
P_0\bigl\{\,\lim_{n\to\infty}n^{-1}X_n=v\bigr\}=1.
\label{lln1}\eeq
  Representations for $v$ are given in \eqref{defv}
and Lemma \ref{velocity} below.
Define the (approximately)
 centered and diffusively scaled process
\begin{align}
B_n(t)=\frac{X_{[nt]}-[nt]v}{\sqrt{n}}.
\label{Bndef}
\end{align}
As usual  $[x]=\max\{n\in\Z: n\le x\}$ is the integer part of a real $x$.
Let $D_{\R^d}[0,\infty)$
be the standard Skorohod space of $\R^d$-valued cadlag paths
(see \citep{ethi-kurt} for the basics).
Let $Q_n^\w=P^\w_0(B_n\in\cdot\,)$
denote the quenched distribution
of the process $B_n$ on $D_{\R^d}[0,\infty)$.

The result of this paper
 concerns the limit of the process $B_n$
as $n\to\infty$.
As expected, the limit process is a Brownian motion with
correlated coordinates.
 For a symmetric, nonnegative definite
$d\times d$ matrix $\mathfrak D$,
a {\sl Brownian motion with diffusion matrix $\mathfrak D$} is the $\R^d$-valued
process $\{{B}(t):t\geq0\}$ with
 continuous paths, independent increments,
and such that  for $s<t$ the $d$-vector ${B}(t)-{B}(s)$ has Gaussian distribution
with mean zero and covariance matrix
$(t-s)\mathfrak D$.
The matrix $\mathfrak D$  is {\sl degenerate} in direction $u\in\R^d$
if $u^t\mathfrak D u=0$.   Equivalently,
$u\cdot {B}(t)=0$ almost surely.

Here is the main result.

\begin{theorem}
\label{main}
Let $d\geq2$ and
consider a random walk in an i.i.d.\ product
random environment that satisfies transience {\rm\eqref{dir-trans}},
 moment assumption  {\rm(\HT)} on the regeneration time,
  bounded step-size hypothesis {\rm(\HM)}, and
the  regularity required by {\rm(\HR)}.
Then for $\P$-almost every
$\w$  distributions $Q_n^\w$  
converge weakly on $D_{\R^d}[0,\infty)$  to the distribution of a Brownian
motion with a diffusion matrix $\kD$ that is independent of
$\w$.  $u^t\kD u=0$ iff $u$ is orthogonal to the span of
$\{x-y: \E(\pi_{0x})\E(\pi_{0y})>0\}$.
\end{theorem}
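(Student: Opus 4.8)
The plan is to split the centered walk into a \emph{quenched-centered} part and the \emph{quenched mean},
\begin{equation*}
X_{[nt]}-[nt]v \;=\; \bigl(X_{[nt]}-E_0^\w[X_{[nt]}]\bigr) \;+\; \bigl(E_0^\w[X_{[nt]}]-[nt]v\bigr),
\end{equation*}
and to treat the two summands by different mechanisms, so that the proof reduces to: \textbf{(A)} for $\P$-a.e.\ $\w$, the rescaled quenched-centered process $\bar B_n(t):=n^{-1/2}\bigl(X_{[nt]}-E_0^\w[X_{[nt]}]\bigr)$ converges weakly on $D_{\R^d}[0,\infty)$ to a Brownian motion with a deterministic diffusion matrix $\kD$; and \textbf{(B)} for $\P$-a.e.\ $\w$, $\;n^{-1/2}\sup_{0\le t\le T}\bigl|E_0^\w[X_{[nt]}]-[nt]v\bigr|\to0$ for every $T<\infty$, i.e.\ the quenched mean is subdiffusive uniformly on compacts. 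The matrix $\kD$ is not new: under \eqref{dir-trans} the displacements of $X$ between successive regeneration times \eqref{tau} form an i.i.d.\ sequence apart from the first, hypothesis (\HT) provides more than two moments, and the i.i.d.\ functional CLT applied to those displacements (Sznitman--Zerner \citep{szni-zern-99}) already gives the \emph{annealed} invariance principle for $n^{-1/2}(X_{[n\cdot]}-[n\cdot]v)$ with an explicit $\kD$ built from the regeneration increments; the $L^2(\P)$ form of (B) then forces the quenched limit in (A) to carry this same covariance.

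For (A) I would use a second-moment argument over the environment. Fix $\th\in\R^d$ and set $g_n(\w)=E_0^\w\bigl[e^{\,i\,n^{-1/2}\th\cdot(X_n-E_0^\w[X_n])}\bigr]$. On one hand $E_0[g_n]=E_0\bigl[e^{\,i\,n^{-1/2}\th\cdot(X_n-E_0^\w[X_n])}\bigr]\to e^{-\frac12\th^t\kD\th}$, because $n^{-1/2}(E_0^\w[X_n]-nv)\to0$ in $\P$-probability by the $L^2(\P)$ form of (B) while $n^{-1/2}(X_n-nv)\Rightarrow N(0,\kD)$ annealed. On the other hand
\begin{equation*}
\Var_\P(g_n)=E_0^{(2)}\!\left[e^{\,i\,n^{-1/2}\th\cdot(X_n^{(1)}-X_n^{(2)})}\right]-\bigl|E_0[g_n]\bigr|^2,
\end{equation*}
where $E_0^{(2)}$ denotes the average over two independent walks in one \emph{common} environment (in which the quenched means cancel out of the difference), to be compared with two walks in \emph{independent} environments. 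Because $\P$ is i.i.d., this difference is controlled by the probability that the two common-environment walks ever visit a common site after both have advanced a macroscopic amount in the direction $\uhat$; transience together with the moment bound (\HT) on $\tau_1$ shows the two paths interact only below a random level that is $o(n)$ with overwhelming probability, so $\Var_\P(g_n)\to0$, rapidly enough along a geometric subsequence $n_j$ that Borel--Cantelli yields $g_{n_j}(\w)\to e^{-\frac12\th^t\kD\th}$ for $\P$-a.e.\ $\w$. Interpolating between the $n_j$ by increment bounds coming from the regeneration structure, and running this jointly at finitely many times and frequencies, gives, for $\P$-a.e.\ $\w$, convergence of the finite-dimensional distributions of $\bar B_n$; tightness follows from the bounded increments of the quenched martingale $X_m-\sum_{k<m}D(T_{X_k}\w)$, with $D(\w)=\sum_z z\,\w_{0,z}$ bounded by (\HM), together with moment estimates on the regeneration displacements, while auxiliary ergodic theorems for the environment seen from the particle, under its absolutely continuous invariant measure, serve to pin down the deterministic value of the limiting covariance $\P$-a.s.

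Statement (B) is where the real work lies. I would again pass through a second moment, now for the mean: expand $\E\bigl[|E_0^\w[X_n]-nv|^2\bigr]$ as a double sum over times $k,\ell<n$ of annealed covariances of the centered quenched local drifts $E_0^\w[D(T_{X_k}\w)]-v$, and bound each covariance --- using the product structure of $\P$ --- by the expected overlap near times $k$ and $\ell$ of two independent walks in a common environment, i.e.\ by a sum involving squares of the walk's Green function. Transience in the direction $\uhat$ and the high moment (\HT) then bound this Green-function overlap by $n^{2\gamma}$ with a \emph{strictly} subdiffusive exponent $\gamma<1/2$; this quantitative accounting is exactly what forces the (unoptimized) requirement $\momp>176d$. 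The bound $\E\bigl[|E_0^\w[X_n]-nv|^2\bigr]\le Cn^{2\gamma}$ is then promoted to the a.s.\ uniform statement in (B) by a maximal inequality over $n\in[n_j,n_{j+1}]$ and $t\in[0,T]$, with the modulus of continuity of $t\mapsto E_0^\w[X_{[nt]}]$ again furnished by regeneration, followed by Borel--Cantelli along the geometric subsequence. I expect the passage from the regeneration moment bound to a usable Green-function estimate, and thence to a strictly subdiffusive exponent, to be by far the most delicate part of the whole argument.

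Finally, the degeneracy claim. Once $\kD$ is identified with the annealed diffusion matrix, $u^t\kD u$ equals $\sigma_u^2/\bar\tau$, where $\sigma_u^2$ is the annealed variance of the $u$-component of one centered regeneration displacement and $\bar\tau$ its expected duration; so $u^t\kD u=0$ iff that $u$-component is a.s.\ deterministic, and --- using the freedom of motion inside a regeneration slab guaranteed by hypothesis (\HR) --- this holds iff $u\cdot z$ takes one and the same value over all admissible steps $z\in\cJ=\{z:\E\pi_{0,z}>0\}$, i.e.\ iff $u\perp\Span\{x-y:x,y\in\cJ\}$. Since $\E(\pi_{0x})\E(\pi_{0y})>0$ holds precisely when $x,y\in\cJ$, this span is the one in the statement, and (\HR) is exactly what keeps it nontrivial, so $\kD\ne0$ and the walk genuinely fluctuates.
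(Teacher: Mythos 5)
Your plan is correct in outline, and it is a genuinely different route from the paper's. The paper does not split $B_n$ into a quenched-centered piece and a quenched-mean piece; instead it constructs the ergodic invariant measure $\P_\infty$ for the environment process (Section~\ref{Pinfty_exists}), proves that $\P$ and $\P_\infty$ are interchangeable in the relevant statements (Section~\ref{substitution}), and then feeds the single $L^2$ bound \eqref{cond} into Theorem~\ref{RS}, a Maxwell--Woodroofe/Derriennic--Lin theorem for additive functionals of Markov chains that delivers the full quenched functional CLT for $B_n$ in one stroke. Your route---writing $g_n(\w)=E_0^\w[e^{i n^{-1/2}\th\cdot(X_n-E_0^\w X_n)}]$, bounding $\Var_\P(g_n)$ by common-environment versus independent-environment intersections, running Borel--Cantelli along a geometric subsequence, then controlling the quenched mean a.s.\ uniformly by a maximal inequality---is exactly the Bolthausen--Sznitman/Berger--Zeitouni method of bounding variances of quenched test-function expectations; the paper explicitly records (in the introduction) that Berger and Zeitouni later produced such an alternative proof by combining this scheme with the intersection ideas from Section~\ref{pathintersections}, and that its virtue is precisely that it avoids $\P_\infty$. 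Your method buys that simplification, at the cost of extra work the paper's Theorem~\ref{RS} packages away: filling in between subsequences, jointing over finitely many times and frequencies, and upgrading the $L^2$ subdiffusivity bound on the quenched mean to an a.s.\ uniform statement, none of which the paper's proof of Theorem~\ref{main} has to do directly. Two smaller points worth flagging: the paper bounds $\E|E_0^\w(X_n)-E_0(X_n)|^2$ by a martingale decomposition indexed by \emph{sites} ordered by $\uhat$-level (Proposition~\ref{intersections}), which exploits the product structure of $\P$ much more cleanly than the time-indexed double sum of drift covariances you sketch for (B); and $\P_\infty$ is \emph{not} absolutely continuous with respect to $\P$ on all of $\Omega$ (the paper cites a counterexample), only on $\kS_{-a}$ for each finite $a$, which is what Theorem~\ref{erg-thm} actually gives and what Lemma~\ref{clt-transfer-lm} uses, so ``its absolutely continuous invariant measure'' should be read with that restriction. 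In both approaches the technical heart is the same intersection/Green-function estimate of Section~\ref{intersectbound} and Appendix~\ref{greenapp}, and you correctly identify it as the place where the moment exponent $176d$ and the dimension restriction $d\ge2$ enter.
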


Equation \eqref{defkD} gives the expression for the diffusion matrix
$\kD$, familiar for example from \citep{szni-00}.

\smallskip

We turn to a discussion of the hypotheses.  Obviously \step 
is only for technical convenience, while \mom and \reg
are the serious assumptions. 

Moment assumption \mom is difficult
to check.  Yet it is a sensible hypothesis because it is
known to follow from many concrete assumptions.   

A RWRE is called  {\sl non-nestling} if for some $\delta>0$ 
\begin{align}
\label{non-nestling}
\P\Bigl\{\sum_{z\in\Z^d} z\cdot\uhat\,\pi_{0,z}\geq\delta\Bigr\}=1.
\end{align}
 This terminology was introduced by Zerner \citep{zernerldp}. 
Together with \astep, non-nestling implies even uniform 
quenched exponential moment bounds on the regeneration times.
See Lemma 3.1 in \citep{rass-sepp-07-b-}.  

Most work on RWRE takes as standing assumptions
 that $\pi_{0,z}$ is supported by the $2d$ nearest neighbors of
the origin,  and  {\sl uniform
ellipticity}:  for some $\kappa>0$,  
\begin{align}
\label{unif-ellipticity}
\P\{\pi_{0,e}\ge\kappa\}=1 \qquad\text{for all unit
vectors $e$.}
\end{align}
Nearest-neighbor jumps with  uniform
ellipticity of course imply Hypotheses (\HM) and (\HR).
In the uniformly elliptic case,
the moment bound  (\HT) on $\tau_1$ follows from the easily
testable  condition  (see \citep{szni-02})  
\[\E\Bigl[\,\Bigl(\,\sum_{z\in\Z^d}z\cdot\uhat\,\pi_{0,z}\Bigr)^+\,\Bigr]
>\kappa^{-1}
\E\Bigl[\Bigl(\,\sum_{z\in\Z^d}z\cdot\uhat\,\pi_{0,z}\Bigr)^-\,\Bigl]. \]

 A more general condition that 
implies Hypothesis (\HT) is {\sl Sznitman's condition} (T'), 
 see Proposition 3.1 in \citep{szni-02}. Condition
(T') cannot be checked by examining the environment $\w_0$
at the origin. 
 But it is still an ``effective'' condition  
in the sense that it can be checked by
examining the environment in finite cubes. Moreover, in condition (T')
the direction vector  $\uhat$ can be replaced by a vector in a 
neighborhood. Consequently the vector can be taken rational, and
then also integral.   Thus our assumption
that $\uhat\in\Z^d$ entails no loss in generality.

 Hypothesis \mom is further justified by a 
currently accepted assumption about uniformly elliptic RWRE. 
Namely,   
it is believed that once a uniformly elliptic 
walk is {\sl ballistic} ($v\ne0$)
the regeneration time has all moments (see \citep{szni-02}). 
Thus conditional on  this supposition, 
 the present work settles the question of quenched
 CLT for  uniformly elliptic, 
 multidimensional ballistic RWRE with bounded steps.

 Hypotheses (\HT) and (\HM) are used throughout the paper. 
Hypothesis (\HR) on the other hand makes
 only one important appearance:  to guarantee the  nondegeneracy of 
a certain Markov chain (Lemma \ref{Yapplm1}  below).
Yet it is Hypothesis (\HR) that is actually necessary for the quenched CLT.

Hypothesis (\HR) can be  violated in two ways:
(a)  the walk
lies in a  one-dimensional linear subspace, or 
(b) assumption \eqref{Yell5} is false 
in which case the walk follows a  sequence of steps 
 completely determined by $\w$ and the only
quenched  randomness 
is in the time taken to leave a site (call this the ``restricted path'' 
case).   In case (b) the walk is bounded if  there is
a chance that the walk intersects itself. This is ruled out by
transience \eqref{dir-trans}.   

In the unbounded situation 
 in case (b) 
  the quenched CLT breaks down 
because the scaled variable   $n^{-1/2}(X_n-nv)$ is not even tight
under $P^\w_0$.  There is still a quenched CLT for the 
walk centered at its quenched mean, that is, for the process
$\Btil_n(t)=n^{-1/2}\{X_{[nt]}-E_0^\w(X_{[nt]})\}$.
Furthermore, the quenched mean itself 
satisfies  a CLT.  Process $B_n$ does satisfy an averaged 
CLT, which comes from the combination of the diffusive fluctuations of 
$\Btil_n$ and of the quenched mean. 
(See \citep{rass-sepp-06} for these results.)
The same situation should hold in one dimension also, and 
has been proved in some cases 
(\citep{gold-07}, \citep{rass-sepp-06}, \citep{zeit-stflour}). 

\medskip

Next a brief discussion of the current
situation in this area of probability and the place of the present 
work in this context.
Several themes appear in
recent work on quenched CLT's for
multidimensional RWRE.

 (i) Small perturbations of classical random walk
have been studied by many authors. The most significant
results include the early work of Bricmont and Kupiainen \citep{bric-kupi-91}
and more recently Sznitman and Zeitouni  \citep{szni-zeit-06}
for small perturbations of Brownian motion in
dimension $d\ge 3$.

(ii) An averaged CLT can be turned into a quenched CLT
by  bounding the variances of quenched expectations
of test functions on the path space.
  This idea was applied 
by Bolthausen and Sznitman \citep{bolt-szni-02} 
to  nearest-neighbor,  uniformly elliptic  non-nestling
 walks in dimension $d\ge 4$  under
a small noise assumption. 
 Berger and Zeitouni  \citep{berg-zeit-07-} developed the approach
further to cover more general ballistic walks without
 the small noise assumption, but still in dimension
$d\ge 4$.  

 After the appearance of the first version of the 
present paper,  Berger and Zeitouni combined some ideas from
our Section \ref{pathintersections} with their own approach to bounding
intersections.  This resulted in an alternative proof of 
Theorem \ref{main} in  the uniformly
elliptic nearest-neighbor case
 that appeared in a revised version of
article \citep{berg-zeit-07-}.  The proof in
\citep{berg-zeit-07-} has the 
virtue that it does not require the ergodic invariant distribution
that we utilize to reduce the proof to a bound on the variance 
of the quenched mean.


   (iii) Our approach is based on the subdiffusivity
of the quenched mean of the walk.  That is, we show that
the variance of $E^\w_0(X_n)$ is of order $n^{2\alpha}$ for some
$\alpha<1/2$.
 This is achieved through intersection bounds. 
We introduced this line of reasoning in \citep{rass-sepp-05},
subsequently applied it to
 walks with a forbidden direction in \citep{rass-sepp-07-a},
and recently to non-nestling walks in \citep{rass-sepp-07-b-}.
Theorem \ref{RS} below summarizes the general principle for
application in the present paper.

It is common in this field to look for an invariant
distribution $\P_\infty$
for the environment process that is mutually absolutely continuous
with the original $\P$,  at least on the
 part of the space $\Omega$ to which the drift points. 
Instead of 
absolute continuity, we use bounds on the variation
distance between $\P_\infty$ and $\P$.  This distance 
decays polynomially in direction $\uhat$, at a rate 
that depends on the strength of the moment assumption \amom.
From this we also get an ergodic theorem for functions 
of the environment that are local in direction $-\uhat$.
This in turn would give the absolute continuity if it were needed 
for the paper.  

\smallskip




The remainder of the paper is for   the proofs. The next section collects 
preliminary material and finishes with an outline of the rest
of the paper.

\medskip

{\sl Acknowledgements.} We thank anonymous referees for thorough readings
of the paper and numerous valuable suggestions.

\section{Preliminaries for the proof}
\label{prelim}
Recall that we assume
$\uhat\in\Z^d$.  This is convenient
because the lattice $\Z^d$ decomposes into {\sl levels}
identified by the integer value   $x\cdot\uhat$.
See Appendix \ref{vectorapp} for the step from a general
$\uhat$ to an integer vector $\uhat$.  

Let us summarize  notation for the reader's convenience.
Constants whose exact values are not important and can change
from line to line are often denoted by $C$.
The set of nonnegative integers is  $\N=\{0,1,2,\dotsc\}$.
  Vectors and
sequences are abbreviated $x_{m,n}=(x_m, x_{m+1},\dotsc,x_n)$
and $x_{m,\infty}=(x_m, x_{m+1}, x_{m+2}, \dotsc)$.  Similar
notation is used for finite and infinite random paths:
 $X_{m,n}$ $=$ $(X_m$, $X_{m+1},$ $\dotsc,$ $X_n)$
and $X_{m,\infty}$ $=$ $(X_m,$ $X_{m+1},$ $ X_{m+2}, \dotsc)$.
$X_{[0,n]}=\{X_k:0\leq k\leq n\}$ denotes the set of sites visited
by the walk.
 ${\mathfrak D}^t$ is the transpose of a vector or matrix
${\mathfrak D}$. An element of $\R^d$ is regarded as a $d\times 1$
 column vector.
 The left shift on the path
space  $(\Z^d)^\N$ is
 $(\theta^kx_{0,\infty})_n = x_{n+k}$.
$\lvert\,\cdot\,\rvert$ denotes Euclidean norm on $\R^d$. 

 $\E$, $E_0$, and $E_0^\w$ denote
expectations under, respectively, $\P$, $P_0$, and $P_0^\w$.
$\P_\infty$ will denote an invariant measure on $\Omega$, with
expectation $\E_\infty$.  Abbreviate
$P^\infty_0(\cdot)=\E_\infty P^\w_0(\cdot)$
and $E^\infty_0(\cdot)=\E_\infty E^\w_0(\cdot)$ to indicate
that the environment of a quenched expectation is averaged
under $\P_\infty$.
A family of  $\sigma$-algebras  on $\Omega$ that
in a sense look towards the future  is defined by
${\kS}_\ell=\sigma\{\w_x: x\cdot \uhat\geq \ell\}$.

Define the {\sl drift}
\[D(\w)=E_0^\w[X_1]=\sum_z z\pi_{0,z}(\w).\]
The {\sl environment process}  is the   Markov chain
on $\Omega$ with transition kernel
\[\Pi(\w,A)=P_0^\w\{T_{X_1}\w\in A\}.\]

The proof of the  quenched CLT  Theorem \ref{main}  utilizes
crucially the environment process and its invariant
distribution. A preliminary part of the proof  is summarized in
the next theorem quoted from  \citep{rass-sepp-05}.  This  Theorem \ref{RS}
was proved by  applying the arguments
 of   Maxwell and Woodroofe \citep{maxw-wood-00} and Derriennic and Lin
\citep{derr-lin-03} to the environment process.

\begin{theorem}  {\rm \citep{rass-sepp-05}}
Let $d\geq1$.  Suppose the  probability measure  $\P_\infty$  on
$(\Omega,{\kS})$ is invariant and ergodic for the Markov
transition  $\Pi$. Assume that
$\sum_z|z|^2\E_\infty[\pi_{0,z}]<\infty$ and  that there
exists an $\alpha<1/2$ such that as $n\to\infty$
\begin{align}
\E_\infty\bigl[\,\abs{E_0^\w(X_n)-n\E_\infty(D)}^2\,\bigr]=\Ord(n^{2\alpha}).
\label{cond}
\end{align}
Then  as $n\to\infty$ the following weak limit happens
 for $\P_\infty$-a.e.\ $\w$:  distributions  $Q_n^\w$
converge weakly  on the space
 $D_{\R^d}[0,\infty)$ to the distribution of a Brownian motion
with a symmetric, nonnegative definite diffusion matrix
$\mathfrak D$
that is independent of $\w$.
\label{RS}\end{theorem}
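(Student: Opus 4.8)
The plan is to run a martingale approximation for the environment chain, in the spirit of Maxwell--Woodroofe \citep{maxw-wood-00} and Derriennic--Lin \citep{derr-lin-03}, and to extract from it a \emph{quenched} functional CLT. Write $\bar\w_k=T_{X_k}\w$ for the environment seen from the particle; under $P_0^\infty$ the sequence $(\bar\w_k)_{k\ge0}$ is a stationary ergodic Markov chain, since $\P_\infty$ is invariant and ergodic for $\Pi$. Decompose the walk into a quenched martingale and an additive functional of this chain,
\[
X_n=M_n+S_n,\qquad S_n=\sum_{k=0}^{n-1}D(\bar\w_k),\qquad M_n=X_n-S_n .
\]
Because $E_0^\w[\,X_{k+1}-X_k\mid X_0,\dots,X_k\,]=D(\bar\w_k)$, the process $(M_n)$ is a $P_0^\w$-martingale for \emph{every} $\w$, with increments $\Delta_k=X_{k+1}-X_k-D(\bar\w_k)$. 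The moment assumption $\sum_z|z|^2\E_\infty[\pi_{0,z}]<\infty$ gives $D\in L^2(\P_\infty)$ and $\E_\infty E_0^\w|\Delta_0|^2<\infty$.

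I would first dispose of $M_n$. Its increments $\Delta_k$ are square-integrable stationary ergodic martingale differences and functions of $(\bar\w_k,\bar\w_{k+1})$, so $E_0^\w[\,\Delta_k\Delta_k^t\mid\mathcal F_k\,]=h(\bar\w_k)$ for a matrix-valued $h\in L^1(\P_\infty)$, where $\mathcal F_k=\sigma(X_0,\dots,X_k)$. By Birkhoff's ergodic theorem for the stationary ergodic chain $(\bar\w_k)$ --- which, by Fubini, applies under $P_0^\w$ for $\P_\infty$-a.e.\ $\w$ --- the sums $n^{-1}\sum_{k<n}E_0^\w[\Delta_k\Delta_k^t\mid\mathcal F_k]$ converge $P_0^\w$-a.s.\ to the deterministic matrix $\E_\infty E_0^\w[\Delta_0\Delta_0^t]$, and the conditional Lindeberg condition holds for the same reason. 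The functional martingale CLT then gives, for $\P_\infty$-a.e.\ $\w$, convergence of $n^{-1/2}M_{[n\cdot]}$ under $P_0^\w$ in $D_{\R^d}[0,\infty)$ to a Brownian motion.

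Next comes the additive functional $\Stil_n:=S_n-n\E_\infty(D)$, and this is where the hypothesis \eqref{cond} enters. Summing one-step drifts gives $E_0^\w[S_n]=E_0^\w[X_n]$, hence, writing $\mathcal F_0=\sigma(\w)$ and $E$ for expectation over the stationary chain,
\[
\norm{E[\Stil_n\mid\mathcal F_0]}_{L^2(\P_\infty)}=\norm{E_0^\w(X_n)-n\E_\infty(D)}_{L^2(\P_\infty)}=\Ord(n^\alpha),\qquad\alpha<1/2 .
\]
Therefore the Maxwell--Woodroofe series $\sum_{n\ge1}n^{-3/2}\norm{E[\Stil_n\mid\mathcal F_0]}_{L^2(\P_\infty)}<\infty$, and \citep{maxw-wood-00} yields an $L^2$ martingale--coboundary approximation $\Stil_n=\Ntil_n+\Rtil_n$ with $\Ntil$ a stationary ergodic martingale and $\norm{\Rtil_n}_{L^2(P_0^\infty)}=o(\sqrt n)$. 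Combined with the first decomposition this gives $X_n-n\E_\infty(D)=(M_n+\Ntil_n)+\Rtil_n$: a single martingale with square-integrable stationary ergodic increments, treated exactly as $M_n$ was, plus a remainder. The step I expect to be the main obstacle is upgrading this from an \emph{averaged} statement to a \emph{quenched} one --- one must show that $n^{-1/2}\max_{k\le n}|\Rtil_k|\to0$ and that the martingale approximation is valid under $P_0^\w$ for $\P_\infty$-a.e.\ $\w$, not just after integrating $\w$ out. This is precisely the refinement supplied by Derriennic--Lin \citep{derr-lin-03}: the summability $\sum n^{-3/2}\norm{E[\Stil_n\mid\mathcal F_0]}_{L^2(\P_\infty)}<\infty$ forces, via a maximal inequality and a Borel--Cantelli argument along dyadic blocks of the chain, the remainder to be negligible $P_0^\w$-a.s.\ for a.e.\ $\w$.

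Assembling the pieces: for $\P_\infty$-a.e.\ $\w$, under $P_0^\w$,
\[
n^{-1/2}\bigl(X_{[n\cdot]}-[n\cdot]\,\E_\infty(D)\bigr)=n^{-1/2}\bigl(M_{[n\cdot]}+\Ntil_{[n\cdot]}\bigr)+o(1)\quad\text{in }D_{\R^d}[0,\infty),
\]
and the leading martingale term converges to a Brownian motion with diffusion matrix $\mathfrak D=\E_\infty E_0^\w[\Theta_0\Theta_0^t]$, where $\Theta_0$ is the time-$0$ increment of $M+\Ntil$; this $\mathfrak D$ is symmetric, nonnegative definite, deterministic, and coincides with the expression \eqref{defkD}. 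Since the law of large numbers for the environment process identifies $v=\E_\infty(D)$, the centering $[n\cdot]\,\E_\infty(D)$ equals $[n\cdot]\,v$, so $Q_n^\w$ converges weakly on $D_{\R^d}[0,\infty)$ to the law of this Brownian motion, as claimed.
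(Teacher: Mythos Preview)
Your proposal is correct and follows essentially the same approach the paper indicates: Theorem~\ref{RS} is not proved in this paper but quoted from \citep{rass-sepp-05}, and the paper explicitly says it ``was proved by applying the arguments of Maxwell and Woodroofe \citep{maxw-wood-00} and Derriennic and Lin \citep{derr-lin-03} to the environment process,'' which is exactly the martingale-approximation route you outline. One small overclaim: your assertion that the resulting $\mathfrak D$ ``coincides with the expression \eqref{defkD}'' is not part of this theorem and is established separately in the paper (once the quenched CLT holds with a deterministic matrix, it must agree with the averaged CLT's diffusion matrix).
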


Proceeding with further definitions, we already defined
above the first 
Sznitman-Zerner
 regeneration time 
 $\tau_1$ as the first time at which 
 \begin{align*}
\label{tau}
\sup_{n<\tau_1} X_n\cdot\uhat<X_{\tau_1}\cdot\uhat\,=\inf_{n\ge\tau_1} X_n\cdot\uhat.
\end{align*}
The first backtracking time is defined by
\beq\beta=\inf\{n\ge0:X_n\cdot\uhat<X_0\cdot\uhat\}.\label{defbeta}\eeq
$P_0$-a.s.\ transience in direction $\uhat$ guarantees that 
\beq P_0(\beta=\infty)>0.\label{transbeta}\eeq
 Otherwise the walk would 
return below level 0 infinitely often
(see Proposition 1.2 in \citep{szni-zern-99}).
Furthermore, a walk transient in direction $\uhat$ will reach 
infinitely many
levels.  At each new level it has  a fresh chance to regenerate. This 
implies that $\tau_1$ is $P_0$-a.s.\   finite 
\citep[Proposition 1.2]{szni-zern-99}.
Consequently we can iterate to  define  $\tau_0=0$, and
for $k\ge 1$
\begin{align*}
\tau_k=\tau_{k-1}+\tau_1\circ\theta^{\tau_{k-1}}.
\label{tauk}
\end{align*}

For i.i.d.\ environments
Sznitman and Zerner \citep{szni-zern-99}  proved that the 
{\sl regeneration slabs}
\beq
\cS_k=
\bigl( \tau_{k+1}-\tau_k,\,
 (X_{\tau_k+n}-X_{\tau_k})_{0\le n\le \tau_{k+1}-\tau_k},\,
\{\w_{X_{\tau_k}+z}: 0\le z\cdot\uhat<(X_{\tau_{k+1}}-X_{\tau_k})\cdot\uhat\}
\bigr)  
\label{regenslab}\eeq
are   i.i.d.\ for $k\ge 1$, each distributed   as
the initial slab  $\bigl( \tau_{1},\,
 (X_{n})_{0\le n\le \tau_{1}},\,
\{\w_{z}: 0 \le z\cdot\uhat<X_{\tau_{1}}\cdot\uhat\}\bigr)$
under $P_0(\,\cdot\,\vert\,\beta=\infty)$.
Strictly speaking, uniform ellipticity and  nearest-neighbor jumps
 were standing assumptions in \citep{szni-zern-99}, but these assumptions
are not needed for the proof of the i.i.d.\ structure.
From this and assumptions \eqref{dir-trans} and \mom it then follows
for $k\ge 1$  that 
\beq
E_0\bigl[(\tau_{k+1}-\tau_k)^\momp\bigr]=
E_0[\tau_1^\momp\,\vert\, \beta=\infty]\le
\frac{E_0(\tau_1^\momp)}{P_0(\beta=\infty)}<\infty.
\label{taumom2}
\eeq

From the renewal structure and moment estimates
a law of large numbers \eqref{lln1} and an averaged functional central
limit theorem follow, along the lines of Theorem 2.3
in \citep{szni-zern-99} and
Theorem 4.1 in \citep{szni-00}.  These references treat walks that
satisfy Kalikow's condition,  less general than Hypothesis (\HT).
But the proofs only rely on the existence of moments of $\tau_1$,
now ensured by Hypothesis (\HT).
   The limiting velocity for the
law of large numbers  is
\beq
v=\frac{E_0[X_{\tau_1}\vert\beta=\infty]}{E_0[{\tau_1}\vert\beta=\infty]}.
\label{defv}\eeq
The averaged CLT states that the distributions  $P_0\{B_n\in\,\cdot\,\}$
converge to the distribution of a Brownian motion with
diffusion matrix
\beq
\kD=\frac{E_0\bigl[(X_{\tau_1}-\tau_1v)(X_{\tau_1}-\tau_1v)^t
\big\vert\beta=\infty\bigr]}
{E_0[{\tau_1}\vert\beta=\infty]}.
\label{defkD}
\eeq

Once we know
 that the $\P$-a.s.\ quenched CLT holds with a constant diffusion matrix,
 this diffusion matrix must be the same $\kD$  as for the
averaged CLT.
We prove here  the degeneracy statement of
Theorem \ref{main}.

\begin{lemma}  Define  $\kD$  by {\rm \eqref{defkD}} and let $u\in\R^d$.
Then
 $u^t\kD u=0$ iff $u$ is orthogonal to the span of
$\{x-y: \E[\pi_{0,x}]\E[\pi_{0,y}]>0\}$.
\label{degen-lm} 
\end{lemma}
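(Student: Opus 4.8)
The plan is to restate the lemma as a condition on the admissible steps and then verify that condition using the regeneration structure. Write $c=u\cdot v$ and $\cJ=\{z:\E\pi_{0,z}>0\}$ for the set of admissible steps. Since $\E\pi_{0,z}=0$ implies $\pi_{0,z}=0$ $\P$-a.s., every increment $Z_k:=X_{k+1}-X_k$ of the walk lies in $\cJ$, $P_0$-a.s.; hence $\Span\{x-y:\E\pi_{0,x}\E\pi_{0,y}>0\}=\Span(\cJ-\cJ)$, and $u$ is orthogonal to this subspace exactly when $z\mapsto u\cdot z$ is constant on $\cJ$, in which case, because $X_n/n\to v$, that constant equals $c$. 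On the other hand, \eqref{defkD} together with $E_0[\tau_1\mid\beta=\infty]>0$ shows that $u^{t}\kD u$ is a positive multiple of $E_0[(u\cdot(X_{\tau_1}-\tau_1 v))^{2}\mid\beta=\infty]$, so $u^{t}\kD u=0$ iff $u\cdot(X_{\tau_1}-\tau_1 v)=0$ for $P_0(\cdot\mid\beta=\infty)$-a.e.\ path. Thus the lemma is equivalent to the statement that $u^{t}\kD u=0$ iff $u\cdot z=c$ for every $z\in\cJ$.

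One direction is immediate: if $u\cdot z=c$ on $\cJ$ then $u\cdot X_n=cn$ identically, so $u\cdot(X_{\tau_1}-\tau_1 v)=0$ and $u^{t}\kD u=0$. For the converse, assume $u^{t}\kD u=0$, i.e.\ $\sum_{k=0}^{\tau_1-1}(u\cdot Z_k-c)=0$ $P_0$-a.s.\ on $\{\beta=\infty\}$. I would prove $u\cdot z=c$ for each fixed $z\in\cJ$ by exhibiting a set of trajectories of positive $P_0$-probability, contained in $\{\beta=\infty\}$, along which this sum reduces to $u\cdot z-c$.

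First take $z$ with $z\cdot\uhat\ge1$; such steps exist, for otherwise $X_n\cdot\uhat$ would be nonincreasing, contradicting transience. On the event that the first step is $z$ and the walk never afterward drops below level $z\cdot\uhat$, one has $\beta=\infty$ trivially, and $\tau_1=1$ by the definition of the regeneration time, so the sum above is $u\cdot z-c$; since $\pi_{0,z}$ depends only on the environment at level $0$ while the ``never drop'' event depends only on the environment at levels $\ge z\cdot\uhat\ge1$, the product form of $\P$ gives this event probability $\E[\pi_{0,z}]\,P_0(\beta=\infty)>0$, hence $u\cdot z=c$. Fix such a forward step $z_+$, so $u\cdot z_+=c$. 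For a general $z\in\cJ$ with $z\cdot\uhat\le0$, I would run the walk along the deterministic itinerary ``climb with just enough copies of $z_+$ to keep the level after the next step nonnegative, take one step $z$, then climb back up past the former maximal level with more copies of $z_+$, and never descend again''. Checking the definition of $\tau_1$ shows this itinerary stays in $\{\beta=\infty\}$, that no regeneration occurs before the step $z$ (the level dips below $z_+\cdot\uhat$ immediately after $z$, by minimality of the initial climb), and — using $u\cdot z_+=c$ to cancel all $z_+$-contributions — that the sum is again $u\cdot z-c$, whence $u\cdot z=c$.

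The main obstacle is making these itineraries carry positive $P_0$-probability in a general i.i.d.\ product environment, where distinct steps need not be available simultaneously at a site, so that $\E[\pi_{0,z'}\pi_{0,z''}]$ can vanish. When $z$ is not parallel to $z_+$, the lattice sites used along the itinerary are pairwise distinct and each requires only one prescribed step, so the probability factors into positive terms $\E[\pi_{0,z_+}]$, $\E[\pi_{0,z}]$ and $P_0(\beta=\infty)$. The exceptional directions — $z$ a nonpositive multiple of a forward step, or $z=0$ when $0\in\cJ$ — create unavoidable collisions, and here Hypothesis (\HR), in particular \eqref{Yell5}, is essential, as it excludes precisely the ``restricted path'' environments in which the offending pair of steps can never co-occur. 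In those cases I would argue indirectly: a regeneration block can accommodate a variable number of null (or up-and-back) excursions on top of a fixed skeleton of genuine moves, each such excursion contributing $0$ to $u\cdot X_{\tau_1}$ but adding to $\tau_1$, so the identity $u\cdot X_{\tau_1}=c\tau_1$ forces $c=0$ and hence $u\cdot z=c$ for those $z$ too; the positive probability of producing such blocks is again supplied by \eqref{Yell5}. Assembling all cases yields $u\cdot z=c$ on $\cJ$, that is, $u\perp\Span\{x-y:\E\pi_{0,x}\E\pi_{0,y}>0\}$.
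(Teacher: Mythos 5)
Your high-level strategy matches the paper's exactly: reduce $u^t\kD u=0$ to the statement that the regeneration increment satisfies $u\cdot(X_{\tau_1}-\tau_1 v)=0$ a.s.\ under $P_0(\cdot\mid\beta=\infty)$, then exhibit positive-probability regeneration blocks to pin down $u\cdot z$ for each admissible step $z$. Your reformulation, the easy direction, the case $z\cdot\uhat\ge1$, and the $3$-segment itinerary ``$m$ copies of $z_+$, one $z$, climb'' for $z\cdot\uhat\le 0$ with $z\notin\R z_+$ are all sound; the factorization into $\E\pi_{0,\cdot}$-products does go through because the prescribed sites are pairwise distinct and lie strictly below the regeneration level.

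The gap is in your ``exceptional directions'' paragraph, and it is the part that actually needs Hypothesis~(\HR). When every forward step is collinear with the target $z$ (so your itinerary must revisit a site with two distinct prescribed steps), or when $z=0$, you propose to force $c=0$ by inserting null or up-and-back excursions into a fixed skeleton and invoke \eqref{Yell5}. This does not close the argument. Inserting an extra $0$-step (or a $w,-w$ detour) at some site $w_0$ along a block produces a path that visits $w_0$ twice and requires two different steps out of $w_0$; its probability is controlled by $\E[\pi_{0,0}\pi_{0,w}]$ (or $\E[\pi_{0,w}\pi_{0,-w}]$), and \eqref{Yell5} does \emph{not} guarantee this is positive --- it only says that with positive $\P$-probability two distinct nonzero steps are supported at a site, not that any prescribed pair is co-supported. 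In fact the paper does not invoke \eqref{Yell5} here at all. The hypothesis it uses is the \emph{other} half of~(\HR), namely $\cJ\not\subset\R u$: when all forward steps lie on $\R z_+$ and the target $z$ also lies on $\R z_+$, there must exist $w\in\cJ\smallsetminus\R z_+$ with $w\cdot\uhat\le 0$. One then builds the $5$-segment path ``$m_n$ copies of $z_+$, one $w$-step, $n$ copies of $z$, one $w$-step, $m_n$ copies of $z_+$'' with $m_n$ chosen minimal so that the level after $m_ny+2w+nz$ is nonnegative; the three climbing stretches live in the distinct affine cosets $\R z_+$, $w+\R z_+$, $2w+\R z_+$, so the path is non-self-intersecting and the probability factors. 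The degeneracy constraint then reads $n(u\cdot z)+2(u\cdot w)=nc+2c$ (using $u\cdot z_+=c$), and letting $n\to\infty$ forces $u\cdot z=c$ (equivalently $c=0$ when $z$ is a negative multiple of $z_+$); the case $z=0$ is handled by inserting the extra null step \emph{inside} this same non-self-intersecting frame, where the forward step count and $\tau_1$ now differ. So the missing ingredient in your sketch is the auxiliary noncollinear direction $w$ supplied by the first clause of~(\HR); replacing your appeal to \eqref{Yell5} by this construction fills the gap.
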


\begin{proof} The argument is a minor embellishment of that given
for a similar degeneracy  statement
on p.~123--124  of \citep{rass-sepp-06} for the forbidden-direction
 case where $\pi_{0,z}$ is supported by $z\cdot\uhat\ge 0$.
We spell out enough  of the argument  to show
 how to adapt that proof to the present case.

Again, the intermediate step is to show that  $u^t\kD u=0$
iff $u$ is orthogonal to the span of
$\{x-v: \E[\pi_{0,x}]>0\}$.  The argument from orthogonality
to  $u^t\kD u=0$  goes as in \citep[p.~124]{rass-sepp-06}.

Suppose $u^t\kD u=0$ which is the same as
\beq  P_0\{X_{\tau_1}\cdot u
=\tau_1 v\cdot u \,\vert\,\beta=\infty\}=1. \label{degen-aux}\eeq
Take $x$ such that $\E\pi_{0,x}>0$. Several cases need to be
considered.  

If $x\cdot\uhat\ge0$ but $x\ne0$ a small modification of the
argument in \citep[p.\ 123]{rass-sepp-06} works to show that 
$x\cdot u=v\cdot u$.

Suppose $x\cdot\uhat<0$. Then take $y$ such 
that $y\cdot\uhat>0$ and $\E\pi_{0,y}>0$.  Such $y$ must exist by
the transcience assumption \eqref{dir-trans}. 

If $y$ is collinear with $x$ and there is no other noncollinear 
vector $y$ with $y\cdot\uhat>0$, then, since the one-dimensional
case is
excluded by Hypothesis (\HR), there must
exist another vector $z$ that is not collinear with $x$ or $y$ and such
that $z\cdot\uhat\le0$ and $\E\pi_{0,z}>0$.

Now for any $n\ge1$, let $m_n$ be the positive integer such that 
\[(m_ny+2z+nx)\cdot\uhat\ge0
\quad\text{but}\quad  ((m_n-1)y+2z+nx)\cdot\uhat<0. \] 
Let the walk first take $m_n$ $y$-steps, followed by one $z$-step,
then $n$ $x$-steps, followed by 
another $z$-step, 
again $m_n$ $y$-steps, and then regenerate (meaning that 
$\beta\circ\theta^{2m_n+n+2}=\infty$).
This path is non-self-intersecting and, by the minimality of $m_n$, 
backtracks enough
to ensure that the first regeneration time is $\tau_1=2m_n+n+2$. Hence 
\begin{align*}
&P_0\{X_{\tau_1}=2m_ny+nx+2z, \tau_1=2m_n+n+2\,\vert\,\beta=\infty\}
\ge (\E\pi_{0,y})^{2m_n}(\E\pi_{0,x})^n(\E\pi_{0,z})^2>0
\end{align*}
and then by \eqref{degen-aux} 
\beq (nx+2m_ny+2z)\cdot u=(n+2m_n+2)v\cdot u. \label{degenaux1}\eeq
Since $y\cdot\uhat>0$ we have already shown that $y\cdot u=v\cdot u$.
Taking $n\nearrow\infty$ implies $x\cdot u=v\cdot u$.

If $y$ is not collinear with $x$, repeat the above argument, but without using 
any $z$-steps and hence with simply $n=1$.

When $x=0$ making the walk take an extra step of size 0 along the path, an almost
identical argument to the above can be repeated.
Since we have shown that $y\cdot u=v\cdot u$ for any $y\ne0$
with $\E\pi_{0,y}>0$, this allows to also conclude that $0\cdot u=v\cdot u$.
%
%
%

Given  $u^t\kD u=0$, 
we have established  $x\cdot u=v\cdot u$  for any $x$ with $\E\pi_{0,x}>0$.
Now follow the
proof in \citep[p.~123--124]{rass-sepp-06} to its conclusion.
\end{proof}

Here is  an outline of the proof of
Theorem \ref{main}.  It all goes via Theorem \ref{RS}.

\smallskip

(i) After some basic estimates in Section \ref{ballistic},
we prove in Section \ref{Pinfty_exists}
 the existence of the ergodic invariant distribution $\P_\infty$ required
for Theorem \ref{RS}.  $\P_\infty$ is not convenient to work with
 so we still need  to do  computations with
$\P$. For this purpose
Section \ref{Pinfty_exists} proves that
 in the direction $\uhat$
the measures $\P_\infty$ and $\P$ come polynomially close in variation
distance and that the environment process satisfies
 a $P_0$-a.s.\ ergodic theorem.
 In Section \ref{substitution}  we show that $\P_\infty$
and $\P$  are interchangeable both in the hypotheses that
need to be checked
and in the conclusions obtained.
In particular, the $\P_\infty$-a.s.\ quenched CLT
coming from Theorem \ref{RS} holds also $\P$-a.s.  Then we know
that the diffusion matrix $\kD$ is the one in \eqref{defkD}.

\smallskip

 The bulk of the work goes towards verifying condition
 \eqref{cond}, but under $\P$ instead of $\P_\infty$.
There are two main stages to this argument.

\smallskip

(ii) By a decomposition into martingale increments
the proof of \eqref{cond}  reduces to bounding the number of common
points of two independent walks in a common environment
(Section \ref{pathintersections}).

\smallskip

(iii) The intersections are controlled by introducing levels
at which both walks regenerate. These
joint regeneration levels are reached fast enough and
the relative positions of the walks from one
joint regeneration level to the next are a Markov chain.
When this Markov chain drifts away from the origin it
 can be approximated well enough by a symmetric random
walk.  This approximation enables us to control the growth
of the Green function of the Markov chain, and thereby the number of
common points. This is in Section \ref{intersectbound} and
in Appendix \ref{greenapp} devoted to  the Green function bound.

Appendix \ref{vectorapp} shows that the assumption  that
 $\uhat$  has integer coordinates entails no
loss of generality if the moment  required is doubled.
The proof given in Appendix \ref{vectorapp} is from 
Berger and Zeitouni \cite{berg-zeit-07-}.
Appendix \ref{Yapp} contains a proof (Lemma \ref{Yapplm1}) that 
requires a systematic enumeration of a large number of cases.  

The end result  of the development is the bound 
\begin{align}
\E\bigl[\,\abs{E_0^\w(X_n)-E_0(X_n)}^2\,\bigr]=\Ord(n^{2\alpha})
\label{cond1aa}
\end{align}
on the variance of the quenched mean, for some $\alpha\in(1/4,1/2)$. 
The parameter $\alpha$ can be taken arbitrarily close to $1/4$ 
if the exponent $\momp$ in \mom can be taken arbitrarily large.  
The same is also true under the invariant measure $\P_\infty$, 
namely   \eqref{cond} is valid for  some $\alpha\in(1/4,1/2)$. 
Based on the behavior of the Green function of a symmetric random walk,
optimal orders in \eqref{cond1aa} should be $n^{1/2}$ in $d=2$, $\log n$
in $d=3$, and constant in  $d\ge4$.
Getting an optimal bound 
in each dimension is not a present goal,  
so in the end we bound all dimensions with the two-dimensional case. 
 
The requirement $\momp> 176d$ of Hypothesis
\mom  is derived from the
bounds established along the way. There is room in the estimates
for we take one simple and lax route to a sufficient bound. 
 Start from 
\eqref{appAfinal} with $p_1=p_2=\momp/6$ as dictated by 
Proposition \ref{qqbarprop4} and \eqref{defhfunction}.   
Taking $\momp=220$ gives the bound $Cn^{22/32}$. 
Feed this bound into Proposition \ref{intersections} 
where it sets $\bar\alpha=11/32$. 
Next in \eqref{momp-cond6} take $\alpha-\bar\alpha=1/8$ 
to get the requirement $\momp>176d$. Finally 
 in \eqref{momp-cond5} take $\alpha-\bar\alpha=1/32$ which
places the demand  $\momp>160d$. With $d\ge 2$ all are
satisfied with  $\momp>176d$.  (Actually $11/32+1/8+1/32=1/2$
but since the inequalities are strict there is room to keep
$\alpha$ strictly below $1/2$.) 

Sections \ref{ballistic}--\ref{pathintersections} are valid
for all  dimensions $d\ge 1$, but Section \ref{intersectbound}
requires $d\ge 2$.  

\section{Basic estimates for ballistic RWRE}
\label{ballistic}
In addition to the regeneration  times
already defined, let 
\[J_m = \inf\{ i\ge0: \tau_i \ge m\}.\]

\begin{lemma}
\label{exponential}
Let $\,\P$ be an i.i.d.\ product measure and satisfy
 Hypotheses {\rm(\HM)} and {\rm(\HT)}. 
We have these bounds: 
\begin{align}
&E_0[\tau_\ell^\momp]\le C\ell^\momp\quad\text{for all $\ell\ge 1$.} 
\label{tau-bound}\\
&\sup_{m\ge0} E_0[\,|\tau_{J_m}-m|^p\,]\le C
\quad\text{ for $1\le p\le \momp-1$.}
\label{renewal-tau-bound}\\
&\sup_{m\ge0}E_0[\,|\inf_{n\ge0}(X_{m+n}-X_m)\cdot\uhat|^p\,]\le C
\quad\text{ for $1\le p\le \momp-1$.}
\label{backtrack-bound}\\
&\sup_{m\ge0}P_0\{(X_{n+m}-X_m)\cdot\uhat\le \sqrt n\,\}\le C n^{-p}
\quad\text{ for $1\le p\le (\momp-1)/2$.}
\label{ldp}
\end{align}
\end{lemma}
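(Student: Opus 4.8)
All four bounds rest on the i.i.d.\ regeneration structure \eqref{regenslab}, the moment estimate \eqref{taumom2} (which also uses \amom through $E_0[\tau_1^{\momp}]<\infty$), the bounded-step hypothesis \astep, and the integrality of the levels $x\cdot\uhat$ (recall $\uhat\in\Z^d$). The plan is to prove \eqref{tau-bound} and \eqref{renewal-tau-bound} first and then deduce \eqref{backtrack-bound} and \eqref{ldp} from \eqref{renewal-tau-bound}. For \eqref{tau-bound}, write $\tau_\ell=\tau_1+\sum_{k=1}^{\ell-1}(\tau_{k+1}-\tau_k)$; by \amom and \eqref{taumom2} each summand has an $L^{\momp}(P_0)$ norm bounded by a constant independent of $k$ and $\ell$, so Minkowski's inequality (legitimate since $\momp>1$) gives $E_0[\tau_\ell^{\momp}]^{1/\momp}\le C\ell$.

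The main work is \eqref{renewal-tau-bound}, which is a form of the renewal ``inspection paradox'': the regeneration block straddling a deterministic time $m$ has a size-biased length, hence one moment fewer than the increments, which is precisely why the range is $p\le\momp-1$. For $m\ge1$ one has $\tau_{J_m-1}<m\le\tau_{J_m}$, so $0\le\tau_{J_m}-m\le\tau_{J_m}-\tau_{J_m-1}$ and it suffices to bound $E_0[(\tau_{J_m}-\tau_{J_m-1})^p]$ uniformly in $m$ (the case $m=0$ is trivial, $\tau_{J_0}=0$). The contribution of $\{J_m=1\}$ is at most $E_0[\tau_1^p]<\infty$. For $k\ge2$ the increment $Y_k:=\tau_k-\tau_{k-1}$ is, by \eqref{regenslab}, independent of $\tau_{k-1}$ and distributed as a generic regeneration increment $Y$ with $E_0[Y^{\momp}]<\infty$; conditioning on $\tau_{k-1}$, using independence, and summing over $k$ yields
\begin{equation*}
\sum_{k\ge2}E_0\bigl[Y_k^p\,\one\{\tau_{k-1}<m\le\tau_k\}\bigr]=\sum_{a=1}^{m}g(a)\,U(\{m-a\}),\qquad g(a):=E_0\bigl[Y^p\,\one\{Y\ge a\}\bigr],
\end{equation*}
where $U(\{s\})=\sum_{j\ge1}P_0(\tau_j=s)\le1$ because $\tau_1<\tau_2<\cdots$. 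Hence the right side is at most $\sum_{a\ge1}g(a)=E_0[Y^{p+1}]<\infty$ (using $p+1\le\momp$), with no dependence on $m$.

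Given \eqref{renewal-tau-bound}, the remaining two bounds are short. For \eqref{backtrack-bound} set $R:=\M\,\abs{\uhat}$, so by \astep every step changes $X\cdot\uhat$ by at most $R$ in modulus. Fix $m\ge1$; for any $n\ge0$, if $m+n\le\tau_{J_m}$ then $X_{m+n}\cdot\uhat\ge X_m\cdot\uhat-R(\tau_{J_m}-m)$ directly from the step bound, while if $m+n>\tau_{J_m}$ then $X_{m+n}\cdot\uhat\ge X_{\tau_{J_m}}\cdot\uhat\ge X_m\cdot\uhat-R(\tau_{J_m}-m)$, the first inequality because after the regeneration time $\tau_{J_m}$ the walk never returns below level $X_{\tau_{J_m}}\cdot\uhat$ (see \eqref{tau}). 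Thus $\abs{\inf_{n\ge0}(X_{m+n}-X_m)\cdot\uhat}\le R(\tau_{J_m}-m)$, and \eqref{renewal-tau-bound} closes the case $m\ge1$; for $m=0$ the same reasoning with $\tau_1$ in place of $\tau_{J_m}$ gives the bound $R\tau_1$, whose $p$-th moment is finite for $p\le\momp-1$. For \eqref{ldp}, fix $m\ge1$ (the case $m=0$ is simpler, the overshoot being absent). Since consecutive regeneration levels differ by at least $1$, the walk never backtracks below a regeneration level, and $X_{\tau_{J_m}}\cdot\uhat\ge X_m\cdot\uhat$ (as $\tau_{J_m}\ge m$ and $\tau_{J_m}$ is a regeneration time), with $\ell_n:=\fl{\sqrt n}+1$ one obtains
\begin{equation*}
\bigl\{(X_{m+n}-X_m)\cdot\uhat\le\sqrt n\,\bigr\}\ \subseteq\ \bigl\{\tau_{J_m+\ell_n}>m+n\bigr\},
\end{equation*}
because on the complementary event $(X_{m+n}-X_m)\cdot\uhat\ge\ell_n>\sqrt n$. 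Writing $\tau_{J_m+\ell_n}-m$ as the overshoot $\tau_{J_m}-m$ plus a sum of $\ell_n$ regeneration increments and noting that $\{J_m=j\}$ is determined by the walk up to $\tau_{J_m}$, the regeneration property makes the overshoot independent of the sum, whose terms are i.i.d.\ copies of $Y$. Splitting $\{\tau_{J_m+\ell_n}-m>n\}$ according to whether the overshoot or the sum exceeds $n/2$: Markov's inequality and \eqref{renewal-tau-bound} bound the first probability by $Cn^{-(\momp-1)}$, while Minkowski's inequality (as for \eqref{tau-bound}) gives the sum an $L^{\momp}(P_0)$ norm $\le C\ell_n\le C\sqrt n$, so Markov's inequality at exponent $\momp$ bounds the second by $Cn^{-\momp/2}$; both are $\le Cn^{-p}$ when $p\le(\momp-1)/2$.

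The one delicate step is \eqref{renewal-tau-bound}: it requires correctly locating the size-biasing of the straddling block (the source of the loss of one moment), treating the initial block separately since its law differs from that of the later i.i.d.\ blocks, and justifying the use of the strong Markov property at the random index $J_m$ via the fact that $\{J_m=j\}$ is measurable with respect to the first $j$ regeneration slabs. The other three bounds are routine once \eqref{renewal-tau-bound} and \eqref{tau-bound} are in hand.
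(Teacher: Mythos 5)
Your proof is correct, and it diverges from the paper's in a worthwhile way on \eqref{renewal-tau-bound} and \eqref{ldp}. For \eqref{tau-bound} you use Minkowski where the paper uses Jensen; equivalent. For \eqref{renewal-tau-bound} the paper introduces the forward recurrence time of the pure renewal process of i.i.d.\ increments $\tau_{j+1}-\tau_j$, writes a renewal equation for its $p$-th power, takes expectations, and establishes a uniform bound by induction on $n$; you instead dominate the residual $\tau_{J_m}-m$ by the entire straddling block $\tau_{J_m}-\tau_{J_m-1}$, peel off the differently-distributed block $J_m=1$, and for $J_m=k\ge 2$ condition on $\tau_{k-1}$ and sum over $k$, producing a convolution of tail moments of a generic increment against the renewal measure, which is finite because that measure assigns mass at most one to each integer (the $\tau_j$ are strictly increasing). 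Both arguments lose exactly one moment for the identical size-biasing reason; yours trades the recursion for a closed-form renewal-measure bound, making the mechanism of the moment loss more transparent. For \eqref{ldp} the paper works with the count of regeneration times in $(m,m+n)$ through a three-term decomposition and iterated conditioning; you use the single inclusion $\{(X_{m+n}-X_m)\cdot\uhat\le\sqrt n\}\subseteq\{\tau_{J_m+\ell_n}>m+n\}$ with $\ell_n=\fl{\sqrt n}+1$, valid because consecutive regeneration levels are strictly increasing integers, then split into overshoot versus an $\ell_n$-fold sum of increments (independent of the overshoot by the strong Markov property, since $\{J_m=j\}$ is measurable in the first $j$ slabs) and apply Markov at exponents $\momp-1$ and $\momp$ respectively. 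Your \eqref{backtrack-bound} coincides with the paper's, with the welcome addition of explicitly handling $m=0$ (where $\tau_{J_0}=\tau_0=0$, so the bound must use $\tau_1$ rather than $\tau_{J_m}-m=0$), an edge case the paper's one-line proof glosses over.
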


\begin{proof}
\eqref{tau-bound} follows from \eqref{taumom2} and 
 Jensen's inequality. 

\def\frt{g}  

The proof of \eqref{renewal-tau-bound} comes by a renewal argument.
Let $Y_j=\tau_{j+1}-\tau_{j}$ for $j\ge 1$ and $V_0=0$, 
 $V_m=Y_1+\dotsm+Y_m$.  
The forward recurrence time  of this pure renewal process is 
$\frt_n=\min\{k\ge 0: n+k\in \{V_m\}\}$. A decomposition 
according to the value of  $\tau_1$ gives
\beq
\tau_{J_n}-n = (\tau_1-n)^+  +   \sum_{k=1}^{n-1} \one\{\tau_1=k\} 
\frt_{n-k}.
\label{renewaux1}\eeq

First we bound the moment of $\frt_n$.  For this 
 write a renewal equation  
\[
\frt_n = (Y_1-n)^+  +   \sum_{k=1}^{n-1} \one\{Y_1=k\} \frt_{n-k}\circ\theta
\]
where
 $\theta$ shifts the sequence $\{Y_k\}$ 
so that   $\frt_{n-k}\circ\theta$ is  independent of $Y_1$. 
Only one term on the right can be nonzero,  so for any $p\ge1$
\[
\frt_n^p = ((Y_1-n)^+)^p  +
          \sum_{k=1}^{n-1} \one\{Y_1=k\} (\frt_{n-k}\circ\theta)^p.
\]
Set $z(n) = E_0[((Y_1-n)^+)^p\,]$.  Assumption $p\le \momp-1$ and
\eqref{taumom2}  
give 
$E_0[Y_1^{p+1}]$ $<\infty$ which 
implies $\sum z(n) < \infty$.  Taking expectations and using independence
gives the equation 
\[
E_0\frt_n^p = z(n)  +   \sum_{k=1}^{n-1} P_0\{Y_1=k\} E_0\frt_{n-k}^p.
\]
Induction on $n$ shows that   
\[ E_0\frt_n^p \le \sum_{k=1}^n z(k) \le C
\quad\text{ for all $n$.} \]
  Raise \eqref{renewaux1} to the power $p$,   take expectations, 
use Hypothesis \amom,  and 
substitute this last bound in there to complete the proof of  
  \eqref{renewal-tau-bound}. 

\eqref{backtrack-bound} follows readily.  Since
 the walk does not backtrack after time $\tau_{J_m}$
  and steps are bounded
by Hypothesis (\HM), 
\[|\inf_{n\ge0}(X_{m+n}-X_m)\cdot\uhat|
=\Big|\inf_{n: m\le n\le\tau_{J_m}}(X_n-X_m)\cdot\uhat\Big|
\le \M|\uhat|(\tau_{J_m}-m).\]
Apply  \eqref{renewal-tau-bound} to this last quantity.

Lastly we show \eqref{ldp}.  For $a<b$  define 
\[V_{a,b}=\sum_{i\ge1}\one\{a<\tau_i<b\}.\]
Then $(X_{m+n}-X_m)\cdot\uhat\le \sqrt n$ implies
$V_{m,m+n}\le \sqrt n$.  Recall  the i.i.d.\ structure of slabs
$({\mathcal S}_k)_{k\ge1}$ defined in \eqref{regenslab}.  
For the first inequality note that  either there are
no regeneration times in $[m,m+n)$, or there is  one and we restart
at the first one. 
\begin{align*}
&P_0\{V_{m,m+n}\le\sqrt n\,\}\\
&
\le P_0\{\tau_{J_m}-m\ge n\} 
+P_0\{V_{0,n}\le\sqrt n\,\vert\,\beta=\infty\}
+\sum_{k=1}^{n-1} 
P_0\{\tau_{J_m}-m=k\}P_0\{V_{0,n-k}\le \sqrt n-1\,\vert\,\beta=\infty\}\\
&
\le P_0\{\tau_{J_m}-m\ge n\}
+P_0\{\tau_{[\sqrt n\,]+1}\ge n\,\vert\,\beta=\infty\}
+C\sum_{k=1}^{n-1} 
k^{-2p}P_0\{\tau_{[\sqrt n\,]}\ge n-k\,\vert\,\beta=\infty\}\\
&
\le \frac{C}{n^p}
+C n^{p}\sum_{k=1}^{n-1} \frac1{k^{2p}{(n-k)^{2p}}}
\le \frac C{n^p}.
\end{align*}
We used \eqref{renewal-tau-bound} in the second inequality and
then again in the third inequality, along with \eqref{tau-bound}.
For the last inequality split the sum according to
 $k\le n/2$ and $k>n/2$,  in the former case 
bound $1/(n-k)$ by $2/n$, and in the latter case bound
$1/k$ by $2/n$.
\end{proof}

\section{Invariant measure and ergodicity}
\label{Pinfty_exists}
For integers $\ell$ define the $\sigma$-algebras 
${\kS}_\ell=\sigma\{\w_x: x\cdot \uhat\geq \ell\}$ on $\Omega$. 
 Denote the restriction of the measure $\P$ to the
 $\sigma$-algebra ${\kS}_\ell$ by $\P_{\vert{\kS}_\ell}$. 
In this section we prove the next two theorems.  The 
variation distance of two probability measures is
$\vard(\mu,\nu)=\sup\{\mu(A)-\nu(A)\}$ with the supremum taken
over measurable sets $A$. 
 $\E_\infty$
denotes expectation under the invariant measure $\P_\infty$
whose existence is established below. The corresponding joint measure on 
environments and paths is denoted by 
 $P_0^\infty(d\w, dx_{0,\infty})=\P_\infty(d\w) P_0^\w(dx_{0,\infty})$
with expectation $E_0^\infty$.

\begin{theorem}
\label{Th_exist}
Assume $\P$ is product and  satisfies Hypotheses {\rm(\HM)}
and {\rm(\HT)}, with $\momp>4d+1$.
Then
there exists a probability measure $\P_\infty$ on $\Omega$ with
these properties.
\begin{enumerate}
\item[{\rm (a)}] Hypothesis {\rm (\HM)} holds $\P_\infty$-almost surely. 
\item[{\rm (b)}] $\P_\infty$ is invariant and ergodic
 for the Markov  transition kernel $\Pi$.
\item[{\rm (c)}] 
For all $\ell\ge 1$ 
\begin{align}
\label{vard-exp}
\vard({\P_\infty}_{\vert{\kS}_\ell},\P_{\vert{\kS}_\ell})\le C\ell^{1-\momp}.
\end{align}
\item[{\rm (d)}]   Under  $P_0^\infty$ the walk 
 has these properties:
\begin{enumerate}
\item[{\rm (e.i)}]  For $1\le p\le\momp-1$  
\begin{align}
E_0^\infty\Big[\,\bigl\lvert\inf_{n\ge0}X_n\cdot\uhat\bigr\rvert^p\,\Big]
\le C.
\label{backtrack-bound-invariant}
\end{align}
\item[{\rm (e.ii)}] 
For $1\le p\le(\momp-1)/2$ and
$n\ge1$,
\begin{align}
&P_0^\infty\{X_n\cdot\uhat\le n^{1/2}\}\le Cn^{-p}.
\label{ldp-invariant}
\end{align}
\end{enumerate}
\end{enumerate}
\end{theorem}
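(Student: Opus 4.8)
The plan is to realise $\P_\infty$ as a time-stationary law of the environment seen from the particle and then to read off (a)--(d) from the renewal structure together with the moment bounds of Lemma~\ref{exponential}; the one genuinely delicate point will be ergodicity.

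\textit{Construction, part~(a), and invariance.} By Hypothesis~(\HM) and the product form of $\P$, the measure $\P$ is carried by the compact product space $\Omega_0=\{\w:\pi_{0,z}(\w)=0\text{ whenever }|z|>\M\}^{\Z^d}$, which is invariant under all shifts $T_z$. For $n\ge0$ let $\mu_n$ be the law of $T_{X_n}\w$ under $P_0$; then each $\mu_n$ sits on $\Omega_0$ and $\mu_{n+1}=\mu_n\Pi$. I would let $\P_\infty$ be a weak limit point of the time averages $\nu_N:=N^{-1}\sum_{n=1}^N\mu_n$ along some subsequence $N_j$. Any such limit is a probability measure supported on $\Omega_0$, which gives~(a); and because $\Pi$ is Feller (it maps cylinder functions to cylinder functions continuously) while $\nu_N\Pi-\nu_N=N^{-1}(\mu_{N+1}-\mu_1)\to0$ in variation, $\P_\infty$ is $\Pi$-invariant. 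In what follows I use only the characterisation $\E_\infty[f]=\lim_j\nu_{N_j}(f)$ for bounded cylinder functions $f$ on $\Omega_0$.

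\textit{The variation bound~(c).} Fix $\ell\ge1$ and a cylinder set $A\in\kS_\ell$. Under $P_0$ the walk up to time $n$ consults only the environment at levels $\le S_n:=\sup_{m\le n}X_m\cdot\uhat$, so conditionally on $\sigma(X_{0,n})$ and on $(\w_x)_{x\cdot\uhat\le S_n}$ the remaining coordinates are i.i.d.\ $\P$. Since $\{T_{X_n}\w\in A\}$ depends only on the coordinates at levels $\ge\ell+X_n\cdot\uhat$, on the event $\{R_n<\ell\}$, with $R_n:=S_n-X_n\cdot\uhat\ge0$, it is measurable with respect to those fresh coordinates and has conditional probability exactly $\P(A)$, by shift invariance of the product measure. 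Hence $|P_0\{T_{X_n}\w\in A\}-\P(A)|\le 2\,P_0\{R_n\ge\ell\}$ uniformly in $n$ and in $A$. To bound the last probability, note that if $\tau_k\le n<\tau_{k+1}$ then the overshoot is at most the climb over a single regeneration slab, $R_n\le\M|\uhat|(\tau_{k+1}-\tau_k)$; thus $P_0\{R_n\ge\ell\}$ is bounded by the probability that the regeneration interval straddling $n$ has length at least $\ell/(\M|\uhat|)$. A size-biased renewal estimate, using $\sup_j U(j)<\infty$ for the renewal mass function $U$ of the i.i.d.\ increments $(\tau_{k+1}-\tau_k)_{k\ge1}$ together with $E_0[(\tau_{k+1}-\tau_k)^{\momp}]\le C$ from~\eqref{taumom2}, then gives $P_0\{R_n\ge\ell\}\le C\ell^{1-\momp}$. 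Taking $\lim_j\nu_{N_j}$ and passing from cylinders to all of $\kS_\ell$ by approximation yields~\eqref{vard-exp}.

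\textit{The walk under $P_0^\infty$, part~(d).} Both estimates come from testing $\P_\infty$ against bounded cylinder functions. The map $\w\mapsto P_0^\w\{X_n\cdot\uhat\le n^{1/2}\}$ is one, so the averaging characterisation and the Markov property give $P_0^\infty\{X_n\cdot\uhat\le n^{1/2}\}=\lim_j N_j^{-1}\sum_{k=1}^{N_j}P_0\{(X_{k+n}-X_k)\cdot\uhat\le n^{1/2}\}\le\sup_k P_0\{(X_{k+n}-X_k)\cdot\uhat\le n^{1/2}\}$, which is $\le Cn^{-p}$ by~\eqref{ldp}; this is~\eqref{ldp-invariant}. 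Similarly, testing against $\w\mapsto E_0^\w[\,|\inf_{0\le m\le K}X_m\cdot\uhat|^p\,]$ gives $E_0^\infty[\,|\inf_{0\le m\le K}X_m\cdot\uhat|^p\,]\le\sup_k E_0[\,|\inf_{m\ge0}(X_{k+m}-X_k)\cdot\uhat|^p\,]\le C$ by~\eqref{backtrack-bound}, and letting $K\uparrow\infty$ by monotone convergence yields~\eqref{backtrack-bound-invariant}. In particular transience in direction $\uhat$ holds $P_0^\infty$-a.s., so the regeneration times are $P_0^\infty$-a.s.\ finite.

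\textit{Ergodicity (part~(b)) --- the main obstacle.} Let $A\subseteq\Omega$ be $\Pi$-invariant. Since $T_{X_n}\w$ has law $\P_\infty$ for every $n$, the process $\one_A(T_{X_n}\w)$ is a bounded $\{0,1\}$-valued martingale under $P_0^\infty$, hence $P_0^\infty$-a.s.\ constant in $n$: $\one_A(\w)=\one_A(T_{X_n}\w)$ for all $n$. Evaluating this along the regeneration times $\tau_k$, for which $X_{\tau_k}\cdot\uhat\to\infty$, shows that $\one_A$ must agree, modulo $\P_\infty$, with a functional of the environment on levels arbitrarily far out in direction $\uhat$; combined with~\eqref{vard-exp}, which makes the tail $\sigma$-field $\bigcap_\ell\kS_\ell$ $\P_\infty$-trivial --- it is variation-close, at rate $\ell^{1-\momp}$, to the product tail of $\P$, trivial by Kolmogorov's $0$--$1$ law --- this forces $\P_\infty(A)\in\{0,1\}$. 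The delicate step is making ``agrees with a tail functional'' precise, equivalently showing that under $P_0^\infty$ the regeneration slabs decouple from the past at the polynomial rate furnished by~\eqref{vard-exp}, which is where the quantitative hypothesis $\momp>4d+1$ is spent; I would carry this out by adapting the ergodicity argument for the forbidden-direction walk in~\citep{rass-sepp-07-a} to the bounded-step setting.
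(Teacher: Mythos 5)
Your construction of $\P_\infty$ as a Ces\`aro weak limit on the compact state space $\Omega_0$, the Feller argument for $\Pi$-invariance, and part~(d) all match the paper's proof. For part~(c) you take a slightly different but correct route: where the paper computes the conditional Radon--Nikodym derivative $d\P_n|_{\kS_\ell}/d\P|_{\kS_\ell}$, splits it at level $\ell/2$, and bounds the Ces\`aro average of $I_{n,\ell}=P_0\{\max_{j\le n}X_j\cdot\uhat>X_n\cdot\uhat+\ell/2\}$ by H\"older, you bound $\vard(\P_n|_{\kS_\ell},\P|_{\kS_\ell})\le P_0\{R_n\ge\ell\}$ directly for each fixed $n$, using that conditionally on the trajectory the environment strictly above the running maximum is still i.i.d.\ $\P$, and then control the overshoot $R_n$ by the span of the straddling regeneration block. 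Since the $\tau_k$ are strictly increasing, the renewal mass function is trivially bounded by $1$, and the moment hypothesis then gives $P_0\{R_n\ge\ell\}\le C\ell^{1-\momp}$ uniformly in $n$. This is a mild strengthening (uniform rather than averaged) of the paper's intermediate estimate, and it yields the same conclusion after convexity and lower semicontinuity of $\vard$.

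The genuine gap is in ergodicity, and you yourself flag it as ``the main obstacle.'' The martingale step is sound: for a $\Pi$-invariant set $A$, $\one_A(T_{X_n}\w)$ is a $\{0,1\}$-valued $P_0^\infty$-martingale, hence $\one_A(T_{X_n}\w)=\one_A(\w)$ for all $n$, $P_0^\infty$-a.s. But the next claim, that $\one_A$ thereby ``agrees, modulo $\P_\infty$, with a functional of the environment on levels arbitrarily far out,'' does not follow. Averaging over the walk gives, for $\P_\infty$-a.e.\ $\w$, $\one_A(\w)=\sum_x P_0^\w\{X_{\tau_k}=x\}\,\one_A(T_x\w)$, and although $X_{\tau_k}\cdot\uhat\to\infty$, each summand $\one_A(T_x\w)$ for a non-local $A$ still depends on \emph{all} of $\w$, not just on coordinates at levels near $x\cdot\uhat$. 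So nothing here makes $\one_A$ measurable with respect to $\bigcap_{\ell}\kS_\ell$ modulo null sets, and the tail triviality you correctly deduce from \eqref{vard-exp} cannot be brought to bear. Producing the decoupling you gesture at is not a routine adaptation of a reference; it is the substantive content of the ergodicity proof, and as written your argument does not contain it.

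The paper sidesteps this issue entirely by proving an $L^1$ ergodic theorem directly: it establishes the $P_0$-a.s.\ limit $n^{-1}\sum_{j<n}\Psi(T_{X_j}\w)\to\conb$ for bounded $\kS_{-a}$-measurable $\Psi$ via the i.i.d.\ regeneration-slab decomposition (Theorem~\ref{erg-thm}), transfers this to $P_0^\infty$ using \eqref{vard-exp} together with the backtracking and ballisticity bounds \eqref{backtrack-bound-invariant}, \eqref{ldp-invariant} --- this is where $\momp>4d+1$ is actually spent --- and then invokes the standard equivalence between $L^1$-convergence of the $\Pi$-ergodic averages and ergodicity of $\P_\infty$. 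That route has the further benefit of delivering the $P_0$-a.s.\ ergodic theorem (Theorem~\ref{erg-thm} itself), which is indispensable in Lemma~\ref{clt-transfer-lm} for passing the quenched CLT from $\P_\infty$-a.e.\ $\w$ to $\P$-a.e.\ $\w$; a proof of ergodicity along your lines would still need to supply that statement separately.
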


More could be said about $\P_\infty$. For example, following 
\citep{szni-zern-99}, one can show that $\P_\infty$ comes  as
a limit, and has a renewal-type representation that involves the
regeneration times.  But we cover only properties  needed in the
sequel.
Along the way we establish this ergodic theorem under the 
original environment measure. 

\begin{theorem} 
\label{erg-thm}  Assumptions as in the above Theorem {\rm\ref{Th_exist}}.
Let $\Psi$ be a bounded $\kS_{-a}$-measurable function 
on $\Omega$, for some $0<a<\infty$.  Then 
\beq
\lim_{n\to\infty} 
 n^{-1}\sum_{j=0}^{n-1} \Psi(T_{X_j}\w) 
 =  \E_\infty \Psi
\quad\text{$P_0$-almost surely.}
\label{erg-P}\eeq
\end{theorem}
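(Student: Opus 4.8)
The plan is to prove the limit first under the invariant measure $\P_\infty$ via the Markov chain ergodic theorem, then independently under $\P$ via a renewal argument, and finally to reconcile the two limits using the variation estimate \eqref{vard-exp}. For the first part, Theorem \ref{Th_exist}(b) says $\P_\infty$ is invariant and ergodic for $\Pi$, so the environment chain $(T_{X_n}\w)_{n\ge0}$ under $P_0^\infty$ is stationary and ergodic; since $\Psi$ is bounded, Birkhoff's theorem gives $n^{-1}\sum_{j=0}^{n-1}\Psi(T_{X_j}\w)\to\E_\infty\Psi$, $P_0^\infty$-a.s. Equivalently, the bounded function $g(\w):=E_0^\w\big[\limsup_{n}n^{-1}\sum_{j=0}^{n-1}\Psi(T_{X_j}\w)\big]$ equals $\E_\infty\Psi$ for $\P_\infty$-a.e.\ $\w$. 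Since dropping the first step of the walk and recentering leaves the $\limsup$ of the Cesàro averages unchanged, $g=\Pi g$, so $g$ is bounded and $\Pi$-harmonic.

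Next I would show that $n^{-1}\sum_{j=0}^{n-1}\Psi(T_{X_j}\w)$ converges $P_0$-a.s.\ to a deterministic constant $L$, hence $g=L$ $\P$-a.s. Decompose along the regeneration times: $\sum_{j=0}^{\tau_k-1}\Psi(T_{X_j}\w)=\sum_{j=0}^{\tau_1-1}\Psi(T_{X_j}\w)+\sum_{l=1}^{k-1}R_l$ with $R_l=\sum_{j=\tau_l}^{\tau_{l+1}-1}\Psi(T_{X_j}\w)$. Because $\Psi$ is $\kS_{-a}$-measurable, each $R_l$ is a fixed measurable function of the tail $(\cS_m)_{m\ge l-a}$ of the i.i.d.\ slab sequence \eqref{regenslab}, so $(R_l)_{l>a}$ is a stationary ergodic sequence; it is integrable since $|R_l|\le\|\Psi\|_\infty(\tau_{l+1}-\tau_l)$ and $\tau_{l+1}-\tau_l$ has finite mean by \eqref{taumom2}. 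Birkhoff's theorem for $(R_l)$, the SLLN $\tau_k/k\to E_0[\tau_1\mid\beta=\infty]$, and a routine interpolation over $\tau_k\le N<\tau_{k+1}$ (using $(\tau_{k+1}-\tau_k)/\tau_k\to0$) give the a.s.\ convergence to $L$. At this point \eqref{erg-P} is exactly the statement $L=\E_\infty\Psi$, equivalently $\E_\P[g]=\E_{\P_\infty}[g]$.

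The main obstacle is this last reconciliation: the Cesàro average, and the harmonic function $g$, depend a priori on the environment at arbitrarily low levels through the walk's backtracking, while \eqref{vard-exp} only compares $\P$ and $\P_\infty$ at high levels; the idea is to exploit transience to move the relevant dependence upward. Using harmonicity, $\E_\P[g]=E_0[g(T_{X_n}\w)]$ and $\E_{\P_\infty}[g]=E_0^\infty[g(T_{X_n}\w)]$ for every $n$. For a slowly growing level $\ell_n\to\infty$, in each expectation discard the event on which the walk has, by time $n$, not yet regenerated above level $\ell_n$, or has but its future returns below $\ell_n$; by \eqref{ldp} and \eqref{backtrack-bound} (respectively \eqref{ldp-invariant} and \eqref{backtrack-bound-invariant}) this event has probability $o(1)$. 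On the complementary event, the future from that regeneration point together with the environment it explores is a functional of the environment at levels $\ge\ell_n-a$ only, and, conditionally on the past, the joint law of (environment above the regeneration level, future path staying above it) is the same under either starting measure up to a total-variation error that is $O(\ell_n^{1-\momp})$ by \eqref{vard-exp} --- this is where the renewal structure and the translation invariance of $\P$ enter, ensuring that this post-regeneration data sees $\P$, respectively $\P_\infty$, only through the restriction to $\kS_{\ell_n}$. Letting $n\to\infty$ gives $|\E_\P[g]-\E_{\P_\infty}[g]|=0$, i.e.\ $L=\E_\infty\Psi$. The delicate bookkeeping --- that the walk's position and the indicator of the ``good'' event themselves depend on low levels --- is precisely why one works through a regeneration above level $\ell_n$, so that, conditionally on the past, the future path and the fresh environment it sees are independent of those low levels; the $a$ extra levels $\Psi$ can inspect below the current level are absorbed into the threshold.
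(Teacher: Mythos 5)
Your Steps 1 and 2 are sound and essentially reproduce the paper's own computations \eqref{ergaux3}--\eqref{ergaux5}: Birkhoff under the stationary ergodic $P_0^\infty$, then the decomposition of the Cesàro sum over regeneration intervals, the observation that since $\Psi$ is $\kS_{-a}$-measurable and each slab has height at least $1$ the block sums are stationary functionals of the i.i.d.\ slab sequence, and the interpolation via $\tau_k/k\to E_0[\tau_1\mid\beta=\infty]$.

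The gap is in Step 3, at the assertion that ``conditionally on the past, the joint law of (environment above the regeneration level, future path staying above it) is the same under either starting measure up to a total-variation error $O(\ell_n^{1-\momp})$.'' Under $P_0$ this is precisely Sznitman--Zerner's renewal structure, valid because $\P$ is a product measure: given the history, the environment on $\kS_{\ell_n}$ is still $\P|_{\kS_{\ell_n}}$. Under $P_0^\infty$ this fails: $\P_\infty$ is not a product measure, so the conditional law of $\w|_{\kS_{\ell_n}}$ given the walk's trajectory up to the regeneration is \emph{not} the marginal ${\P_\infty}|_{\kS_{\ell_n}}$, and \eqref{vard-exp} compares only the unconditional restrictions ${\P_\infty}|_{\kS_\ell}$ and $\P|_{\kS_\ell}$; it says nothing about a conditional law given an $\kS_{\ell}^{\,c}$-measurable past. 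Note also that the harmonicity observation does no work here: since $\Pi^n g=g$, $E_0[g(T_{X_n}\w)]$ equals $\E_\P g$ exactly for every $n$, so passing to large $n$ does not shrink $\E_\P g-\E_{\P_\infty}g$; the whole content is the estimate you sketch, and it is exactly that estimate that is not justified.

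The paper avoids this conditional-law issue by never conditioning at all. It introduces the $\kS_{\ell^{1/2}/3}$-measurable functions $G_{n,x}(\w)=E_x^\w\bigl[\,\lvert\cdots\rvert\,\one\{\inf_j X_j\cdot\uhat\ge X_0\cdot\uhat-\ell^{1/2}/2\}\bigr]$ for $x\in\cI$, writes $E_0^\infty\bigl[\one\{X_\ell\in\cI\}G_{n,X_\ell}\bigr]=\E_\infty\bigl[\sum_{x\in\cI}P_0^\w\{X_\ell=x\}G_{n,x}\bigr]$, and then uses a union bound followed by Markov's inequality to pass to $\sum_{x\in\cI}\P_\infty\{G_{n,x}\ge\e_1/(C\ell^d)\}$. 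This decouples the $\kS_{\ell^{1/2}/3}$-measurable quantities $G_{n,x}$, to which \eqref{vard-exp} applies, from the prefactors $P_0^\w\{X_\ell=x\}$, which depend on low levels but are simply bounded by $1$. It is that decoupling device, not any conditional independence under $\P_\infty$, that makes the reconciliation rigorous. As written, your Step 3 stops short of supplying an analogous mechanism.
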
 

Theorem \ref{erg-thm} tells us that there is a unique invariant $\P_\infty$ 
in a natural relationship to $\P$, and also 
gives the absolute continuity 
${\P_\infty}_{|\kS_{-a}}\ll\P_{|\kS_{-a}}$. 
  Limit  \eqref{erg-P} cannot hold
for all bounded measurable $\Psi$  on $\Omega$ because this would 
imply the absolute continuity  $\P_\infty\ll\P$ on the entire space
$\Omega$.  A  counterexample that satisfies (\HT) and (\HM)
but where the quenched walk is degenerate
was given by Bolthausen and Sznitman \citep[Proposition 1.5]{bolt-szni-02}.
Whether regularity  assumption (\HR) or ellipticity
will make a difference here is not presently clear. 
For the simpler case of  space-time walks (see description of
model  in \citep{rass-sepp-05}) 
 with nondegenerate
$P^\w_0$ absolute continuity $\P_\infty\ll\P$ does hold on the 
entire space. Theorem 3.1 in \citep{bolt-szni-02} proves this 
for nearest-neighbor jumps with some weak ellipticity. 
 The general case is no harder. 

\begin{proof}[Proof of Theorems \ref{Th_exist} and \ref{erg-thm}] 
Let  $\P_n(A)=P_0\{T_{X_n}\w\in A\}$. A computation shows
that \[f_n(\w)=\frac{d\P_n}{d\P}(\w)=\sum_x P_x^\w\{X_n=0\}.\]

By Hypothesis (\HM) we can replace the state space 
$\Omega=\mathcal P^{\Z^d}$ with  the compact space 
$\Omega_0={\mathcal P}_{0}^{\Z^d}$ where 
\beq {\mathcal P}_{0}=
\{(p_z)\in\mathcal P:
\text{$p_z=0$ if $|z|>\M$} \}.
\label{defP0}
\eeq

Compactness gives  a subsequence $\{n_j\}$ along which
${n_j}^{-1}\sum_{m=1}^{n_j}\P_m$ converges weakly to a probability measure
$\P_\infty$ on $\Omega_0$.   Hypothesis (\HM) transfers to 
$\P_\infty$ by virtue of having been included in the 
state space $\Omega_0$. 
We have verified part (a) of Theorem \ref{Th_exist}. 

Due to Hypothesis (\HM) $\Pi$ is Feller-continuous.
Consequently the weak limit 
${n_j}^{-1}\sum_{m=1}^{n_j}\P_m\to\P_\infty$   together with 
$\P_{n+1}=\P_n\Pi$ implies the 
 $\Pi$-invariance of $\P_\infty$.

Next we derive the bound on the variation distance.
On metric spaces  total variation distance
can be  characterized  in terms of continuous functions: 
\[\vard(\mu,\nu)=\frac12
\sup\Big\{\int f d\mu-\int f d\nu:f\text{ continuous},\ \sup|f|\le1\Big\}.\]  This makes $\vard(\mu,\nu)$ lower semicontinuous
which we shall find convenient below. 

Fix $\ell>0$. Then 
\beq
\frac{d{\P_n}_{|{\kS}_\ell}}{d{\P}_{|{\kS}_\ell}}
=
\E\big[\sum_x P_x^\w\{X_n=0,\,\max_{j\leq n} X_j\cdot\uhat\leq \ell/2\}
\big\vert {\kS}_\ell\big]
+\sum_x\E[P_x^\w\{X_n=0,\,\max_{j\leq n} X_j\cdot\uhat>\ell/2\}|{\kS}_\ell].
\label{RNder}\eeq
The $L^1(\P)$-norm of the second term is 
\begin{align*}
&\sum_x P_x\{X_n=0,\,\max_{j\leq n} X_j\cdot\uhat>\ell/2\} 
=P_0\{\max_{j\leq n} X_j\cdot\uhat>X_n\cdot\uhat+\ell/2\}\equiv I_{n,\ell}.
\end{align*}
The integrand in the first term on the right-hand side of \eqref{RNder}
 is measurable with
respect to $\sigma(\w_x:x\cdot\uhat\le \ell/2)$ and therefore independent of 
$\kS_{\ell}$. So this term is equal to the  nonrandom constant
\begin{align*}
&\sum_x P_x\{X_n=0,\,\max_{j\leq n} X_j\cdot\uhat\le \ell/2\} \\
&\qquad = 1 - 
P_0\{\max_{j\leq n} X_j\cdot\uhat>X_n\cdot\uhat+\ell/2\}\\
&\qquad = 1 - I_{n,\ell}.
\end{align*}
Altogether, 
\begin{align*}
\vard({\P_n}_{|{\kS}_\ell},{\P}_{|{\kS}_\ell})\leq\tfrac12
\int\Bigl\lvert\frac{d{\P_n}_{|{\kS}_\ell}}{d{\P}_{|{\kS}_\ell}}-1\Bigr\rvert d\P
\leq I_{n,\ell}.
\end{align*}
Now write
\begin{align*}
&\frac1n\sum_{k=1}^n I_{k,\ell}
=\frac1n\sum_{k=1}^n 
P_0\{\max_{j\le k}X_j\cdot\uhat >  X_k\cdot\uhat  + \ell/2\}\\
&\le  \frac1n E_0\Big[\sum_{k=1}^{\tau_1\wedge n}
\one\big\{\max_{j\le k}X_j\cdot\uhat >  X_k\cdot\uhat  + \ell/2\big\}\Big]
+\frac1n\sum_{k=2}^n 
E_0[(\tau_k-\tau_{k-1})\one\{X_{\tau_k}\cdot\uhat-X_{\tau_{k-1}}\cdot\uhat> \ell/2\}]\\
&\le n^{-1} E_0[\tau_1\wedge n]+\frac{n-1}n 
E_0[\tau_1\one\{\tau_1>\ell/2\M\}|\beta=\infty]\\
&\le Cn^{-1}+ C\ell^{1-\momp}.
\end{align*}
The last inequality came from Hypothesis \mom and H\"older's 
inequality.  Let $n\to\infty$ along the relevant subsequence
and use 
lower semicontinuity and convexity of the variation distance. 
This proves part (c).

Concerning backtracking: notice first that due to  \eqref{backtrack-bound}
we have
\begin{align*}
\E_k[E_0^\w(|\inf_{n\ge0}X_n\cdot\uhat|^p)]
&=E_0[E_0^{T_{X_k}\w}(|\inf_{n\ge0}X_n\cdot\uhat|^p)]
=E_0[|\inf_{n\ge0}(X_{n+k}-X_k)\cdot\uhat|^p]
\le C_p.
\end{align*}
Since $E_0^\w(|\inf_{0\le n\le N}X_n\cdot\uhat|^p)$ is a continuous function
of $\w$, the definition of $\P_\infty$ along with the above estimate
and monotone convergence imply \eqref{backtrack-bound-invariant}.
(e.i) has been proved.

Write once again, using \eqref{ldp}
\begin{align*}
\E_k[P_0^\w\{X_n\cdot\uhat\le \sqrt n\}]
&=E_0[P_0^{T_{X_k}\w}\{X_n\cdot\uhat\le \sqrt n\}]
=P_0\{(X_{n+k}-X_k)\cdot\uhat\le \sqrt n\}
\le Cn^{-p}.
\end{align*}
Since $P_0^\w\{X_n\cdot\uhat\le\sqrt n\}$ is a continuous function
of $\w$, the definition of $\P_\infty$ along with the above estimate
imply \eqref{ldp-invariant} and proves (e.ii).

As the last point 
we prove the ergodicity. 
Let $\Psi$ be a bounded local function on $\Omega$. It suffices
to prove that for some constant $\conb$  
\beq
\lim_{n\to\infty} 
E^\infty_0 \Bigl\lvert n^{-1}\sum_{j=0}^{n-1} \Psi(T_{X_j}\w) 
-\conb\,\Bigr\rvert = 0.
\label{ergaux1}
\eeq
By an approximation it follows from this that for all $F\in L^1(\P_\infty)$
\beq
n^{-1}\sum_{j=0}^{n-1} \Pi^jF(\w)  \rightarrow \E_\infty F 
\quad\text{in $L^1(\P_\infty)$.}
\label{erqaux2}
\eeq
  By standard theory (Section IV.2 in \citep{rose})
this is equivalent to ergodicity of $\P_\infty$ for the transition $\Pi$. 

We combine the proof of Theorem \ref{erg-thm} with the proof of
 \eqref{ergaux1}.  For this purpose let
$a$ be a positive integer and  $\Psi$ a bounded $\kS_{-a+1}$-measurable
function.   Let 
\[
\varphi_i=\sum_{j=\tau_{ai}}^{\tau_{a(i+1)}-1} \Psi(T_{X_j}\w).
\]
From the  i.i.d.\ regeneration slabs and the moment
bound \eqref{tau-bound} follows the limit 
\beq
\lim_{m\to\infty} 
 m^{-1}\sum_{j=0}^{\tau_{am}-1}   \Psi(T_{X_j}\w) 
=
\lim_{m\to\infty} 
 m^{-1}\sum_{i=0}^{m-1} \varphi_i 
=\cona \qquad\text{$P_0$-almost surely,}
\label{ergaux3} \eeq
where the constant $\cona$ is defined by the limit.  

To justify limit \eqref{ergaux3}  more explicitly,  recall the 
 definition of regeneration slabs given in \eqref{regenslab}.
  Define  a function  $\Phi$
of the regeneration slabs  by
\[
\Phi(\cS_0,\cS_1,\cS_2,\dotsc)=\sum_{j=\tau_a}^{\tau_{2a}-1}
\Psi(T_{X_{j}}\w).
\]
 Since each regeneration slab has thickness in $\uhat$-direction
 at least 1, the $\Psi$-terms in the sum do not read the environments
below level zero and consequently the sum is a function of 
$(\cS_0,\cS_1,\cS_2,\dotsc)$.
Next one can check for $k\ge 1$ that 
\begin{align*}
&\Phi(\cS_{a(k-1)},\cS_{a(k-1)+1},\cS_{a(k-1)+2},\dotsc)\\
&\qquad=
\sum_{j=\tau_a(X_{\tau_{a(k-1)}+\,\centerdot\,}-X_{\tau_{a(k-1)}})}^{\tau_{2a}
(X_{\tau_{a(k-1)}+\,\centerdot\,}-X_{\tau_{a(k-1)}})-1} \!\!\!\!\!\!
\Psi\bigl(T_{X_{\tau_{a(k-1)}+j}-X_{\tau_{a(k-1)}}}(T_{X_{\tau_{a(k-1)}}}\w)\bigr)
=\varphi_k. 
\end{align*}
Now the sum
of $\varphi$-terms in \eqref{ergaux3} can be decomposed into
\[
\varphi_0+\varphi_1+ 
\sum_{k=1}^{m-2}\Phi(\cS_{ak},\cS_{ak+1},\cS_{ak+2},\dotsc). 
\]
The limit \eqref{ergaux3} follows because the slabs  $(\cS_k)_{k\ge 1}$ are 
i.i.d.\ and the finite initial terms 
 $\varphi_0+\varphi_1$ are eliminated
 by the $m^{-1}$ factor. 

Let $\alpha_n=\inf\{k: \tau_{ak}\ge n\}$. 
 Bound \eqref{tau-bound} 
 implies that $n^{-1}(\tau_{a(\alpha_n-1)}-\tau_{a\alpha_n})
\to 0$ $P_0$-almost surely.
Consequently \eqref{ergaux3} 
yields the next limit,   for another
constant $\conb$: 
\beq
\lim_{n\to\infty} 
 n^{-1}\sum_{j=0}^{n-1}   \Psi(T_{X_j}\w) 
= \conb \qquad\text{$P_0$-almost surely.}
\label{ergaux5}
\eeq
By boundedness this limit 
is valid also in  $L^1(P_0)$ and the initial
point of the walk is immaterial by shift-invariance of $\P$.
Let $\ell>0$ and  abbreviate 
\[
G_{n,x}(\w)=E^\w_x\Bigl[\;
\Bigl\lvert n^{-1}\sum_{j=0}^{n-1} \Psi(T_{X_j}\w) 
-\conb\,\Bigr\rvert \one\bigl\{\,\inf_{j\ge 0}X_j\cdot\uhat\ge 
X_0\cdot\uhat-\ell^{1/2}/2\bigr\}\,\Bigr].
\]
  Let \[\cI=\{x\in\Z^d: x\cdot\uhat\ge \ell^{1/2},
\abs{x}\le \M\ell\}.\]
If $\ell$ is large enough
relative to $a$, then  for $x\in\cI$ 
the function $G_{n,x}$ is $\kS_{\ell^{1/2}/3}$-measurable.
Use the bound \eqref{vard-exp} on the variation
distance and the fact that  the functions $G_{n,x}(\w)$ are 
uniformly bounded over all $x,n,\w$.  
\begin{align*}
&\P_\infty\Bigl\{ \;\sum_{x\in \cI}P^\w_0[X_\ell=x] G_{n,x}(\w)\ge \e_1
\Bigr\}
\le \sum_{x\in \cI} \P_\infty\{ G_{n,x}(\w)\ge \e_1/(C\ell^d)\} \\
&\le  C\ell^d \e_1^{-1}  \sum_{x\in \cI} \E_\infty G_{n,x} 
\le  C\ell^d \e_1^{-1}  \sum_{x\in \cI} \E G_{n,x} 
+  C\ell^{2d} \e_1^{-1} \ell^{(1-\momp)/2}.
\end{align*}
By \eqref{ergaux5} $\E G_{n,x}\to 0$ for any fixed $x$.
Thus from above we get for any fixed $\ell$, 
\begin{align*}
\varlimsup_{n\to\infty} E^\infty_0\bigl[ \,\one\{X_\ell\in\cI\}
G_{n,X_\ell} \bigr] \le \e_1 + C\ell^{2d} \e_1^{-1}\ell^{(1-\momp)/2}.
\end{align*}
The reader should bear in mind that the constant $C$ is 
changing from line to line.  Finally, take $p\le(\momp-1)/2$ and
use \eqref{backtrack-bound-invariant} and \eqref{ldp-invariant} to write 
\begin{align*}
&\varlimsup_{n\to\infty} 
E^\infty_0 \Bigl\lvert n^{-1}\sum_{j=0}^{n-1} \Psi(T_{X_j}\w) 
-\conb\,\Bigr\rvert\\
&\le \varlimsup_{n\to\infty} 
E^\infty_0 \Bigl[ \one\{X_\ell\in\cI\}
\Bigl\lvert n^{-1}\!\!\!\sum_{j=\ell}^{n+\ell-1}\!\! \Psi(T_{X_j}\w) 
-\conb\Bigr\rvert\,
\one\bigl\{\inf_{j\ge \ell}X_j\cdot\uhat\ge 
X_\ell\cdot\uhat-\ell^{1/2}/2\bigr\} \Bigr]\\
&\quad\qquad  +\; C P^\infty_0 \{X_\ell\notin\cI\}
\;+\; C P^\infty_0 \bigl\{\inf_{j\ge \ell}X_j\cdot\uhat<
X_\ell\cdot\uhat-\ell^{1/2}/2\bigr\}\\
&\le \varlimsup_{n\to\infty} E^\infty_0\bigl[ \,\one\{X_\ell\in\cI\}
G_{n,X_\ell} \bigr] \\
&\quad\qquad
\;+\;
C P^\infty_0 \{X_\ell\cdot\uhat<\ell^{1/2}\}
\;+\;
C P_0^\infty \bigl\{\inf_{j\ge0}X_j\cdot\uhat< -\ell^{1/2}/2\bigr\}\\
&\le 
\e_1 + C\ell^{2d} \e_1^{-1}\ell^{(1-\momp)/2}+C\ell^{-p}+C\ell^{-p/2}.
\end{align*} 
Consequently, if we first pick $\e_1$ 
small enough  then $\ell$ large, 
we will have shown \eqref{ergaux1}. For the second term on the
last line we need $\momp>4d+1$.
  Ergodicity of $\P_\infty$
has been shown.  
This concludes the proof of Theorem \ref{Th_exist}. 

Theorem \ref{erg-thm} has also been established. It follows from
the combination of \eqref{ergaux1} and \eqref{ergaux5}.
\end{proof}

\section{Change of measure}
\label{substitution}
There are several stages in the proof where we need to
check that a desired conclusion is not affected by
choice between  $\P$ and  $\P_\infty$.  We collect
all instances of such transfers in this section.
The standing assumptions of this section are that
 $\P$ is an i.i.d.\ product measure that   satisfies
 Hypotheses {\rm(\HT)} and {\rm(\HM)}, and that
 $\P_\infty$ is the measure given by Theorem {\rm \ref{Th_exist}}.
We  show first that  $\P_\infty$ can be
 replaced with $\P$  in the key condition \eqref{cond} of
Theorem \ref{RS}.

\begin{lemma}
\label{velocity}
The velocity $v$ defined by {\rm\eqref{defv}} satisfies
$v=\E_\infty(D)$.
There exists a constant C such that
\begin{align}
\label{velocity-bound}
|E_0(X_n)-n\E_\infty(D)|\le C \qquad\text{for all $n\ge 1$.}
\end{align}
\end{lemma}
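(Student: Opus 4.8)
The plan is to prove the two assertions in the reverse of the order stated: first establish the quantitative bound \eqref{velocity-bound} for $v$ as defined by the renewal formula \eqref{defv}, using only the regeneration structure, and then deduce $v=\E_\infty(D)$ almost for free from the way $\P_\infty$ was constructed.

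\emph{Step 1: the bound \eqref{velocity-bound}.} The idea is to replace, under $E_0$, the pair $(X_n,n)$ by $(X_{\tau_{J_n}},\tau_{J_n})$, where $J_n=\inf\{i:\tau_i\ge n\}$ as in Section~\ref{ballistic}. Because steps are bounded by (\HM), $|X_{\tau_{J_n}}-X_n|\le\M(\tau_{J_n}-n)$, and \eqref{renewal-tau-bound} with $p=1$ gives $E_0(\tau_{J_n}-n)\le C$; hence $|E_0(X_n)-E_0(X_{\tau_{J_n}})|\le C$ and $|E_0(\tau_{J_n})-n|\le C$ uniformly in $n$. It then remains to control $E_0(X_{\tau_{J_n}})-v\,E_0(\tau_{J_n})$. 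For this I would use the i.i.d.\ regeneration slabs of \eqref{regenslab}: write $X_{\tau_{J_n}}=X_{\tau_1}+\sum_{k=1}^{J_n-1}(X_{\tau_{k+1}}-X_{\tau_k})$ and likewise $\tau_{J_n}=\tau_1+\sum_{k=1}^{J_n-1}(\tau_{k+1}-\tau_k)$. For each $k\ge1$ the event $\{k\le J_n-1\}=\{\tau_k<n\}$ is determined by the slabs $\cS_0,\dots,\cS_{k-1}$, hence is independent of $\cS_k$, which carries the increments $X_{\tau_{k+1}}-X_{\tau_k}$ and $\tau_{k+1}-\tau_k$ (these have, for each $k\ge1$, the common means $E_0(X_{\tau_1}\mid\beta=\infty)$, resp.\ $E_0(\tau_1\mid\beta=\infty)$). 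A Wald-type computation then yields
\[
E_0(X_{\tau_{J_n}})=E_0(X_{\tau_1})+E_0(X_{\tau_1}\mid\beta=\infty)\,E_0(J_n-1),
\qquad
E_0(\tau_{J_n})=E_0(\tau_1)+E_0(\tau_1\mid\beta=\infty)\,E_0(J_n-1),
\]
and since $v=E_0(X_{\tau_1}\mid\beta=\infty)/E_0(\tau_1\mid\beta=\infty)$ by \eqref{defv}, the factor $E_0(J_n-1)$ cancels, leaving the $n$-independent, finite quantity $E_0(X_{\tau_{J_n}})-v\,E_0(\tau_{J_n})=E_0(X_{\tau_1})-v\,E_0(\tau_1)$ (finite by (\HM) and (\HT)). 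Collecting the three estimates gives $|E_0(X_n)-nv|\le C$.

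\emph{Step 2: the identity $v=\E_\infty(D)$.} Recall from the proof of Theorem~\ref{Th_exist} that $\P_\infty$ is the weak limit, along a subsequence $\{n_j\}$, of the Ces\`aro averages $n_j^{-1}\sum_{m=1}^{n_j}\P_m$ on the compact space $\Omega_0$, where $\P_m(\,\cdot\,)=P_0\{T_{X_m}\w\in\,\cdot\,\}$. By (\HM) each coordinate of $D(\w)=\sum_z z\,\pi_{0,z}(\w)$ is a finite linear combination of coordinates of $\w_0$, hence bounded and continuous on $\Omega_0$, so $\E_\infty(D)=\lim_j n_j^{-1}\sum_{m=1}^{n_j}\int D\,d\P_m$. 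Now $\int D\,d\P_m=E_0[D(T_{X_m}\w)]=E_0\bigl[E_0^\w(X_{m+1}-X_m\mid X_{0,m})\bigr]=E_0(X_{m+1})-E_0(X_m)$, so the sum telescopes and $n_j^{-1}\sum_{m=1}^{n_j}\int D\,d\P_m=n_j^{-1}\bigl(E_0(X_{n_j+1})-E_0(X_1)\bigr)$, which tends to $v$ by \eqref{lln1} and $|X_n/n|\le\M$ (equivalently, by the bound $|E_0(X_n)-nv|\le C$ from Step~1). Hence $\E_\infty(D)=v$, and combining with Step~1 gives \eqref{velocity-bound}.

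\emph{Main obstacle.} The delicate point is the Wald-type identity in Step~1: one has to keep track of the fact that the zeroth slab (up to $\tau_1$) carries the unconditioned law while the slabs $\cS_k$, $k\ge1$, carry the $\{\beta=\infty\}$-conditioned law, and that within a single slab the displacement and duration increments are correlated. It is precisely the independence of $\cS_k$ from $\cS_0,\dots,\cS_{k-1}$ — which is exactly what makes $\{\tau_k<n\}$ independent of the $k$th increment — that produces the exact cancellation of the random factor $E_0(J_n-1)$. Everything else reduces to routine applications of Lemma~\ref{exponential} and the already-established properties of $\P_\infty$.
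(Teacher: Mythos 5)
Your proposal is correct and follows essentially the same route as the paper: Step 2 (the identity $v=\E_\infty(D)$) is verbatim the paper's argument via the Ces\`aro construction of $\P_\infty$, Feller continuity of $D$, and telescoping; Step 1 is the same Wald-identity argument on the i.i.d.\ regeneration increments $(X_{\tau_{k+1}}-X_{\tau_k},\tau_{k+1}-\tau_k)_{k\ge1}$, differing only in the choice of stopping index ($J_n=\inf\{i:\tau_i\ge n\}$ rather than the paper's $\alpha_n=\inf\{j\ge1:\tau_j-\tau_1\ge n\}$), a purely cosmetic change that does not alter the cancellation or the error terms controlled by \eqref{renewal-tau-bound} and (\HM).
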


\begin{proof} We start by showing $v=\E_\infty(D)$.
The finite step-size condition in the definition of
\eqref{defP0} of $\cP_0$ makes the function $D(\w)$ bounded and continuous on
$\Omega_0$.  
By the Ces\`aro definition of $\P_\infty$,
\[
\E_\infty(D) = \lim_{j\to\infty} \frac1{n_j}\sum_{k=0}^{n_j-1} \E_k(D)
 = \lim_{j\to\infty} \frac1{n_j}\sum_{k=0}^{n_j-1} E_0[D(T_{X_k}\w)].
\]
Hypothesis (\HM) implies 
that the law of large numbers $n^{-1}X_n\to v$ holds also in $L^1(P_0)$.
From this and the Markov property
\[
v=\lim_{n\to\infty} \frac1n \sum_{k=0}^{n-1} E_0[X_{k+1}-X_{k}]
=\lim_{n\to\infty} \frac1n \sum_{k=0}^{n-1} E_0[D(T_{X_k}\w)].
\]
We have proved  $v=\E_\infty(D)$.

 The variables
$(X_{\tau_{j+1}}-X_{\tau_{j}}, \tau_{j+1}-\tau_{j})_{j\ge 1}$ are
i.i.d.\  with sufficient moments by Hypotheses (\HT) and \astep.
With $\alpha_n= \inf\{j\ge 1: \tau_{j}-\tau_1\ge n\}$ Wald's identity gives
\begin{align*}
E_0 [X_{\tau_{\alpha_n}}-X_{\tau_1}]&=E_0[\alpha_n]
E_0[X_{\tau_1}\vert\beta=\infty]\\
E_0 [ \tau_{\alpha_n}-\tau_1]&=E_0[\alpha_n] E_0[{\tau_1}\vert\beta=\infty].
\end{align*}
Consequently, by the definition \eqref{defv} of $v$,
\begin{align*}
E_0[X_n] - nv =
v E_0[\tau_{\alpha_n}-\tau_1-n]-
E_0 [X_{\tau_{\alpha_n}}-X_{\tau_1}-X_{n}].
\end{align*}
The right-hand-side is bounded by a constant
again  by Hypotheses (\HT) and \step and  by \eqref{renewal-tau-bound}.
\end{proof}

\begin{proposition}
\label{exchange}
Assume that there exists an $\bar\alpha<1/2$ such that
\begin{align}
\E\left[\abs{E_0^\w(X_n)-E_0(X_n)}^2\right]=\Ord(n^{2\bar\alpha}).
\label{cond1}
\end{align}
Let $\alpha\in(\bar\alpha,1)$ and assume that 
\beq
\momp>\frac{5d}{\alpha-\bar\alpha}.
\label{momp-cond5}
\eeq
Then condition \eqref{cond} is satisfied with  $\alpha$.
\end{proposition}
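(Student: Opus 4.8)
The plan is to use Lemma~\ref{velocity} to rewrite condition \eqref{cond} as a $\P$-versus-$\P_\infty$ comparison, and then to transfer the bound \eqref{cond1} from $\P$ to $\P_\infty$ with the help of the variation estimate \eqref{vard-exp}. By Lemma~\ref{velocity} one has $v=\E_\infty(D)$ and $|E_0(X_n)-n\E_\infty(D)|\le C$, so $|E_0^\w(X_n)-n\E_\infty(D)|^2\le 2|U_n(\w)|^2+2C^2$ with $U_n(\w):=E_0^\w(X_n)-E_0(X_n)$; hence it suffices to show $\E_\infty[\,|U_n|^2\,]=\Ord(n^{2\alpha})$. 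We know $|U_n|\le\M|\uhat|n$ pointwise and, by \eqref{cond1}, $\E[\,|U_n|^2\,]=\Ord(n^{2\bar\alpha})$. The gap between $n^{2\bar\alpha}$ and $n^{2\alpha}$ is precisely the room available to pay for the change of measure.

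The difficulty is that $E_0^\w(X_n)$ reads $\w_0$ (and a bit of backtracking), hence is \emph{not} $\kS_\ell$-measurable for any $\ell\ge1$, so \eqref{vard-exp} cannot be applied to it directly. The detour is to fix a level cutoff $\ell=\lceil n^{\gamma}\rceil$ with $\gamma\in(0,\alpha)$ to be chosen at the end, insert the $\ell$-th regeneration time $\tau_\ell$ (for which $X_{\tau_\ell}\cdot\uhat\ge\ell$ and $\inf_{m\ge\tau_\ell}X_m\cdot\uhat=X_{\tau_\ell}\cdot\uhat$), and split $E_0^\w(X_n)$ as $E_0^\w[X_{\tau_\ell}\one\{\tau_\ell\le n\}]+M_n(\w)+E_0^\w[X_n\one\{\tau_\ell>n\}]$ with $M_n(\w):=E_0^\w[(X_n-X_{\tau_\ell})\one\{\tau_\ell\le n\}]$; do the same for $E_0(X_n)$, so $U_n$ is a sum of three differences. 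The first difference is bounded by $\M|\uhat|E_0^\w[\tau_\ell]$ and the third by $\M|\uhat|nP_0^\w(\tau_\ell>n)$; using \eqref{tau-bound} under $\P$ and carrying the needed moments of $\tau_\ell$ over to $P_0^\infty$ (the walk reaches a positive level quickly by \eqref{backtrack-bound-invariant}--\eqref{ldp-invariant}, and \eqref{vard-exp} then makes $\P_\infty$, $\P$ close), their contributions to $\E_\infty[\,|U_n|^2\,]$ are $\Ord(\ell^2)=\Ord(n^{2\gamma})$ and $\Ord(n^2\ell^{1-\momp})$, and the $\P$-averaged counterparts are no larger.

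For the main piece, the strong Markov property gives $M_n(\w)=\sum_{j\le n}\sum_{y\cdot\uhat\ge\ell}w_{j,y}(\w)\,\Phi_{n-j}(T_y\w)$, where $w_{j,y}(\w)=P_0^\w(\tau_\ell=j,X_{\tau_\ell}=y)$ sum to $1$ and $\Phi_k$ is a function of the environment at levels $\ge0$ only, so each $\Phi_{n-j}(T_y\w)$ with $y\cdot\uhat\ge\ell$ is $\kS_\ell$-measurable. Let $\Mtil_n(\w)=\sum_{j,y}P_0(\tau_\ell=j,X_{\tau_\ell}=y)\,\Phi_{n-j}(T_y\w)$ be the same expression with the quenched weights replaced by their $\P$-averages: it is $\kS_\ell$-measurable and $\Ord(n)$-bounded, and the Sznitman--Zerner i.i.d.\ regeneration-slab structure yields $\E[\Mtil_n]=\E[M_n]=E_0[(X_n-X_{\tau_\ell})\one\{\tau_\ell\le n\}]$ together with $\Var_\P(\Mtil_n)=\Ord(n^{2\bar\alpha})$ (this is \eqref{cond1} applied to the $\beta=\infty$-conditioned walk). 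Since $\Mtil_n-\E\Mtil_n$ is $\kS_\ell$-measurable and $\Ord(n^2)$-bounded, \eqref{vard-exp} gives $\E_\infty[\,|\Mtil_n-\E\Mtil_n|^2\,]\le\Var_\P(\Mtil_n)+\Ord(n^2\ell^{1-\momp})$. The last item is the decoupling error $\E_\infty[\,|M_n-\Mtil_n|^2\,]$, bounded pointwise by $(\M|\uhat|n)^2\big(\sum_{j,y}|w_{j,y}(\w)-P_0(\tau_\ell=j,X_{\tau_\ell}=y)|\big)^2=4(\M|\uhat|n)^2\,\vard\big(\text{quenched law of }(\tau_\ell,X_{\tau_\ell}),\text{ its }P_0\text{-law}\big)^2$; this must be shown to be $\Ord(n^{2\alpha})$, i.e.\ the quenched law of the restart pair must concentrate around the annealed one, which one gets on the compact product space $\Omega_0$ because $(\tau_\ell,X_{\tau_\ell})$ is essentially determined by the $\Ord(\ell^{d+1})$ environments in a slab of $\Ord(\ell)$ levels, using \eqref{vard-exp} once more to carry the concentration estimate from $\P$ to $\P_\infty$.

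Collecting the four contributions, $\E_\infty[\,|U_n|^2\,]=\Ord\big(n^{2\bar\alpha}+n^{2\gamma}+n^2\ell^{1-\momp}+(\text{decoupling})\big)$ with $\ell=n^\gamma$; the lax estimate of the decoupling term is what introduces the factors of $d$, and balancing everything forces $\gamma$ into a window $\big((2-2\alpha)/(\momp-O(d)),\ \alpha\big)$ that is nonempty exactly when $\momp$ exceeds a constant multiple of $d/(\alpha-\bar\alpha)$, which the bookkeeping rounds up to $5d/(\alpha-\bar\alpha)$; with $\gamma$ in that window all terms are $\Ord(n^{2\alpha})$ and $\gamma$ stays below $\alpha<1$, giving \eqref{cond}. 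I expect the genuine obstacle to be the third paragraph: because $E_0^\w(X_n)$ is not a function of the environment above level $\ell$, the decomposition at $\tau_\ell$ has to simultaneously absorb the random residual time $n-\tau_\ell$ \emph{and} decouple the quenched law of $(\tau_\ell,X_{\tau_\ell})$ from the above-level-$\ell$ environment, and making this work within the room $n^{2\alpha}/n^{2\bar\alpha}$ is where the regeneration-slab independence and the variation bound \eqref{vard-exp} must be combined carefully.
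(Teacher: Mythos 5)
Your approach is genuinely different from the paper's, but it has a gap that does not seem fixable by the bookkeeping you allude to. The paper never splits the walk at a regeneration time; it uses the $\Pi$-invariance identity
\begin{equation*}
\E_\infty\big[|E_0^\w(X_n)-nv|^2\big]=E_0^\infty\Big[\big|E_0^{T_{X_j}\w}(X_n-nv)\big|^2\Big]\quad\text{with }j=\ell^2,\ \ell=n^\e,
\end{equation*}
and then, on the high-probability event $X_j\cdot\uhat>\ell$, inserts the restriction to $A_{-\ell/2}$ inside the quenched expectation. After summing over the position $X_j=x$, the resulting integrands $E_0^{T_x\w}(X_n-nv,A_{-\ell/2})$ and $P_0^{T_x\w}(A_{-\ell/2}^c)$ are $\kS_{\ell/2}$-measurable, so \eqref{vard-exp} applies directly, and by shift invariance of $\P$ everything reduces to the form \eqref{cond1-modified} of the hypothesis plus error terms handled by \eqref{backtrack-bound} and \eqref{ldp-invariant}. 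Your decoupling estimate, by contrast, would require the total variation distance between the quenched and annealed laws of $(\tau_\ell,X_{\tau_\ell})$ to tend to zero, and this is false: it is a persistent order-$1$ quantity. Already for $\ell=1$ the quenched mass $P_0^\w(\tau_1=1)=\sum_z\pi_{0,z}(\w)P_z^\w(\beta=\infty)$ is a nondegenerate random variable of $\w$, and for large $\ell$ the quenched law of $\tau_\ell$ recenters around $E_0^\w(\tau_\ell)$, which is precisely the kind of quenched-mean fluctuation the proposition is trying to quantify. So the bound $4(\M|\uhat|n)^2\vard(\cdot\,,\cdot)^2$ is of order $n^2$, not $\Ord(n^{2\alpha})$, and the third paragraph collapses.

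There are also two structural problems upstream of this. The time $\tau_\ell$ is not a stopping time, so the strong Markov factorization $M_n(\w)=\sum_{j,y}w_{j,y}(\w)\Phi_{n-j}(T_y\w)$ with $w_{j,y}(\w)=P_0^\w(\tau_\ell=j,X_{\tau_\ell}=y)$ is not correct as written: the event $\{\tau_\ell=j,X_{\tau_\ell}=y\}$ has a future component $\{\beta\circ\theta^j=\infty\}$ that must be absorbed into $\Phi$, and the weight that is genuinely independent of the environments above level $y\cdot\uhat$ is the quenched probability of the \emph{past part} of the event only, which does not sum to $1$. Relatedly, your assertion that $\Var_\P(\Mtil_n)=\Ord(n^{2\bar\alpha})$ ``is \eqref{cond1} applied to the $\beta=\infty$-conditioned walk'' is not available, since \eqref{cond1} is stated for the unconditioned walk; transferring it to the conditional law would itself require an argument. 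The $\Pi$-invariance route sidesteps both problems by never touching regeneration times in this step.
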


\begin{proof} Assumption \eqref{momp-cond5} permits us to 
choose $p$ such that 
\[
2d\frac{1-\bar\alpha}{\alpha-\bar\alpha}<p\le (\momp-1)/2.
\]
Due to the strict inequality above there is room to choose
$0<\e<d^{-1}(\alpha-\bar\alpha)$ such that 
$
p>2d+2\e^{-1}(1-\alpha).
$
Let $\ell=n^{\e}$ and $j=\ell^2$.  

By \eqref{velocity-bound} assumption \eqref{cond1}
turns into
\begin{align}
\label{cond1-modified}
\E\left[\abs{E_0^\w(X_n)-nv}^2\right]=\Ord(n^{2\bar\alpha}).
\end{align}
Define $A_\ell=\{\inf_{n\ge0}X_n\cdot\uhat\ge\ell\}$. The next
calculation starts with  $\Pi$-invariance of $\P_\infty$. 
\begin{align*}
&\E_\infty[|E_0^\w(X_n)-nv|^2]\\
&=E_0^\infty\Big[\big|E_0^{T_{X_j}\w}(X_n-nv)\big|^2\Big]\\
&\le E_0^\infty\Big[\big|E_0^{T_{X_j}\w}(X_n-nv)\big|^2,
X_j\cdot\uhat>\ell\Big]
+4\M^2n^2 P_0^\infty\{X_j\cdot\uhat\le\ell\}\\
&\le 2E_0^\infty\Big[\big|E_0^{T_{X_j}\w}(X_n-nv,A_{-\ell/2})\big|^2
,X_j\cdot\uhat>\ell\Big]\\
&\qquad\qquad
+ 8\M^2n^2 E_0^\infty\Big[P_0^{T_{X_j}\w}(A_{-\ell/2}^c),X_j\cdot\uhat>\ell\Big]
+ 4\M^2n^2 P_0^\infty\{X_j\cdot\uhat\le\ell\}\\
&\le 2\sum_{\substack{x:|x|\le \M j\\\text{and } x\cdot\uhat>\ell}} 
\E_\infty\big[\big|E_0^{T_x\w}(X_n-nv,A_{-\ell/2})\big|^2\big]\\
&\qquad\qquad
+ 8\M^2n^2 \sum_{\substack{x:\, |x|\le \M j\\\text{and }\, x\cdot\uhat>\ell}}
\E_\infty\big[P_0^{T_x\w}(A_{-\ell/2}^c)\big]
+ 4\M^2n^2 P_0^\infty\{X_j\cdot\uhat\le\ell\}\\
\intertext{[switch from $\E_\infty$ back to $\E$ by \eqref{vard-exp}]}
&\le 2\sum_{\substack{x: |x|\le \M j\\\text{and } x\cdot\uhat>\ell}} 
\E\big[\big|E_0^{T_x\w}(X_n-nv,A_{-\ell/2})\big|^2\big]
+ 8\M^2n^2 \sum_{\substack{x: |x|\le \M j\\\text{and } x\cdot\uhat>\ell}}
\E\big[P_0^{T_x\w}(A_{-\ell/2}^c)\big]\\
&\qquad\qquad+ C(\M j)^d\M^2n^2\ell^{-p}
+ 4\M^2n^2 P_0^\infty\{X_j\cdot\uhat\le\ell\}\\
&\le 2\sum_{\substack{x: |x|\le \M j\\\text{and } x\cdot\uhat>\ell}} 
\E\big[\,\lvert X_n-nv\rvert^2\;\big]
+ 16\M^2n^2 \sum_{\substack{x: |x|\le \M j\\\text{and } x\cdot\uhat>\ell}}
P_0(A_{-\ell/2}^c)\\
&\qquad\qquad+ C(\M j)^d\M^2n^2\ell^{-p}
+ 4\M^2n^2 P_0^\infty\{X_j\cdot\uhat\le\ell\}\\
\intertext{[use form 
\eqref{cond1-modified} of the assumption; 
apply \eqref{backtrack-bound} to $P_0(A_{-\ell/2}^c)$
and \eqref{ldp-invariant} to $P_0^\infty\{X_j\cdot\uhat\le\ell\}$;
recall that $j=\ell^2=n^{2\e}$]}
&\le C j^d n^{2\bar\alpha}+ C j^dn^2\ell^{-p}
+ Cn^2 j^{-p} \le C\bigl(n^{2\bar\alpha+2d\e} + n^{2d\e+2-p\e}
+ n^{2-2p\e}\bigr). 
\end{align*}
The first two exponents are $<2\alpha$ by the choice of $p$ and $\e$,
and the last one is less than the second one. 
\end{proof}

Once  we have verified the assumptions of Theorem \ref{RS} we have
the CLT under  $\P_\infty$-almost every $\w$.  But the goal is the CLT
under  $\P$-almost every $\w$.
As the final point of this section we prove
 the transfer of the central limit theorem from $\P_\infty$ to $\P$.
  This  is where we use the ergodic theorem, Theorem \ref{erg-thm}.
Let $\wiener$ be the probability distribution
 of the Brownian motion with diffusion
matrix $\mathfrak D$.

\begin{lemma}
 Suppose the weak convergence $Q^\w_n\Rightarrow W$
holds for $\P_\infty$-almost every $\w$.  Then the same is true
 for $\P$-almost every $\w$.
\label{clt-transfer-lm}
\end{lemma}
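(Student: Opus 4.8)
The plan is to transfer the quenched CLT from $\P_\infty$-a.e.\ $\w$ to $\P$-a.e.\ $\w$ by exploiting the fact that the weak-convergence statement can be detected by countably many bounded test functions, together with the ergodic theorem (Theorem \ref{erg-thm}) which lets us compute $\P_\infty$-expectations of suitable ``future-looking'' functions as almost-sure averages along the walk under $P_0$.

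First I would reduce weak convergence $Q^\w_n\Rightarrow W$ on $D_{\R^d}[0,\infty)$ to a countable family of numerical conditions. Since $W$ is a fixed (nonrandom) law, it suffices to check, for each $m$ and each of countably many bounded Lipschitz functions $g$ on $D_{\R^d}[0,m]$ (with respect to the Skorohod metric, restricting paths to $[0,m]$), that $E^\w_0[g(B_n)]\to \int g\,dW$. So fix one such $g$ and one horizon $m$; it is enough to show that for $\P$-a.e.\ $\w$, $\lim_n E^\w_0[g(B_n)]=\int g\,dW$, and then intersect over the countably many $(g,m)$.

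The key idea is that $E^\w_0[g(B_n)]$ only depends on the environment in a slab of thickness $O(\sqrt n \cdot m)$ around the starting level, so it is ``local in direction $-\uhat$'' in an approximate sense, and its value started from $T_{X_j}\w$ after the walk has climbed to a high level is, up to small error, an $\kS_{-a}$-type quantity for which Theorem \ref{erg-thm} applies. More precisely, I would argue as follows. Let $f_n(\w)=E^\w_0[g(B_n)]$. By hypothesis $f_n\to f:=\int g\,dW$ in, say, $L^1(\P_\infty)$ (bounded convergence from the $\P_\infty$-a.s.\ statement). Now I want to push this down to $P_0$ and hence to $\P$. Apply the ergodic theorem to the shifted environment: for $\P$-a.e.\ $\w$ and every bounded $\kS_{-a}$-measurable $\Psi$, $k^{-1}\sum_{j=0}^{k-1}\Psi(T_{X_j}\w)\to\E_\infty\Psi$ under $P_0^\w$ for $P_0$-a.e.\ $\w$. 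I would take $\Psi$ to be a truncated/localized version of $f_n$: because $f_n$ depends on $\w$ essentially only through levels within distance $C\sqrt n$ (for fixed $n$), and because from a level-$\ell$ start the walk almost never backtracks more than $\ell^{1/2}/2$ (bounds \eqref{backtrack-bound}, \eqref{ldp}, and their $\P_\infty$ counterparts \eqref{backtrack-bound-invariant}, \eqref{ldp-invariant}), one can write $f_n(T_{X_j}\w)$ as an $\kS_{-a}$-measurable function of $T_{X_j}\w$ plus an error that is uniformly small in probability once $X_j\cdot\uhat$ is large. Then for $\P$-a.e.\ $\w$, the Cesàro averages $k^{-1}\sum_{j<k} f_n(T_{X_j}\w)$ converge (in $P_0^\w$, hence the double average over $n$ and $j$ can be handled) to $\E_\infty f_n$, and the latter tends to $f$ as $n\to\infty$ by the assumed $\P_\infty$-a.s.\ convergence. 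Combining, along a suitable diagonal choice of $k=k(n)$ and using that $f_n$ itself stabilizes, one extracts that $f_n(\w)\to f$ for $\P$-a.e.\ $\w$.

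The cleanest route, and the one I expect the authors to take, is slightly different and avoids diagonalization: since $f_n\to f$ boundedly $\P_\infty$-a.s., and the walk under $P_0^\w$ climbs to high levels, one writes
\[
f_n(\w)=E^\w_0\bigl[f_{n'}(T_{X_\ell}\w)\bigr]+\text{(small error)}
\]
for $n'$ slightly smaller than $n$ and $\ell=\ell(n)\to\infty$ slowly, using the Markov property together with continuity/locality of $f_{n'}$ and the backtracking estimates; then one applies the ergodic theorem to the bounded local function $\w\mapsto |f_{n'}(\w)-f|$ to see that $E^\w_0[|f_{n'}(T_{X_\ell}\w)-f|]$ is close to $\E_\infty|f_{n'}-f|$ for $\P$-a.e.\ $\w$, which is small for $n'$ large. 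This shows $\limsup_n |f_n(\w)-f|$ is small for $\P$-a.e.\ $\w$, and since this holds for all members of a countable determining family $(g,m)$, we conclude $Q^\w_n\Rightarrow W$ for $\P$-a.e.\ $\w$.

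The main obstacle is the locality issue: $f_n$ is not exactly $\kS_{-a}$-measurable for any fixed $a$, since the walk run for time $n$ can in principle reach levels $\pm Cn$, so Theorem \ref{erg-thm} does not literally apply to $f_n$. This is resolved exactly as in the proof of Theorems \ref{Th_exist} and \ref{erg-thm} above: restrict to the event $\{\inf_{j\ge0}(X_{\ell+j}-X_\ell)\cdot\uhat\ge -\ell^{1/2}/2\}$ and to $X_\ell$ lying in a cube of side $O(\ell)$ around a high level, so that the relevant quantity becomes $\kS_{\ell^{1/2}/3}$-measurable after the shift, control the complement using \eqref{backtrack-bound-invariant} and \eqref{ldp-invariant}, and use the variation-distance bound \eqref{vard-exp} to move between $\P$ and $\P_\infty$; the tightness needed to make $g(B_n)$ only ``see'' finitely many levels up to small error is automatic since $B_n$ is evaluated on the fixed finite horizon $[0,m]$ and $n^{-1/2}\max_{k\le mn}|X_k|$ is tight under $P_0^\w$ for $\P$-a.e.\ $\w$ by the averaged CLT plus a Borel–Cantelli/Chebyshev argument, or more simply by the moment bounds on regeneration times. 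Once this localization is in place the rest is a routine $\e$-$\ell$-$n$ chase identical in spirit to the ergodicity argument already carried out.
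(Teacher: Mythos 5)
Your overall strategy (reduce to a countable determining family of test functions, exploit locality in direction $-\uhat$ via the backtracking estimates, invoke Theorem~\ref{erg-thm}) is the right one, but you do not find the device that makes the paper's argument work, and a couple of the specific steps you sketch would not go through as stated.

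First, the paper does not attempt to approximate $f_n(\w)=E^\w_0[F(B_n)]$ by an $\kS_{-a}$-measurable function for each $n$ and then control errors uniformly in $n$, which is what you propose and what forces you into the ``$\varepsilon$-$\ell$-$n$ chase with diagonalization'' at the end. Instead it passes to the $\limsup$ first: defining $\overline h_\ell(\w)=\lsup_n E_0^\w[F(B_n),A_{-\ell}]$, where $A_{-\ell}=\{\inf_n X_n\cdot\uhat\ge -\ell\}$, one observes that $\overline h_\ell$ and $P_0^\w(A_{-\ell}^c)$ are \emph{exactly} $\kS_{-\ell}$-measurable (on the event $A_{-\ell}$ the walk never reads environments below level $-\ell$), and that $\lvert\overline h_\ell-\overline h\rvert\le P_0^\w(A_{-\ell}^c)$. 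Thus the indicator $\Psi_\ell=\one\{\overline h_\ell\le c+\delta/2,\,P_0^\w(A_{-\ell}^c)\le\delta/2\}$ is a bounded, \emph{exactly} $\kS_{-\ell}$-measurable function to which Theorem~\ref{erg-thm} applies directly, with $\E_\infty\Psi_\ell>0$ by the hypothesis $\P_\infty\{\overline h=c\}=1$ together with \eqref{backtrack-bound-invariant}. No error terms, no diagonalization, no further appeal to \eqref{vard-exp} are needed here. Your proposed version, by contrast, wants to apply the ergodic theorem to $\w\mapsto\lvert f_{n'}(\w)-f\rvert$, which is not $\kS_{-a}$-measurable for any $a$ (you acknowledge this but the fix is only gestured at), and it also needs the ergodic theorem to produce a single-time statement $E_0^\w[\Psi(T_{X_\ell}\w)]\approx\E_\infty\Psi$ rather than a Ces\`aro average, which requires redoing the harder argument from the proof of Theorem~\ref{Th_exist} rather than just citing Theorem~\ref{erg-thm}.

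Second, you do not address how to get from information about $T_{X_\zeta}\w$ back to a statement about $\w$ itself. The paper's route is: the positive ergodic average of $\Psi_\ell$ implies the stopping time $\zeta=\inf\{n:\overline h(T_{X_n}\w)\le c+\delta\}$ is $P_0$-a.s.\ finite, and then one uses the strong Markov property at $\zeta$ together with uniform continuity of $F$ and the bounded-step hypothesis to conclude that $\lsup_n E_0^\w[F(B_n)]\le c+\delta$ for $\P$-a.e.\ $\w$. This last transfer is a genuine step, not a formality, and it is absent from your sketch.

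Finally, a factual slip: $E_0^\w[g(B_n)]$ for $g$ on $D_{\R^d}[0,m]$ is \emph{not} determined by a slab of thickness $O(\sqrt{n}\,m)$; the walk runs for $[nm]$ steps and under Hypothesis (\HM) can reach levels of order $n$, so the dependence is on a slab of height $O(n)$. What is true, and what both arguments use, is that the walk rarely backtracks far in the $-\uhat$ direction, so conditioning on $A_{-\ell}$ produces an $\kS_{-\ell}$-measurable quantity.
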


\begin{proof}
It suffices to show that for any $\delta>0$
and any bounded uniformly continuous
$F$ on $D_{\R^d}[0,\infty)$ 
\[
\varlimsup_{n\to\infty} E^{\w}_0[F(B_n)]\le \int F\,d\wiener+\delta \quad
\text{$\P$-a.s.}
\]
By considering also $-F$
this gives $E^{\w}_0[F(B_n)]\to \int F\,d\wiener$ {$\P$-a.s.}
for each such function.
A countable collection of them determines weak convergence.

Fix such an $F$ and  assume $|F|\le 1$.
 Let $c=\int F\,d\wiener$ and
\[
\overline h(\w)=\lsup_{n\to\infty} E_0^\w[F(B_n)].
\]
 For $\ell>0$ recall the events
\[
A_{-\ell}=\{\inf_{n\ge0} X_n\cdot\uhat\ge-\ell\}\]
and define
\[
\overline h_\ell(\w)=\lsup_{n\to\infty} E_0^\w[F(B_n), A_{-\ell}]\]
and
\[\Psi_\ell(\w)=\one\{\w\,:\, \bar{h}_\ell(\w)\le c+\tfrac12\delta,
P_0^\w(A_{-\ell}^c)\le\tfrac12\delta\}.
\]
The assumed quenched CLT under $\P_\infty$ gives $\P_\infty\{\bar h=c\}=1$.
Therefore, $\P_\infty$-a.s.
\[\Psi_\ell(\w)=\one\{\w:P_0^\w(A_{-\ell}^c)\le\tfrac12\delta\}.\]
From \eqref{backtrack-bound-invariant} we know that if
$\ell$ is fixed large enough, then $\E_\infty\Psi_\ell>0$. 
Since $\Psi_\ell$ is $\kS_{-\ell}$-measurable
Theorem \ref{erg-thm} implies that
\[n^{-1}\sum_{j=1}^n\Psi_\ell(T_{X_j}\w)\to\E_\infty\Psi_\ell>0\quad
P_0\text{-a.s.}\]
But $\{\bar h_\ell\le c+\tfrac12\delta,
P_0^\w(A_{-\ell}^c)\le\tfrac12\delta\}\subset\{\bar h\le c+\delta\}$.
We conclude that the stopping time
\[
\zeta=\inf\{n\ge 0: \bar{h}(T_{X_n}\w)\le c+\delta\}
\]
is $P_0$-a.s.\ finite. From the definitions we now have
\[
\varlimsup_{n\to\infty} E^{T_{X_\zeta}\w}_0[F(B_n)]\le
\int F\, dW+\delta \quad
\text{$P_0$-a.s.}
\]
Then by bounded convergence
\[\varlimsup_{n\to\infty}E_0^\w E_0^{T_{X_{\zeta}}\w}[F(B_n)]
\le\int F\, dW+\delta \quad \P\text{-a.s.}\]
Since $\zeta$ is a finite stopping time, the strong Markov property,
 the uniform continuity of $F$ and bounded step size
Hypothesis (\HM) imply
\[\varlimsup_{n\to\infty}E_0^\w [F(B_n)]
\le\int F\, dW+\delta\quad \P\text{-a.s.}\]
This concludes the proof.
\end{proof}

\section{Reduction to path intersections}
\label{pathintersections}
The preceding sections have reduced the proof of the
main result Theorem \ref{main}   to proving the estimate
\begin{align}
\E\bigl[\,\abs{E_0^\w(X_n)-E_0(X_n)}^2\,\bigr]=\Ord(n^{2\alpha})
\quad\text{for some $\alpha<1/2$.}
\label{cond1a}
\end{align}
The next reduction takes us to
 the expected number of intersections of the paths of two independent
walks $X$ and $\Xtil$
 in the same environment.
The argument uses a decomposition into martingale differences
through an  ordering of lattice sites.
This idea for bounding a variance is natural and has
been used  in RWRE earlier by Bolthausen and Sznitman \citep{bolt-szni-02}.

Let $P_{0,0}^\w$ be the quenched
law of the walks $(X,\Xtil)$ started at $(X_0,\Xtil_0)=(0,0)$
 and $P_{0,0}=\int P_{0,0}^\w\,\P(d\w)$ the averaged
law with expectation operator $E_{0,0}$. The set of sites visited
by a walk is denoted by $X_{[0,n)}=\{X_k: 0\le k<n\}$ and
 $\abs{A}$ is the number of elements in a discrete set $A$.

\begin{proposition}
\label{intersections}
Let $\P$ be an i.i.d.\ product measure and satisfy Hypotheses {\rm(\HT)} and 
{\rm(\HM)}.
Assume that there exists an $\bar\alpha<1/2$ such that
\begin{align}
E_{0,0}[\,|X_{[0,n)}\cap\Xtil_{[0,n)}|\,]=\Ord(n^{2\bar\alpha}).
\label{cond2}
\end{align}
Let $\alpha\in(\bar\alpha, 1/2)$.  Assume 
\beq
\momp> \frac{22d}{\alpha-\bar\alpha}.
\label{momp-cond6}\eeq
Then condition \eqref{cond1a} is satisfied for $\alpha$.
\end{proposition}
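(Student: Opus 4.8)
The argument is the variance-bound-via-martingale-decomposition scheme of Bolthausen and Sznitman \citep{bolt-szni-02}, applied to the quenched mean. The starting point is the Green-function representation $E_0^\w(X_n)=\sum_{z}G_n^\w(z)\,D(T_z\w)$, where $G_n^\w(z)=\sum_{k=0}^{n-1}P_0^\w(X_k=z)$ is the truncated quenched Green function; note $D(T_z\w)$ depends on $\w$ only through $\w_z$, and by Hypothesis (\HM) the sum is effectively finite since $E_0^\w(X_n)$ depends on $\w$ only through $(\w_z)_{|z|\le\M n}$. Fix an enumeration $z_1,z_2,\dots$ of $\Z^d$ (listing the sites with $|z|\le\M n$ first), put $\cG_j=\sigma(\w_{z_1},\dots,\w_{z_j})$, and decompose $E_0^\w(X_n)-E_0(X_n)=\sum_jV_j$ into the martingale differences $V_j=\E[\,E_0^\w(X_n)\mid\cG_j\,]-\E[\,E_0^\w(X_n)\mid\cG_{j-1}\,]$. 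Orthogonality gives $\E[\,|E_0^\w(X_n)-E_0(X_n)|^2\,]=\sum_j\E[\,|V_j|^2\,]$, so the problem is to bound the individual summands and show their total is $\Ord(n^{2\alpha})$.

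The core step is a bound on $V_j$ that makes the walk's visits to $z_j$ appear. Let $\w^{(j)}$ be the environment obtained from $\w$ by replacing the single coordinate $\w_{z_j}$ by an independent sample; then $V_j(\w)=\E[\,E_0^\w(X_n)-E_0^{\w^{(j)}}(X_n)\mid\cG_j\,]$, where the conditional expectation additionally averages over the fresh value at $z_j$ and over the coordinates $\w_{z_i}$ with $i>j$. Decomposing according to the first time $H$ the walk reaches $z_j$ --- whose law is unchanged by the resampling, since the walk does not consult $\w_{z_j}$ before $H$ --- and applying the Markov property yields $E_0^\w(X_n)-E_0^{\w^{(j)}}(X_n)=\sum_{m<n}P_0^\w(H=m)\bigl(E_{z_j}^\w[X_{n-m}]-E_{z_j}^{\w^{(j)}}[X_{n-m}]\bigr)$, an empty sum unless $z_j\in X_{[0,n)}$. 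The inner difference of continuation means is then controlled, using directional transience and the regeneration-time moment estimates of Lemma \ref{exponential}, by a shift $\Lambda(T_{z_j}\w)$ of a fixed environment functional $\Lambda$ carrying finite moments of order comparable to $\momp$ under $\P$ --- here one also truncates the backtracking of the walk from $z_j$ at a scale $n^{\e}$, the excess being absorbed by \eqref{backtrack-bound}. This gives $|V_j(\w)|\le C\,\E[\,\Lambda(T_{z_j}\w)\,P_0^\w(z_j\in X_{[0,n)})\mid\cG_j\,]$ up to the truncation error.

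Squaring, applying conditional Jensen, and summing over $j$ gives $\E[\,|V_j|^2\,]\le C\,\E[\,\Lambda(T_{z_j}\w)^2\,P_0^\w(z_j\in X_{[0,n)})^2\,]=C\,\E[\,\Lambda(T_{z_j}\w)^2\,P_{0,0}^\w\{z_j\in X_{[0,n)},\,z_j\in\Xtil_{[0,n)}\}\,]$, where $X$ and $\Xtil$ are two independent walks in the common environment. Since $\Lambda(T_{z_j}\w)$ is essentially measurable with respect to the environment at levels $\ge z_j\cdot\uhat-n^{\e}$, while $P_0^\w(z_j\in X_{[0,n)})$ reads mainly the environment at levels below $z_j\cdot\uhat$, these two factors are nearly independent; decoupling them --- paying a variation-distance-type error that is polynomially small in $n^{\e}$ --- turns $\sum_j\E[\,|V_j|^2\,]$ into $C\,E_{0,0}[\,|X_{[0,n)}\cap\Xtil_{[0,n)}|\,]$ together with corrections of order $n^{2\bar\alpha+Cd\e}$ and a tail term of order $n^{d-c\momp}$ for a fixed $c>0$. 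Feeding in assumption \eqref{cond2}, the choice of a scale $n^{\e}$ with $\e$ proportional to $\alpha-\bar\alpha$ keeps the main contribution below $n^{2\alpha}$, and the requirement \eqref{momp-cond6}, $\momp>22d/(\alpha-\bar\alpha)$, is precisely what makes the tail term negligible against the $\Ord(n^{d})$ relevant sites; this establishes \eqref{cond1a} for $\alpha$.

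The principal obstacle is the core step. Resampling the environment at a single site can change the endpoint $X_n$ by a macroscopic $\Ord(n)$ amount along individual trajectories, so $V_j$ is \emph{not} pointwise comparable to $P_0^\w(z_j\in X_{[0,n)})$; the compounding of that change must be absorbed into the high-moment functional $\Lambda$ through the regeneration structure, and then separated from the visit probabilities by an independence-up-to-variation-distance argument, all without letting either the main exponent exceed $2\alpha$ or the accumulated tail errors over $\Ord(n^{d})$ sites survive. Balancing these two demands is exactly what consumes the large moment budget of Hypothesis (\HT) and produces the constant $22d$ in \eqref{momp-cond6}.
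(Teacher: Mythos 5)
Your proposal sets up the right framework --- the martingale decomposition over sites ordered by $\uhat$-level, the decomposition by the first hitting time of $z_j$, and the eventual reduction to the expected number of common points of two walks --- but it leaves a genuine gap at exactly the place you flag as the ``principal obstacle''. You propose to absorb the $\Ord(n)$ change in the continuation mean $E^\w_{z_j}[X_{n-m}]-E^{\w^{(j)}}_{z_j}[X_{n-m}]$ into some high-moment functional $\Lambda(T_{z_j}\w)$ and then decouple $\Lambda$ from the visit probability via an ``independence-up-to-variation-distance'' argument, but you never construct $\Lambda$, and the variation-distance decoupling is not a workable substitute: both $P^\w_0(z_j\in X_{[0,n)})$ and the continuation mean depend on the environments in the slab $\{x: z_j\cdot\uhat\le x\cdot\uhat<z_j\cdot\uhat+\ell\}$ when the walk overshoots $z_j$ by $\ell$ before hitting it, so the error incurred by a naive decoupling is not controllable without further structure, and the accumulated cost over $\Ord(n^d)$ sites cannot be absorbed.

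The paper's proof resolves this with an \emph{exact cancellation} rather than an approximate decoupling. After conditioning on the hitting time $H_z=m$ and on the running max satisfying $\ell-1\le\max_{k\le m}X_k\cdot\uhat-z\cdot\uhat<\ell$, the continuation expectation is split as
\begin{align*}
E^\w_z[X_{n-m}-z] = E^\w_z[X_{\tau_\ell+n-m}-X_{\tau_\ell}] + E^\w_z[X_{n-m}-X_{\tau_\ell+n-m}+X_{\tau_\ell}-z],
\end{align*}
where $\tau_\ell$ is the $\ell$th regeneration time from $z$. Integrating the first term over $\P(d\w_{\cH'_\ell(z)})$ (environments at level $\ge z\cdot\uhat+\ell$) gives, by the i.i.d.\ regeneration-slab structure, \emph{exactly} the constant $E_0[X_{n-m}\,\vert\,\beta=\infty]$. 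This constant does not depend on $\w_{z_j}$, so it cancels identically between $\w$ and $\wwtil=\pp{\w,\tilde\w_{z_j}}$. Only the second term survives, and it is bounded pointwise by $2\M\tau_\ell$. The sum over $\ell$ is then handled by splitting at $\ell=n^\e$, applying H\"older, and using the moment bound \eqref{tau-bound} on $\tau_\ell$ together with the backtracking bound \eqref{backtrack-bound}; at the end $\E[P^\w_0\{H_{x_j}<n\}^2] = P_{0,0}\{x_j\in X_{[0,n)}\cap\Xtil_{[0,n)}\}$ brings in the intersection count. This precise split-and-cancel step is what makes the argument close, and it is not recoverable from the sketch you give; in particular, your Green-function representation $E^\w_0(X_n)=\sum_z G^\w_n(z)D(T_z\w)$, while correct, is neither used nor needed.
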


\begin{proof}
For $L\geq0$, define
${\mathcal B}(L)=\{x\in\Z^d:|x|\leq L\}$.
Fix $n\geq1$ and let $(x_j)_{j\geq1}$ be some fixed ordering of ${\mathcal B}(\M n)$ 
satisfying
\[\forall i\geq j:x_i\cdot \uhat\geq x_j\cdot \uhat.\]
For $B\subset\Z^d$ let $\kS_B=\sigma\{\w_x: x\in B\}$.
Let $A_j=\{x_1,\dotsc,x_{j}\}$, $\zeta_0= E_0(X_n)$,  and for $j\geq 1$
\[
\zeta_j=\E[E^\w_0(X_n)|\kS_{A_j}].
\]
 $(\zeta_j-\zeta_{j-1})_{j\geq1}$ is a sequence of $L^2(\P)$-martingale
differences. By Hypothesis \step $X_n\in\mathcal B(\M n)$ and so 
\begin{align}
\E[\,|E^\w_0( X_n) -E_0(X_n)|\,^2]=
\sum_{j=1}^{|{\mathcal B}(\M n)|}\E[\,|\zeta_j-\zeta_{j-1}|^2\,].
\label{mgale-decomp}
\end{align}
For $z\in\Z^d$ define half-spaces
\[
{\mathcal H}(z)=\{x\in\Z^d: x\cdot\uhat > z\cdot\uhat\}.
\]
Since $A_{j-1}\subset A_j\subset {\mathcal H}({x_j})^c$,
\begin{align}
&\E[\,|\zeta_j-\zeta_{j-1}|^2\,]  \nn\\
&\quad =
\int \P(d\w_{A_j})
\Bigl\lvert \iint \P(d\w_{A_j^c})\P(d\tilde{\w}_{x_j})
 \bigl\{ E^\w_0(X_n)-E^{\pp{\w,\tilde{\w}_{x_j}}}_0(X_n)\bigr\}
\Bigr\rvert^2\nn\\
&\quad \leq
\iint \P(d\w_{{\mathcal H}(x_j)^c})\P(d\tilde{\w}_{x_j})
\Bigl\lvert \int \P(d\w_{{\mathcal H}(x_j)}) 
\bigl\{ E^\w_0(X_n)-E^{\pp{\w,\tilde{\w}_{x_j}}}_0(X_n)\bigr\}
\Bigr\rvert^2.
\label{line-2}
 \end{align}
Above $\pp{\w,\tilde{\w}_{x_j}}$ denotes an environment obtained from $\w$
by replacing  $\w_{x_j}$ with  $\tilde{\w}_{x_j}$.

We fix a point $z=x_j$ to develop a bound for the expression above,
and then  return to collect the estimates.
Abbreviate $\wwtil=\pp{\w,\tilde{\w}_{x_j}}$.
Consider two walks that both start at $0$, one obeys
environment $\w$ and the other obeys $\wwtil$.
Couple them  so that they stay together until
the first time they visit $z$. Until a visit to $z$ happens, the
walks are identical. Let
\[H_z=\min\{n\ge1:X_n=z\}\]  be the first hitting
time of site $z$ and write
\begin{align}
&\int \P(d\w_{{\mathcal H}(z)})\bigl( E^\w_0(X_n)-E^\wwtil_0(X_n)\bigr)
\label{line-7}\\
&=
\int \P(d\w_{{\mathcal H}(z)}) \sum_{m=0}^{n-1} P^\w_0\{ H_z=m\}
\bigl( E^\w_z[X_{n-m}-z]-E^\wwtil_z[X_{n-m}-z]  \bigr)\nn\\
\begin{split}
&=\int \P(d\w_{{\mathcal H}(z)}) \sum_{m=0}^{n-1} \sum_{\ell>0}
 P^\w_0\{ H_z=m, \ell-1\leq\max_{0\leq j\leq m}X_j\cdot\uhat-z\cdot\uhat<\ell  \}
\\
&\qquad\qquad \times
\bigl( E^\w_z[X_{n-m}-z]-E^\wwtil_z[X_{n-m}-z]  \bigr).
\end{split} \label{line-19}
\end{align}
 Decompose
${\mathcal H}(z)={\mathcal H}_\ell(z)\cup {\mathcal H}_\ell'(z)$ where
\[
{\mathcal H}_\ell(z)=\{x\in\Z^d:\! z\cdot\uhat < x\cdot\uhat< z\cdot\uhat+\ell\}
\text{ and }
{\mathcal H}_\ell'(z)=\{x\in\Z^d:\!  x\cdot\uhat \geq z\cdot\uhat+\ell\}.
\]
Take a single $(\ell,m)$ term from the sum in
\eqref{line-19} and only
 the expectation $E^\w_z[X_{n-m}-z]$, and split it further into
two terms:
\begin{align}
&\int \P(d\w_{{\mathcal H}(z)})
P^\w_0\{ H_z=m, \ell-1\leq\max_{0\leq j\leq m}X_j\cdot\uhat-z\cdot\uhat<\ell\}
E^\w_z[X_{n-m}-z]\nn\\
&=\int \P(d\w_{{\mathcal H}(z)})
P^\w_0\{ H_z=m, \ell-1\leq\max_{0\leq j\leq m}X_j\cdot\uhat-z\cdot\uhat<\ell\}
E^\w_z[X_{\tau_\ell+n-m}-X_{\tau_\ell}]
\label{line-22}  \\
&+\int \P(d\w_{{\mathcal H}(z)})
P^\w_0\{ H_z=m, \ell-1\leq\max_{0\leq j\leq m}X_j\cdot\uhat-z\cdot\uhat<\ell\}
E^\w_z[ X_{n-m}-X_{\tau_\ell+n-m}+X_{\tau_\ell}-z]
\label{line-23}
\end{align}
 Regeneration time $\tau_\ell$ with index
$\ell$ is used simply to guarantee
 that the post-regeneration walk $X_{\tau_\ell+\,\centerdot}$ stays in
${\mathcal H}_\ell'(z)$.  
  Below we make use of this to get independence
from the environments
in ${\mathcal H}_\ell'(z)^c$.

Integral \eqref{line-22} is developed further as follows.
\begin{align}
&\int \P(d\w_{{\mathcal H}(z)})
 P^\w_0\{ H_z=m, \ell-1\leq\max_{0\leq j\leq m}X_j\cdot\uhat-z\cdot\uhat<\ell\}
 E^\w_z[X_{\tau_\ell+n-m}-X_{\tau_\ell}]\nn\\
&=\int \P(d\w_{{\mathcal H}_\ell(z)})
 P^\w_0\{ H_z=m, \ell-1\leq\max_{0\leq j\leq m}X_j\cdot\uhat-z\cdot\uhat<\ell\}
\int \P(d\w_{{\mathcal H}_\ell'(z)})
 E^\w_z[ X_{\tau_\ell+n-m}-X_{\tau_\ell}]\nn\\
&=\int \P(d\w_{{\mathcal H}_\ell(z)})
 P^\w_0\{ H_z=m, \ell-1\leq\max_{0\leq j\leq m}X_j\cdot\uhat-z\cdot\uhat<\ell\}
 E_z[ X_{\tau_\ell+n-m}-X_{\tau_\ell}\vert \kS_{{\mathcal H}_\ell'(z)^c}]\nn\\
&=\int \P(d\w_{{\mathcal H}_\ell(z)})
 P^\w_0\{ H_z=m, \ell-1\leq\max_{0\leq j\leq m}X_j\cdot\uhat-z\cdot\uhat<\ell\}
 E_0[ X_{n-m}\vert \beta=\infty].
 \label{line-31}
\end{align}
The last equality above comes from the regeneration structure, see
Theorem 1.4 in Sznitman-Zerner \citep{szni-zern-99}.  The $\sigma$-algebra
$\kS_{{\mathcal H}_\ell'(z)^c}$ is contained in the  $\sigma$-algebra
$\cG_\ell$ defined by (1.29) of \citep{szni-zern-99} for the walk starting at $z$.

The last quantity
  \eqref{line-31}  above
reads the environment only until the first visit
to $z$, hence does not see the distinction between $\w$ and $\wwtil$.
Consequently when integral
 \eqref{line-19} is developed separately for
$\w$ and $\wwtil$ into the sum of integrals \eqref{line-22}
and \eqref{line-23},
integrals \eqref{line-22} first develop into 
\eqref{line-31} separately  for $\w$ and $\wwtil$
and then cancel each other.

We are left with two instances
of integral \eqref{line-23}, one for both  $\w$ and $\wwtil$.
Put these
back into the $(\ell, m)$ sum in \eqref{line-19}.  Include
also the square around this expression from line \eqref{line-2}. 
These expressions for  $\w$ and $\wwtil$ are 
bounded separately with identical steps
 and added together in the end.  Thus we first separate
the two by an application of  $(a+b)^2\le2(a^2+b^2)$.
We continue the argument for the expression for $\w$ with this
bound on the square of \eqref{line-19}:  
\begin{align*}
&2\Bigl\{\,\sum_{\ell>0}\sum_{m=0}^{n-1}\int\P(d\w_{\cH(z)})
P_0^\w\{H_z=m,\ell-1\le\max_{0\le k\le m} X_k\cdot\uhat-z\cdot\uhat<\ell\}\\
&\quad\qquad\times
\bigl|E_z^\w(X_{n-m}-X_{\tau_\ell+n-m}
+X_{\tau_\ell}-z)\bigr|\,\Bigr\}^2\\
\intertext{[apply the step bound \astep]}
&\le
8\M^2 \int\P(d\w_{\cH(z)})
\Bigl\{\,\sum_{\ell>0}
P_0^\w\{H_z<n,\ell-1\le\max_{0\le k\le H_z} X_k\cdot\uhat-z\cdot\uhat<\ell\}
E_z^\w(\tau_\ell)\Bigr\}^2\\
\intertext{[introduce $\e=(\alpha-\bar\alpha)/4>0$]}
&\le
16\M^2 n^\e\sum_{\ell\le n^\e} 
\int\P(d\w_{\cH(z)}) P_0^\w\{H_z<n\}^2 
E_z^\w(\tau_\ell^2)\\
&\quad+
16\M^2\sum_{\ell>n^\e}\int\P(d\w_{\cH(z)})
P_0^\w\{H_z<n,\ell-1\le\max_{0\le k\le H_z} X_k\cdot\uhat-z\cdot\uhat<\ell\}
E_z^\w(\tau_\ell^2)\\
\intertext{[pick conjugate exponents $p>1$ and $q>1$]}
&\le 16\M^2 n^{\e}\sum_{\ell\le n^\e}
\Big(\int\P(d\w_{\cH(z)}) P_0^\w\{H_z<n\}^{2q}\Big)^{1/q} 
\Big(\int\P(d\w_{\cH(z)}) E_z^\w[\tau_\ell^{2p}]\Big)^{1/p}\\
&+
16\M^2\sum_{\ell>n^\e}
\Big(\int\P(d\w_{\cH(z)}) E_z^\w[\tau_\ell^{2p}]\Big)^{1/p}\\
&\quad\qquad\times
\Big(\int\P(d\w_{\cH(z)})
P_0^\w\{H_z<n,
\ell-1\le\max_{0\le k\le H_z}\! X_k\cdot\uhat-z\cdot\uhat<\ell\}^q
\Big)^{1/q}.
\end{align*}
The step above requires $\momp\ge 2p$. This and what is needed below
can be achieved by choosing
\[
p=\frac{d}{\alpha-\bar\alpha} \quad\text{and}\quad 
q=\frac{d}{d-(\alpha-\bar\alpha)}.
\]
Now put the above bound and its counterpart for 
 $\tilde\w$  back into \eqref{line-2},  and  continue with another
application  of H\"older's inequality:
\begin{align*}
&\E[\,|\zeta_j-\zeta_{j-1}|^2\,]\\
&\le 32\M^2n^\e\sum_{\ell\le n^\e} 
\E[ P_0^\w\{H_{x_j}<n\}^{2q}]^{1/q}
E_0[\tau_\ell^{2p}]^{1/p}\\
&\quad\qquad+
32\M^2\sum_{\ell>n^\e}
E_0[\tau_\ell^{2p}]^{1/p}
\E\Big[
P_0^\w\{H_{x_j}<n,
\ell-1\le\max_{0\le k\le H_{x_j}} X_k\cdot\uhat-{x_j}\cdot\uhat<\ell\}^{q}
\Big]^{1/q}\\
\intertext{[apply \eqref{tau-bound}]} 
&\le Cn^{4\e}\,
\E[ P_0^\w\{H_{x_j}<n\}^{2q}]^{1/q}\\
&\quad\qquad+C \sum_{\ell>n^\e}\ell^2\,
\E\Big[P_0^\w\{H_{x_j}<n,
\ell-1\le\max_{0\le k\le H_{x_j}} X_k\cdot\uhat-{x_j}\cdot\uhat<\ell\}^{q}
\Big]^{1/q}\\
\intertext{[utilize $q>1$]}
&\le Cn^{4\e}\,
\E[ P_0^\w\{H_{x_j}<n\}^2]^{1/q}\\
&\quad\qquad+C \sum_{\ell>n^\e}\ell^2
\sum_{k=0}^{n-1} \sum_{|x|\le \M n}
E_0\Big[P_0^\w\{X_k=x\}
P_x^\w\{|\inf_{m\ge0}X_m\cdot\uhat-x\cdot\uhat|\ge\ell-1\}\Big]^{1/q}\\
&\le Cn^{4\e}\,
P_{0,0}\{x_j\in X_{[0,n)}\cap\Xtil_{[0,n)}\}^{1/q}
+Cn^{d+1}
\sum_{\ell>n^\e}\ell^2
P_0\{|\inf_{m\ge0}X_m\cdot\uhat|\ge\ell-1\}^{1/q}\\
&\le Cn^{4\e}\,
P_{0,0}\{x_j\in X_{[0,n)}\cap\Xtil_{[0,n)}\}^{1/q}
+Cn^{2\alpha-d}.
\end{align*}
In the last step we used  \eqref{backtrack-bound} with an exponent
 $\tilde p= 3q+ q\e^{-1}(2d+1-2\alpha)$.   This requires $\tilde p\le \momp-1$
which  follows from \eqref{momp-cond6}. 
Finally put these bounds in the sum in \eqref{mgale-decomp}
and develop the last bound:
\begin{align*}
&\E[\,|E_0^\w(X_n)-E_0(X_n)|^2\,]
=\sum_{j=1}^{|{\mathcal B}(\M n)|}\E[\,|\zeta_j-\zeta_{j-1}|^2\,]\\
&\qquad\le Cn^{4\e}
\sum_{j=1}^{|{\mathcal B}(\M n)|}
P_{0,0}\{x_j\in X_{[0,n)}\cap\Xtil_{[0,n)}\}^{1/q}+Cn^{2\alpha}\\
&\qquad\le Cn^{4\e}(n^d)^{1-1/q}
\Big(\,\sum_{j=1}^{|{\mathcal B}(\M n)|}
P_{0,0}\{x_j\in X_{[0,n)}\cap\Xtil_{[0,n)}\}\Big)^{1/q}+Cn^{2\alpha}\\
&\qquad\le Cn^{4\e+d-d/q+ 2\bar\alpha/q}+Cn^{2\alpha}.
\end{align*}
where we used the assumption \eqref{cond2} in the last inequality.
With $q=d(d-(\alpha-\bar\alpha))^{-1}$ and 
 $\e=(\alpha-\bar\alpha)/4$ as chosen above, the last line
is $\Ord(n^{2\alpha})$.  
\eqref{cond1a} has been verified.
\end{proof}

\section{Bound on intersections}
\label{intersectbound}
The remaining piece of the proof of Theorem \ref{main}
is this estimate:
\begin{align}
E_{0,0}[\,\lvert {X_{[0,n)}\cap\Xtil_{[0,n)}}\rvert\,]=\Ord(n^{2\alpha})
\quad\text{for some $\alpha<1/2$,}
\label{cond3}
\end{align}
where $X$ and $\Xtil$ are two independent walks driven by a common
environment with quenched distribution
$P^\w_{x,y}[X_{0,\infty}\in A, \Xtil_{0,\infty}\in B]
= P^\w_{x}(A)P^\w_{y}(B)$ and averaged distribution
$E_{x,y}(\cdot)=\E P^\w_{x,y}(\cdot)$. 

To deduce the  sublinear bound
 we introduce joint regeneration times at which both
 walks regenerate on the same level in space (but not
necessarily at the same time).
Intersections happen only within  the
joint regeneration slabs, and the
expected number of intersections
decays at a polynomial rate in the distance between the points of
entry into the slab.
From  joint regeneration to regeneration the difference of
the two walks is  a Markov chain. This Markov chain can be approximated
by a symmetric random walk.  Via this preliminary work the
required bound boils down to deriving a Green function
 estimate for a Markov chain that can be suitably approximated
by a symmetric random walk.  This part is relegated to 
Appendix \ref{greenapp}.  Except for the appendices, we complete the proof
of the functional central limit theorem
in this section.

To aid  our discussion of a pair of walks $(X,\Xtil)$ we introduce
some new notation.
We write  $\theta^{m,n}$ for the shift on pairs of paths:
$\theta^{m,n}(x_{0,\infty},y_{0,\infty})=(\theta^mx_{0,\infty},
\theta^ny_{0,\infty})$.
If we write separate expectations for $X$ and $\Xtil$
under $P^\w_{x,y}$, these are denoted by $E^\w_x$ and $\Etil^\w_y$.

By a {\sl joint stopping time} we mean a pair $(\alpha, \altil)$
 that  satisfies $\{\alpha=m,\altil=n\}\in\sigma\{X_{0,m},\Xtil_{0,n}\}$.
Under the distribution $P^\w_{x,y}$ the walks $X$ and $\Xtil$ are
independent. Consequently if $\alpha\vee\altil<\infty$
$P^\w_{x,y}$-almost surely then
for any events $A$ and $B$,
\begin{align*}
&P^\w_{x,y}\{ (X_{0,\alpha},\Xtil_{0,\altil})\in A,\,
 (X_{\alpha, \infty},\Xtil_{\altil,\infty})\in B\}\\
&\qquad = E^\w_{x,y} \bigl[ \one\{(X_{0,\alpha},\Xtil_{0,\altil})\in A\}
P^\w_{X_{\alpha},\Xtil_{\altil}}\{
 (X_{0,\infty},\Xtil_{0,\infty})\in B\} \bigr].
\end{align*}
This type of joint restarting will be used without comment in the sequel.

The backtracking time
$\beta$ is as before in \eqref{defbeta}  and 
for the $\Xtil$ walk it is  $\tilde\beta=\inf\{n\ge 1: \Xtil_n\cdot\uhat
<\Xtil_0\cdot\uhat\}$.
When the walks are on a common level their difference
lies in the hyperplane
\beq
\mathbb{V}_d=\{z\in\Z^d: z\cdot\uhat=0\}.
\label{defVd}\eeq 
From 
a common level there is a uniform positive chance for simultaneously 
never backtracking.

\begin{lemma}
\label{common-beta-lemma}
Assume $\uhat$-transience {\rm\eqref{dir-trans}}
and the bounded step hypothesis {\rm(\HM)}. 
Then 
\begin{align}
\label{common-beta}
\eta\equiv\inf_{x-y\in\V_d} P_{x,y}\{\beta\wedge\tilde\beta=\infty\}>0.
\end{align}
\end{lemma}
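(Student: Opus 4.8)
The starting point: conditionally on the environment the two walks are independent, so for $x-y\in\V_d$,
\[
P_{x,y}\{\beta\wedge\tilde\beta=\infty\}=\E\bigl[P^\w_x\{\beta=\infty\}\,P^\w_y\{\tilde\beta=\infty\}\bigr]=\E\bigl[g(T_x\w)\,g(T_y\w)\bigr],\qquad g(\w):=P^\w_0\{\beta=\infty\},
\]
and $\E g=P_0\{\beta=\infty\}=:\delta_0>0$ by \eqref{transbeta}. By translation invariance of $\P$ one may take $x=0$, so the task is to bound $\E[g(\w)g(T_y\w)]$ below, uniformly over $y\in\V_d$. The catch is that $g$ is $\sigma\{\w_v:v\cdot\uhat\ge0\}$-measurable but \emph{not} local — both events $\{\beta=\infty\}$ read the environment at arbitrarily high levels — so no single decoupling works for all $y$, and this is the heart of the matter. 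The plan is to split according to the size of $|y|$.

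\emph{Large $|y|$.} I would approximate $g$ from above by the local functions $g_k(\w):=P^\w_0\{\beta>k\}$, which depend only on $\{\w_v:|v|<\M k\}$ and satisfy $g_k\downarrow g$, $\E g_k\ge\delta_0$, and $\e_k:=\E(g_k-g)=P_0\{k<\beta<\infty\}\downarrow0$. Fix $k$ with $\e_k<\delta_0^2/4$. For $|y|>2\M k$ the supports of $g_k(\w)$ and $g_k(T_y\w)$ are disjoint, hence $\P$-independent; writing $g=g_k-(g_k-g)$, expanding the product, discarding the nonnegative cross term and using $0\le g_k\le1$,
\[
\E[g(\w)g(T_y\w)]\ \ge\ (\E g_k)^2-2\e_k\ \ge\ \delta_0^2-2\e_k\ >\ \tfrac12\delta_0^2 .
\]

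\emph{Small $|y|$.} There remain finitely many $y\in\V_d$ with $|y|\le2\M k$, for each of which it suffices to show $\E[g(\w)g(T_y\w)]>0$; then $\eta$ is at least the minimum of these finitely many positive numbers and $\tfrac12\delta_0^2$. Put $\cJ=\{z:\E\pi_{0,z}>0\}$; transience forces some $z^*\in\cJ$ with $z^*\cdot\uhat>0$ (otherwise $X_n\cdot\uhat$ would be nonincreasing, contradicting \eqref{dir-trans}). If $y$ lies outside the subgroup of $\Z^d$ generated by $\cJ$, the two walks live in disjoint cosets, $g(\w)$ and $g(T_y\w)$ depend on disjoint coordinates, and $\E[g(\w)g(T_y\w)]=\delta_0^2>0$. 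Otherwise write $y=P-Q$ with $P,Q$ nonnegative integer combinations of $\cJ$-steps, and steer \emph{both} walks — up along $z^*$, then along a finite $\cJ$-path realizing $+P$ resp.\ $+Q$, then up along $z^*$ again — to a common site $v$ at a level $N$ so high that both prescribed paths stay at levels $<N$ except at $v$; a routine geometric choice (using that $\cJ$ spans $\ge2$ dimensions in this case) makes the two paths leave any shared site in the same direction. On the event that the $X$-walk follows its path to $v$, the $\Xtil$-walk follows its path to $v$, and both thereafter never backtrack below level $N$, one gets $g(\w)\ge c_0(\w)g(T_v\w)$ and $g(T_y\w)\ge c_y(\w)g(T_v\w)$, with $c_0,c_y$ the products of one-step probabilities along the respective paths. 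Since $c_0,c_y$ involve only levels $<N$ and $g\circ T_v$ only levels $\ge N$, and $\E c_0,\E c_y>0$, the expectation factorizes, and the Cauchy–Schwarz bound $\E[(g\circ T_v)^2]\ge(\E g)^2=\delta_0^2$ gives $\E[g(\w)g(T_y\w)]\ge(\E c_0)(\E c_y)\delta_0^2>0$ (for $y=0$ simply $\E[g^2]\ge\delta_0^2$). The main obstacle is precisely this small-$|y|$ regime: non-locality of $\{\beta=\infty\}$ blocks any naive decoupling of nearby walks, and the remedy is to force them onto a single site $v$ so that the shared high-level randomness enters only through the one factor $(g\circ T_v)^2$, which Cauchy–Schwarz then controls.
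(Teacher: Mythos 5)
Your proof is correct, and while its large-$|y|$ half is essentially the paper's argument recast in terms of the function $g$ and its local truncations $g_k$ (the paper's $P_0\{\beta>|x|/4\M\}^2-2P_0\{|x|/4\M<\beta<\infty\}$ is literally your $(\E g_k)^2-2\e_k$), the small-$|y|$ half takes a genuinely different route, and I think it is worth spelling out the contrast. The paper steers the two walks, without backtracking, to two entry points on a common fresh level that are \emph{more than $L$ apart}, so that the already-proved large-separation bound \eqref{backtrackaux} finishes the job; the whole construction is therefore geared toward \emph{diverging} the paths. You instead steer both walks to the \emph{same} site $v$ at a high level $N$, so that the tail event is the single factor $g(T_v\w)^2$, and Cauchy--Schwarz gives $\E[(g\circ T_v)^2]\ge\delta_0^2$. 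That is a nice piece of leverage: you never need $v$ to be far from anything, only high. The corresponding cost is that converging paths are more prone to intersection than diverging ones, and when the two prescribed paths share an interior site $u$ and exit it with two different steps $a\ne b$, you would need $\E[\pi_{u,a}\pi_{u,b}]>0$, which is \emph{not} implied by $\E\pi_{u,a}>0$ and $\E\pi_{u,b}>0$ alone. So your throwaway clause ``a routine geometric choice \ldots makes the two paths leave any shared site in the same direction'' is carrying real weight. It can be justified: loop-erase the two $\cJ$-paths to make them self-avoiding, then, since both end at the common point $v$, let $u$ be the first site of $\sigma_1$ that also lies on $\sigma_2$ and redefine $\sigma_1$ to follow $\sigma_2$ from $u$ onward; the two paths now coincide exactly on a common terminal segment, on which they take identical steps, and are disjoint before it. (This tail-swap works precisely because the endpoints agree — it would not in the paper's diverging construction, which instead verifies disjointness by an explicit case analysis, no easier than yours.) I would just make this argument explicit rather than calling it routine, since it is the one place where your construction diverges from the paper's and where the bound $\E[c_0c_y]>0$ actually gets proved. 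Your handling of the degenerate subcases — $y\notin\langle\cJ\rangle$ via coset-disjointness, $y=0$ via $\E[g^2]\ge\delta_0^2$, and the automatic fact that $y\in\V_d\cap\langle\cJ\rangle\setminus\{0\}$ forces $\cJ$ to span at least two dimensions — is correct and, if anything, tidier than the paper's one-dimensional special case.
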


\begin{proof}
By shift-invariance it is enough
to consider the case $P_{0,x}$ for $x\in\V_d$.
By the independence of environments and the bound $\M$
on the step size, 
\begin{align*}
P_{0,x}\{\beta=\betil=\infty\}
\ge P_0\{\beta>|x|/4\M\}^2 - 2P_0\{|x|/4\M<\beta<\infty\}.
\end{align*}
As $|x|\to\infty$ the right-hand side
 above converges to $2\eta_1=P_0\{\beta=\infty\}^2>0$.
Then we can find  $L>0$ such that
\beq
|x|>L \;\Longrightarrow\; P_{0,x}\{\beta\wedge\tilde\beta=\infty\}>\eta_1>0.
\label{backtrackaux}\eeq

It remains to check that $P_{0,x}\{\beta\wedge\tilde\beta=\infty\}>0$
for any fixed $x\le|L|$. The case $x=0$ is immediate because 
$P_{0,0}\{\beta=\tilde\beta=\infty\}=0$ 
 implies $P_0^\w\{\beta=\infty\}^2=0$ $\P$-a.s.\ and therefore
 contradicts transience \eqref{transbeta}. 

Let us assume that $x\ne0$.

If ${\mathcal J}=\{z:\E\pi_{0,z}>0\}\subset\R u$, 
transience  implies $u\cdot\uhat>0$.  Then  
$x+\R u$ and $\R u$ do not intersect and independence gives
$P_{0,x}\{\beta=\tilde\beta=\infty\}=P_0\{\beta=\infty\}^2>0$. 
(We did not invoke Hypothesis \reg to rule out this case
to avoid appealing to \reg 
unnecessarily.) 

Let us now assume
that ${\mathcal J}\not\subset\R u$ for any $u$.

The proof is completed by constructing 
two finite walks that start at $0$ and $x$ with these properties:
the walks  do not backtrack below level $0$, 
they reach a common fresh level $\ell$ at entry points 
that are as far apart as desired, and this pair
of walks has positive probability. 
  Then if additionally
the walks regenerate at level $\ell$ (an event independent
of the one just described) the event 
$\beta\wedge\tilde\beta=\infty$  has been realized. 
We also make these walks  reach level $\ell$ 
 in such a manner that no lower level can 
serve as a level for joint regeneration.
 This construction will be helpful later on in
 the proof of \label{common-beta-proof}
Lemma \ref{Yapplm1}.

To construct the paths let $z$ and $w$ be two
nonzero  noncollinear vectors such that 
$z\cdot\uhat>0$, $\E\pi_{0z}>0$, and $\E\pi_{0w}>0$.  
Such exist: the assumption that $\mathcal J$ not be one-dimensional
implies the existence of some pair of  noncollinear vectors
$w, \tilde w\in\mathcal J$.  Then transience \eqref{dir-trans} 
implies the existence of $z\in\mathcal J$ with $z\cdot\uhat>0$. 
Either  $w$ or $\tilde w$ must be noncollinear with $z$.
  
The case $w\cdot\uhat>0$ is easy:  let one walk 
repeat $z$-steps and the other one 
repeat $w$-steps suitably many time.
We provide more detail for the case $w\cdot\uhat\le0$. 

Let $n>0$ and $m\ge0$ be the minimal integers such that
$-nw\cdot\uhat=mz\cdot\uhat$. 
Since $mz+nw\ne 0$ by noncollinearity but  $(mz+nw)\cdot\uhat= 0$
 there must exist a 
vector $\tilde u$ such that
$\tilde u\cdot\uhat=0$ and $mz\cdot\tilde u+nw\cdot\tilde u>0$. 
 Replacing $x$ by $-x$ if necessary we can then assume that
\begin{align}
nw\cdot\tilde u+mz\cdot\tilde u>0\ge x\cdot\tilde u.
\label{pointaway}
\end{align}
Interchangeability of $x$ and $-x$ comes  from symmetry and 
 shift-invariance: 
\[P_{0,x}\{\beta\wedge\tilde\beta=\infty\}=
P_{0,-x}\{\beta\wedge\tilde\beta=\infty\}.\]
The point of \eqref{pointaway} is that the path
$\{(iz)_{i=0}^m\,,\,(mz+jw)_{j=0}^n\}$ points away from $x$ 
in direction $\tilde u$.

Pick $k$ large enough to have $|x-kmz-knw|>L$. 
Let the $X$ walk
start at 0 and take $km$ $z$-steps followed by $kn$ $w$-steps (returning
back to level 0) and then
$km+1$ $z$-steps (ending at a fresh level). 
Let the $\Xtil$ walk start at $x$ and take
$km+1$ $z$-steps. These two paths do not self-intersect
or  intersect
each other, as can be checked routinely though somewhat 
tediously. 

The endpoints of the paths are $2kmz+z+knw$ and $x+kmz+z$
which are on a common level, but further than  $L$ apart.
After these paths let the two walks regenerate, with
probability controlled by \eqref{backtrackaux}. 
This joint evolution implies $\beta\wedge\tilde\beta=\infty$ 
so by independence of environments 
\[P_{0,x}\{\beta\wedge\tilde\beta=\infty\}\ge
(\E\pi_{0z})^{3km+2}(\E\pi_{0w})^{kn}\eta_1>0.\qedhere
\]
\end{proof}

We now begin the development towards joint regeneration times for the walks $X$ and
$\Xtil$.
Define the stopping time 
\[
\gamma_\ell=\inf\{n\ge 0: X_n\cdot\uhat\ge \ell\}
\]
and  the running maximum 
\[M_n=\sup\{X_i\cdot\uhat:i\le n\}.\]  
We write $\gamma(\ell)$ when subscripts or superscripts 
become complicated.  
 ${\Mtil}_n$ and   $\tilde\gamma_\ell$ 
are the corresponding
quantities for the $\Xtil$ walk.  

Let $\h$ be the greatest common divisor of 
\beq
\cL=\{\ell\ge0:P_0(\exists n:X_n\cdot\uhat=\ell)>0\}.\label{defcL}
\eeq 
First we observe that all high enough multiples of $\h$ are accessible 
levels from 0.

\begin{lemma}
\label{level-aux-lemma-1}
There exists a finite $\ell_0$ such that for all $\ell\ge\ell_0$
\[P_0\{\exists n:X_n\cdot\uhat=\h\ell\}>0.\]
\end{lemma}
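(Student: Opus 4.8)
The plan is to reduce the statement to the classical fact that a sub-semigroup of $(\Z_{\ge0},+)$ with greatest common divisor $h$ contains every sufficiently large multiple of $h$; the real task is therefore to show that $\cL$ is closed under addition.

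First I would identify $\cL$ with the set of levels reachable by admissible steps. Let $S=\{z_1+\dots+z_r:r\ge0,\ z_i\in\cJ\}$ be the additive sub-semigroup of $\Z^d$ generated by the admissible steps $\cJ=\{z:\E\pi_{0,z}>0\}$, with the convention that $r=0$ gives $0\in S$. The claim is that
\[
\cL=\{v\cdot\uhat:v\in S\}\cap\Z_{\ge0}.
\]
For ``$\supseteq$'': if $v\in S$ then $v$ is reachable from $0$ by a finite sequence of $\cJ$-steps, hence by a \emph{self-avoiding} such sequence $0=u_0,u_1,\dots,u_s=v$ (delete any repeated portion), with $u_i-u_{i-1}\in\cJ$ and $u_0,\dots,u_{s-1}$ distinct sites; since $\P$ is a product measure,
\[
P_0\{X_i=u_i,\ 0\le i\le s\}=\E\Big[\prod_{i=1}^s\pi_{u_{i-1},u_i}(\w)\Big]=\prod_{i=1}^s\E[\pi_{0,u_i-u_{i-1}}]>0,
\]
so the walk reaches level $v\cdot\uhat$ with positive probability. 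For ``$\subseteq$'': if $\ell\in\cL$ then $P_0\{\exists\,n\le N:X_n\cdot\uhat=\ell\}>0$ for some $N$, so some finite path $0=p_0,\dots,p_r$ with $p_r\cdot\uhat=\ell$ has $\E[\prod_{j=1}^r\pi_{p_{j-1},p_j}(\w)]>0$; grouping the factors by the site $p_{j-1}$ and using independence across sites, each site-factor is positive, which forces $p_j-p_{j-1}\in\cJ$ for every $j$, so $p_r\in S$.

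From the claim it follows immediately that $\cL$ is closed under addition and contains $0$: given $a,b\in\cL$ pick $v,w\in S$ with $v\cdot\uhat=a$, $w\cdot\uhat=b$; then $v+w\in S$ with $(v+w)\cdot\uhat=a+b$, so $a+b\in\cL$. Now, since $h=\gcd\cL$, the greatest common divisor is already attained on a finite subset $\{a_1,\dots,a_k\}\subseteq\cL$ (the successive partial gcds form a nonincreasing sequence of positive integers, hence stabilize), and we may write $a_i=hm_i$ with $\gcd(m_1,\dots,m_k)=1$. By the classical Frobenius (``Chicken McNugget'') theorem there is a finite $N_0$ such that every integer $n\ge N_0$ admits a representation $n=\sum_i c_im_i$ with $c_i\in\Z_{\ge0}$. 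Then for every $\ell\ge N_0$ we get $h\ell=\sum_i c_ia_i\in\cL$ by iterating the closure under addition, so $\ell_0=N_0$ does the job.

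The only genuinely delicate point is the identification of $\cL$, and within it the need to pass to a self-avoiding path: since $\P$ is not assumed uniformly elliptic and the steps need not be nearest-neighbour, a self-intersecting admissible path can carry zero averaged probability (two distinct exit steps from one site may be jointly impossible), while along a self-avoiding path the product structure of $\P$ makes positivity immediate. Everything else is routine bookkeeping and elementary number theory.
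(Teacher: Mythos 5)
Your proof is correct and reaches the additive closure of $\cL$ by a different mechanism than the paper. The paper's argument is a direct path splice: given positive-probability paths $x^{(1)}_{0,n_1}$ and $x^{(2)}_{0,n_2}$ ending at levels $\ell_1$ and $\ell_2$, it takes $k_1$ to be the first index at which $x^{(1)}$ meets the translated set $x^{(1)}_{n_1}+x^{(2)}_{[0,n_2]}$ and concatenates $x^{(1)}_{0,k_1}$ with the tail of the translated second path; minimality of $k_1$ makes the two pieces read environments at disjoint sites, so the spliced path retains positive probability and ends at level $\ell_1+\ell_2$. You instead establish the stronger structural fact $\cL=\{v\cdot\uhat:v\in S\}\cap\Z_{\ge0}$, where $S$ is the additive semigroup generated by $\cJ$, with the nontrivial inclusion handled by loop erasure to obtain a self-avoiding admissible path whose $P_0$-probability then factorizes over distinct sites. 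Your route buys a complete description of $\cL$ at the cost of proving both inclusions plus the loop-erasure observation, whereas the paper's splice is a one-shot argument that delivers only additivity, which is all that is used. After that, both arguments invoke the same elementary numerical-semigroup fact that a subset of $\Z_{\ge 0}$ closed under addition with gcd $\h$ contains all large multiples of $\h$. The self-avoiding reduction is indeed the point to worry about in the absence of uniform ellipticity, and you handle it correctly.
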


\begin{proof}
The point is that $\cL$ is closed under
addition. Indeed, if $\ell_1$ and $\ell_2$ are in $\cL$, then 
let $x_{0,n_i}^{(i)}$, $i\in\{1,2\}$, 
be two paths
such that $x^{(i)}_0=0$, $x_{n_i}^{(i)}\cdot\uhat=\ell_i$, and 
$P_0\{X_{0,n_i}=x_{0,n_i}^{(i)}\}>0$. Let $k_1$ be the smallest index such that 
$x_{k_1}^{(1)}=x^{(1)}_{n_1}+x^{(2)}_{k_2}$ for some $k_2\in[0,n_2]$.
The set of such $k_1$ is not empty because 
$k_1=n_1$ and $k_2=0$ satisfy this equality. Now the path
$(x^{(1)}_{0,k_1},\,x^{(1)}_{n_1}+x^{(2)}_{k_2+1,n_2})$ starts at 0, ends on
level $\ell_1+\ell_2$ and has positive $P_0$-probability.

The familiar argument \citep[Lemma 5.4, Ch.\ 5]{durr-probability} shows that all large enough multiples of 
$\h$ lie in $\cL$.
\end{proof}

Next we show that all high enough multiples of $\h$ can be reached as fresh levels
without backtracking.

\begin{lemma}
\label{level-aux-lemma-2}
There exists a finite  $\ell_1$ such that  for all $\ell\ge\ell_1$
\begin{align}
P_0\{X_{\gamma_{\h\ell}}\cdot\uhat=\h\ell,\,\beta>\gamma_{\h\ell}\}>0.
\label{level-aux-1}
\end{align}
\end{lemma}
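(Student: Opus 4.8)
The plan is to introduce the set
\[
\cL'=\{\ell\ge 1:\ P_0\{X_{\gamma_\ell}\cdot\uhat=\ell,\ \beta>\gamma_\ell\}>0\}
\]
of levels reachable as fresh levels without backtracking below $0$, and to show, in parallel with the proof of Lemma~\ref{level-aux-lemma-1}, that $\cL'$ is closed under addition and has greatest common divisor $\h$; the same Durrett-type fact then gives $\h\ell\in\cL'$ for all $\ell$ beyond some $\ell_1$, which is exactly \eqref{level-aux-1}. Note first that $\cL'\subseteq\cL$ (a level reached as $X_{\gamma_\ell}\cdot\uhat=\ell$ is in particular visited), so every element of $\cL'$ is a multiple of $\h$; hence only the divisibility $\gcd\cL'\mid\h$ will require work.

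To prove closure under addition, given $\ell_1,\ell_2\in\cL'$ I would pick a single finite path $(0=v_0,\dots,v_p)$ of positive $P_0$-probability witnessing the event for $\ell_1$ (so $v_i\cdot\uhat\ge 0$ for all $i$, $v_i\cdot\uhat<\ell_1$ for $i<p$, and $v_p\cdot\uhat=\ell_1$), and likewise $(0=w_0,\dots,w_q)$ for $\ell_2$, and concatenate them into the path $(v_0,\dots,v_p,\,v_p+w_1,\dots,v_p+w_q)$. This path stays at levels $\ge 0$ and its level is $<\ell_1+\ell_2$ at every vertex except the last, where it equals $\ell_1+\ell_2$, so it witnesses $\{X_{\gamma_{\ell_1+\ell_2}}\cdot\uhat=\ell_1+\ell_2,\ \beta>\gamma_{\ell_1+\ell_2}\}$. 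Its probability factors as $P_0(X_{0,p}=v_{0,p})\,P_0(X_{0,q}=w_{0,q})>0$, because the first piece reads environments only at sites of level in $[0,\ell_1)$ and the shifted second piece only at sites of level in $[\ell_1,\ell_1+\ell_2)$, so by the product structure and shift-invariance of $\P$ the two factors separate. Hence $\ell_1+\ell_2\in\cL'$.

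Next, $\gcd\cL'=\h$. By transience there is a step $z_0\in\cJ$ with $z_0\cdot\uhat>0$; the one-step path $(0,z_0)$ shows $z_0\cdot\uhat\in\cL'$, and likewise $z\cdot\uhat\in\cL'$ for every $z\in\cJ$ with $z\cdot\uhat>0$. For $z\in\cJ$ with $z\cdot\uhat\le 0$ that is not an integer multiple of $z_0$, consider the path that takes $k$ steps $z_0$, then one step $z$, then $j$ steps $z_0$, ending at $(k+j)z_0+z$ on level $(k+j)(z_0\cdot\uhat)+z\cdot\uhat$. The only way two of its vertices could coincide would force $z\in\Z z_0$, which is excluded, so the path is self-avoiding and of positive probability; for $k$ and $j$ large it stays at levels $\ge 0$ and attains its endpoint as a strictly new maximal level, so $(k+j)(z_0\cdot\uhat)+z\cdot\uhat\in\cL'$. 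Comparing this value for $j$ and $j+1$ and using $z_0\cdot\uhat\in\cL'$ gives $\gcd\cL'\mid z\cdot\uhat$; if instead $z$ is an integer multiple of $z_0$, then $\gcd\cL'\mid z_0\cdot\uhat\mid z\cdot\uhat$ directly. Thus $\gcd\cL'\mid z\cdot\uhat$ for every $z\in\cJ$, and since every $\ell\in\cL$ is realized by some path and is therefore a non-negative integer combination of the numbers $z\cdot\uhat$, $z\in\cJ$, we get $\gcd\cL'\mid\gcd\cL=\h$, hence $\gcd\cL'=\h$.

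Finally, a set of positive integers closed under addition with g.c.d.\ $\h$ contains every sufficiently large multiple of $\h$ (\citep[Lemma~5.4, Ch.~5]{durr-probability}), which produces the required $\ell_1$. I expect the main obstacle to be the inclusion $\gcd\cL'\mid\h$: the constraints ``fresh level'' and ``no backtracking below $0$'' could a priori shrink the g.c.d., and the way around this is exactly the explicit self-avoiding constructions above — running straight up along one admissible step and inserting at most one sidestep — which keep every visited site distinct so that positivity of the probability is preserved under the i.i.d.\ product measure $\P$.
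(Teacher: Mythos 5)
Your proof is correct, and it takes a genuinely different route from the paper's. The paper picks a single admissible step $x$ with $x\cdot\uhat=k\h>0$, invokes Lemma~\ref{level-aux-lemma-1} to find, for each residue class $j\bmod k$, a positive-probability path to a level $\equiv j\h\pmod{k\h}$, trims that path so it touches the relevant residue classes only at its endpoints, and then sandwiches it between long runs of $x$-steps to make it non-backtracking with a fresh terminal level; tacking on further $x$-steps then covers all $\ell\ge\ell_1$ in the given residue class. You instead work directly with the set $\cL'$ of levels reachable fresh without backtracking, prove it is closed under addition by concatenating paths (using that the two pieces read disjoint, hence independent, slabs of environment), and compute $\gcd\cL'=\h$ by inserting a single sidestep into a straight $z_0$-run; the Durrett-type lemma is then applied once, directly to $\cL'$. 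What your argument buys is a cleaner structure: the closure-under-addition step is very transparent, and you avoid the trimming step the paper needs to make the sandwiching argument clean; you also bypass Lemma~\ref{level-aux-lemma-1}'s conclusion as a black box, since your gcd computation reproves the needed divisibility. What the paper's approach buys is a fully explicit construction of the witnessing path for each $\ell\ge\ell_1$ in one shot. One small point worth making explicit in your write-up: in the sidestep construction you need $j$ large enough that $jz_0\cdot\uhat>-z\cdot\uhat$, so that the endpoint level exceeds the running maximum $kz_0\cdot\uhat$ attained before the $z$-step — otherwise $\gamma_L$ would occur during the initial $z_0$-run, where the walk could overshoot $L$. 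You state the conclusion (``attains its endpoint as a strictly new maximal level'') but it is worth recording that this is precisely what forces the inequality on $j$.
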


\begin{proof}
Pick and fix a step $x$ such that $\E\pi_{0,x}>0$ and $x\cdot\uhat>0$. Then 
$x\cdot\uhat=k\h$ for some $k>0$. For any $0\le j\le k-1$,
by appeal to Lemma \ref{level-aux-lemma-1},
we find a path $\sigma^{(j)}$, with positive $P_0$-probability, going from
0 to a level $\ell\h$ with $\ell=j\mod k$.
By deleting initial and final segments  
if necessary and by shifting the reduced path,
we can assume that $\sigma^{(j)}$
visits a level in $k\h\Z$ only at the beginning and a level in 
$j\h+k\h\Z$ only at 
the end. In particular,  $\sigma^{(0)}$ is the single point 0.

Let $y^{(j)}$ be the endpoint of $\sigma^{(j)}$. Pick $m=m^{(j)}$ 
large enough so that the path 
$\sitil^{(j)}=((ix)_{0\le i<m},\,mx+\sigma^{(j)},mx+\,y^{(j)}+(ix)_{1\le i\le m})$ 
stays at or above
level 0 and ends at a fresh level. It has positive $P_0$-probability
because its constituent pieces all do. Note that
 the only self-intersections are those that possibly
exist within the piece $mx+\sigma^{(j)}$, and even these can be
removed by erasing loops from $\sigma^{(j)}$ as part of its
construction if so desired.    
Let $\ell_1$ be the maximal level attained by $\sitil^{(0)},\dots,\sitil^{(k-1)}$.

Given $\ell\ge\ell_1$ let $j=\ell\mod k$. Path $\sitil^{(j)}$ followed by 
appropriately many $x$-steps realizes the event in \eqref{level-aux-1}
and has positive $P_0$-probability.
\end{proof}

Next we extend the estimation to joint fresh levels of two walks
reached without backtracking.

\begin{lemma}
\label{level-aux-lemma-3}
Let $\ell_2\h$ be the next multiple of $\h$ after $\M \lvert\uhat\rvert+\ell_1\h$ with 
$\ell_1$ as in Lemma \ref{level-aux-lemma-2}.
There exists $\eta>0$ with this property: uniformly over all $x$ and $y$ such that
$x\cdot\uhat,\,y\cdot\uhat\in[0,\M \lvert\uhat\rvert\,]\cap\h\Z$,
\begin{align}
\begin{split}
&P_{x,y}\{\exists i:\,i\h\in[0,\ell_2\h],\\ 
&\qquad\qquad X_{\gamma_{i\h}}\cdot\uhat=\Xtil_{\gatil_{i\h}}\cdot\uhat=i\h,\,
\beta>\gamma_{i\h},\,\betil>\gatil_{i\h}\}\ge\eta.
\end{split}
\label{common-level}
\end{align}
\end{lemma}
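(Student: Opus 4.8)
The plan is to aim for the single common level $\ell_2\h$, i.e.\ to establish \eqref{common-level} with $i=\ell_2$. The point of the choice of $\ell_2$ is that $\ell_2\h$ is a multiple of $\h$ lying strictly above $\M\lvert\uhat\rvert+\ell_1\h$, while $x\cdot\uhat,\,y\cdot\uhat\le\M\lvert\uhat\rvert$; hence $\ell_2\h-x\cdot\uhat\ge\ell_1\h$ and $\ell_2\h-y\cdot\uhat\ge\ell_1\h$. Therefore, by Lemma \ref{level-aux-lemma-2} together with shift-invariance of $\P$, the one-walk events
\[
G_x=\{X_{\gamma_{\ell_2\h}}\cdot\uhat=\ell_2\h,\ \beta>\gamma_{\ell_2\h}\},
\qquad
\Gtil_y=\{\Xtil_{\gatil_{\ell_2\h}}\cdot\uhat=\ell_2\h,\ \betil>\gatil_{\ell_2\h}\}
\]
satisfy $P_x\{G_x\}>0$ and $P_y\{\Gtil_y\}>0$. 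Since $G_x\cap\Gtil_y$ is contained in the event in \eqref{common-level}, it suffices to bound $P_{x,y}\{G_x\cap\Gtil_y\}$ below uniformly over the admissible pairs $x,y$. Because $x\cdot\uhat$ and $y\cdot\uhat$ range over the finite set $\h\Z\cap[0,\M\lvert\uhat\rvert]$, and because by shift-invariance of $\P$ the quantity $P_{x,y}\{G_x\cap\Gtil_y\}$ depends on $(x,y)$ only through $(x\cdot\uhat,\,y-x)$, it is enough to produce a positive lower bound for each fixed pair of starting levels and then take the minimum over the finitely many cases.

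Fix the starting levels. Using the explicit construction in the proof of Lemma \ref{level-aux-lemma-2}, I would record that $G_x$ has a sub-event $G_x^{\circ}\subseteq G_x$ of the form ``$X$ follows one fixed non-self-intersecting path'', that path lying inside the ball $B(x,C_0)=\{w\in\Z^d:|w-x|\le C_0\}$ with $C_0<\infty$ depending only on $\M,\h,\ell_1,\ell_2$ (not on $x$). Then $P^\w_x\{G_x^{\circ}\}$ is $\sigma\{\w_w:w\in B(x,C_0)\}$-measurable, and $P_x\{G_x^{\circ}\}=\prod_{\text{steps of the path}}\E[\pi_{0,\cdot}]\ge p_0>0$ with $p_0$ uniform over the finitely many starting levels. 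If $|x-y|>2C_0$ then $B(x,C_0)$ and $B(y,C_0)$ are disjoint, so by the i.i.d.\ product form of $\P$ the variables $P^\w_x\{G_x^{\circ}\}$ and $P^\w_y\{\Gtil_y^{\circ}\}$ are independent, whence
\[
P_{x,y}\{G_x\cap\Gtil_y\}\ \ge\ \E\bigl[P^\w_x\{G_x^{\circ}\}\bigr]\,\E\bigl[P^\w_y\{\Gtil_y^{\circ}\}\bigr]\ =\ P_x\{G_x^{\circ}\}\,P_y\{\Gtil_y^{\circ}\}\ \ge\ p_0^2 .
\]

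It then remains to treat the bounded-separation case $|x-y|\le 2C_0$. If $x=y$, then, using independence of the two walks under $P^\w_{x,y}$ and Jensen's inequality, $P_{x,y}\{G_x\cap\Gtil_y\}=\E\bigl[P^\w_x\{G_x\}^2\bigr]\ge P_x\{G_x\}^2>0$. If $x\ne y$, then $y-x$ takes only finitely many values (those with $0<|y-x|\le 2C_0$), so it suffices to show $P_{x,y}\{G_x\cap\Gtil_y\}>0$ for each resulting configuration. For this I would construct, from admissible steps, two mutually disjoint non-self-intersecting lattice paths $\sigma$ (from $x$) and $\sigmatil$ (from $y$), with $\sigma$ staying at $\uhat$-levels $\ge x\cdot\uhat$ and first reaching a level $\ge\ell_2\h$ exactly at level $\ell_2\h$, and $\sigmatil$ having the analogous properties relative to the floor $y\cdot\uhat$. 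Disjointness together with the i.i.d.\ product structure of $\P$ then yields
\[
P_{x,y}\{X\text{ follows }\sigma,\ \Xtil\text{ follows }\sigmatil\}=\prod_{\text{steps of }\sigma}\E[\pi_{0,\cdot}]\cdot\prod_{\text{steps of }\sigmatil}\E[\pi_{0,\cdot}]>0 ,
\]
since every step used is admissible. Finally $\eta$ is the minimum of $p_0^2$ and these finitely many positive numbers.

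The main obstacle is exactly this last point: the explicit construction, for the finitely many bounded-separation configurations with $x\ne y$, of two disjoint admissible paths climbing to the prescribed level $\ell_2\h$ under the level constraints. This is a (somewhat simpler) variant of the construction carried out in the proof of Lemma \ref{common-beta-lemma} — Hypothesis (\HR) supplies a pair of non-collinear admissible steps, $d\ge 2$ gives the room to steer the two walks apart, and Lemmas \ref{level-aux-lemma-1}--\ref{level-aux-lemma-2} are used to land exactly on level $\ell_2\h$ — and, as there and in Appendix \ref{Yapp}, verifying that the two paths can always be kept disjoint while respecting these constraints requires a tedious case analysis.
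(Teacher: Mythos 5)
Your decomposition into large-separation, equal-start, and bounded-separation cases is reasonable, and the disjoint-ball argument for $|x-y|>2C_0$ and the Jensen argument for $x=y$ are both sound. But the bounded-separation case $0<|x-y|\le 2C_0$, $x\ne y$, is left open: you sketch that one would construct two \emph{disjoint} admissible paths climbing to level $\ell_2\h$ and explicitly acknowledge that this requires a tedious case analysis you do not carry out. That case is the substance of the lemma, so the proposal as written is incomplete; moreover, insisting on disjointness makes the problem harder than it needs to be.

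The paper sidesteps disjointness entirely with a merging device. Take the positive-probability one-walk path $\sigma$ from Lemma \ref{level-aux-lemma-2} lifting $x$ to the fresh level $\ell_2\h$ without backtracking, and $\sitil$ similarly for $y$. If $x+\sigma$ and $y+\sitil$ intersect, redefine the path of one of the walks (the one started at the lower level, so that no new backtracking below its starting level is introduced) so that from its first intersection with the other path it simply follows that other path to its end. The resulting pair of paths may share sites, but wherever they do they take the \emph{same} next step, so no conflicting demands are placed on any single $\w_z$. By the product structure of $\P$, the averaged probability that the two walks follow these two paths then factors over distinct visited sites into moments of $\pi_{0,\cdot}$, each positive because the original one-walk path probabilities are; this gives a positive lower bound with no case analysis at all. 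Uniformity in $x,y$ follows because there are finitely many admissible starting levels in $[0,\M|\uhat|]\cap\h\Z$ and finitely many ways for the shifted paths to first intersect. The idea you are missing is to make overlaps \emph{consistent} rather than try to forbid them.
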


\begin{proof}
Let $x\cdot\uhat=\ell\h$ and $y\cdot\uhat=\elltil\h$.  
Lemma \ref{level-aux-lemma-2} gives a positive $P_0$-probability path $\sigma=z_{0,n}$ 
that connects 0 to level $\ell_2\h-\ell\h$ and stays above level 0.
Choose $\sitil=\ztil_{0,\ntil}$ 
similarly for $\elltil$. If the paths $x+\si$ and $y+\sitil$ intersect, 
redefine $x+\sigma$ to follow
$y+\sitil$ from the first time it intersects $y+\sitil$. The probability in \eqref{common-level}
is bounded below by
\[P_{x,y}\{X_{0,n}=x+\si,\,\Xtil_{0,\ntil}=y+\sitil\}>0.\]

Uniformity over $x,y$ comes from observing that there are 
finitely many possible 
such positive lower bounds because we have finitely many 
admissible initial levels $\ell$ and $\elltil$
and finitely many ways to to intersect the shifts of the corresponding paths.
\end{proof}

Define the first common fresh level to be 
\[L=\inf\{\ell:X_{\gamma_\ell}\cdot\uhat={\Xtil_{\gatil_\ell}}\cdot\uhat=\ell\}.\]
If the walks start on a common level then this initial level 
is $L$.   Iteration of Lemma 
\ref{level-aux-lemma-3} shows that 
 $L$ is always a.s.\ finite
provided the walks start on levels  in $\h\Z$.  (This and more
is proved in Lemma \ref{level-aux-lemma-4} below.)   

Next we define, in stages, the first joint regeneration level of two 
walks $(X,\Xtil)$ that start at initial points  $X_0,\Xtil_0$ on a common level
$\lambda_0\in\h\Z$. First define
\[
J=\begin{cases}
M_{\beta\wedge\tilde\beta}\vee\tilde M_{\beta\wedge\tilde\beta}+\h 
&\text{if }\ \beta\wedge\tilde\beta<\infty,\\
\infty &\text{if }\ \beta\wedge\tilde\beta=\infty\end{cases}
\] 
and then
\[
\lambda=\begin{cases}
L\circ\theta^{\gamma_J,\gatil_J}
=\inf\{\ell\ge J:X_{\gamma_\ell}\cdot\uhat={\Xtil_{\gatil_\ell}}\cdot\uhat=\ell\}
&\text{if }\ J<\infty,\\
\infty &\text{if }\ J=\infty.\end{cases}
\] 
If $\lambda<\infty$, then $\lambda$ is the first common fresh level after at least one
walk backtracked. Also, $\lambda=\infty$ iff neither walk backtracked.
Let
\[\lambda_1=L\circ\theta^{\gamma(\lambda_0+\h),\gatil(\lambda_0+\h)}\] 
which is
the first common fresh level strictly above the initial level $\lambda_0$. For $n\ge2$
as long as $\lambda_{n-1}<\infty$ define successive common fresh levels
\[\lambda_n=\lambda\circ\theta^{\gamma_{\lambda_{n-1}},
\tilde\gamma_{\lambda_{n-1}}}.\]
Joint regeneration at level $\lambda_n$ is signaled by $\lambda_{n+1}=\infty$.
Consequently the first joint regeneration level is
\[\Lambda=\sup\{\lambda_n:\lambda_n<\infty\}.\]
$\Lambda<\infty$ a.s.\ because by Lemma \ref{common-beta-lemma} at each 
common fresh level $\lambda_n$ the walks have at least chance
 $\eta>0$ to
simultaneously not backtrack.
The first joint regeneration times are
\begin{align}
\label{defmu1mutil1}
(\mu_1,\tilde\mu_1)=(\gamma_\Lambda,\tilde\gamma_\Lambda).
\end{align}

The present goal is  to get moment  
bounds on $\mu_1$ and $\tilde\mu_1$. 
To be able to shift levels back to level 0 we fix representatives from all 
non-empty levels. 
For all $j\in\cL_0=\{z\cdot\uhat:z\in\Z^d\}$ pick and fix $\vhat(j)\in\Z^d$
such that $\vhat(j)\cdot\uhat=j$. By the definition of $\h$ 
as the greatest common divisor of $\cL$ in \eqref{defcL}
and the group structure
of $\cL_0$, $\vhat(j)$ is defined for all $j\in\h\Z$.

\begin{lemma}
\label{level-aux-lemma-4}
For  $m\ge1$ and $p\le\momp$
\begin{align}
\label{lambda-moment}
\sup_{x,y\in\V_d} P_{x,y}\{\Lambda>m\}\le C_p m^{-p}.
\end{align}
\end{lemma}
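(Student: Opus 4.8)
The plan is to realize $\Lambda$ as a sum of a random, geometrically bounded number of level-increments between consecutive common fresh levels, each increment having uniformly bounded $\momp$-th moment; a routine Minkowski/conditioning argument then gives $E_{x,y}[\Lambda^{\momp}]\le C$, and Chebyshev yields \eqref{lambda-moment}. Throughout, $x,y\in\V_d$ means both walks start on level $0$, and the supremum is really over all horizontal offsets, so every estimate must be uniform in the configuration.

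Set $\lambda_0=x\cdot\uhat=0$ and $N=\inf\{n\ge1:\lambda_{n+1}=\infty\}$, so $\Lambda=\lambda_N=\sum_{n=1}^N\Delta_n$ with $\Delta_n=\lambda_n-\lambda_{n-1}\ge\h$. Since $\lambda_m$ is only defined when $\lambda_{m-1}<\infty$, the chain structure gives $\{N\ge n\}=\{\lambda_n<\infty\}$, and $\{N\ge n-1\}=\{\lambda_{n-1}<\infty\}$ belongs to the information $\cF_{n-1}$ generated by the two walks up to the instants they first reach level $\lambda_{n-1}$. At the common fresh level $\lambda_{n-1}$ both walks sit on a common level, so Lemma \ref{common-beta-lemma} gives $P\{\beta\wedge\tilde\beta=\infty\mid\cF_{n-1}\}\ge\eta$ irrespective of the configuration; but $\{\lambda_n<\infty\}$ is contained in the event that at least one walk backtracks after the restart at $\lambda_{n-1}$, hence $P\{N\ge n\mid\cF_{n-1}\}\le1-\eta$ on $\{N\ge n-1\}$, and by induction $P_{x,y}\{N\ge n\}\le(1-\eta)^{n-1}$ uniformly.

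Next I would bound a single increment. Write $J_n$ for the analogue of $J$ at the $n$-th stage and decompose $\Delta_n=(J_n-\lambda_{n-1})+(\lambda_n-J_n)$ on $\{N\ge n\}$ (the term $\Delta_1$ is of the second type with a harmless extra $\h$ and no backtracking part). For $\lambda_n-J_n$: from the configuration reached at level $J_n$, $\lambda_n$ is the first common fresh level at or above $J_n$, and iterating Lemma \ref{level-aux-lemma-3} over consecutive windows of $\ell_2\h$ levels --- using that overshoots are at most $\M\lvert\uhat\rvert$ by Hypothesis (\HM), so each new window is entered in the configuration that lemma requires, and that environments in disjoint level-slabs are independent --- shows $\lambda_n-J_n$ is stochastically dominated, uniformly in the configuration, by $\ell_2\h$ times a Geometric$(\eta)$ variable, so all its moments are bounded uniformly. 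For $J_n-\lambda_{n-1}$, say the $X$-walk is the one that backtracks; then $J_n-\lambda_{n-1}\le\M\lvert\uhat\rvert\,(\beta\wedge\tilde\beta)+\h$, since $M_{\beta\wedge\tilde\beta}\le\M\lvert\uhat\rvert\beta$ and $\tilde M_{\beta\wedge\tilde\beta}\le\M\lvert\uhat\rvert(\beta\wedge\tilde\beta)$ by bounded steps. The key observation is that on $\{\beta<\infty\}$ one has $\beta<\tau_1$: after the regeneration time $\tau_1$ the walk stays at or above level $X_{\tau_1}\cdot\uhat\ge\h>0$, so a passage strictly below level $0$ can only occur before $\tau_1$. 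Hence $E_0[(\beta\wedge\tilde\beta)^{\momp};\ \beta\wedge\tilde\beta<\infty\mid\cF_{n-1}]\le E_0[\tau_1^{\momp}]\le C$ by \eqref{tau-bound} (together with its $\tilde X$-counterpart), uniformly. This backtracking step is where I expect the only genuine subtlety: bounding the running maximum of the \emph{non}-backtracking walk --- which a priori could have wandered far up --- by the finite, $\tau_1$-bounded, time at which the other walk first drops below the common level; the argument above handles it because that time depends on one walk only and is shift-invariant.

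Finally I would assemble. By Minkowski, $\lVert\Lambda\rVert_{L^{\momp}(P_{x,y})}\le\sum_{n\ge1}\lVert\Delta_n\ind\{N\ge n\}\rVert_{\momp}$. Using $\ind\{N\ge n\}\le\ind\{N\ge n-1\}$ with $\{N\ge n-1\}\in\cF_{n-1}$, the uniform conditional bound $E[\Delta_n^{\momp}\mid\cF_{n-1}]\le C$ from the previous step, and $P\{N\ge n-1\}\le(1-\eta)^{n-2}$, one gets $E[\Delta_n^{\momp}\ind\{N\ge n\}]\le C\,(1-\eta)^{n-2}$, so the series converges and $E_{x,y}[\Lambda^{\momp}]\le C$ uniformly over $x,y\in\V_d$ (this also records $\Lambda<\infty$ a.s.). For $p\le\momp$ then $E_{x,y}[\Lambda^p]\le C_p$, and Chebyshev's inequality gives $P_{x,y}\{\Lambda>m\}\le C_p\,m^{-p}$, which is \eqref{lambda-moment}. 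The remaining points --- the precise window bookkeeping in the iteration of Lemma \ref{level-aux-lemma-3}, and the reduction of all one-walk estimates to level $0$ by shift-invariance of $\P$ --- are routine.
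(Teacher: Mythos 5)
Your proof is correct and rests on the same core estimates as the paper's: the geometric decay $P\{N\ge n\}\le(1-\eta)^{n-1}$ for the number of common fresh levels via Lemma~\ref{common-beta-lemma}, the bound $J_n-\lambda_{n-1}\le\M|\uhat|(\beta\wedge\tilde\beta)+\h\le C(\tau_1+\tilde\tau_1)$ together with the observation that a finite backtracking time is strictly smaller than $\tau_1$ (this is exactly what the paper uses to reach \eqref{level-line3}), and the iteration of Lemma~\ref{level-aux-lemma-3} over $\ell_2\h$-slabs with the product structure of $\P$ to give the geometric tail of $\lambda_n-J_n$ (the paper's argument around \eqref{level-line4}). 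The only genuine difference is the assembly: you take $\momp$-th norms of the increments $\Delta_n\ind\{N\ge n\}$ and apply Minkowski to get $E_{x,y}[\Lambda^{\momp}]\le C$ uniformly, then Chebyshev, whereas the paper union-bounds the tail event $\{\Lambda>2m\}$ over which of the at most $n_0\asymp m$ increments exceeds $m/n$. Your packaging is tidier — a single Minkowski sum in place of the paper's double sum over $(n,k)$ plus separate treatments of $\{\lambda_1>m\}$ and $\{\lambda_{n_0}<\infty\}$ — and it delivers the slightly stronger conclusion $E_{x,y}[\Lambda^{\momp}]\le C$ (the paper's statement only gives finiteness of moments of order $<\momp$), which is more than the lemma requires but costs nothing. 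Both routes require precisely the same conditioning argument: that $\cF_{n-1}$ reads only environments strictly below level $\lambda_{n-1}$, so the conditional law of the future is $P_{z,\tilde z}$ with $z,\tilde z$ on the common fresh level; you flag this correctly.
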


\begin{proof}
Recall $\ell_2$ from Lemma \ref{level-aux-lemma-3}. 
Consider $m>2\ell_2\h$ and let
$n_0=[m/(2\ell_2\h)]$.
 
Iterations of \eqref{common-beta} utilized below proceed as
follows:  for $k\ge2$ and any event $B$ that depends on
the paths $(X_{0\,,\,\gamma(\lambda_{k-1})},
\Xtil_{0\,,\,\gatil(\lambda_{k-1})})$,
\begin{align*}
&P_{x,y}\{\lambda_k<\infty,\,\lambda_{k-1}<\infty,\, B\}\\
&=
P_{x,y} \{(\beta\wedge\betil)\circ
\theta^{\gamma_{\lambda_{k-1}},\gatil_{\lambda_{k-1}}}<\infty,\,
\lambda_{k-1}<\infty,\,B\}\\
&=\sum_{z,w} P_{x,y} \{ 
X_{\gamma(\lambda_{k-1})}=z,\,\Xtil_{\gatil(\lambda_{k-1})}=w,\,
\lambda_{k-1}<\infty,\,B\} P_{z,w}\{\beta\wedge\betil<\infty\}\\
&\le P_{x,y} \{ \lambda_{k-1}<\infty,\,B\} (1-\eta).
\end{align*}
The product comes from dependence on disjoint environments:
 the event 
$\{\beta\wedge\betil<\infty\}$ does not need environments below
the starting level $z\cdot\uhat=w\cdot\uhat$, while the event
$\{X_{\gamma(\lambda_{k-1})}=z,\,\Xtil_{\gatil(\lambda_{k-1})}=w, \,B\}$
only reads environments strictly below this level. 

After the sum decomposition below 
 iterate \eqref{common-beta}
 to bound  $P_{x,y}\{\lambda_{n_0}<\infty\}$ 
and to go from $\lambda_n<\infty$ down
to $\lambda_{k+1}<\infty$ inside the sum.  
Then weaken $\lambda_{k+1}<\infty$ to
$\lambda_{k}<\infty$. 
Note that  $\lambda_1<\infty$ a.s.\ so this event 
 does not contribute a $1-\eta$  factor and hence there is only
a power $(1-\eta)^{n_0-1}$ for the middle term. 
\begin{align}
&P_{x,y}\{\Lambda>2m\}\nn\\
&\le P_{x,y}\{\lambda_1>m\}+P_{x,y}\{\lambda_{n_0}<\infty\}
+ \sum_{n=2}^{n_0-1}\sum_{k=1}^{n-1} P_{x,y}\{\lambda_n<\infty,\,
\tfrac{m}{n}<\lambda\circ\theta^{\gamma_{\lambda_k},\tilde\gamma_{\lambda_k}}-\lambda_k<\infty\}\nn\\
&\le P_{x,y}\{\lambda_1>m\}+(1-\eta)^{n_0-1} 
\label{level-line-1}\\ 
&\qquad+\sum_{n=2}^{n_0-1}\sum_{k=1}^{n-1} (1-\eta)^{n-k-1}
P_{x,y}\{\lambda_k<\infty,\,
\tfrac{m}{n}<\lambda\circ\theta^{\gamma_{\lambda_k},\tilde\gamma_{\lambda_k}}
-\lambda_k<\infty\}.
\label{level-line0}
\end{align}
Separate probability \eqref{level-line0} into two parts: 
\begin{align}
&P_{x,y}\{\lambda_k<\infty,\,
\tfrac{m}{n}<\lambda\circ\theta^{\gamma_{\lambda_k},\tilde\gamma_{\lambda_k}}
-\lambda_k<\infty\}\nn\\
&\le2 P_{x,y}\{\lambda_k<\infty,\,
\tfrac{m}{2n}<M_{\beta\wedge\tilde\beta}\circ\theta^{\gamma_{\lambda_k},\tilde\gamma_{\lambda_k}}+\h-\lambda_k <\infty\}\label{level-line1}\\
&\qquad+
P_{x,y}\{\lambda_k<\infty,\,J\circ
\theta^{\gamma_{\lambda_k},\tilde\gamma_{\lambda_k}}<\infty,
\tfrac{m}{2n}<(L\circ\theta^{\gamma_J,\gatil_J}-J)\circ
\theta^{\gamma_{\lambda_k},\tilde\gamma_{\lambda_k}}\}.
\label{level-line2}
\end{align}
For probability \eqref{level-line1}
\begin{align}
&P_{x,y}(\lambda_k<\infty, \,\tfrac{m}{2n}< 
M_{\beta\wedge\tilde\beta}\circ\theta^{\gamma_{\lambda_k},\tilde\gamma_{\lambda_k}}
+\h-\lambda_k <\infty )\nn\\
&=\sum_{z\cdot\uhat=\ztil\cdot\uhat=0}
P_{x,y}\{\lambda_{k}<\infty,\,
X_{\gamma_{\lambda_{k}}}=z+\vhat(\lambda_k),
\Xtil_{\tilde\gamma_{\lambda_{k}}}=\ztil+\vhat(\lambda_k)\}
P_{z,\ztil}\{\tfrac{m}{2n}<  
M_{\beta\wedge\tilde\beta}+\h   <\infty\}\nn\\
&\le CP_{x,y}\{\lambda_{k}<\infty\} (n/m)^p\le\cdots\le 
C(1-\eta)^{k-1}(n/m)^p.
\label{level-line3}
\end{align}
The independence above came from the fact that
the variable  $M_{\beta\wedge\tilde\beta}$
needs environments only on levels at or above the initial level. 
Starting at level 0, on the event $\beta\wedge\tilde\beta<\infty$ we have 
\[M_{\beta\wedge\tilde\beta}+\h \le \M|\uhat|\,\beta\wedge\tilde\beta+\h \le 
C(\tau_1+\tilde\tau_1).\] 
Then  we invoked   Hypothesis (\HT) for the moments of $\tau_1$ and $\tautil_1$. 
Finally iterate \eqref{common-beta} again
as prior to \eqref{level-line0}.

Probability \eqref{level-line2}  does not develop as conveniently because $L$ needs
environments below the starting level. To remove this dependence we 
use
the event $\cE$ defined below. Start by rewriting  \eqref{level-line2}  as follows.
\begin{align}
&P_{x,y}\{\lambda_k<\infty,\,J\circ
\theta^{\gamma_{\lambda_k},\tilde\gamma_{\lambda_k}}<\infty,\,
\tfrac{m}{2n}<(L\circ\theta^{\gamma_J,\gatil_J}-J)\circ
\theta^{\gamma_{\lambda_k},\tilde\gamma_{\lambda_k}}\}\nn\\
&=\sum_{j\in\h\Z} \sum_{z,\ztil} E_{x,y}\bigl[\lambda_k<\infty,\,
J\circ\theta^{\gamma(\lambda_k),\gatil(\lambda_k)}=j,\,
X_{\gamma_j}=z,\,
\Xtil_{\gatil_j}=\ztil,
P^\w_{z,\ztil}\{\tfrac{m}{2n}<L-j\}\bigr].
\label{level-line4}
\end{align}
Fix $j$ for the moment. We bound the probability in \eqref{level-line4}.
Let 
$s_0$ and $s_1$ be the integers defined by
\[(s_0-1)\ell_2\h<j\le s_0\ell_2\h<\cdots<s_1\ell_2\h\le j+\tfrac{m}{2n}<(s_1+1)\ell_2\h.\]
In the beginning of the proof
 we assured that $\tfrac{m}{2n}>\ell_2\h$ so $s_0$ and $s_1$ are well defined.
Define
\[\cE=\{\exists i:\,i\h\in[0,\ell_2\h],\, 
X_{\gamma_{i\h}}\cdot\uhat=\Xtil_{\gatil_{i\h}}\cdot\uhat=i\h,\,
\beta>\gamma_{i\h},\,\betil>\gatil_{i\h}\},\]
an event that guarantees a common fresh level in a zone 
of height $\ell_2\h$ without backtracking.
We use $\cE$ in situations where
the levels of the initial points are in 
$[0,r_0\lvert\uhat\rvert]\cap\h\Z$ and
then $\cE$ only needs environments $\{\w_a:a\cdot\uhat\in[0,\ell_2\h)\}$.
For any integer $s\in[s_0,s_1-1]$ we do the following decomposition.
\begin{align*}
&P^\w_{z,\ztil}\{L>(s+1)\ell_2\h\}\\
&\le P^\w_{z,\ztil}\{L>s\ell_2\h,\, 
(X_{\gamma(s\ell_2\h)+\,\centerdot}-\vhat(s\ell_2\h),\Xtil_{\gatil(s\ell_2\h)+\,\centerdot}-
\vhat(s\ell_2\h))
\in\cE^c\}\\
&\le \sum_{w,\wtil} P_{z,\ztil}^\w\{L>s\ell_2\h,\,
X_{\gamma(s\ell_2\h)}=w,\,\Xtil_{\gatil(s\ell_2\h)}=\wtil\}
P^{T_{\vhat(s\ell_2\h)}\w}_{w-\vhat(s\ell_2\h),\wtil-\vhat(s\ell_2\h)}\{\cE^c\}.
\end{align*}
To begin the iterative factoring write 
$P^\w_{z,\ztil}\{\tfrac{m}{2n}<L-j\}\le P^\w_{z,\ztil}\{L>s_1\ell_2\h\}$ and 
substitute the above decomposition with $s=s_1-1$ into \eqref{level-line4}. 
Notice that for each $(w,\wtil)$, the quenched probability
\[P^{T_{\vhat((s_1-1)\ell_2\h)}\w}_{w-\vhat((s_1-1)\ell_2\h),
\wtil-\vhat((s_1-1)\ell_2\h)}\{\cE^c\}\] 
is a function of environments $\{\w_a:a\cdot\uhat\in[(s_1-1)\ell_2\h,\,s_1\ell_2\h)\}$ 
and thereby independent of everything else inside the 
expectation $E_{x,y}$ in \eqref{level-line4},
as long as $s_0\le s_1-1$.
By Lemma \ref{level-aux-lemma-3}
\[P_{w-\vhat((s_1-1)\ell_2\h),\wtil-\vhat((s_1-1)\ell_2\h)}\{\cE^c\}\le 1-\eta.\]
After this first round probability \eqref{level-line2} is bounded,
via \eqref{level-line4},  by
\begin{align*}
&\sum_{j\in\h\Z} \sum_{z,\ztil} E_{x,y}\bigl[\lambda_k<\infty,\,
J\circ\theta^{\gamma(\lambda_k),\gatil(\lambda_k)}=j,\,
X_{\gamma_j}=z,\,\Xtil_{\gatil_j}=\ztil,
P^\w_{z,\ztil}\{L>(s_1-1)\ell_2\h\}\bigr](1-\eta).
\end{align*}
This procedure is repeated $s_1-s_0-1$ times to arrive at the upper bound
\begin{align*}
&P_{x,y}\{\lambda_k<\infty,\,J\circ
\theta^{\gamma_{\lambda_k},\tilde\gamma_{\lambda_k}}<\infty,\,
\tfrac{m}{2n}<(L\circ\theta^{\gamma_J,\gatil_J}-J)\circ
\theta^{\gamma_{\lambda_k},\tilde\gamma_{\lambda_k}}\}\\
&\qquad\le P_{x,y}\{\lambda_k<\infty\}(1-\eta)^{s_1-s_0-1}\\
&\qquad\le C P_{x,y}\{\lambda_k<\infty\}(1-\eta)^{m/(2\ell_2\h n)}\\
&\qquad\le C P_{x,y}\{\lambda_k<\infty\} (n/m)^p\le C(1-\eta)^{k-1} (n/m)^p.
\end{align*}
In the last step we iterated \eqref{common-beta} as earlier.

Substitute this upper bound and \eqref{level-line3} back to lines
\eqref{level-line1}--\eqref{level-line2}. 
These in turn go back into the sum on
line \eqref{level-line0}. The remaining
 probability  $P_{x,y}\{\lambda_1>m\}$  
on line \eqref{level-line-1} is 
bounded by $Ce^{-cm}$,  by another iteration
of Lemma \ref{level-aux-lemma-3} with the help of event $\cE$. 

To summarize, we have shown
\[P_{x,y}\{\Lambda>2m\}\le  Ce^{-cm}+C\sum_{n\ge1} n (1-\eta)^{n-2}  (n/m)^p
\le C m^{-p}.
\qedhere
\]
\end{proof}

Next we extend the tail bound to the regeneration times. 

\begin{lemma}  Suppose $\momp>3$. 
Then
\begin{align}
\label{mu-bound}
\sup_{x,y\in\V_d} P_{x,y}[\,\mu_1\vee\tilde\mu_1\ge m\,]\le Cm^{-\momp/3}.
\end{align}
In particular,  for any $p< \momp/3$, 
\begin{align}
\label{mu-bound7}
\sup_{x,y\in\V_d} E_{x,y}[\,|\mu_1\vee\tilde\mu_1|^p\,]\le C.
\end{align}
\label{mu-lemma1}\end{lemma}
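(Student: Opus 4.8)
The plan is to split on the value of the first joint regeneration level $\Lambda$: when $\Lambda$ is large we use the polynomial tail of Lemma~\ref{level-aux-lemma-4}, and when $\Lambda$ is small we note that $\mu_1=\gamma_\Lambda$ and $\tilde\mu_1=\tilde\gamma_\Lambda$ (see \eqref{defmu1mutil1}) are then dominated by the hitting time of a \emph{fixed} deterministic level, which in turn is dominated by an ordinary regeneration time whose moments are controlled by \eqref{tau-bound}. Concretely, fix $m\ge1$ and set $\ell=\lceil m^{1/3}\rceil$; then for any $x,y\in\V_d$,
\[
P_{x,y}\{\mu_1\vee\tilde\mu_1\ge m\}\le P_{x,y}\{\Lambda>\ell\}+P_{x,y}\{\mu_1\vee\tilde\mu_1\ge m,\ \Lambda\le\ell\}.
\]
The first term is at most $C\ell^{-\momp}\le Cm^{-\momp/3}$ by Lemma~\ref{level-aux-lemma-4} applied with exponent $p=\momp$.

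For the second term I would use that $\ell\mapsto\gamma_\ell$ is nondecreasing, so that on $\{\Lambda\le\ell\}$ one has $\mu_1=\gamma_\Lambda\le\gamma_\ell$ and $\tilde\mu_1=\tilde\gamma_\Lambda\le\tilde\gamma_\ell$; hence this term is bounded by $P_{x,y}\{\gamma_\ell\ge m\}+P_{x,y}\{\tilde\gamma_\ell\ge m\}$. Each summand depends only on one of the two independent walks, so by shift-invariance of $\P$ I may translate the starting site to $0$ and estimate under $P_0$. Since each regeneration time strictly raises the $\uhat$-level of the walk and $\uhat\in\Z^d$, one gets $X_{\tau_\ell}\cdot\uhat\ge\ell$ and therefore $\gamma_\ell\le\tau_\ell$; Chebyshev's inequality together with \eqref{tau-bound} then gives $P_0\{\gamma_\ell\ge m\}\le m^{-\momp}E_0[\tau_\ell^{\momp}]\le C(\ell/m)^{\momp}\le Cm^{-2\momp/3}$, and the same for $\tilde\gamma_\ell$. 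Adding the two parts yields \eqref{mu-bound}.

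Then \eqref{mu-bound7} is immediate from \eqref{mu-bound}: for $p<\momp/3$, integrating the tail bound gives
\[
E_{x,y}[\,|\mu_1\vee\tilde\mu_1|^p\,]=\int_0^\infty p\,t^{p-1}\,P_{x,y}\{\mu_1\vee\tilde\mu_1\ge t\}\,dt\le 1+C\int_1^\infty p\,t^{p-1-\momp/3}\,dt<\infty,
\]
with a bound uniform in $x,y\in\V_d$. The assumption $\momp>3$ is exactly what makes this range of $p$ contain $p=1$.

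I do not anticipate a real obstacle. The one step needing care is the comparison used in the small-$\Lambda$ case: one must check both that on $\{\Lambda\le\ell\}$ the random-level hitting time $\gamma_\Lambda$ is dominated by the deterministic-level hitting time $\gamma_\ell$, and that $\gamma_\ell\le\tau_\ell$ (i.e. that $\ell$ regenerations always carry the walk past level $\ell$). Everything else is Chebyshev plus Lemma~\ref{level-aux-lemma-4}. I would also remark that taking $\ell\sim m^{1/2}$ balances the two error terms and in fact delivers the stronger rate $Cm^{-\momp/2}$, but the weaker exponent $\momp/3$ stated here suffices for all later uses of \eqref{mu-bound7}.
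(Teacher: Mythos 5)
Your proof is correct, and it follows a genuinely different route from the paper's. You split on the event $\{\Lambda>\ell\}$ at a single threshold $\ell\sim m^{1/3}$ and use a union bound, estimating the first piece with the tail bound of Lemma~\ref{level-aux-lemma-4} and the second piece with $\gamma_\Lambda\le\gamma_\ell\le\tau_\ell$ followed by Chebyshev and \eqref{tau-bound}. The paper instead decomposes over all values $\Lambda=\ell$ and applies H\"older's inequality with conjugate exponents $s=3$, $t=3/2$ to the indicator $\one\{\gamma_\ell\ge m\}\one\{\Lambda=\ell\}$ inside the sum; the choice $s=3$ is what produces the $m^{-\momp/3}$ exponent there. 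Both arguments rest on the same two ingredients (monotonicity $\gamma_\ell\le\tau_\ell$ plus the $\ell^\momp$ moment bound on $\tau_\ell$, and the polynomial tail of $\Lambda$), so neither is significantly more general, but your version is more elementary and, as you observe, the threshold $\ell\sim m^{1/2}$ balances the two error terms and gives the sharper rate $Cm^{-\momp/2}$ --- which, while not needed downstream, shows the paper's exponent is not tight. One small caution, which you did flag: the inequality $\gamma_\ell\le\tau_\ell$ relies on the fact that each regeneration slab has $\uhat$-thickness at least $1$, so $X_{\tau_\ell}\cdot\uhat\ge\ell$; this is exactly what the paper also uses implicitly in its bound $P_0\{\gamma_\ell\ge m\}\le P_0\{\tau_\ell\ge m\}$, so there is no gap.
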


\begin{proof}
By \eqref{tau-bound}, since $x\cdot\uhat=0$ for $x\in\V_d$,
 we can bound 
\[P_{x,y}\{\gamma_\ell\ge m\}=P_0\{\gamma_\ell\ge m\}\le P_0\{\tau_\ell\ge m\}
\le C(\ell/m)^\momp.\]

Pick conjugate exponents  $s=3$ and $t=3/2$. 
\begin{align*}
P_{x,y}\{\mu_1\ge m\}
&\le\sum_{\ell\ge1}P_{x,y}\{\gamma_\ell\ge m,\Lambda=\ell\}\\
&\le C\sum_{\ell\ge1}P_0\{\gamma_\ell\ge m\}^{1/s} P_{x,y}\{\Lambda=\ell\}^{1/t}\\
&\le C\sum_{\ell\ge1} \frac{\ell^{\momp/3}}{m^{\momp/3}}
\frac1{\ell^{2\momp/3}}\le Cm^{-\momp/3}.
\end{align*}
The same holds for $\tilde\mu_1$. 
\end{proof}

After these preliminaries define the sequence of
joint regeneration times by $\mu_0=\mutil_0=0$ and 
\beq
(\mu_{i+1},\tilde\mu_{i+1})=(\mu_i,\tilde\mu_i)
+(\mu_1,\tilde\mu_1)\circ\theta^{\mu_i, \tilde\mu_i}.
\label{defmukmutilk}
\eeq
The previous estimates, Lemmas \ref{level-aux-lemma-4}
and \ref{mu-lemma1}, show that common regeneration levels
come fast enough. 
The next tasks are to identify suitable Markovian 
structures and to develop a coupling.  Recall again the 
definition \eqref{defVd} of $\V_d$. 

\begin{proposition}  Under the averaged measure $P_{x,y}$
with $x,y\in\V_d$,  
the process  $(\Xtil_{\mutil_i}-X_{\mu_i})_{i\ge 1}$ is a Markov chain
on $\mathbb{V}_d$ with transition probability
\beq
q(x,y)=P_{0,x}\{  \Xtil_{\mutil_1}-X_{\mu_1}= y\,\vert\, 
\beta=\tilde\beta=\infty\}.
\label{defqxy}
\eeq
\label{Ymcprop}\end{proposition}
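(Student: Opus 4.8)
The plan is to prove a one‑step renewal identity for the pair $(X,\Xtil)$ at the joint regeneration times and then read off both the Markov property and the formula \eqref{defqxy} for $q$; this is the Sznitman--Zerner argument for the i.i.d.\ regeneration structure (\citep[Theorem~1.4]{szni-zern-99}) transplanted to two walks, with Lemmas \ref{common-beta-lemma}--\ref{mu-lemma1} supplying the needed finiteness and positivity. First the easy points. By construction $X_{\mu_1}\cdot\uhat=\Xtil_{\mutil_1}\cdot\uhat=\Lambda$, so $\Xtil_{\mutil_1}-X_{\mu_1}\in\V_d$, and by \eqref{defmukmutilk} the two walks sit on a common level at every joint regeneration, so the process indeed takes values in $\V_d$; Lemma \ref{common-beta-lemma} together with transience makes $\Lambda$, $\mu_1$ and $\tilde\mu_1$ a.s.\ finite and $\eta>0$, so the conditional law in \eqref{defqxy} is well defined.

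The heart of the proof is a decomposition of the event $\{X_{0,\mu_i}=\sigma,\ \Xtil_{0,\tilde\mu_i}=\sitil\}$, for admissible finite paths $\sigma,\sitil$ that reach a common level $\ell$ exactly at their first visits to that level and are combinatorially arranged so that $\ell$ is the $i$‑th joint regeneration level with no joint regeneration before it. Unwinding the recursive definition of $\Lambda$ through $\lambda_n$, $J$, $\lambda$ and $L\circ\theta^{\gamma_J,\gatil_J}$, one checks that this event is the intersection of: (i) the event that $X$ follows $\sigma$ and $\Xtil$ follows $\sitil$ until their first visits to level $\ell$, which is measurable with respect to $\{\w_z:z\cdot\uhat<\ell\}$ since both walks read only environments on levels $<\ell$ before hitting level $\ell$, the arrangement requirement on $(\sigma,\sitil)$ being purely combinatorial; and (ii) the event $\beta\circ\theta^{\mu_i}=\tilde\beta\circ\theta^{\tilde\mu_i}=\infty$ that neither walk ever returns below level $\ell$ afterwards, which absorbs the non‑backtracking conditions at all earlier (hence lower) fresh levels. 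Event (ii) is measurable with respect to the post‑$(\mu_i,\tilde\mu_i)$ increments and $\{\w_z:z\cdot\uhat\ge\ell\}$ because such walks stay at or above level $\ell$; that $\lambda=\infty$ means exactly ``neither walk backtracks below the current level'' is immediate from the definitions of $J$ and $\lambda$.

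Granting this, let $z,w$ be the endpoints of $\sigma,\sitil$, both on level $\ell$. The product structure of $\P$ (independence of environments on $\{z\cdot\uhat<\ell\}$ and on $\{z\cdot\uhat\ge\ell\}$), the quenched Markov property, and the independence of $X$ and $\Xtil$ under $P^\w_{x,y}$ give, for bounded $g$,
\begin{align*}
&E_{x,y}\bigl[\one\{X_{0,\mu_i}=\sigma,\ \Xtil_{0,\tilde\mu_i}=\sitil\}\,
g\bigl((X_{\mu_i+n}-z)_n,(\Xtil_{\tilde\mu_i+n}-w)_n\bigr)\bigr]\\
&\qquad=P_{x,y}\{X,\Xtil\text{ follow }\sigma,\sitil\text{ below level }\ell\}\cdot
E_{z,w}\bigl[\,g(X_\cdot-z,\Xtil_\cdot-w)\,;\,\beta=\tilde\beta=\infty\,\bigr].
\end{align*}
Since $z\cdot\uhat=w\cdot\uhat=\ell$, shift‑invariance of $\P$ rewrites the second factor as $P_{0,w-z}\{\beta=\tilde\beta=\infty\}\,E_{0,w-z}[\,g(X_\cdot,\Xtil_\cdot)\mid\beta=\tilde\beta=\infty\,]$, and multiplying its first part by the remaining factor recovers $P_{x,y}\{X_{0,\mu_i}=\sigma,\ \Xtil_{0,\tilde\mu_i}=\sitil\}$, while $w-z=\Xtil_{\tilde\mu_i}-X_{\mu_i}$ on this event. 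Summing over the admissible $\ell,\sigma,\sitil$ and attaching an arbitrary bounded $f$ of $(X_{0,\mu_i},\Xtil_{0,\tilde\mu_i})$ yields the renewal identity: conditionally on $(X_{0,\mu_i},\Xtil_{0,\tilde\mu_i})$, the recentered pair $\bigl((X_{\mu_i+n}-X_{\mu_i})_n,(\Xtil_{\tilde\mu_i+n}-X_{\mu_i})_n\bigr)$ is distributed as two independent walks started from $(0,\Xtil_{\tilde\mu_i}-X_{\mu_i})$ and conditioned on $\beta=\tilde\beta=\infty$. Since the whole joint‑regeneration construction depends on a pair of paths only through level differences, $(\mu_{i+1}-\mu_i,\tilde\mu_{i+1}-\tilde\mu_i)=(\mu_1,\tilde\mu_1)\circ\theta^{\mu_i,\tilde\mu_i}$ and $\Xtil_{\tilde\mu_{i+1}}-X_{\mu_{i+1}}$ is the first‑joint‑regeneration displacement of that recentered pair; applying the renewal identity with $g$ equal to this functional gives the Markov property, with transition $\Phi_g(v)=P_{0,v}\{\Xtil_{\tilde\mu_1}-X_{\mu_1}=\cdot\mid\beta=\tilde\beta=\infty\}=q(v,\cdot)$.

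The main obstacle is the verification underlying steps (i)--(ii): checking that the recursive construction of $\Lambda$ --- threaded through $\lambda_n$, $J$, $\lambda$, $L$, the scale $\h$, and the auxiliary events of Lemmas \ref{level-aux-lemma-1}--\ref{level-aux-lemma-3} --- respects the stratification of $\Z^d$ into levels, so that the split of the event into a below‑level‑$\ell$ part and a future non‑backtracking part is exact. This is precisely where one must re‑run, rather than merely cite, the Sznitman--Zerner bookkeeping, now for pairs of walks.
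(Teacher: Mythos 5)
Your proposal is correct and takes essentially the same route as the paper's proof: both rest on the factoring of the quenched probability of the joint-regeneration event into a part below the regeneration level (a function of environments on $\{z : z\cdot\uhat < \ell\}$) and a part at-or-above (a function of environments on $\{z : z\cdot\uhat \ge \ell\}$), followed by the product structure of $\P$, shift-invariance, and reassembly. The only organizational difference is that you first prove a full renewal identity for the conditional law of the future pair of paths given the past up to $(\mu_i,\tilde\mu_i)$ and then read off the Markov property, whereas the paper computes the finite-dimensional distribution directly and peels off the last transition one step at a time; the underlying factoring lemma and its justification are the same.
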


Note that the time-homogeneous 
Markov chain does not start from $\Xtil_{0}-X_{0}$ because the transition
  to $\Xtil_{\mutil_1}-X_{\mu_1}$ does not include the condition
$\beta=\tilde\beta=\infty$.

\begin{proof}
Let $n\ge2$ and $z_1,\dotsc,z_n\in\V_d$. The proof comes from iterating the following
steps.
\begin{align*}
&P_{0,z}\{\Xtil_{\mutil_i}-X_{\mu_i}=z_i\text{ for }1\le i\le n\}\\
&=\sum_{\wtil-w=z_{n-1}}\!\!\!\!
P_{0,z}\{\Xtil_{\mutil_i}-X_{\mu_i}=z_i\text{ for }1\le i\le n-2,\,
X_{\mu_{n-1}}=w,\,\Xtil_{\mutil_{n-1}}=\wtil\}\\
&\qquad\qquad\times
P_{w,\wtil}\{\Xtil_{\mutil_1}-X_{\mu_1}=z_n\,|\,\beta=\betil=\infty\}\\
&=\sum_{\wtil-w=z_{n-1}}\!\!\!\! 
P_{0,z}\{\Xtil_{\mutil_i}-X_{\mu_i}=z_i\text{ for }1\le i\le n-2,\,
X_{\mu_{n-1}}=w,\,\Xtil_{\mutil_{n-1}}=\wtil\}\\
&\qquad\qquad\times
P_{0,z_{n-1}}\{\Xtil_{\mutil_1}-X_{\mu_1}=z_n\,|\,\beta=\betil=\infty\}\\
&=P_{0,z}\{\Xtil_{\mutil_i}-X_{\mu_i}=z_i\text{ for }1\le i\le n-1\}q(z_{n-1},z_n).
\end{align*}

The factoring in the first equality above  
 is justified by the fact that 
\begin{align*}
&P_{0,z}^\w\{\Xtil_{\mutil_i}-X_{\mu_i}=z_i\text{ for }1\le i\le n-2,\,
X_{\mu_{n-1}}=w,\,\Xtil_{\mutil_{n-1}}=\wtil,\,
\Xtil_{\mutil_n}-X_{\mu_n}=z_n\}\\
&\qquad\qquad
=P_{0,z}^\w(A)P_{w,\wtil}^\w(B),
\end{align*}
where $A$ is a collection of paths staying below level $w\cdot\uhat=\wtil\cdot\uhat$,
while \[B=\{\Xtil_{\mutil_1}-X_{\mu_1}=z_n,\,\beta=\betil=\infty\}\]  
is a collection of paths that stay at or above their initial level.
\end{proof}

The Markov chain $Y_k=\Xtil_{\mutil_k}-X_{\mu_k}$ will be
compared to a random walk obtained by performing the same
construction of joint regeneration times to two
independent walks in independent environments.
To indicate the difference in construction we change
notation. Let the
pair of walks $(X,\Xbar)$ obey $P_0\otimes P_z$ with $z\in\mathbb{V}_d$,
and denote the first backtracking time of the $\Xbar$ walk
by $\betabar=\inf\{n\ge 1: \Xbar_n\cdot\uhat<\Xbar_0\cdot\uhat\}$.
Construct the
joint regeneration times $(\rho_k,\rhobar_k)_{k\ge 1}$
for $(X,\Xbar)$  by the same recipe
[\eqref{defmu1mutil1}, \eqref{defmukmutilk}, and the equations leading to them]
 as was used to construct
$(\mu_k,\mutil_k)_{k\ge 1}$ for $(X,\Xtil)$.
Define $\Ybar_k=\Xbar_{\rhobar_k}-X_{\rho_k}$.  An analog
of the previous
proposition, which we will not spell out,
shows that  $(\Ybar_k)_{k\ge 1}$ is a Markov chain with transition
\beq
\qbar(x,y)=
P_0\otimes P_x[  \Xbar_{\rhobar_1}-X_{\rho_1}= y\,\vert\,
\beta=\betabar=\infty].
\label{defqbar}\eeq

 In the next two proofs we make use of the
following decomposition.
Suppose  $x\cdot\uhat=y\cdot\uhat=0$, and let
 $(x_1,y_1)$ be another pair of points on a common, higher
 level: $x_1\cdot\uhat=y_1\cdot\uhat=\ell>0$.  Then we can write
\beq\begin{split}
&\{ (X_0,\Xtil_0)=(x,y),\,\beta=
\tilde{\beta}=\infty, \,
(X_{\mu_1},\Xtil_{\mutil_1})=(x_1,y_1)\}\\
&\quad =
\bigcup_{(\gamma,\gatil)}
\{ X_{0,n(\gamma)}=\gamma,\, \Xtil_{0,n(\gatil)}=\gatil, \,
\beta\circ\theta^{n(\gamma)}=
\tilde\beta\circ\theta^{n(\gatil)}=\infty\}.
\end{split}\label{pathdecomp1}\eeq
Here  $(\gamma,\gatil)$ range over all pairs of paths that connect
$(x,y)$ to $(x_1,y_1)$, that stay between levels $0$ and $\ell-1$
before the final points, and
for which a joint regeneration fails at all levels before $\ell$.
 $n(\gamma)$ is the index of the final point
 along the path, so for example
$\gamma=(x=z_0,z_1,\dotsc, z_{n(\gamma)-1},  z_{n(\gamma)}=x_1)$.

\begin{proposition} The process
$(\Ybar_k)_{k\ge 1}$ is a symmetric random walk on $\mathbb{V}_d$
and its   transition probability satisfies
\begin{align*}
\qbar(x,y)&=\qbar(0,y-x)=\qbar(0,x-y)\\
&=
P_0\otimes P_0\{  \Xbar_{\rhobar_1}-X_{\rho_1}= y-x\,\vert\,
\beta=\betabar=\infty\}.
\end{align*}
\label{Ybarprop2}
\end{proposition}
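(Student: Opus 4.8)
The plan is to deduce both assertions from the (unspelled-out) analog of Proposition~\ref{Ymcprop}, which already gives that $(\Ybar_k)_{k\ge1}$ is a Markov chain with transition kernel $\qbar$. It then remains to prove two things: the translation invariance $\qbar(x,y)=\qbar(0,y-x)$, which promotes the chain to a genuine random walk, and the reflection symmetry $\qbar(0,z)=\qbar(0,-z)$ for $z\in\V_d$. Since $\V_d$ is a subgroup we have $-z\in\V_d$ whenever $z\in\V_d$, and the last displayed equality in the proposition is nothing but definition \eqref{defqbar} read at $x=0$, so these two facts yield the full statement.

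For translation invariance I would fix $x\in\V_d$ and realize $(X,\Xbar)$ under $P_0\otimes P_x$, with $\Xbar$ driven by an environment $\wbar\sim\P$ independent of the $X$-environment. Set $\Xbar'_n=\Xbar_n-x$; this is a walk started at $0$ in the shifted environment $T_x\wbar$, which is again $\P$-distributed by shift invariance of $\P$ and still independent of the $X$-environment, so $(X,\Xbar')$ under $P_0\otimes P_x$ has exactly the law of $(X,\Xbar)$ under $P_0\otimes P_0$. Because $x\cdot\uhat=0$ we have $\Xbar_n\cdot\uhat=\Xbar'_n\cdot\uhat$ for all $n$, hence the backtracking time, the running maximum $\Mbar_n$, and every level-entrance time $\gabar_\ell$ of $\Xbar$ coincide with those of $\Xbar'$; consequently $\beta\wedge\betabar$, $M\vee\Mbar$, the first common fresh level $L$, and then $J$, $\lambda$, $\Lambda$ and $(\rho_1,\rhobar_1)$ are literally unchanged under the replacement, and $\{\beta=\betabar=\infty\}$ is the same event. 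Since $\Xbar_{\rhobar_1}=\Xbar'_{\rhobar_1}+x$ this gives
\[
\qbar(x,y)=P_0\otimes P_x\{\Xbar_{\rhobar_1}-X_{\rho_1}=y\mid\beta=\betabar=\infty\}=P_0\otimes P_0\{\Xbar_{\rhobar_1}-X_{\rho_1}=y-x\mid\beta=\betabar=\infty\}=\qbar(0,y-x).
\]
For the reflection symmetry I would use that under $P_0\otimes P_0$ the two (walk, environment) pairs are interchangeable, so the swap $S(X,\Xbar)=(\Xbar,X)$ preserves the law. The joint-regeneration recipe is symmetric in its two path arguments: $J$ is built from $M_{\beta\wedge\betabar}\vee\Mbar_{\beta\wedge\betabar}+\h$, the common fresh level $L$ imposes $X_{\gamma_\ell}\cdot\uhat=\Xbar_{\gabar_\ell}\cdot\uhat=\ell$ with no preference between the walks, and $\lambda$, $\Lambda$ inherit this symmetry; hence running the recipe on $(\Xbar,X)$ yields the same level $\Lambda$ but interchanges the two entrance times, producing $(\rhobar_1,\rho_1)$ in place of $(\rho_1,\rhobar_1)$, so that $\Ybar_1=\Xbar_{\rhobar_1}-X_{\rho_1}$ becomes $X_{\rho_1}-\Xbar_{\rhobar_1}=-\Ybar_1$ while $\{\beta=\betabar=\infty\}$ is mapped to itself. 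This yields
\[
\qbar(0,z)=P_0\otimes P_0\{\Ybar_1=z\mid\beta=\betabar=\infty\}=P_0\otimes P_0\{-\Ybar_1=z\mid\beta=\betabar=\infty\}=\qbar(0,-z),
\]
and combining with the previous step gives $\qbar(x,y)=\qbar(0,y-x)=\qbar(0,x-y)$; substituting definition \eqref{defqbar} at $x=0$ produces the asserted identity.

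I expect the one step requiring care to be the symmetry claim: one has to walk through the definitions of $\beta\wedge\betabar$, $M\vee\Mbar$, $\gamma_\ell$, $\gabar_\ell$, $L$, $J$, $\lambda$ and $\Lambda$ and check that each is a symmetric functional of the \emph{ordered} pair of paths, so that $S$ fixes $\Lambda$ and merely interchanges the two entrance-time families. The translation step is routine once one observes that $x\in\V_d$ means $x\cdot\uhat=0$, so every level-based quantity in the construction is blind to the shift by $x$.
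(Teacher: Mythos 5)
Your proof is correct and rests on the same two facts the paper uses: shift invariance of $\P$ combined with $x\cdot\uhat=0$ for translation invariance, and interchangeability of the two labels under $P_0\otimes P_0$ for the reflection symmetry. The only difference is cosmetic---you run the translation step directly at the process level, observing that $(X,\Xbar-x)$ under $P_0\otimes P_x$ has the same law as $(X,\Xbar)$ under $P_0\otimes P_0$ and that all level-based functionals are unchanged, whereas the paper reaches the same conclusion by decomposing the probability into a sum over pairs of paths and shifting the $\Xbar$-path by $-x$.
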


\begin{proof}   It remains  to show that for independent $(X,\Xbar)$
 the transition \eqref{defqbar}
 reduces to  a symmetric random walk. This becomes
obvious once probabilities are decomposed into sums over paths
because the events of interest are insensitive to shifts by
$z\in\mathbb{V}_d$.
\beq\begin{split}
&P_0\otimes P_x\{\beta=\betabar=\infty\,,\,
  \Xbar_{\rhobar_1}-X_{\rho_1}= y\}\\
&=\sum_w P_0\otimes P_x\{\beta=\betabar=\infty\,,\,
  X_{\rho_1}= w\,,\,\Xbar_{\rhobar_1}=y+w \}\\
&=\sum_w  \sum_{(\gamma, \gabar)}
P_0\{ X_{0,n(\gamma)}=\gamma,\, \beta\circ\theta^{n(\gamma)}=\infty\}
P_x\{ X_{0,n(\gabar)}=\gabar, \,
\beta\circ\theta^{n(\gabar)}=\infty\}\\
&=\sum_w  \sum_{(\gamma, \gabar)}
P_0\{ X_{0,n(\gamma)}=\gamma\} P_x\{ X_{0,n(\gabar)}=\gabar\}
\bigl(P_0\{\beta=\infty\}\bigr)^2.
\end{split}
\label{temp-gam-7}
\eeq

Above we used the decomposition idea from \eqref{pathdecomp1}.
Here  $(\gamma, \gabar)$ range  over the appropriate
class of pairs of
paths in $\Z^d$  such that $\gamma$ goes from $0$ to $w$ and
$\gabar$ goes from $x$ to $y+w$.
The independence for the last equality above comes from
noticing that the quenched probabilities
$P^\w_0\{X_{0,n(\gamma)}=\gamma\}$ and $P^\w_w\{\beta=\infty\}$
depend on independent collections of environments.

The probabilities on the last line of
\eqref{temp-gam-7}   are not changed if each pair $(\gamma,\gabar)$
is replaced
by $(\gamma,\gamma')=(\gamma,\gabar-x)$.  These pairs connect
 $(0,0)$ to $(w,y-x+w)$.
  Because $x\in\mathbb{V}_d$ satisfies
$x\cdot\uhat=0$,
 the shift has not changed regeneration levels.
This shift turns $P_x\{ X_{0,n(\gabar)}=\gabar\}$ on   the last line
of \eqref{temp-gam-7} into $P_0\{ X_{0,n(\gamma')}=\gamma'\}$.
We can reverse  the steps in
\eqref{temp-gam-7}   to arrive at the probability
\[
P_0\otimes P_0\{\beta=\betabar=\infty\,,\,
  \Xbar_{\rhobar_1}-X_{\rho_1}= y-x\}.
 \]
This proves $\qbar(x,y)=\qbar(0,y-x)$.

Once both walks start at $0$
 it is immaterial which is labeled
$X$ and which $\Xbar$, hence symmetry holds.
\end{proof}

It will be useful to know that $\qbar$ inherits all possible
transitions from $q$.

\begin{lemma} If $q(z,w)>0$ then also $\qbar(z,w)>0$.
\label{qqbarlm3}
\end{lemma}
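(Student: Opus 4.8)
The plan is to push everything through the path decomposition \eqref{pathdecomp1} down to the level of a single admissible pair of finite paths, and then to use the crucial fact that the combinatorial constraints defining an admissible pair are purely geometric — they never mention the environment — so they are the same whether the two walks share an environment or run in independent ones.

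First I would unwind the definitions. Since $q(z,w)=P_{0,z}\{\Xtil_{\mutil_1}-X_{\mu_1}=w\mid\beta=\betil=\infty\}$ by \eqref{defqxy}, the hypothesis $q(z,w)>0$ means $P_{0,z}\{\Xtil_{\mutil_1}-X_{\mu_1}=w,\ \beta=\betil=\infty\}>0$, so there is a pair of endpoints $(x_1,y_1)$ on a common level $\ell=x_1\cdot\uhat=y_1\cdot\uhat>0$ with $y_1-x_1=w$ for which $P_{0,z}\{(X_{\mu_1},\Xtil_{\mutil_1})=(x_1,y_1),\ \beta=\betil=\infty\}>0$. Apply \eqref{pathdecomp1} to this event: it is a countable disjoint union over admissible pairs $(\gamma,\gatil)$ — pairs of paths joining $(0,z)$ to $(x_1,y_1)$, staying on levels $0,\dots,\ell-1$ before their endpoints, and failing joint regeneration at every level below $\ell$ — so there is a single such $(\gamma,\gatil)$ with
\[
P_{0,z}\{X_{0,n(\gamma)}=\gamma,\ \Xtil_{0,n(\gatil)}=\gatil,\ \beta\circ\theta^{n(\gamma)}=\betil\circ\theta^{n(\gatil)}=\infty\}>0 .
\]
The point is that the three conditions defining admissibility refer only to the geometry of $(\gamma,\gatil)$, so the very same pair $(\gamma,\gatil)$ is admissible in the decomposition \eqref{pathdecomp1} written for the independent walks $(X,\Xbar)$ in place of $(X,\Xtil)$.

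Next I would factor that positive averaged probability. By quenched independence of $X$ and $\Xtil$ together with the quenched Markov property it equals
\[
\E\bigl[\,P^\w_0\{X_{0,n(\gamma)}=\gamma\}\;P^\w_z\{\Xtil_{0,n(\gatil)}=\gatil\}\;P^\w_{x_1}\{\beta=\infty\}\;P^\w_{y_1}\{\beta=\infty\}\,\bigr].
\]
The first two factors are $\sigma\{\w_a:0\le a\cdot\uhat\le\ell-1\}$-measurable, since traversing $\gamma$ (resp.\ $\gatil$) only reads out-steps from the sites of the path other than the endpoint, all of which lie strictly below level $\ell$; the last two factors are $\kS_\ell$-measurable, since $\{\beta=\infty\}$ for a walk started on level $\ell$ reads only $\w$ on levels $\ge\ell$. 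As the integrand is nonnegative and strictly positive on a set of positive $\P$-measure, each of the four factors is itself positive on a set of positive $\P$-measure; and because $\P$ is an i.i.d.\ product, the $\sigma\{\w_a:a\cdot\uhat\le\ell-1\}$-event $\{P^\w_0\{X_{0,n(\gamma)}=\gamma\}>0\}$ is independent of the $\kS_\ell$-event $\{P^\w_{x_1}\{\beta=\infty\}>0\}$, so their intersection has positive measure. Integrating over that intersection gives $P_0\{X_{0,n(\gamma)}=\gamma,\ \beta\circ\theta^{n(\gamma)}=\infty\}=\E[P^\w_0\{X_{0,n(\gamma)}=\gamma\}\,P^\w_{x_1}\{\beta=\infty\}]>0$, and the same argument with $(z,\gatil,y_1)$ gives $P_z\{\Xtil_{0,n(\gatil)}=\gatil,\ \betil\circ\theta^{n(\gatil)}=\infty\}>0$.

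Finally I would assemble the bound for $\qbar$. By \eqref{defqbar} it suffices to show $P_0\otimes P_z\{\Xbar_{\rhobar_1}-X_{\rho_1}=w,\ \beta=\betabar=\infty\}>0$. Running \eqref{pathdecomp1} for the independent walks and keeping only the single term indexed by the admissible pair $(\gamma,\gatil)$ above, this probability is at least $P_0\otimes P_z\{X_{0,n(\gamma)}=\gamma,\ \Xbar_{0,n(\gatil)}=\gatil,\ \beta\circ\theta^{n(\gamma)}=\betabar\circ\theta^{n(\gatil)}=\infty\}$, which by independence of the two walks and of their environments equals $P_0\{X_{0,n(\gamma)}=\gamma,\ \beta\circ\theta^{n(\gamma)}=\infty\}\cdot P_z\{\Xbar_{0,n(\gatil)}=\gatil,\ \betabar\circ\theta^{n(\gatil)}=\infty\}$; both factors were just shown positive (the $\Xbar$-factor has the same value as the $\Xtil$-factor, the walks being identically distributed under the single-walk measure). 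Hence $\qbar(z,w)>0$. I expect the only genuine obstacle to be the bookkeeping of the middle step: verifying precisely that traversing $\gamma$ up to its endpoint reads no environment on level $\ell$ and that the post-endpoint non-backtracking event reads no environment below level $\ell$, so that the product-measure independence is legitimately applicable; the rest is routine.
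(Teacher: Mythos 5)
Your proof is correct and follows essentially the same route as the paper's: both reduce $q(z,w)>0$ via the path decomposition \eqref{pathdecomp1} to a single admissible pair $(\gamma,\gatil)$ with $\E\big[P^\w_0(\gamma)P^\w_z(\gatil)P^\w_{x_1}\{\beta=\infty\}P^\w_{y_1}\{\beta=\infty\}\big]>0$, observe that admissibility is a purely geometric condition so the same pair appears in the decomposition for $(X,\Xbar)$, and then deduce that this pair contributes positively to $\qbar(z,w)$. The only stylistic difference is that in the middle step you re-invoke product-measure independence of the two $\sigma$-algebra strips to argue the intersection of two positive-measure events has positive measure; this is a small detour, since the positivity of the four-factor expectation already gives a positive-measure set where all four quenched factors are simultaneously positive, from which $\E\big[P^\w_0(\gamma)P^\w_{x_1}\{\beta=\infty\}\big]>0$ and its analogue for $(\gatil,y_1)$ follow directly without any independence argument — which is the shortcut the paper takes by passing from $\E[P^\w(\gamma)P^\w(\gatil)]>0$ straight to $P(\gamma)P(\gatil)>0$.
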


\begin{proof}
By the decomposition from \eqref{pathdecomp1}
 we can express
\begin{align*}
&P_{x,y}\{  (X_{\mu_1},\Xtil_{\mutil_1})=(x_1,y_1) \vert \beta=
\tilde{\beta}=\infty \}
=
\sum_{(\gamma,\gatil)}
\frac{\E P^\w(\gamma)P^\w(\gatil)P^\w_{x_1}\{\beta=\infty\}
P^\w_{y_1}\{\beta=\infty \}}{P_{x,y}\{\beta=
\tilde{\beta}=\infty \}}.
\end{align*}
If this probability is positive, then at least one pair
$(\gamma,\gatil)$ must satisfy $\E P^\w(\gamma)P^\w(\gatil)>0$.
This implies that $ P(\gamma)P(\gatil)>0$ so that also
\[
P_{x}\otimes P_{y}\{  (X_{\mu_1},\Xtil_{\mutil_1})=(x_1,y_1) \vert \beta=
\tilde{\beta}=\infty \} >0.
\qedhere
\]
\end{proof}

In the sequel we detach  the notations $Y=(Y_k)$ and $\Ybar=(\Ybar_k)$
from their original definitions  in terms of the walks
$X$, $\Xtil$ and $\Xbar$,
 and use  $(Y_k)$ and $(\Ybar_k)$  to denote canonical Markov chains with
transitions $q$ and $\qbar$.
Now we construct  a coupling.

\begin{proposition} The single-step transitions $q(x,y)$ for $Y$ and
$\qbar(x,y)$ for $\Ybar$
can be coupled in such a way that, when the processes start
from a common state $x\ne0$,
\[
P_{x,x}\{Y_1\ne\Ybar_1\} \le C|x|^{-\momp/6} 
\]
for all $x\in\mathbb{V}_d$.  Here $C$ is a finite
positive constant independent of $x$.
\label{qqbarprop4}\end{proposition}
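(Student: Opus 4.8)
Since for probability measures on a countable set a coupling attaining their total variation distance exists, it is enough to show $\vard\bigl(q(x,\cdot),\qbar(x,\cdot)\bigr)\le C\abs{x}^{-\momp/6}$ for $x\in\V_d$, $x\ne0$ (for bounded $\abs{x}$ this is trivial once $C\ge1$). The plan is to write both kernels through the decomposition \eqref{pathdecomp1}. Running the two walks from $(0,x)$ and splitting over the first common regeneration level $\ell$ and the entrance points there, the event underlying $q$ factors into a \emph{finite}-path block — pairs of paths $(\gamma,\tilde\gamma)$ from $(0,x)$ up to the common level $\ell$ along which a joint regeneration is impossible at all lower levels — and the requirement that both continuations from level $\ell$ never backtrack below it. As the two blocks involve environments on disjoint ranges of levels, one obtains
\[
q(x,y)=\frac{g(y)\,m(x,y)}{\sum_{z\in\V_d} g(z)\,m(x,z)},
\qquad
\qbar(x,y)=\frac{\mbar(x,y)}{\sum_{z\in\V_d}\mbar(x,z)} ,
\]
where $g(y)=P_{0,y}\{\beta=\betil=\infty\}$ is the ``never backtrack'' factor computed in the \emph{common} environment, $\kappa=P_0\{\beta=\infty\}$, $m(x,y)$ is the sum of the quenched weights $\E[P^\w(\gamma)P^\w(\tilde\gamma)]$ over the admissible finite pairs with endpoint difference $y$, and $\mbar(x,y)$ the same sum with $\E[P^\w(\gamma)P^\w(\tilde\gamma)]$ replaced by $P(\gamma)P(\tilde\gamma)$ (for independent walks the continuation factor is the constant $\kappa^2$, which cancels). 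By Lemma~\ref{common-beta-lemma}, $\sum_z g(z)m(x,z)=P_{0,x}\{\beta=\betil=\infty\}\ge\eta>0$ and $g(z)\ge\eta$ for all $z\in\V_d$, while $\sum_z\mbar(x,z)=1$, so the normalizations are under control.

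Three discrepancies remain. (a) \emph{Finite-path weights.} Since $\E[P^\w(\gamma)P^\w(\tilde\gamma)]=P(\gamma)P(\tilde\gamma)$ whenever the two paths are site-disjoint, $\sum_y\abs{m(x,y)-\mbar(x,y)}$ is at most a constant times $P_{0,x}\{X_{[0,\mu_1]}\cap\Xtil_{[0,\tilde\mu_1]}\ne\emptyset\}$; as the walks start at distance $\abs{x}$ and move by at most $\M$ per step, such an intersection forces $\mu_1\vee\tilde\mu_1\ge\abs{x}/2\M$, an event of probability $\Ord(\abs{x}^{-\momp/3})$ by Lemma~\ref{mu-lemma1}. (b) \emph{Most mass sits at large $\abs{y}$.} From $Y_1=(\Xtil_{\tilde\mu_1}-\Xtil_0)-(X_{\mu_1}-X_0)+x$ one gets $\abs{Y_1}\ge\abs{x}-2\M(\mu_1\vee\tilde\mu_1)$, so both $q(x,\cdot)$, $\qbar(x,\cdot)$ and the subprobability $m(x,\cdot)$ put mass $\Ord(\abs{x}^{-\momp/3})$ on $\{\abs{y}<\abs{x}/2\}$ (using $g\ge\eta$ for $m$). (c) \emph{The continuation factor.} On $\{\abs{y}\ge\abs{x}/2\}$ one needs $g(y)=\kappa^2\bigl(1+\Ord(\abs{x}^{-c})\bigr)$ for some $c>0$: here $g(y)-\kappa^2=\mathrm{Cov}\bigl(P^\w_0\{\beta=\infty\},P^{T_y\w}_0\{\beta=\infty\}\bigr)$, and one approximates $P^\w_0\{\beta=\infty\}$ by the probability of reaching a high level $R$ without backtracking \emph{and without leaving} the ball $\cB(S)$ — a function of $\w$ on $\cB(S)$ only — choosing $S$ a power of $R$ and $R$ a power of $\abs{y}$; the approximation error is governed by the probability that a non-backtracking walk exits $\cB(S)$ before climbing $R$ levels, which is polynomially small by the bounds \eqref{tau-bound}, \eqref{ldp} and \eqref{backtrack-bound} of Lemma~\ref{exponential}, while for $\abs{y}>2S$ the two localized functions have disjoint supports and zero covariance. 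Putting these together — $\vard(q(x,\cdot),\qbar(x,\cdot))$ is bounded by the total variation distance between the normalized versions of $g\cdot m$ and of $m$ (a tilt by $g$, controlled by (b) and (c)) plus that between the normalized versions of $m$ and of $\mbar$ (controlled by (a)), all divided by normalizers pinned between $\eta$ and $1$ — yields a polynomial bound in $\abs{x}$; the exponent $\momp/6$ in the statement is what this accounting produces with a deliberately non-optimal choice of the auxiliary scales $R,S$ and of the split level $\abs{x}/2$.

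The step I expect to be the real obstacle is (c): a \emph{quantitative} spatial decorrelation of $P^\w_0\{\beta=\infty\}$. The difficulty is that this quantity is not a local function of the environment — a walk conditioned never to backtrack can still drift far in the transverse directions while remaining at low levels — so it cannot be localized for free; one must invoke the moment hypothesis \mom, through the regeneration-time and backtracking tail bounds of Lemma~\ref{exponential}, precisely to show that such transverse excursions before the walk has climbed a comparable number of levels are rare, and only then localize. Once (c) is available, the path-intersection estimate (a) and the bookkeeping of the three mismatched normalizers are routine given Lemmas~\ref{common-beta-lemma} and~\ref{mu-lemma1}.
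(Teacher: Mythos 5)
Your route is genuinely different from the paper's, which never passes through a total-variation estimate. The paper builds an explicit coupling of three walks $(X,\Xtil,\Xbar)$ on one probability space: $\Xbar$ starts identical to $\Xtil$ but reads an independent environment $\ombar$ on the sites of $X$'s path (and $\w$ elsewhere), so that $(X,\Xbar)$ is a pair of independent walks while $\Xbar=\Xtil$ persists until $\Xbar$ first hits $X$'s trace. The uncoupling event is then literally a path intersection before the joint regeneration times, bounded by Lemma~\ref{mu-lemma1}; the conditioning on $\{\beta=\betil=\infty\}$ is handled by running an i.i.d.\ sequence of such triples and accepting the first non-backtracking one, with a Cauchy--Schwarz at that stage converting $\momp/3$ into $\momp/6$. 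You instead factor each transition kernel into a finite-path weight ($m$ or $\mbar$) times the never-backtrack factor $g(y)=P_{0,y}\{\beta=\betil=\infty\}$ and compare the factors.

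Your decomposition $q\propto g\cdot m$, $\qbar\propto\kappa^2\cdot\mbar$ is correct, and (a) and (b) are sound given Lemmas~\ref{common-beta-lemma} and~\ref{mu-lemma1}, which pin the normalizers between $\eta$ and $\eta^{-1}$. As you anticipate, the real new ingredient is (c), the quantitative decorrelation $\abs{g(y)-\kappa^2}\le C\abs{y}^{-c}$ --- and this is precisely what the paper's construction is engineered to avoid, since giving $\Xbar$ its own environment on $X$'s path makes the needed independence \emph{exact} rather than something to be extracted from the mixing of $\P$. Your sketch of (c) is on the right track but omits one of the two error terms in the localization: besides $\{T_S<\gamma_R\}$ (exit $\cB(S)$ before reaching level $R$, handled via \eqref{tau-bound}), the difference $P^\w_0\{\beta=\infty\}-f_{R,S}$ also picks up the event $\{\gamma_R<\beta<\infty\}$, i.e.\ reach level $R$ without backtracking but ultimately backtrack. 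This second term must be bounded separately, and it is where Hypothesis \mom enters directly: if $\tau_1\le\gamma_R<\beta$ then the walk never drops below $X_{\tau_1}\cdot\uhat>0$ after $\tau_1$, contradicting $\beta<\infty$, so $\{\gamma_R<\beta<\infty\}\subset\{\tau_1>\gamma_R\}\subset\{\tau_1>R/(\M\lvert\uhat\rvert)\}$, whose $P_0$-probability is $\le CR^{-\momp}$. With that addition the decorrelation closes and the remaining bookkeeping you outline is routine. Net assessment: your approach can be made to work, but only after proving (c), which is a genuine extra lemma; the paper's coupling is shorter and more self-contained because it builds the independence into the process rather than deriving it from spatial decorrelation of the non-local observable $P^\w_0\{\beta=\infty\}$.
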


\begin{proof} We start by constructing a coupling of three walks $(X,\Xtil,\Xbar)$
such that the pair $(X,\Xtil)$ has distribution $P_{x,y}$ and
 the pair $(X,\Xbar)$ has distribution $P_{x}\otimes P_y$.

First let $(X,\Xtil)$ be two independent walks in a common environment
$\w$ as before. Let $\ombar$ be an environment independent
of $\w$.   Define the walk $\Xbar$ as follows.
Initially $\Xbar_0=\Xtil_0$.
On the sites $\{X_k:0\le k<\infty\}$ $\Xbar$ obeys environment
$\ombar$, and on all other sites $\Xbar$ obeys $\omega$.
$\Xbar$ is coupled to agree with $\Xtil$ until the time
\[
T=\inf\{ n\ge 0: \Xbar_n\in \{X_k:0\le k<\infty\}\,\}
\]
it hits the path of $X$.

The coupling between $\Xbar$ and $\Xtil$  can be achieved simply as
follows. Given $\w$ and $\ombar$,
  for each $x$ create two independent  i.i.d.~sequences
$(z^x_k)_{k\ge 1}$  and $(\zbar^x_k)_{k\ge 1}$ with distributions
\[
Q^{\w,\ombar}\{z^x_k=y\}=\pi_{x,x+y}(\w)
\quad\text{and}\quad
Q^{\w,\ombar}\{\zbar^x_k=y\}=\pi_{x,x+y}(\ombar).
\]
Do this independently at each $x$.
Each time the $\Xtil$-walk visits state $x$,
it uses a new $z^x_k$ variable as its next step, and never reuses the same
$z^x_k$ again.  The $\Xbar$ walk operates the same way except that
it uses the variables $\zbar^x_k$ when $x\in\{X_k\}$ and
the $z^x_k$ variables when  $x\notin\{X_k\}$.  Now $\Xbar$ and $\Xtil$
follow the same steps $z^x_k$ until $\Xbar$ hits the set $\{X_k\}$.

It is intuitively obvious that the walks $X$ and $\Xbar$ are
independent because they never use the same environment.
The following calculation verifies this.
Let $X_0=x_0=x$ and $\Xtil=\Xbar=y_0=y$ be the initial states, and
$\bfP_{x,y}$  the joint measure created by the coupling.
Fix finite vectors $x_{0,n}=(x_0,\dotsc,x_n)$ and
$y_{0,n}=(y_0,\dotsc,y_n)$ and recall also the notation
 $X_{0,n}=(X_0,\dotsc,X_n)$.
The description of the coupling tells us to start as follows.
\begin{align*}
&\bfP_{x,y}\{X_{0,n}=x_{0,n}, \Xbar_{0,n}=y_{0,n}\}\\
&=\int\P(d\w)\int\P(d\ombar) \int P_x^\w(d{z_{0,\infty}})
\one\{z_{0,n}=x_{0,n}\} \\
&\qquad \times
\!\!\prod_{i: y_i\notin\{z_k:\,0\le k<\infty\}}\!\!\!\pi_{y_i,y_{i+1}}(\w)
\prod_{i: y_i\in\{z_k:\,0\le k<\infty\}}\!\!\!\pi_{y_i,y_{i+1}}(\ombar)\\
\intertext{[by dominated convergence]}
&=\lim_{N\to\infty}\int\P(d\w)\int\P(d\ombar) \int P_x^\w(dz_{0,N})
\,\one\{z_{0,n}=x_{0,n}\} \\
&\qquad \times
\!\prod_{i: y_i\notin\{z_k:\,0\le k\le N\}}\!\!\!\pi_{y_i,y_{i+1}}(\w)
\prod_{i: y_i\in\{z_k:\,0\le k\le N\}}\!\!\!\pi_{y_i,y_{i+1}}(\ombar)\\
&=\lim_{N\to\infty} \sum_{z_{0,N}: z_{0,n}=x_{0,n}}
\int\P(d\w) \, P_x^\w[X_{0,N}=z_{0,N}]
\!\!\prod_{i: y_i\notin\{z_k:\,0\le k\le N\}}\!\!\!\pi_{y_i,y_{i+1}}(\w)\\
&\qquad \times
\int\P(d\ombar)\!\!
\prod_{i: y_i\in\{z_k:\,0\le k\le N\}}\!\!\!\pi_{y_i,y_{i+1}}(\ombar)\\
\intertext{[by independence of the two functions of $\w$]}
&=\lim_{N\to\infty} \sum_{z_{0,N}: z_{0,n}=x_{0,n}}
\int\P(d\w) \, P_x^\w\{X_{0,N}=z_{0,N}\}\\
&\qquad\times
\int \P(d\w)
\prod_{i: y_i\notin\{z_k:\,0\le k\le N\}}\pi_{y_i,y_{i+1}}(\w)
\int\P(d\ombar)
\!\!\prod_{i: y_i\in\{z_k:\,0\le k\le N\}}\!\!\!\pi_{y_i,y_{i+1}}(\ombar)\\
&=P_x\{X_{0,n}=x_{0,n}\}\, P_y\{X_{0,n}=y_{0,n}\}.
\end{align*}

Thus at this point the coupled pairs $(X,\Xtil)$ and $(X,\Xbar)$
have the desired marginals $P_{x,y}$ and $P_x\otimes P_y$.

Construct the joint regeneration
times $(\mu_1,\mutil_1)$ for  $(X,\Xtil)$ and
$(\rho_1,\rhobar_1)$ for  $(X,\Xbar)$ by the earlier
recipes.
Define  two pairs of walks stopped at their
joint regeneration times:
\beq
(\Gamma,\Gammabar)\equiv\bigl( (X_{0,\,\mu_1},\Xtil_{0,\,\mutil_1}),
(X_{0,\,\rho_1},\Xbar_{0,\,\rhobar_1})\bigr).
\label{defGaGa}
\eeq

Suppose the sets $X_{[0,\,\mu_1\vee\rho_1)}$ and
$\Xtil_{[0,\,\mutil_1\vee\rhobar_1)}$ do not intersect. Then the
construction implies that the path $\Xbar_{0,\,\mutil_1\vee\rhobar_1}$
 agrees with
$\Xtil_{0,\,\mutil_1\vee\rhobar_1}$, and this forces the equalities
$(\mu_1,\mutil_1)=(\rho_1,\rhobar_1)$
 and $(X_{\mu_1},\Xtil_{\mutil_1})=(X_{\rho_1},\Xbar_{\rhobar_1})$.
We insert an estimate on this event.

\begin{lemma}
For $x\ne y$ in $\V_d$,
\beq
P_{x,y}\{X_{[0,\,\mu_1\vee\rho_1)}\cap
\Xtil_{[0,\,\mutil_1\vee\rhobar_1)}\ne\emptyset\}\le
C|x-y|^{-\momp/3}.
\label{capbound}
\eeq
\end{lemma}

\begin{proof}
Write
\begin{align*}
P_{x,y}\{X_{[0,\,\mu_1\vee\rho_1)}\cap\Xtil_{[0,\,\mutil_1\vee\rhobar_1)}\ne\emptyset\}
&\le P_{x,y}\{\mu_1\vee\mutil_1\vee\rho_1\vee\rhobar_1>|x-y|/2\M\}.
\end{align*}
The conclusion follows from \eqref{mu-bound}, extended to cover also 
$(\rho_1,\rhobar_1)$.
\end{proof}

From \eqref{capbound}
 we obtain
\beq
\bfP_{x,y}\bigl\{\,(X_{\mu_1},\Xtil_{\mutil_1})\ne
(X_{\rho_1},\Xbar_{\rhobar_1})\,\bigr\}
\le  \bfP_{x,y}\bigl\{\,\Gamma\ne\Gammabar\bigr\}
\le C\abs{x-y}^{-p_0/3}.
\label{XXtil-goal5}
\eeq

  But we are not finished yet. To represent 
the transitions $q$ and $\qbar$ we must also 
 include the conditioning on no backtracking.
For this generate an i.i.d.~sequence
$(X^{(m)},\Xtil^{(m)},\Xbar^{(m)})_{m\ge 1}$, each triple constructed
as $(X,\Xtil,\Xbar)$  above.  Continue to write $\bfP_{x,y}$ for  the
probability measure of the entire sequence.    Let also again
\[\Gamma^{(m)}=
(X^{(m)}_{0\,,\,\mu^{(m)}_1},\Xtil^{(m)}_{0\,,\,\mutil^{(m)}_1})
 \quad\text{and}\quad
\Gammabar^{(m)}=
(X^{(m)}_{0\,,\,\rho^{(m)}_1},\Xbar^{(m)}_{0\,,\,\rhobar^{(m)}_1})\]
  be the
pairs of paths
run up to  their joint regeneration times.

 Let $M$ be the first $m$ such that
the paths  $(X^{(m)},\Xtil^{(m)})$ do not backtrack,
which means that
\[
\text{$X^{(m)}_k\cdot\uhat\ge X^{(m)}_0\cdot\uhat$
and
$\Xtil^{(m)}_k\cdot\uhat\ge \Xtil^{(m)}_0\cdot\uhat$ for all $k\ge 1$.}
\]
Similarly define  $\Mbar$ for $(X^{(m)},\Xbar^{(m)})_{m\ge 1}$.  Both $M$ and
$\Mbar$ are stochastically bounded by geometric random variables
 by \eqref{common-beta}.

The pair of walks
$(X^{(M)},\Xtil^{(M)})$ is now
distributed as a pair of walks under the measure
$P_{x,y}\{\,\cdot\,\vert \beta=\tilde\beta=\infty\}$,
while
$(X^{(\Mbar)},\Xbar^{(\Mbar)})$
 is distributed as a pair of walks under
$P_{x}\otimes P_y\{\,\cdot\,\vert \beta=\betabar=\infty\}$.
Consider  the two pairs
of paths $(\Gamma^{(M)}, \Gammabar^{(\Mbar)})$
chosen by the random indices $(M,\Mbar)$.
We insert one more lemma.

\begin{lemma}
For $x\ne y$ in $\V_d$,
\beq
\bfP_{x,y}\bigl\{\,\Gamma^{(M)}\ne\Gammabar^{(\Mbar)}\bigr\}
\le C\abs{x-y}^{-\momp/6}.
\label{XXtil-goal7}
\eeq
\end{lemma}
\begin{proof}
Let $\cA_m$ be the event that
the walks $\Xtil^{(m)}$ and $\Xbar^{(m)}$ agree up to the maximum
$\mutil^{(m)}_1\vee\rhobar^{(m)}_1$ of their regeneration times.
The equalities $M=\Mbar$ and
$\Gamma^{(M)}=\Gammabar^{(\Mbar)}$ are a consequence
of the event 
\[\{\cA_1\cap\dotsm\cap \cA_M\}=\bigcup_{m\ge 1}\{M=m\}
\cap\cA_1\cap\dotsm\cap \cA_m ,\]
 for the following reason.
As pointed out earlier, on the event $\cA_m$  we have the equality
of the regeneration times  $\mutil^{(m)}_1=\rhobar^{(m)}_1$
and of the  stopped paths
$\Xtil^{(m)}_{0\,,\,\mutil^{(m)}_1}=
\Xbar^{(m)}_{0\,,\,\rhobar^{(m)}_1}$.  By definition, these walks
do not backtrack after the regeneration time.
  Since the walks  $\Xtil^{(m)}$ and $\Xbar^{(m)}$ agree
up to this time, they must backtrack or fail to backtrack
together.  If this is true for
each $m=1,\dotsc,M$, it forces  $\Mbar=M$, since the other factor
in deciding  $M$ and $\Mbar$ are the paths $X^{(m)}$ that are common
to both.   And since the paths agree up to the regeneration times,
we have  $\Gamma^{(M)}=\Gammabar^{(\Mbar)}$.

Estimate \eqref{XXtil-goal7}  follows:
\begin{align*}
&\bfP_{x,y}\bigl\{\,\Gamma^{(M)}\ne\Gammabar^{(\Mbar)}\,\bigr\}
\le \bfP_{x,y}\bigl\{\,\cA_1^c\cup\dotsm\cup \cA_M^c\,\bigr\}\\
&\le \sum_{m=1}^\infty \bfP_{x,y}\{M\ge m,\, \cA_m^c\,\}
\le \sum_{m=1}^\infty \bigl(\bfP_{x,y}\{M\ge m\}\bigr)^{1/2}
\bigl(\bfP_{x,y}( \cA_m^c)\bigr)^{1/2}\\
&\le C|x-y|^{-\momp/6}.
\end{align*}
The last step comes from the estimate in \eqref{capbound}
for each $\cA_m^c$ and the geometric bound on $M$.
\end{proof}

We are ready to finish the proof of Proposition
\ref{qqbarprop4}.
To create initial conditions
 $Y_0=\Ybar_0=x$ let the walks start at 
 $(X^{(m)}_0,\Xtil^{(m)}_0)=(X^{(m)}_0,\Xbar^{(m)}_0)=(0,x)$.
 Let the final outcome of
the coupling be the pair
\[
(Y_1,\Ybar_1)=\bigl( \Xtil^{(M)}_{\mutil^{(M)}_1} \;-\;
X^{(M)}_{\mu^{(M)}_1}\,,\,
\Xbar^{(\Mbar)}_{\rhobar^{(\Mbar)}_1} \;-\;
X^{(\Mbar)}_{\rho^{(\Mbar)}_1}\bigr)
\]
under the measure $\bfP_{0,x}$. The marginal distributions
of $Y_1$ and $\Ybar_1$ are correct
[namely, given by the transitions
\eqref{defqxy} and  \eqref{defqbar}]  because, as argued above,
the pairs of walks themselves have the right marginal distributions.
The event $\Gamma^{(M)}=\Gammabar^{(\Mbar)}$ implies
$Y_1=\Ybar_1$, so
estimate \eqref{XXtil-goal7} gives the bound claimed in
Proposition \ref{qqbarprop4}.
 \end{proof}

The construction of the Markov chain is complete, and we return to
the main development of the proof.  It remains to prove a sublinear
bound on the expected number
$E_{0,0}\lvert X_{[0,n)}\cap \Xtil_{[0,n)}\rvert $
of common points of two independent walks in a common environment.
Utilizing the joint regeneration times,  write
\beq
E_{0,0}\lvert X_{[0,n)}\cap \Xtil_{[0,n)}\rvert
\le\sum_{i=0}^{n-1} E_{0,0}\lvert X_{[\mu_i,\mu_{i+1})}
\cap \Xtil_{[\mutil_i,\mutil_{i+1})}\rvert.
\label{capbd7}\eeq

The term $i=0$ is a finite constant by bound \eqref{mu-bound}
 because the number of common points is
bounded by the number $\mu_1$ of steps.  
For each  $0<i<n$ 
apply a  decomposition
into pairs of paths from $(0,0)$ 
to given points $(x_1,y_1)$   in the style of \eqref{pathdecomp1}: 
$(\gamma,\gatil)$ are the pairs of paths with the property that 
\begin{align*}
&\bigcup_{(\gamma,\gatil)}
\{X_{0,n(\gamma)}=\gamma,\, \Xtil_{0,n(\gatil)}=\gatil,\,
\beta\circ\theta^{n(\gamma)}=
\tilde\beta\circ\theta^{n(\gatil)}=\infty\}
=\{ X_0=\Xtil_0=0,\,  X_{\mu_i}=x_1,\, \Xtil_{\mutil_i}=y_1\}.
\end{align*}
Each term $i>0$ in \eqref{capbd7} we rearrange as follows. 
\begin{align*}
&E_{0,0}\lvert X_{[\mu_i,\mu_{i+1})}
\cap \Xtil_{[\mutil_i,\mutil_{i+1})}\rvert\\
&=\sum_{x_1,y_1}\sum_{(\gamma,\gatil)} 
P_{0,0}\{ X_{0,n(\gamma)}=\gamma,\, \Xtil_{0,n(\gatil)}=\gatil\} 
E_{x_1,y_1}[\one\{\beta=\tilde{\beta}=\infty\}
 \lvert X_{[0\,,\, \mu_{1})}
\cap \Xtil_{[0\,,\,\mutil_{1})}\rvert\,]\\
&=\sum_{x_1,y_1}\sum_{(\gamma,\gatil)}
P_{0,0}\{ X_{0,n(\gamma)}=\gamma,\, \Xtil_{0,n(\gatil)}=\gatil\}
P_{x_1,y_1}\{\beta=\tilde{\beta}=\infty\} 
E_{x_1,y_1}[\, 
 \lvert X_{[0\,,\,\mu_1)}
\cap \Xtil_{[0\,,\,\mutil_1)}\rvert  \,\vert\,
\beta=\tilde{\beta}=\infty\,]\\
&=\sum_{x_1,y_1}
P_{0,0}\{ X_{\mu_i}=x_1,\, \Xtil_{\mutil_i}=y_1\}
 E_{x_1,y_1}[\, 
 \lvert X_{[0\,,\,\mu_1)}
\cap \Xtil_{[0\,,\,\mutil_1)}\rvert  \,\vert\,
\beta=\tilde{\beta}=\infty\,].
\end{align*}
We have used the product structure of $\P$ in the first and and last equalities.
The last conditional expectation above is handled by estimates
\eqref{common-beta}, \eqref{mu-bound}, \eqref{capbound}
  and Schwarz inequality:
\begin{align*}
&E_{x_1,y_1}[\, 
 \lvert X_{[0\,,\,\mu_1)}
\cap \Xtil_{[0\,,\,\mutil_1)}\rvert  \,\vert\,
\beta=\tilde{\beta}=\infty\,]\le \eta^{-1}  E_{x_1,y_1}[\, 
 \lvert X_{[0\,,\,\mu_1)}
\cap \Xtil_{[0\,,\,\mutil_1)}\rvert\,]\\
&\qquad \le \eta^{-1}  E_{x_1,y_1}[\mu_1\cdot \one\{
X_{[0\,,\,\mu_1)}\cap \Xtil_{[0\,,\,\mutil_1)}\ne\emptyset\}\,]\\
&\qquad\le \eta^{-1}  \bigl(E_{x_1,y_1}[\mu_1^2]\bigr)^{1/2} 
\bigl( P_{x_1,y_1}\{
X_{[0\,,\,\mu_1)}\cap \Xtil_{[0\,,\,\mutil_1)}\ne\emptyset\}\,\bigr)^{1/2}\\
&\qquad\le C\bigl(1\vee\abs{x_1-y_1}\bigr)^{-\momp/6} \le h(x_1-y_1). 
\end{align*}
On the last line we defined  
\beq h(x)=C(\abs{x}\vee1)^{-\momp/6}. \label{defhfunction}\eeq
Insert the last bound back up, and appeal to the Markov property
established in  Proposition \ref{Ymcprop}:
\begin{align*}
E_{0,0}\lvert X_{[\mu_i,\mu_{i+1})}
\cap \Xtil_{[\mutil_i,\mutil_{i+1})}\rvert
&\le  E_{0,0} \bigl[h( \Xtil_{\mutil_i}-X_{\mu_i})\bigr]\\
&=\sum_{x} P_{0,0}\{\Xtil_{\mutil_1}-X_{\mu_1}=x\}
\sum_yq^{i-1}(x,y)h(y).
\end{align*}

In order to apply Theorem \ref{greenthm1} from Appendix \ref{greenapp},
 we check
its hypotheses in the next lemma.  Part \eqref{Yell5} of Hypothesis (\HR) 
enters
here crucially to guarantee that the transition $q$ has enough
irreducibility. 

\begin{lemma} The Markov chain $(Y_k)_{k\ge 0}$ with transition $q(x,y)$ 
and the symmetric random walk
$(\Ybar_k)_{k\ge 0}$ with transition $\qbar(x,y)$ 
satisfy assumptions {\rm (A.i)}, {\rm (A.ii)},  {\rm (A.iii)}
 and  {\rm (A.iv)}
stated in the beginning of Appendix \ref{greenapp}. To
ensure that $p_1>15$ as required by  {\rm (A.iv)}, we assume
$\momp> 90$.
\label{Yapplm1}\end{lemma}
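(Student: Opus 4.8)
Lemma \ref{Yapplm1} is a verification statement: we must check the four hypotheses (A.i)--(A.iv) of Appendix \ref{greenapp} for the pair $(Y,\Ybar)$, and the plan is to observe that three of them follow quickly from results already in hand, leaving one genuinely laborious point. Write $\cV\subseteq\V_d$ for the additive group generated by $\{\,y:q(0,y)>0\,\}$. Symmetry of $\qbar$ is Proposition \ref{Ybarprop2}, and by Lemma \ref{qqbarlm3} every transition of $q$ is also a transition of the translation-invariant walk $\qbar$, so once $q$ is shown to be irreducible and aperiodic on $\cV$ (with $\cV$ of full rank $d-1$) the same properties pass to $\qbar$; this gives (A.i). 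For the moment bounds (A.ii), the bounded-step Hypothesis (\HM) makes the single-step displacements of both chains bounded by $\M(\mu_1+\mutil_1)$ respectively $\M(\rho_1+\rhobar_1)$, so \eqref{mu-bound} of Lemma \ref{mu-lemma1} (the construction of $(\rho_1,\rhobar_1)$ being identical to that of $(\mu_1,\mutil_1)$) gives uniformly bounded $p$-th moments for every $p<\momp/3$, which since $\momp>90$ is far more than needed. Finally, the coupling estimate (A.iii) is Proposition \ref{qqbarprop4}: it produces a coupling of $q(x,\cdot)$ and $\qbar(x,\cdot)$ from a common state $x$ with $P_{x,x}\{Y_1\ne\Ybar_1\}\le C|x|^{-\momp/6}$, so the coupling exponent is $p_1=\momp/6$, and $\momp>90$ forces $p_1>15$ as (A.iv) demands.

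The real content is the irreducibility and aperiodicity of $q$ on a rank-$(d-1)$ lattice $\cV$, and this is where Hypothesis (\HR) --- in particular the ``restricted path'' exclusion \eqref{Yell5} --- is used. To certify a transition $q(x,y)>0$ one exhibits a pair of finite lattice paths, one for $X$ and one for $\Xtil$ with prescribed relative starting position $x$, that do not intersect each other or themselves, that arrive simultaneously at a common fresh level $\ell$ at points whose difference is $y$, that backtrack just enough below the intermediate levels (but not below level $0$) so that \emph{no} level below $\ell$ can serve as a joint regeneration level, and that then regenerate jointly; since two independent walks in a common environment follow any prescribed non-self-intersecting path pair with positive $P_{0,0}^\w$-probability on a positive-$\P$-measure set of environments, and the no-backtracking event above level $\ell$ is independent of everything below, this forces $q(x,y)>0$. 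The machine for building such path pairs is precisely the one already assembled in the proof of Lemma \ref{common-beta-lemma} (which was written with this application in mind), combined with the level-accessibility facts of Lemmas \ref{level-aux-lemma-2}--\ref{level-aux-lemma-4}. Aperiodicity comes from realizing the zero increment, $q(x,x)>0$, by the same technique (using that a common first step has positive probability $\sum_z\E[\pi_{0,z}^2]>0$), while full rank of $\cV$ comes from realizing, for each generator $e_i$ of $\V_d$, a pair of increments differing by $e_i$ --- which is exactly the step where \eqref{Yell5} is invoked, to perturb one path by a single lattice vector at a visited site where the walk genuinely has more than one available move, without destroying the non-self-intersection or the joint-regeneration structure.

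The obstacle, and the reason the proof is a long enumeration, is that this last construction must be performed case by case according to the geometry of the admissible-step set $\cJ=\{z:\E\pi_{0,z}>0\}$ allowed by (\HR). One must branch on: whether the forward step supplied by transience \eqref{dir-trans} (some $z$ with $z\cdot\uhat>0$, $\E\pi_{0,z}>0$) is or is not collinear with a given non-collinear pair of admissible steps (the first clause of (\HR) guarantees such a pair but not which member is non-collinear with a forward step); whether the remaining generating directions of $\V_d$ are produced by ``sideways'' steps, by combinations of forward and backward steps as in the construction around \eqref{pointaway}, or only with the help of a holding step at a site where \eqref{Yell5} applies; and, in each configuration, where along the path pair the use of \eqref{Yell5} can safely be inserted. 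For every case one must check (a) that the two paths really avoid one another and themselves, and (b) that the chosen level $\ell$ is genuinely the \emph{first} joint regeneration level, so that the relative displacement produced is exactly $Y_1$ rather than a quantity already frozen at an earlier joint regeneration. I expect (b) to be the most delicate bookkeeping: it is exactly what the ``height $\ge\h$ per slab'' observation and the minimality arguments in the spirit of Lemmas \ref{level-aux-lemma-2}--\ref{level-aux-lemma-4} are for, and it has to be re-verified in each case.
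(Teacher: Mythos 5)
You have mislabeled the four assumptions from Appendix~\ref{greenapp}, and that shift has left the hardest one unaddressed. In the paper, (A.i) is the third-moment bound $E_0\lvert\Ybar_1\rvert^3<\infty$, (A.ii) is the exponential exit-time bound $\sup_{x\in[-r,r]^d}E_x(U_r)\le K^r$, (A.iii) is the degenerate-coordinate compatibility, and (A.iv) is the coupling estimate with $p_1>15$. Your ``(A.ii)'' argument via Lemma~\ref{mu-lemma1} and bounded steps in fact correctly delivers (A.i); your ``(A.iii)'' argument via Proposition~\ref{qqbarprop4} with $p_1=\momp/6$ correctly delivers (A.iv); and your citation of Lemma~\ref{qqbarlm3} is exactly what proves (A.iii). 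But (A.ii), the exponential bound on the exit time of the Markov chain $Y$ from $[-r,r]^d$, is never established; you treat it as a moment bound, which it is not.

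More importantly, ``irreducibility and aperiodicity of $q$ on a rank-$(d-1)$ lattice'' is the wrong target, and even if proved it would not give (A.ii), which is a quantitative estimate uniform in the cube size $r$. What the paper actually proves in Appendix~\ref{Yapp} are the three uniform lower bounds \eqref{Yline-1}--\eqref{Yline-3}: $P_0\{Y_1\ne0\}\ge\delta$; $\inf_{0<\lvert x\rvert\le L}P_x\{\lvert Y_1\rvert>L\}\ge\delta$; and $\inf_{\lvert x\rvert>L}\bigl(P_x\{Y_1=Y_0+\bhat\}\wedge P_x\{Y_1=Y_0-\bhat\}\bigr)\ge\delta$ for a fixed $\bhat\ne 0$. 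These are then chained: any $x\in[-r,r]^d$ can be taken out of the cube in at most $2r$ such moves, each of probability $\ge\delta$, giving $K^r$ with $K=\delta^{-2}$. The third estimate is qualitatively stronger than mere positivity of $q(x,y)$ — it requires that the same probability bound hold for \emph{all} $\lvert x\rvert>L$, which is achieved precisely because the constructed path pair is translation-invariant for large $\lvert x\rvert$. You also insist on full rank of the increment lattice $\cV$ and on aperiodicity via $q(x,x)>0$, but neither is required: Theorem~\ref{greenthm1} is formulated so that degenerate coordinates are projected away using (A.iii) (that is the sole purpose of (A.iii)), and the paper's discussion surrounding \eqref{spitz-1} explicitly notes that aperiodicity of $\Ybar$ is not needed for the Spitzer bounds. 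The case enumeration you anticipate is indeed the bulk of Appendix~\ref{Yapp}, but its goal is the uniform bound \eqref{Yline-3}, not irreducibility.
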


\begin{proof} 
From \eqref{mu-bound7} and Hypothesis (\HM)
 we get moment bounds
\[E_{0,x}\lvert \Xbar_{\rhobar_k}\rvert^p
\;+\; E_{0,x}\lvert X_{\rho_k}\rvert^p <\infty 
\]
for $p<\momp/3$. With $\momp>9$ this gives assumption (A.i), namely 
that $E_0\lvert \Ybar_1\rvert^3<\infty$. [Lemma
\ref{mu-lemma1} is applied here to $(X,\Xbar)$ even though we
wrote the proof only for $(X,\Xtil)$.]  
  Assumption (A.iii) 
comes from Lemma \ref{qqbarlm3}.  Assumption (A.iv) comes
from Proposition \ref{qqbarprop4}.   

The only part that needs work is assumption (A.ii).
The required exponential exit time bound is achieved through
a combination of the following three steps, for constants $\delta>0$,
 $L>0$
and a fixed vector $\bhat\ne 0$:
\begin{align}
&P_0[Y_1\ne 0]\ge\delta,\label{Yline-1}\\[5pt]
\inf_{0<\abs{x}\le L}
&P_x[\,\lvert Y_1\rvert > L\,]\ge\delta,\label{Yline-2}\\ 
\text{and }\ \inf_{\abs{x}> L}
&\Bigl\{ P_x[\, Y_1 =Y_0+\bhat\,]\,\wedge\,P_x[\, Y_1 =Y_0-\bhat\,]\Bigr\}
\ge\delta.\label{Yline-3}
\end{align}
Given any initial state $x$ contained in a cube $[-r,r]^d$, 
there is a sequence of at most $2r$ steps of the types 
covered by the above estimates  that takes the 
chain $Y$ outside the cube, and this sequence of steps is taken
with probability at least $\delta^{2r}$. Thus the exit time from
the cube is dominated by $2r$ times a geometric random variable with mean 
$\delta^{-2r}$.

To prove \eqref{Yline-1}--\eqref{Yline-3} we make use of 
\beq
P_x[Y_1=z]\ge P_{0,x}\{\beta=\tilde\beta=\infty,\,
 \Xtil_{\mutil_1}=y+z,\, X_{\mu_1}=y\}
\label{YXaux1}\eeq
which is a consequence of the definition of the transition 
\eqref{defqxy} and valid for all $x,y,z$.  To this end we 
construct suitable paths for the $X$ and $\Xtil$ walks with
positive probabilities. 
We carry out the rest of the  proof in 
Appendix \ref{Yapp} because this requires a fairly tedious 
cataloguing of cases. 
\end{proof}  

Appendix \ref{greenapp} also requires $0\le h(x)\le C(1\vee\abs{x})^{-p_2}$
for $p_2>0$.  This we have without further requirements on $\momp$.  
Now that  the assumptions have been checked, Theorem  \ref{greenthm1}
gives constants  $0<C<\infty$ and 
$0<\eta<1/2$ 
such that 
\[
\sum_{i=1}^{n-1} \sum_yq^{i-1}(x,y)h(y) \le Cn^{1-\eta}
\quad\text{ for all $x\in\mathbb{V}_d$ and $n\ge 1$.}
\]
Going back to \eqref{capbd7} and collecting the bounds along
the way gives the final estimate 
\[
E_{0,0}\lvert X_{[0,n)}\cap \Xtil_{[0,n)}\rvert \le C_p n^{1-\eta}
\]
for all $n\ge 1$. Taking $p$ large enough, $1-\eta$ can be made as close
as desired to $1/2$. 
This is \eqref{cond3} which was earlier shown to imply 
condition \eqref{cond} required by Theorem \ref{RS}. 
Previous work in Sections \ref{prelim} and \ref{substitution}
convert the CLT from Theorem \ref{RS} into the main result
Theorem \ref{main}.  The entire proof is complete, except for
the Green function estimate furnished by  Appendix \ref{greenapp}
and the remainder of the proof of Lemma \ref{Yapplm1} in Appendix
\ref{Yapp}.  

\appendix


\section{A Green function estimate}
\label{greenapp} 
This appendix can be read independently of the rest of the paper. 
Let us write a $d$-vector
in terms of coordinates  as $x=(x^1,\dotsc,x^d)$,
and similarly for random vectors  $X=(X^1,\dotsc,X^d)$. 

Let $\group$ be some subgroup of $\Z^d$. 
Let $Y=(Y_k)_{k\ge 0}$ be a Markov chain  on $\group$  with 
transition probability $q(x,y)$, and let  $\Ybar=(\Ybar_k)_{k\ge 0}$ be a
 symmetric  random walk  on $\group$ with 
transition probability $\qbar(x,y)=\qbar(y,x)=\qbar(0,y-x)$. 
Make the following assumptions.

\smallskip

(A.i) A finite third moment for the random walk: 
$E_0\lvert \Ybar_1\rvert^3<\infty$. 

\smallskip

(A.ii) Let $U_r=\inf\{n\ge 0: Y_n\notin [-r,r]^d\}$ be the 
exit time from a centered  cube of side length $2r+1$ for the 
Markov chain $Y$.
Then there is a constant $0<K<\infty$ such that 
\beq
\sup_{x\in [-r,r]^d}  E_x(U_r)\le K^r \quad\text{for all $r\ge 1$}.
\label{Y-ell-ass}
\eeq

\smallskip

(A.iii) For every $i\in\{1,\dotsc,d\}$, 
if  the one-dimensional random walk 
$\Ybar^i$ is degenerate in the sense that $\qbar(0,y)=0$ for $y^i\ne 0$,
then so is the process  $Y^i$
in the sense that $q(x,y)=0$ whenever $x^i\ne y^i$.  In other words, 
any coordinate that can move in the $Y$ chain somewhere in space
can also move in the $\Ybar$ walk.

\smallskip

(A.iv) For any initial state $x\ne0$  the transitions 
$q$ and $\qbar$ can be coupled  so that 
\beq
P_{x,x}\{Y_1\ne\Ybar_1\}\le C{\lvert x\rvert}^{-p_1}
\label{couplass}\eeq
where $0<C,p_1<\infty$ are constants independent of $x$ and
 $p_1 > 15$. 

\smallskip

Let $h$ be a function on $\group$
 such that $0\le h(x)\le C{(\lvert x\rvert\vee1)}^{-p_2}$ for constants 
$0<C, p_2<\infty$. 
This section is devoted
to proving the following Green function bound on the Markov chain. 

\begin{theorem} There are constants $0<C,\eta<\infty$ such 
that  
\[
\sum_{k=0}^{n-1} E_zh(Y_k) = \sum_y h(y) \sum_{k=0}^{n-1} P_z\{Y_k=y\}   
\le Cn^{1-\eta}
\]
for all $n\ge 1$ and $z\in\group$. 
If $p_1$ and $p_2$ can be taken arbitrarily large, 
then  $1-\eta$ can be taken arbitrarily close
to  (but still strictly
above) $1/2$.  
\label{greenthm1}
\end{theorem}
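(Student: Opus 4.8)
The plan is to reduce the Green function bound to an estimate on the time $Y$ spends near the origin, to control that quantity for the symmetric walk $\Ybar$ via a local limit theorem, and then to transfer the estimate to $Y$ through a multiscale coupling driven by hypothesis (A.iv).

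First, since $0\le h(x)\le C(\lvert x\rvert\vee 1)^{-p_2}$, for every radius $R\ge 1$ one has $h(x)\le C\,\ind\{\lvert x\rvert\le R\}+CR^{-p_2}$, hence
\[
\sum_{k=0}^{n-1}E_zh(Y_k)\ \le\ C\,S_n(R)\ +\ C\,nR^{-p_2},\qquad
S_n(R):=\sup_{z\in\group}\ \sum_{k=0}^{n-1}P_z\{\lvert Y_k\rvert\le R\}.
\]
It therefore suffices to prove a bound of the shape $S_n(R)\le CR^{c}\,n^{1/2}$ (up to logarithmic factors) for a fixed exponent $c$, and then to pick $R$ equal to a small power of $n$ balancing $S_n(R)$ against $nR^{-p_2}$; the gap between $1/2$ and $1-\eta$ arises precisely from this optimization, with $\eta$ depending on $p_1,p_2$ and tending to $0$ as $p_1,p_2\to\infty$.

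Second, I would control $\Ybar$. By (A.iii) every coordinate along which $\Ybar$ is frozen is also frozen for $Y$, so after projecting these away we may assume $\Ybar$ is nondegenerate in each of its $d'$ coordinates. If $d'\ge 3$ the walk is transient, $\sum_{k\ge 0}P_z\{\lvert\Ybar_k\rvert\le R\}\le CR^{d'}$ uniformly in $n$, and the theorem follows immediately after the transfer step with $\eta$ close to $1$; so assume $d'\in\{1,2\}$. Using (A.i) and a Berry--Esseen local limit theorem, $P_z\{\Ybar_k=y\}\le Ck^{-d'/2}$ with the Gaussian profile, which yields $\sup_z\sum_{k=0}^{n-1}P_z\{\lvert\Ybar_k\rvert\le R\}\le CR(R+\sqrt n)$ (the case $d'=1$ being the worst), together with the companion fact that the expected number of excursions $\Ybar$ makes between $B(\rho)$ and the complement of $B(2\rho)$ before time $n$ is $O(\sqrt n/\rho)$.

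Third, and this is the heart of the matter, I would transfer these bounds to $Y$ by a \emph{multiscale coupling}. Fix dyadic radii $\rho_j=2^j$ with $R\le\rho_j\le R_0$, the innermost scale being $R$ (or a constant) and the outermost $R_0$ being a small power of $n$. Whenever $Y$ sits at a point $x$ with $\lvert x\rvert>\rho_j$ we run the coupling of (A.iv) against a freshly started symmetric copy $\Ybar$, restarting it each time $Y$ re-enters $B(\rho_j)$; since each step then fails to agree with probability at most $C\lvert x\rvert^{-p_1}\le C\rho_j^{-p_1}$, over $[0,n)$ the path $Y$ agrees outside $B(\rho_j)$ with a symmetric random walk except on an event of probability $\le C\rho_j^{-p_1}$ times the number of excursions at scale $j$ (at the coarsest scale this is $\le CnR_0^{-p_1}$). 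On the agreement event an excursion of $Y$ above level $\rho_j$ contributes no time in $B(R)$ when $R<\rho_j$, so occupation of $B(R)$ is built up only inside the nested interior regions; peeling off one dyadic annulus at a time, one bounds the occupation at scale $j$ in terms of that at scale $j-1$ using the $\Ybar$-excursion counts from the second step, and finally invokes the exponential exit-time bound (A.ii) only at the innermost (bounded, or $O(\log n)$) scale, where $K^{r}$ is still a fixed power of $n$. Summing the $O(\log n)$ scales together with the coupling-error terms and choosing $R_0$ a power of $n$ large enough that $n\cdot nR_0^{-p_1}\le n^{1-\eta}$ (possible because $p_1>15$) gives $S_n(R)\le CR^{c}n^{1/2+o(1)}$, and the optimization of the first paragraph finishes the proof.

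The main obstacle is exactly this last step: the coupling accuracy $\lvert x\rvert^{-p_1}$ is only polynomial, while the exit-time control (A.ii) is only exponential, so the scales $\rho_j$ (and in particular $R_0$) must be chosen to be polynomial in $n$ to make the accumulated coupling error beat $n^{1-\eta}$, yet the exponential exit bound can be afforded only at a logarithmic or constant scale; the dyadic peeling reconciles these two demands at the cost of an extra $\log n$ factor and the strengthened hypothesis $p_1>15$. The remaining items — bookkeeping for jump overshoots when $Y$ crosses a scale (handled with the second moment coming from (A.i)) and the reduction to genuinely nondegenerate $\Ybar$ via (A.iii) — are routine but must be carried along.
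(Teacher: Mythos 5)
Your proposal draws on the same toolkit as the paper --- couple $Y$ to $\Ybar$ via (A.iv), exploit one-dimensional symmetric random walk estimates for $\Ybar$, iterate over nested scales, and reserve the exponential bound (A.ii) for a small base scale --- but the quantitative accounting of coupling errors does not close, and this is where the real work of the argument lies. You charge a per-step failure rate $C\rho_j^{-p_1}$ outside $B(\rho_j)$ and conclude that the coupling survives $[0,n)$ up to an event of probability $Cn\rho_j^{-p_1}$. Taken at face value that union bound forces the innermost scale $\rho_0$ to be at least $n^{1/p_1}$, at which point (A.ii) gives the useless $K^{n^{1/p_1}}$; and if instead you try to weight the per-step rate by the time $Y$ actually spends in the annulus, you need the very occupation estimate you are trying to prove. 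The paper resolves this circularity in Lemma \ref{YYbarlm1}: the probability that the coupling breaks before a stopping time $\zeta$ of $\Ybar$ is bounded by $CE_x\bigl[\sum_{k<\zeta}\lvert\Ybar_k\rvert^{-p_1}\bigr]$, a quantity depending only on the \emph{random walk's} path, computed via Spitzer's half-line Green function to give $Cr_0^{2-p_1}$ per excursion (Lemma \ref{YY-aux-lm-1}). That extra factor $r_0^2$ and the per-excursion (rather than per-step) structure are exactly what let the multiscale iteration descend to a base scale of order $\log\log r$ rather than a polynomial in $n$.

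The ``peeling off one dyadic annulus at a time'' step is also left as a black box, and the paper's actual route is structurally different in a way worth noting. Rather than directly transporting $\Ybar$'s occupation measure to $Y$, the paper splits the task in two: (i) it upgrades (A.ii) to the polynomial bound $\sup_{x\in B_r}E_x[U_r]\le Cr^{13}$ (Lemmas \ref{YY-aux-lm-2}--\ref{YY-aux-lm-6}) via the iterated escape estimate with scales $r_k=r_0^{\gamma^k}$, $r_0\approx\log\log r$; and (ii) it shows the excursion durations of $Y$ \emph{outside} $B$ stochastically dominate i.i.d.\ variables with tail $\gtrsim a^{-1/2}$ (Lemma \ref{stoch-lm}), using the same coupling plus the classical first-passage tail for 1D symmetric walks. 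A renewal/Wald computation then combines (i) and (ii). Your local-limit bound $\sum_k P_z\{\lvert\Ybar_k\rvert\le R\}\lesssim R\sqrt n$ is a correct statement about $\Ybar$, but transferring it to $Y$ would in effect require both (i) and (ii) anyway; as written, the proposal contains neither, so there is a genuine gap at the heart of the argument.
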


Precisely speaking, the bound that emerges is 
\beq
\sum_{k=0}^{n-1} E_zh(Y_k) 
\le Cn^{\{1-p_2/(2p_1-4)\}\vee\{(1/2)+ 13/(2p_1-4)\}}.  
\label{appAfinal}\eeq 
The remainder of the section proves the theorem. 
Throughout $C$ will change value but $p_1, p_2$ remain
the constants in the assumptions above.  

For the proof we can assume that each coordinate walk $\Ybar^i$
$(1\le i\le d)$  is 
nondegenerate.  For if the random walk has a degenerate coordinate  $\Ybar^j$
then Assumption (A.iii) implies that also for the 
Markov chain $Y^j_n=Y^j_0$ for all times $n\ge 0$.  Then we can 
project everything onto the remaining $d-1$ coordinates.  
Given the starting point $z$ of Theorem   \ref{greenthm1} write the 
Markov chain as  $Y_n=(z^j, Y'_n)$ where  $Y'_n$ is 
the  $\Z^{d-1}$-valued  Markov chain with 
transition    $q'(x',y')=q((z_j,x'),(z_j,y'))$.  
Take the $(d-1)$-dimensional 
 random walk $\Ybar'_n=(Y^1_n,\dotsc,Y^{j-1}_n,Y^{j+1}_n, \dotsc, Y^d_n)$.
Replace $h$ with $h'(x')= h(z_j,x')$. 
All the assumptions continue to hold with the same constants
because $\lvert x'\rvert\le \lvert (z_j,x')\rvert$ and the 
exit time from a cube only concerns the nondegenerate coordinates. 
The  constants from the assumptions determine the constants of the
theorem. Consequently the estimate of the theorem follows with
constants that do not depend on the frozen coordinate $z^j$. 

We begin by
 discarding terms outside a cube of side $r=n^{\e_1}$ for a small $\e_1>0$
that will be specified at the end of the proof. 
For convenience, use below the $\ell^1$ norm $\lvert\,\cdot\,\rvert_1$
 on $\Z^d$ because its
values are integers.  
\begin{align*}
&\sum_{|y|_1> n^{\e_1}} h(y) \sum_{k=0}^{n-1} P_z\{Y_k=y\} 
\le\sum_{k=0}^{n-1} \,\sum_{j\ge  [n^{\e_1}]+1} Cj^{-p_2}  
P_z\{\lvert Y_k\rvert_1= j\}\\
&\le\sum_{k=0}^{n-1}\,  Cn^{-p_2\e_1}   \sum_{j\ge  [n^{\e_1}]+1}
P_z\{\lvert Y_k\rvert_1= j\} 
\le  Cn^{1-p_2\e_1}. 
\end{align*}

Let \[B=[-n^{\e_1}, n^{\e_1}]^d.\]
Since $h$ is bounded,   it now remains to show that 
\beq
\sum_{k=0}^{n-1} P_z\{Y_k\in B\} \le Cn^{1-\eta}. 
\label{goal-Y-1}
\eeq
For this we can assume $z\in B$ since accounting for the time 
to enter $B$ can only improve the estimate.

Bound \eqref{goal-Y-1}  will be achieved in two stages. 
First we improve the assumed 
exponential exit time bound  \eqref{Y-ell-ass}  to a polynomial
bound.    Second, we show that often
enough $Y$ 
 follows the random walk $\Ybar$ during its excursions outside $B$.
The random walk excursions are long and thereby we obtain \eqref{goal-Y-1}.
Thus our first task is to construct a suitable coupling
of $Y$ and $\Ybar$. 

\begin{lemma}  Let $\zeta=\inf\{n\ge 1: \Ybar_n\in A\}$ be the 
first entrance time of the random walk $\Ybar$ 
into some set $A\subseteq\group$.
Then we can couple the Markov chain $Y$ and the random walk
 $\Ybar$ so that 
\[
P_{x,x}\{\text{ $Y_k\ne\Ybar_k$ for some $1\le k\le \zeta$ }\}
\le C E_x \Bigl[\;\sum_{k=0}^{\zeta-1} {\lvert\Ybar_k\rvert}^{-p_1}\Bigr].
\]
\label{YYbarlm1}
\end{lemma}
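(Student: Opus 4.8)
The plan is to build the coupling step by step along the trajectory, using Assumption (A.iv) at each individual move and summing the resulting error probabilities over the excursion.

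First I would set up the coupling recursively. Start both processes at $Y_0=\Ybar_0=x$. Suppose the two processes have been coupled up through time $k-1$ and that they still agree, $Y_{k-1}=\Ybar_{k-1}=w$. If $w=0$, couple the next steps arbitrarily (this state is not visited before time $\zeta$ anyway when $0\in A$, and in general the bound only needs to control the event that they split strictly before $\zeta$, so we may restrict attention to $w\neq 0$). If $w\ne 0$, invoke Assumption (A.iv): the single-step transitions $q(w,\cdot)$ and $\qbar(w,\cdot)=\qbar(0,\cdot-w)$ can be coupled so that the probability they disagree is at most $C|w|^{-p_1}$. Use that coupling for the step from time $k-1$ to time $k$. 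Once the two processes ever disagree, let them evolve independently (or in any measurable way); this does not affect the event whose probability we are bounding. This construction defines a single probability measure $P_{x,x}$ under which $Y$ is a Markov chain with transition $q$ and $\Ybar$ is the symmetric random walk with transition $\qbar$, as required.

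Next I would estimate the failure probability. Let $N=\inf\{k\ge 1: Y_k\ne\Ybar_k\}$ be the first disagreement time. On the event $\{Y_{j}=\Ybar_{j}\text{ for }0\le j<k\}$, the conditional probability that the step at time $k$ produces a disagreement is at most $C|\Ybar_{k-1}|^{-p_1}$ by the construction above. Hence, by a union bound over $k=1,\dots,\zeta$ and the tower property,
\begin{align*}
P_{x,x}\{\text{$Y_k\ne\Ybar_k$ for some $1\le k\le\zeta$}\}
&=P_{x,x}\{N\le\zeta\}\\
&\le E_x\Bigl[\sum_{k=1}^{\zeta}\mathbf 1\{N\ge k\}\,\frac{C}{|\Ybar_{k-1}|^{p_1}}\Bigr]
\le C\,E_x\Bigl[\sum_{k=0}^{\zeta-1}|\Ybar_k|^{-p_1}\Bigr],
\end{align*}
where in the first inequality I bounded $\mathbf 1\{N= k\}$ (the event of splitting exactly at step $k$) by the conditional single-step disagreement probability given agreement up to $k-1$, and then dropped the indicator $\mathbf 1\{N\ge k\}\le 1$. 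Here $\zeta$ depends only on the $\Ybar$ path, so it is a stopping time for the sequence $(\Ybar_k)$ and the sum on the right is well-defined; note also that since $x\neq 0$ and disagreement is what we are tracking, the term $k=0$ contributes $|x|^{-p_1}$ which is harmless, and if $0\in A$ then $\Ybar_k\ne 0$ for $0\le k<\zeta$ so the summand never blows up.

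The only subtlety — and the one place a little care is needed — is making the recursive coupling a genuine joint law rather than a family of conditional descriptions: one should specify, e.g. via Kolmogorov extension on the path space $(\group\times\group)^{\N}$, a consistent family of finite-dimensional distributions in which the $k$-th coordinate transition is the (A.iv) coupling when the two processes currently agree at a nonzero site and an independent product otherwise. Once that is in place, the marginal-law verification (that $Y\sim q$-chain and $\Ybar\sim\qbar$-walk) is immediate because each prescribed transition has the correct marginals regardless of the current state, and the displayed union-bound estimate finishes the proof.
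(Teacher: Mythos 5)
Your proposal is correct and follows essentially the same approach as the paper: build the coupling by invoking Assumption (A.iv) at each step while the two processes agree, then sum the per-step disagreement probabilities over the excursion via a first-disagreement decomposition and the tower property. The paper's construction is just a more explicit version of your recursive one—it pre-assigns to each site $x$ an i.i.d.\ sequence of coupled jump pairs $(Z^x_k,\Zbar^x_k)$ and has each process read a fresh variable on each visit, which makes the correct marginals and the step-$k$ conditional disagreement bound $C|\Ybar_{k-1}|^{-p_1}$ immediate—but this is the same idea packaged concretely rather than through a Kolmogorov-extension argument.
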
 

The proof shows that the statement works also
if  $\zeta=\infty$ is possible, but we will not need this case.

\begin{proof} For each state $x$ create an i.i.d.~sequence
$(Z^x_k, \Zbar^x_k)_{k\ge 1}$ such that $Z^x_k$ has 
distribution $q(x,x+\,\cdot\,)$,  $\Zbar^x_k$ has 
distribution $\qbar(x,x+\,\cdot\,)=\qbar(0,\,\cdot\,)$, and 
each pair $(Z^x_k, \Zbar^x_k)$ is coupled so that 
$P(Z^x_k\ne \Zbar^x_k) \le C\lvert{x}\rvert^{-p_1}$.  
For distinct $x$ these sequences are independent. 

Construct the process $(Y_n,\Ybar_n)$ as follows: with 
counting measures 
\[L_n(x)=\sum_{k=0}^n \one\{Y_k=x\}
\quad\text{and}\quad 
 \Lbar_n(x)=\sum_{k=0}^n \one\{\Ybar_k=x\} \quad(n\ge 0) 
\]
and with initial point $(Y_0,\Ybar_0)$ given, define for $n\ge 1$ 
\[
Y_n=Y_{n-1}+ Z^{Y_{n-1}}_{L_{n-1}(Y_{n-1})}
\quad\text{and}\quad
\Ybar_n=\Ybar_{n-1}+ \Zbar^{\Ybar_{n-1}}_{\Lbar_{n-1}(\Ybar_{n-1})}.
\]

In words, every time the chain $Y$ visits a state $x$, it 
reads its next jump from a new variable $Z^x_k$ which is then
discarded and never used again.  And similarly for $\Ybar$.   
This construction has the property that, if $Y_k=\Ybar_k$ for 
$0\le k\le n$ with $Y_{n}=\Ybar_{n}=x$, 
then the next joint step  is 
$(Z^x_k,\Zbar^x_k)$ for $k=L_{n}(x)=\Lbar_{n}(x)$.  In other
words, given that the processes agree up to the present 
and reside together at $x$, the probability that they separate in
the next step is bounded by $C\abs{x}^{-p_1}$.

Now follow self-evident steps. 
\begin{align*}
&P_{x,x}\{\text{ $Y_k\ne\Ybar_k$ for some $1\le k\le \zeta$ }\}\\
&\le \sum_{k=1}^\infty
 P_{x,x}\{\text{ $Y_j=\Ybar_j\in A^c$ for $1\le j<k$,   $Y_k\ne\Ybar_k$ } \}\\
&\le \sum_{k=1}^\infty
 E_{x,x}\bigl[\one\{\text{ $Y_j=\Ybar_j\in A^c$ for $1\le j<k$ }\}
P_{Y_{k-1},\Ybar_{k-1}}\{ Y_1\ne\Ybar_1\} \,  \bigr]\\
&\le C\sum_{k=1}^\infty
 E_{x,x}\bigl[\one\{\text{ $Y_j=\Ybar_j\in A^c$ for $1\le j<k$ }\}
{\lvert \Ybar_{k-1}\rvert}^{-p_1} \,   \bigr]\\
&\le C E_x \sum_{m=0}^{\zeta-1} {\lvert\Ybar_m\rvert}^{-p_1}.
\qedhere
\end{align*}
\end{proof}

For the remainder of this section
 $Y$ and $\Ybar$ are always coupled
in the manner that satisfies Lemma \ref{YYbarlm1}.

\begin{lemma}   Fix a coordinate index  $j\in\{1,\dotsc,d\}$.  
Let $r_0$ be a positive integer and  $\wbar=\inf\{n\ge 1: \Ybar_n^j\le r_0\}$ 
 the first time the random walk $\Ybar$
 enters the half-space 
$\cH=\{x: x^j\le r_0\}$. Couple $Y$ and $\Ybar$ starting from a
common initial point $x\notin\cH$.
Then there is a constant $C$ independent of $r_0$ such that
\[
\sup_{x\notin\cH } P_{x,x}\{\text{ $Y_k\ne\Ybar_k$ for some
 $k\in\{1,\dotsc,\wbar\}$ }\} \le  Cr_0^{2-p_1}
\quad \text{ for all $r_0\ge 1$.} 
\]
The same result holds for $\cH=\{x: x^j\ge -r_0\}$.
\label{YY-aux-lm-1}
\end{lemma}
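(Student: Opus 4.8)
The plan is to combine Lemma~\ref{YYbarlm1} with a one–dimensional Green function estimate for the random walk $\Ybar$. Apply Lemma~\ref{YYbarlm1} with $A=\cH$ and $\zeta=\wbar$: for $Y$ and $\Ybar$ coupled from a common point $x\notin\cH$,
\[
P_{x,x}\{\,Y_k\ne\Ybar_k\text{ for some }1\le k\le\wbar\,\}\le C\,E_x\Bigl[\sum_{k=0}^{\wbar-1}\lvert\Ybar_k\rvert^{-p_1}\Bigr].
\]
For $k<\wbar$ the walk has not yet entered $\cH$, so $\Ybar_k^j>r_0$ and hence $\lvert\Ybar_k\rvert\ge\lvert\Ybar_k^j\rvert=\Ybar_k^j>r_0>0$. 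Thus it suffices to prove $\sup_{x\notin\cH}E_x\bigl[\sum_{k=0}^{\wbar-1}(\Ybar_k^j)^{-p_1}\bigr]\le Cr_0^{2-p_1}$ with $C$ independent of $r_0$.

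Next I would pass to one dimension. The coordinate process $S_k=\Ybar_k^j-r_0$ is a mean-zero random walk on $\Z$ (mean zero by symmetry of $\qbar$), nondegenerate by the reduction made at the start of this appendix, with $E\lvert S_1\rvert^3<\infty$ by (A.i); and $\wbar=\inf\{n\ge1:S_n\le0\}$, with $S_0=x^j-r_0\ge1$. Writing $N_m$ for the number of visits of $S$ to level $m\ge1$ strictly before $\wbar$, the quantity above equals $\sum_{m\ge1}(r_0+m)^{-p_1}E_x[N_m]$. Applying the strong Markov property at the first visit of $S$ to level $m$ (and noting the walk may overshoot $m$, in which case it simply never visits $m$) gives $E_x[N_m]\le G(m):=E_m[N_m]$, uniformly over all admissible starting points.

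The main technical input is the bound $G(m)\le Cm$ with $C$ independent of $m$. Each excursion away from $m$ returns to $m$ before entering $(-\infty,0]$ with some probability $1-p_m$, and otherwise enters $(-\infty,0]$ first, so $N_m$ is geometric and $G(m)=1/p_m$, where $p_m=P_m(S\text{ enters }(-\infty,0]\text{ before re-hitting }m)$. For a mean-zero random walk with finite variance (here finite third moment) this ruin-type probability is of order $1/m$: one has $p_m\asymp 1/V(m)$ with $V$ the renewal function of the strict descending ladder heights, and $V(m)\asymp m$. This is classical one-dimensional random walk theory, and the third-moment hypothesis comfortably controls the overshoots of $S$ over the levels $m$ and $0$ (and any periodicity of $\Ybar^j$ is absorbed into the constants). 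I expect this Green function estimate $G(m)\le Cm$ — equivalently the lower bound $p_m\ge c/m$ with overshoot control — to be the only step that is not pure bookkeeping.

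Finally I would sum: using $E_x[N_m]\le Cm$ and $p_1>2$ (indeed $p_1>15$),
\[
\sum_{m\ge1}(r_0+m)^{-p_1}\,Cm\;\le\;Cr_0^{-p_1}\sum_{m=1}^{r_0}m\;+\;C\sum_{m>r_0}m^{1-p_1}\;\le\;Cr_0^{2-p_1},
\]
which yields the assertion. The case $\cH=\{x:x^j\ge-r_0\}$ is identical after replacing $\Ybar^j$ by $-\Ybar^j$, again a symmetric mean-zero walk.
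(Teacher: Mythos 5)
Your proof follows the paper's argument essentially step for step: Lemma~\ref{YYbarlm1} with $A=\cH$, reduction to the coordinate walk $\Ybar^j$ (using $\lvert\Ybar_k\rvert\ge\Ybar_k^j$ before $\wbar$), a bound on the half-line Green function that is linear in the distance $t-r_0$, and summation of $(t-r_0)t^{-p_1}$. The only difference is in how the Green function bound is justified: you reduce to the diagonal via the strong Markov property and then invoke the excursion/escape-probability decomposition together with the ladder-height renewal function, whereas the paper obtains the same estimate — indeed the slightly sharper, uniform-in-starting-point version $g(s,t)\le C\bigl(1+(s-r_0-1)\wedge(t-r_0-1)\bigr)$, which makes your extra reduction $E_x[N_m]\le E_m[N_m]$ unnecessary — directly from Spitzer's P19.3 and the boundedness of $u,v$ from P18.7, along the way noting that no aperiodicity is needed; this is exactly the "classical one-dimensional random walk theory" step you correctly identify as the one piece that is not pure bookkeeping.
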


\begin{proof} By Lemma \ref{YYbarlm1} 
\begin{align*}
&P_{x,x}\{\text{ $Y_k\ne\Ybar_k$ for some
 $k\in\{1,\dotsc,\wbar\}$ }\}
\le  CE_x\biggl[\; \sum_{k=0}^{\wbar-1} 
\lvert \Ybar_k\rvert^{-p_1}\,\biggr]\\
&\qquad\qquad\qquad 
\le CE_{x^j}\biggl[\; \sum_{k=0}^{\wbar-1} 
 \lvert\Ybar_k^j\rvert^{-p_1}\,\biggr]
=C\sum_{t=r_0+1}^\infty t^{-p_1} g(x^j,t) 
\end{align*}
where  for $s,t\in [r_0+1,\infty)$
\[
g(s,t)= \sum_{n=0}^\infty P_s\{\Ybar_n^j=t\,,\,\wbar>n\} 
\]
is the Green function of the half-line $(-\infty, r_0]$ for 
the one-dimensional random walk $\Ybar^j$.
This is the expected
number of visits to $t$ before entering $(-\infty,r_0]$,
defined on p.~209 in Spitzer \citep{spitzer}.  The development
in Sections 18 and 19 in \citep{spitzer} gives the bound 
\beq
g(s,t)\le C(1+(s-r_0-1)\wedge(t-r_0-1))\le C(t-r_0),\quad 
s,t\in [r_0+1,\infty). 
\label{spitz-1}\eeq

Here is some more detail. Shift $r_0+1$ to the origin to 
match the setting in \citep{spitzer}.  Then    P19.3 on p.~209
gives
\[
g(x,y)= \sum_{n=0}^{x\wedge y} u(x-n)v(y-n)\qquad \text{for $x,y\ge 0$}
\]
where the functions $u$ and $v$ are defined on p.~201. 
For a symmetric random walk $u=v$ (E19.3 on p.~204). 
P18.7 on p.~202 implies that
\[
v(m)=\frac1{\sqrt{c}} \sum_{k=0}^\infty 
\mathbf{P}\{\mathbf{Z}_1+\dotsm+\mathbf{Z}_k=m\}
\]
where $c$ is a certain constant
and  $\{\mathbf{Z}_i\}$ are i.i.d.~strictly positive, integer-valued
ladder variables for the underlying random walk. (For $k=0$
the sum $\mathbf{Z}_1+\dotsm+\mathbf{Z}_k$ is identically
zero.)  
Now $v(m)\le v(0)$ for each $m$ because the $\mathbf{Z}_i$'s are
strictly positive.  (Either do induction on $m$, or note that
for a particular realization of the sequence $\{\mathbf{Z}_i\}$
a given $m$ can be attained for at most one value of $k$.)   
So the quantities $u(m)=v(m)$ are bounded. This 
justifies \eqref{spitz-1}. 

Continuing from further above we get the estimate claimed in the 
statement of the lemma: 
\[
E_x\biggl[\; \sum_{k=0}^{\wbar-1} 
\lvert \Ybar_k\rvert^{-p_1}\,\biggr]
\le C\sum_{t>r_0} (t-r_0) t^{-p_1}
\le C r_0^{2-p_1}. 
\qedhere
\]
\end{proof} 

For the next lemmas abbreviate $B_r=[-r,r]^d$ for $d$-dimensional
centered cubes. 

\begin{lemma} 
 There exist 
constants 
$0<\alpha_1, A_1 <\infty$ such that
\beq
\inf_{x\in B_r\smallsetminus B_{r_0}} 
P_x\{ \text{without entering $B_{r_0}$ chain $Y$ exits $B_r$ by time
 $A_1r^{3}$}\}
\ge \frac{\alpha_1}r
\label{YY-aux-1.5}
\eeq
 for large enough  positive integers $r_0$ and $r$ that  satisfy 
 \[r^{2/(p_1-2)}\le  r_0< r.\] 
\label{YY-aux-lm-2}\end{lemma}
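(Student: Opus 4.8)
The plan is to reduce the statement to a one‑dimensional gambler's ruin estimate for the symmetric random walk $\Ybar$, and then transfer it to the Markov chain $Y$ through the coupling of Lemma~\ref{YY-aux-lm-1}. Recall that in this section $Y$ and $\Ybar$ are always coupled as in Lemma~\ref{YYbarlm1}, and that we may assume every coordinate walk $\Ybar^i$ is nondegenerate, so $\Ybar^i_1$ has mean zero and a finite, strictly positive variance (finiteness from (A.i)). Fix $x\in B_r\smallsetminus B_{r_0}$ and choose a coordinate $j$ with $\lvert x^j\rvert=\lvert x\rvert_\infty$, so $r_0<\lvert x^j\rvert\le r$; by the reflection symmetry of $\Ybar$ in its $j$‑th coordinate I may assume $x^j>r_0$, the case $x^j<-r_0$ being identical using the second form of $\cH$ in Lemma~\ref{YY-aux-lm-1}. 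Put
\[
S=\inf\{n\ge 0:\ \Ybar^j_n\le r_0\ \text{ or }\ \Ybar^j_n>r\},
\]
which is a.s.\ finite, and let $\cE=\{\Ybar^j_S>r\}$ be the event that $\Ybar^j$ leaves the interval $(r_0,r]$ through the top. On $\cE$ one has $\Ybar^j_k>r_0$, hence $\Ybar_k\notin B_{r_0}$, for every $k\le S$, while $\lvert\Ybar^j_S\rvert>r$ forces $\Ybar_S\notin B_r$; so $\cE$ says exactly that $\Ybar$ exits $B_r$ at time $S$ without having entered $B_{r_0}$.

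The first step is the one‑dimensional estimate. Since $\Ybar^j$ is a mean‑zero random walk with finite positive variance, classical gambler's ruin bounds for the interval $(r_0,r]$ (optional stopping for the martingales $\Ybar^j_{n\wedge S}$ and $(\Ybar^j_{n\wedge S})^2-\Var(\Ybar^j_1)(n\wedge S)$, together with renewal‑theoretic control of the overshoot, for which the finite third moment in (A.i) is comfortably enough) give, with constants depending only on the walk and uniformly over $r_0<x^j\le r$,
\[
P_x(\cE)\ \ge\ c\,\frac{x^j-r_0}{r-r_0}\ \ge\ \frac{c}{r},
\qquad
E_x[S]\ \le\ C\,r^2 .
\]
Hence $P_x(S>A_1r^3)\le C/(A_1r)$ by Markov's inequality, and fixing $A_1$ large enough, depending only on the walk, yields $P_x(\cE,\ S\le A_1r^3)\ge \alpha_1'/r$ with $\alpha_1'>0$ independent of $x,r,r_0$.

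It remains to pass from $\Ybar$ to $Y$. Let $\wbar=\inf\{n\ge 1:\Ybar^j_n\le r_0\}$ be the first entrance of $\Ybar$ into $\cH=\{y:y^j\le r_0\}$; since $x^j>r_0$, Lemma~\ref{YY-aux-lm-1} gives $P_{x,x}\{Y_k\ne\Ybar_k\text{ for some }1\le k\le\wbar\}\le Cr_0^{\,2-p_1}$. As $S\le\wbar$ always, on the intersection of $\{\cE,\ S\le A_1r^3\}$ with $\{Y_k=\Ybar_k\text{ for all }k\le\wbar\}$ the chain $Y$ agrees with $\Ybar$ up to time $S$ and so also exits $B_r$ by time $A_1r^3$ without entering $B_{r_0}$. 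Therefore
\[
\inf_{x\in B_r\smallsetminus B_{r_0}}P_x\{\text{$Y$ exits $B_r$ by time $A_1r^3$ without entering $B_{r_0}$}\}\ \ge\ \frac{\alpha_1'}{r}-Cr_0^{\,2-p_1},
\]
and the hypothesis $r_0\ge r^{2/(p_1-2)}$ makes $Cr_0^{\,2-p_1}\le Cr^{-2}\le\alpha_1'/(2r)$ once $r$ is large, giving the claim with $\alpha_1=\alpha_1'/2$. The step I expect to require the most care is this one‑dimensional input — producing the lower bound $c/r$ for $P_x(\cE)$ and the bound $E_x[S]=O(r^2)$ with constants independent of $r_0$, given that the increments of $\Ybar^j$ are only third‑moment integrable and not bounded.
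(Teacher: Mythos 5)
Your argument follows essentially the same route as the paper's: reduce to the one‑dimensional coordinate walk $\Ybar^j$, establish an exit‑through‑the‑top probability of order $1/r$ and an $O(r^2)$ expected exit time for $\Ybar^j$, then transfer to $Y$ via Lemma~\ref{YY-aux-lm-1} and absorb the coupling error $Cr_0^{\,2-p_1}$ using $r_0\ge r^{2/(p_1-2)}$. The one step you flag does require the care you anticipate: the linear lower bound $P_x(\cE)\ge c\,(x^j-r_0)/(r-r_0)$ is not uniformly valid because the overshoot shifts the numerator by a constant (cf.\ \eqref{spitz-8}), so for $x^j$ within a bounded distance of $r_0$ one first moves a fixed number of steps to the right with positive probability before invoking the ruin estimate, which still yields the only bound your argument actually uses, namely $P_x(\cE)\ge c/r$.
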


\begin{proof} 
A point 
 $x\in B_r\smallsetminus B_{r_0}$ has a coordinate
 $x^j\in[-r, -r_0-1]\cup[r_0+1,r]$. 
The same argument works for
  both alternatives, and 
we  treat the case
$x^j\in[r_0+1,r]$.  

One way to realize the event in \eqref{YY-aux-1.5}  is this:  starting at 
$x^j$,
the $\Ybar^j$ walk exits  $[r_0+1,r]$ by time $A_1r^3$ through the right 
boundary into $[r+1,\infty)$, 
and $Y$ and $\Ybar$ stay coupled together throughout this time. 
Let $\zetabar$ be the time $\Ybar^j$ exits $[r_0+1,r]$ and $\wbar$ the 
time $\Ybar^j$ enters $(-\infty, r_0]$.  Then $\wbar\ge\zetabar$. 
 Thus the 
complementary probability of \eqref{YY-aux-1.5} is bounded above by 
\beq
\begin{split}
&P_{x^j}\{\text{ $\Ybar^j$  exits  $[r_0+1,r]$ into $(-\infty, r_0]$ }\}\\
&\qquad 
+ \; P_{x^j}\{ \zetabar >A_1r^3\} \; +  \;  
P_{x,x}\{\text{ $Y_k\ne\Ybar_k$ for some
 $k\in\{1,\dotsc,\wbar\}$ }\}.
\end{split} 
\label{YY-aux-3}\eeq

We treat the terms one at a time. 
From the development on p.~253-255  in Spitzer \citep{spitzer} we get the 
bound
\beq
P_{x^j}\{\text{ $\Ybar^j$  exits  $[r_0+1,r]$ into $(-\infty, r_0]$ }\}
 \le 1-\frac{\alpha_2}r
\label{YY-aux-4}
\eeq
for a constant $\alpha_2>0$, uniformly over
$0<r_0< x^j\le r$. In some more detail:
P22.7 on p.~253, 
the inequality in the third display  of p.~255, and the third moment
assumption on the steps of $\Ybar$  give a lower bound 
\beq
P_{x^j}\{\text{ $\Ybar^j$  exits  $[r_0+1,r]$ into $[r+1,\infty)$ }\}
\ge \frac{x^j-r_0-1-c_1}{r-r_0-1}
\label{spitz-8}
\eeq
for the probability of exiting to the right.
Here $c_1$ is a constant that comes from the term denoted
in \citep{spitzer}  by
$  M\sum_{s=0}^N(1+s)a(s) $
whose finiteness follows from the third moment assumption.  
The text  on p.~254-255 suggests that these
steps need the aperiodicity assumption. This need for
aperiodicity  can be traced back via P22.5 to 
P22.4 which is used to assert the boundedness of 
$u(x)$ and $v(x)$.  But as we observed above in the derivation
of \eqref{spitz-1} boundedness of $u(x)$ and $v(x)$   is true
without any additional assumptions.  
 
 To go forward from \eqref{spitz-8} 
fix any $m>c_1$ so that the numerator
above is positive for $x^j=r_0+1+m$. 
The probability in \eqref{spitz-8}
 is minimized at $x^j=r_0+1$, and from $x^j=r_0+1$
there is a fixed positive
probability $\theta$ to take $m$ steps to the right to get
past the point $x^j=r_0+1+m$.  Thus for all $x^j\in[r_0+1,r]$ we get the lower bound 
\[
P_{x^j}\{\text{ $\Ybar^j$  exits  $[r_0+1,r]$ into $[r+1,\infty)$ }\}
\ge \frac{\theta(m-c_1)}{r-r_0-1} \ge \frac{\alpha_2}{r}
\] 
where $\alpha_2>0$ is a constant, 
and \eqref{YY-aux-4} is verified. 

As in  \eqref{spitz-1} let $g(s,t)$ be the Green function
of the random walk $\Ybar^j$  for the 
half-line $(-\infty, r_0]$,  and 
let $\tilde{g}(s,t)$ be the Green function for the complement
of the interval 
$[r_0+1,r]$. Then $\tilde{g}(s,t)\le g(s,t)$, and by \eqref{spitz-1}
we get this moment bound:
\begin{align*}
 E_{x^j}[\,\zetabar\,]=\sum_{t=r_0+1}^r \tilde{g}(x^j,t) 
\le \sum_{t=r_0+1}^r g(x^j,t) 
\le Cr^2.
\end{align*}
Consequently, 
 uniformly over $x^j\in [r_0+1,r]$, 
\beq
 P_{x^j}[ \zetabar >A_1r^3] \le \frac{C}{A_1r}.
\label{YY-aux-5} \eeq

From Lemma \ref{YY-aux-lm-1}  
\beq
P_x\{\text{ $Y_k\ne\Ybar_k$ for some
 $k\in\{1,\dotsc,\wbar\}$ }\} \le  Cr_0^{2-p_1}.
\label{YY-aux-6}\eeq

Putting bounds \eqref{YY-aux-4}, \eqref{YY-aux-5}
and \eqref{YY-aux-6} together gives an upper bound of 
\[
1\;-\;\frac{\alpha_2}r \;+\; \frac{C}{A_1r} \;+\;  Cr_0^{2-p_1}  
\]
for the sum in \eqref{YY-aux-3} which bounds the complement of the 
probability in \eqref{YY-aux-1.5}.  By assumption 
$r_0^{2-p_1}\le r^{-2}$.   So if $A_1$ is fixed large enough,
then the sum above is not more than $1-\alpha_1/r$ for a 
constant $\alpha_1>0$,  for all large enough $r$  .  
\end{proof}

We iterate the last estimate to get down to an iterated logarithmic cube. 

\begin{corollary} 
Fix a constant $c_1>1$ 
and consider positive integers $r_0$ and $r$ that  satisfy 
 \[\log\log r\le  r_0\le c_1\log\log r< r.\]  Then  for large enough $r$   
\beq
\inf_{x\in B_r\smallsetminus B_{r_0}} 
P_x\{ \text{ without entering $B_{r_0}$ chain $Y$ exits $B_r$ by time
 $r^{4}$ }\}
\ge r^{-3}.
\label{YY-aux-1.7}
\eeq
\label{YY-aux-cor-2.1}\end{corollary}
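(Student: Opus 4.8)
The plan is to iterate Lemma~\ref{YY-aux-lm-2} along a telescoping family of nested cubes whose side lengths shrink from $r$ down to $r_0$ at the fastest rate its hypothesis $r^{2/(p_1-2)}\le r_0<r$ permits. Set $\vartheta=2/(p_1-2)$; since $p_1>15$ we have $0<\vartheta<2/13$, so $1/\vartheta>13/2$. Define $\tilde\rho_0=r_0$, $\tilde\rho_{j+1}=\lfloor\tilde\rho_j^{1/\vartheta}\rfloor$, let $N=\min\{j\ge1:\tilde\rho_j\ge r\}$, and put $\rho_j=\tilde\rho_j$ for $0\le j\le N-1$ and $\rho_N=r$. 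Since $r_0\ge\log\log r$ is large for large $r$ and $1/\vartheta>1$, the sequence is strictly increasing, $N\ge 2$, each $\rho_j$ is large enough for Lemma~\ref{YY-aux-lm-2}, and every consecutive pair $(\rho_{j+1},\rho_j)$ satisfies $\rho_{j+1}^{\vartheta}\le\rho_j<\rho_{j+1}$: for $j<N-1$ because $\rho_{j+1}\le\rho_j^{1/\vartheta}$, and for the last pair because minimality of $N$ gives $\rho_{N-1}<r$ while $\lfloor\tilde\rho_{N-1}^{1/\vartheta}\rfloor\ge r$ forces $\rho_{N-1}\ge r^{\vartheta}$. A routine estimate of the recursion (using $\tilde\rho_{j+1}\ge\tfrac12\tilde\rho_j^{1/\vartheta}$) yields $\vartheta^{-(N-1)}\log r_0\le 2\log r$, and since $\log r_0\ge\log\log\log r$ this gives $N\le c_1\log\log r$ for a constant $c_1$ and all large $r$.

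Next I would run the chain through these annuli. Fix $x\in B_r\smallsetminus B_{r_0}$, say $x\in B_{\rho_{k+1}}\smallsetminus B_{\rho_k}$ for some $0\le k\le N-1$. Applying Lemma~\ref{YY-aux-lm-2} with outer radius $\rho_{k+1}$ and inner radius $\rho_k$, with probability at least $\alpha_1/\rho_{k+1}$ the chain leaves $B_{\rho_{k+1}}$ within time $A_1\rho_{k+1}^3\le A_1r^3$ without ever entering $B_{\rho_k}\supseteq B_{r_0}$. At the exit time the chain is either outside $B_r$ (the desired event has occurred) or in some $B_{\rho_{l+1}}\smallsetminus B_{\rho_l}$ with $l\ge k+1$, and one restarts there via the strong Markov property. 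The annulus index strictly increases, so at most $N$ stages occur; the total time is at most $NA_1r^3\le r^4$ for large $r$, and chaining the conditional probabilities through the worst case $k=0$ (legitimate because $\alpha_1/\rho_i<1$, so skipping annuli only helps) gives
\[
\inf_{x\in B_r\smallsetminus B_{r_0}}P_x\{\,Y\text{ exits }B_r\text{ by time }r^4\text{ without entering }B_{r_0}\,\}\ \ge\ \prod_{i=1}^N\frac{\alpha_1}{\rho_i}.
\]

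It then remains to bound $\prod_{i=1}^N\rho_i$. From $\rho_{i+1}\le\rho_i^{1/\vartheta}$ and $\rho_{i+1}\ge\rho_i^{1/\vartheta}-1$ one gets $\rho_i\le 2^{\vartheta}\rho_{i+1}^{\vartheta}$ for $i\le N-2$; iterating down to $\rho_{N-1}<r$ gives $\rho_i<2^{\vartheta/(1-\vartheta)}r^{\vartheta^{N-1-i}}$, hence $\prod_{i=1}^{N-2}\rho_i<2^{N\vartheta/(1-\vartheta)}r^{\vartheta/(1-\vartheta)}$ and
\[
\prod_{i=1}^N\rho_i\ <\ r^2\cdot 2^{N\vartheta/(1-\vartheta)}r^{\vartheta/(1-\vartheta)}\ =\ 2^{N\vartheta/(1-\vartheta)}\,r^{(2-\vartheta)/(1-\vartheta)}.
\]
Because $\vartheta<2/13$ the exponent satisfies $(2-\vartheta)/(1-\vartheta)<24/11<3$, while $2^{N\vartheta/(1-\vartheta)}$ and $\alpha_1^{-N}$ are bounded by a fixed power of $\log r$ since $N\le c_1\log\log r$. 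Therefore $\prod_{i=1}^N(\alpha_1/\rho_i)\ge(\log r)^{-C}r^{-24/11}\ge r^{-3}$ for all large $r$, which is \eqref{YY-aux-1.7}.

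The main obstacle is bookkeeping rather than any new idea: one must pick the nested radii so that every consecutive pair meets the hypothesis of Lemma~\ref{YY-aux-lm-2} while the number $N$ of them stays $O(\log\log r)$, and then track how the multiplicative errors (the floor corrections, and one factor $\alpha_1$ per stage) accumulate over these $N=\Theta(\log\log r)$ iterations. These errors contribute only polylogarithmically, so what controls the final exponent is the geometric sum $(2-\vartheta)/(1-\vartheta)$, and the hypothesis $p_1>15$ (indeed already $p_1>6$) keeps it safely below $3$.
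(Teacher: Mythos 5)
Your proof is correct and is essentially the paper's argument: iterate Lemma \ref{YY-aux-lm-2} along $O(\log\log r)$ nested cubes whose radii grow doubly exponentially, chain the one-stage probabilities via the Markov property, and observe that the geometric sum of exponents in $\prod_i(\alpha_1/\rho_i)$ stays strictly below $3$ while the $O(\log\log r)$ multiplicative fudge factors contribute only polylogarithmically. The only cosmetic difference is that you build the radii downward from $r$ by taking floors at the extremal rate $1/\vartheta=(p_1-2)/2$, whereas the paper builds them upward from $r_0$ as exact powers $r_0^{\gamma^k}$ with a fixed integer $\gamma\in[3,(p_1-2)/2]$ and phrases the chaining as an explicit induction (its display \eqref{YY-aux-1.71}).
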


\begin{proof}
Consider $r$ large enough so that $r_0$ is also large enough
to play the role of $r$ in Lemma \ref{YY-aux-lm-2}. 
Pick an integer $\gamma$ such that 
 $3\le \gamma\le (p_1-2)/2$. 
Put  $r_k=r_0^{\gamma^k}$
for $k\ge 0$ ($r_0$ is still $r_0$) and $t_n=A_1\sum_{k=1}^n r_0^{3\gamma^{k}}$
where $A_1$ is the constant from Lemma \ref{YY-aux-lm-2}.  

We claim that  for $n\ge 1$
\begin{align}
&\inf_{x\in B_{r_n}\smallsetminus B_{r_{0}}} 
P_x\{ \text{without entering $B_{r_0}$ chain $Y$ exits $B_{r_n}$ by time
 $t_n$}\}
\ge \prod_{k=1}^{n}\Bigl(\frac{\alpha_1}{r_k}\Bigr).
\label{YY-aux-1.71}
\end{align}
Here $\alpha_1$ is the constant coming from \eqref{YY-aux-1.5}
and we can assume  $\alpha_1\le 1$. 

We prove \eqref{YY-aux-1.71} by induction.  The case $n=1$ is Lemma 
\ref{YY-aux-lm-2} applied to  $r_1=r_0^\gamma$ and $r_0$. 
The inductive step comes
from the Markov property. Assume \eqref{YY-aux-1.71} is true for $n$
and consider exiting $B_{r_{n+1}}$ without entering $B_{r_0}$. 

\smallskip

(i) If the initial state $x$ lies in $B_{r_n}\smallsetminus B_{r_{0}}$
then by induction 
the chain first takes time $t_n$ to exit $B_{r_n}$ without entering
$B_{r_0}$ with probability bounded below by  
$\prod_{k=1}^{n}({\alpha_1}/{r_k})$.  If the walk landed
in  $B_{r_{n+1}}\smallsetminus B_{r_n}$ take another time 
$A_1r_{n+1}^3=A_1r_0^{3\gamma^{n+1}}$ to exit $B_{r_{n+1}}$ without 
entering  $B_{r_n}$ with 
probability at least  $\alpha_1/r_{n+1}$ (Lemma \ref{YY-aux-lm-2} again). 
The times taken add up to $t_{n+1}$ and the probabilities 
multiply to $\prod_{k=1}^{n+1}({\alpha_1}/{r_k})$.

\smallskip

(ii)  If the initial state $x$ lies in $B_{r_{n+1}}\smallsetminus B_{r_n}$
then apply Lemma \ref{YY-aux-lm-2} to exit $B_{r_{n+1}}$ without 
entering  $B_{r_n}$ in time $A_1r_{n+1}^3=A_1r_0^{3\gamma^{n+1}}$   with 
probability at least  $\alpha_1/r_{n+1}$. 

\smallskip

This completes the inductive proof of \eqref{YY-aux-1.71}.

Let $N=\min\{k\ge 1: r_k\ge r\}$. Then   $r_0^{\gamma^{N-1}}<r$.
If $r$ is large enough, and in particular  $r_0$ is large enough
to make $\log\log r_0 >0$, then also   
$N<1+(\log\log r)/(\log \gamma)<2\log\log r$. 

To prove the corollary take first  $n=N-1$ in \eqref{YY-aux-1.71}.
This gets the chain Y out of $B_{r_{N-1}}$ without entering $B_{r_0}$.
If $Y$ landed in $B_r\smallsetminus B_{r_{N-1}}$, 
 apply Lemma \ref{YY-aux-lm-2} once more to take $Y$ out of
$B_r$ without entering  $B_{r_{N-1}}$.  The probabilitry of
achieving this is bounded below by
\begin{align*}
\prod_{k=1}^{N-1}\Bigl(\frac{\alpha_1}{r_k}\Bigr)\cdot\frac{\alpha_1}{r}  
\ge \alpha_1^N r_0^{-\frac{\gamma^{N}}{\gamma-1}}r^{-1} 
\ge (\log r)^{2\log \alpha_1} r^{-\frac{\gamma}{\gamma-1}-1}
\ge r^{-3}
\end{align*}
where again we  required large enough $r$. 
For the time elapsed we get the bound 
\[
t_{N-1}+A_1r^3 \le A_1(N-1)r_0^{3\gamma^{N-1}} 
 + A_1r^3 
\le r^4 
\]
  for large enough $r$. 
\end{proof}

The reader can see that the exponents in the previous lemmas can 
be tightened.  But in the end the exponents still get rather large 
so we prefer to keep the statements and proofs simple for readability.  
We come to one of the main auxiliary lemmas of this development. 

\begin{lemma}  Let $U=\inf\{n\ge 0: Y_n\notin B_r\}$  be 
the first exit time from $B_r=[-r,r]^d$ for the Markov chain $Y$. 
 Then there exists a finite positive constant $C_1$ such that 
\[\sup_{x\in B_r}E_x[U]\le C_1r^{13}\quad\text{ for all $1\le r<\infty$.}\] 
\label{YY-aux-lm-6}
\end{lemma}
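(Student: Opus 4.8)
The plan is to bootstrap the \emph{exponential} exit-time bound \eqref{Y-ell-ass} of assumption (A.ii), which controls only the escape from a tiny cube, together with the \emph{polynomial} escape-probability estimate of Corollary \ref{YY-aux-cor-2.1} for the large cube, into a polynomial bound for $E_x[U]$. First dispose of small $r$: Corollary \ref{YY-aux-cor-2.1} requires $r$ large, so fix the threshold $r_*$ above which it (and the nested Lemma \ref{YY-aux-lm-2}) applies, and for $1\le r<r_*$ simply invoke (A.ii) to get $\sup_{x\in B_r}E_x[U]\le K^r\le K^{r_*}\le K^{r_*}r^{13}$. From now on take $r\ge r_*$ and set $r_0=\lceil\log\log r\rceil$; for a fixed $c_1>1$ and $r$ large this satisfies $\log\log r\le r_0\le c_1\log\log r<r$, so Corollary \ref{YY-aux-cor-2.1} is available for the pair $(r_0,r)$.

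Now fix $x\in B_r$ and split the time interval $[0,U)$ into the set $\mathcal I$ of times $n$ with $Y_n\in B_{r_0}$ and the set $\mathcal O$ of times $n$ with $Y_n\in B_r\smallsetminus B_{r_0}$, so $|\mathcal I|+|\mathcal O|=U$ (all quantities below have finite expectation since $U\le K^r$ by (A.ii)). Let $N$ be the number of maximal time intervals spent by $Y$ in $B_{r_0}$ before time $U$. Each time such a visit ends, $Y$ sits in $B_r\smallsetminus B_{r_0}$ (or has already left $B_r$, terminating everything), and from there Corollary \ref{YY-aux-cor-2.1} gives probability at least $r^{-3}$ of leaving $B_r$ before re-entering $B_{r_0}$; hence $N$ is dominated by $1$ plus a geometric variable and $E_x[N]\le 1+r^3$. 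By the strong Markov property at the start of each visit, the conditional expected length of a visit, given the past, is at most $\sup_{z\in B_{r_0}}E_z[U_{r_0}]\le K^{r_0}$ by (A.ii); summing over visits gives $E_x[|\mathcal I|]\le K^{r_0}E_x[N]\le C(1+r^3)(\log r)^{c_1\log K}$, which is $O(r^{4})$.

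For $\mathcal O$, decompose it into the $\le N+1$ excursions of $Y$ in $B_r\smallsetminus B_{r_0}$. From any starting point $y\in B_r\smallsetminus B_{r_0}$, Corollary \ref{YY-aux-cor-2.1} asserts that within $r^4$ steps $Y$ leaves $B_r$ — hence the current excursion ends — with probability at least $r^{-3}$; iterating over blocks of $r^4$ steps via the strong Markov property yields $P_y(V>kr^4)\le(1-r^{-3})^k$ for the excursion length $V$, so $E_y[V]\le r^4\sum_{k\ge0}(1-r^{-3})^k=r^7$. Summing over the excursions, $E_x[|\mathcal O|]\le r^7E_x[N+1]\le Cr^{10}$. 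Combining the two pieces, $E_x[U]=E_x[|\mathcal I|]+E_x[|\mathcal O|]\le Cr^{10}+Cr^{4}\le C_1r^{13}$ uniformly in $x\in B_r$ and $r\ge 1$, proving the lemma with room to spare. The only genuinely nontrivial input is Corollary \ref{YY-aux-cor-2.1}; once that is granted the remainder is routine excursion bookkeeping, the one point requiring care being the Wald-type argument that converts the per-visit and per-excursion conditional expectation bounds into bounds on sums over a random number of pieces.
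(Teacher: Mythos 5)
Your proof is correct, and it uses a genuinely different (and cleaner) decomposition than the paper's. The paper decomposes according to which excursion $[T_j,S_j)$ the chain finally leaves $B_r$ in, writing $E_x[U]=\sum_j E_x[T_j,\,T_j\le U<S_j]+\sum_j E_x[U-T_j,\,T_j\le U<S_j]$, and then expands the first sum further as a double sum $\sum_j\sum_{i<j}E_x[T_{i+1}-T_i,\,\dots]$; bounding each increment by $K^{r_0}+r^7$ and summing $\sum_j j(1-r^{-3})^{j-2}\lesssim r^6$ yields the $r^{13}$. You instead split the trajectory up to $U$ directly into the set $\mathcal I$ of times in $B_{r_0}$ and the set $\mathcal O$ of times in $B_r\smallsetminus B_{r_0}$, and bound each piece by a single Wald-type inequality: the number $N$ of visits (and hence the number of excursions) before $U$ has $E_x[N]\le r^3$ by Corollary \ref{YY-aux-cor-2.1}, while each visit has conditional expected length $\le K^{r_0}$ by (A.ii) and each excursion has conditional expected length $\le r^7$ by iterating Corollary \ref{YY-aux-cor-2.1} over blocks of length $r^4$. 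This avoids the paper's double sum entirely and even gives the sharper bound $O(r^{10})$, which of course still implies the stated $O(r^{13})$. The Wald-type steps are legitimate because $\{S_{i-1}<U\}$ and $\{T_i<U\}$ are, respectively, $\cF_{S_{i-1}}$- and $\cF_{T_i}$-measurable, so the strong Markov property applies at the start of each visit and each excursion. One cosmetic slip: you write ``$U\le K^r$ by (A.ii)''; assumption (A.ii) gives $E_x[U]\le K^r$, not a pathwise bound, though finiteness of $E_x[U]$ is all you actually use.
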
 

\begin{proof} First observe that $\sup_{x\in B_r} E_x[U]<\infty$
by assumption \eqref{Y-ell-ass}. 
Throughout, let positive integers $r_0<r$ satisfy
 $\log\log r\le r_0\le 2\log\log r$ 
so that in particular  the assumptions of Corollary
\ref{YY-aux-cor-2.1} are satisfied.  
Once the statement is proved for large enough
$r$, we obtain it for all $r\ge 1$ by increasing $C_1$. 

Let $0=T_0=S_0\le T_1\le S_1\le T_2\le\dotsm$ be the successive
exit and  entrance
times into $B_{r_0}$. Precisely,
 for $i\ge 1$ as long as $S_{i-1}<\infty$ 
\[
T_i=\inf\{ n\ge S_{i-1}: Y_n\notin B_{r_0}\}
\quad\text{and}\quad
S_i=\inf\{ n\ge T_{i}: Y_n\in B_{r_0}\}
\]
Once $S_i=\infty$ then we set $T_j=S_j=\infty$ for all $j>i$. 
If $Y_0\in B_r\smallsetminus B_{r_0}$ then also $T_1=0$. 
From  assumption \eqref{Y-ell-ass} 
\beq 
\sup_{x\in B_{r_0}}E_x[T_1]\le K^{r_0}\le (\log r)^{2\log K}.  
\label{ell-escape}\eeq
So a priori $T_1$ is finite but $S_1=\infty$ is possible. 
Since $T_1\le U<\infty$ we can decompose as follows,
for $x\in B_r$: 
\beq
\begin{split}
E_x[U]&= \sum_{j=1}^\infty E_x[U,\, T_j\le U<S_j]\\
 &= \sum_{j=1}^\infty E_x[T_j\,,\, T_j\le U<S_j] + 
\sum_{j=1}^\infty E_x[U-T_j\,,\, T_j\le U<S_j].
\end{split}
\label{YY-aux-11}\eeq

\def\mm{4}    
\def\bb{3}    
\def\abb{7}  
\def\cbb{10} 

We first treat the last sum in \eqref{YY-aux-11}. By an inductive
application of  Corollary \ref{YY-aux-cor-2.1}, 
for any $z\in B_r\smallsetminus B_{r_0}$,
\beq\begin{split}
&P_z\{U>jr^{\mm},\, U<S_1\} \le P_z\{\text{ $Y_k\in B_r\smallsetminus B_{r_0}$
for $k\le jr^{\mm}$ }\}\\
&\qquad=E_z\Big[\one\{\text{ $Y_k\in B_r\smallsetminus B_{r_0}$          
for $k\le (j-1)r^{\mm}$ }\}
P_{Y_{(j-1)r^{\mm}}}\{ \text{ 
$Y_k\in B_r\smallsetminus B_{r_0}$  for $k\le r^{\mm}$ }\}\,\Big]\\
&\qquad\le \dotsm \le (1-r^{-\bb})^j.
\end{split}
\label{YY-aux-11.5}\eeq 
Utilizing this, still for $z\in B_r\smallsetminus B_{r_0}$,
\beq\begin{split}
E_z[ U,\,U<S_1]&=\sum_{m=0}^\infty P_z\{U>m\,,\,U<S_1\}\\
&\le r^{\mm} \sum_{j=0}^\infty P_z\{ U>jr^{\mm}\,,\,U<S_1\} \le 
r^{\abb}.
\end{split}\label{YY-aux-12}\eeq 
Next we take into consideration the failure to exit $B_r$ 
during the earlier excursions in $B_r\smallsetminus B_{r_0}$.
Let 
\[ H_i=\{ \text{$Y_n\in B_r$ for $T_i\le n<S_i$} \}  \]
be the event that in between the $i$th exit from $B_{r_0}$ and 
entrance back into $B_{r_0}$ the chain $Y$ does not exit $B_r$. 
We shall repeatedly use this  consequence of Corollary \ref{YY-aux-cor-2.1}:
 \beq \text{  for $i\ge 1$, on the event $\{T_i<\infty\}$,   
 $P_x\{ H_i\,\vert\,\cF_{T_i}\}\le 1-r^{-\bb}$.  
}\label{YY-aux-14}\eeq

  Here is the 
first instance. 
\begin{align*}
&E_x[U-T_j\,,\, T_j\le U<S_j]
=
E_x\Bigl[\; \prod_{k=1}^{j-1}\one_{H_k} \cdot\one\{T_j<\infty\}
\cdot E_{Y_{T_j}}(U,\,U<S_1)\Bigr]\\
&\le  r^{\abb} E_x\Bigl[ \;\prod_{k=1}^{j-1}\one_{H_k}
\cdot\one\{T_{j-1}<\infty\} \Bigr]
\le  r^{\abb} (1-r^{-\bb})^{j-1}.
\end{align*}
Note that if $Y_{T_j}$ above lies outside $B_r$ then 
$E_{Y_{T_j}}(U)=0$.  In the other case $Y_{T_j}\in 
 B_r\smallsetminus B_{r_0}$ 
and \eqref{YY-aux-12} applies. 
So for  the last sum in \eqref{YY-aux-11}:
\beq
\sum_{j=1}^\infty E_x[U-T_j\,,\, T_j\le U<S_j]
\le \sum_{j=1}^\infty  r^{\abb} 
(1-r^{-\bb})^{j-1}
\le r^{\cbb}.
\label{YY-aux-17}
\eeq

We turn to the second-last sum in \eqref{YY-aux-11}. 
Separate the $i=0$ term from the sum below and use
 \eqref{ell-escape} and \eqref{YY-aux-14}: 
\beq
\begin{split}
&E_x[T_j\,,\, T_j\le U<S_j] \ \le\  
\sum_{i=0}^{j-1} E_x\Bigl[\; \prod_{k=1}^{j-1}\one_{H_k} 
\cdot\one\{T_j<\infty\}\cdot
 (T_{i+1}-T_i)\Bigr] \\
&\le \  (\log r)^{2\log K} (1-r^{-\bb})^{j-1}  \\
&\quad   +\; 
 \sum_{i=1}^{j-1} E_x\Bigl[\; \prod_{k=1}^{i-1}\one_{H_k}\cdot
(T_{i+1}-T_i)\one_{H_i} \cdot\one\{T_{i+1}<\infty\} \Bigr] 
(1-r^{-\bb})^{j-1-i} .
\end{split}
\label{YY-aux-18}\eeq
Split the last expectation  as
\begin{align}
&E_x\Bigl[\; \prod_{k=1}^{i-1}\one_{H_k}\cdot              
(T_{i+1}-T_i)\one_{H_i}\cdot\one\{T_{i+1}<\infty\} \Bigr]\nn\\
&\le
E_x\Bigl[\; \prod_{k=1}^{i-1}\one_{H_k}\cdot              
(T_{i+1}-S_i)\one_{H_i} \cdot\one\{S_{i}<\infty\}\Bigr]
+\; E_x\Bigl[\; \prod_{k=1}^{i-1}\one_{H_k}\cdot              
(S_i-T_i)\one_{H_i}\cdot\one\{T_{i}<\infty\} \Bigr]\nn\\
&\le 
E_x\Bigl[\; \prod_{k=1}^{i-1}\one_{H_k}\cdot\one\{S_{i}<\infty\}\cdot              
E_{Y_{S_i}}(T_1) \Bigr]
+\; E_x\Bigl[\; \prod_{k=1}^{i-1}\one_{H_k}\cdot \one\{T_{i}<\infty\}\cdot             
E_{Y_{T_i}}(S_1\cdot\one_{H_1}) \Bigr]\nn\\
&\le E_x\Bigl[\; \prod_{k=1}^{i-1}\one_{H_k}\cdot \one\{T_{i-1}<\infty\}\Bigr]
\bigl((\log r)^{2\log K} + r^{\abb}\bigr)\nn\\
&\le (1-r^{-\bb})^{i-1} \bigl((\log r)^{2\log K} + 
r^{\abb}\bigr).              
\label{YY-aux-20}
\end{align}
In the second-last inequality above,
 before applying \eqref{YY-aux-14} 
to the $H_k$'s,  
$E_{Y_{S_i}}(T_1)\le (\log r)^{2\log K}$ comes
from \eqref{ell-escape}. The other expectation is 
estimated by iterating Corollary \ref{YY-aux-cor-2.1}  again
 with $z\in B_r\smallsetminus B_{r_0}$, as was done in calculation
\eqref{YY-aux-11.5}:
\begin{align*}
E_{z}[S_1\cdot\one_{H_1}]
&=\sum_{m=0}^\infty P_z\{S_1>m\,,\,H_1\}
\le\sum_{m=0}^\infty P_z\{\text{ $Y_k\in B_r\smallsetminus B_{r_0}$
for $k\le m$ }\}\\
&\le r^{\mm}\sum_{j=0}^\infty P_z\{\text{ $Y_k\in B_r\smallsetminus B_{r_0}$
for $k\le jr^{\mm}$ }\}
\le r^{\abb}. 
\end{align*} 
Insert the bound from line \eqref{YY-aux-20} back up into
\eqref{YY-aux-18} to get the bound 
\begin{align*}
E_x[T_j\,,\, T_j\le U<S_j] \le 
 (2 (\log r)^{2\log K} + r^{\abb})j   (1-r^{-\bb})^{j-2}.
\end{align*}
Finally, bound  the second-last sum in \eqref{YY-aux-11}:
\begin{align*}
\sum_{j=1}^\infty E_x[T_j\,,\, T_j\le U<S_j] 
\le \bigl(2 (\log r)^{2\log K}r^{6} + r^{13}\bigr)
 (1-r^{-\bb})^{-1}.
\end{align*}
Take $r$ large enough so that $r^{-\bb}<1/2$. 
Combine the above bound
 with \eqref{YY-aux-11} and \eqref{YY-aux-17} 
  to get 
\[
E_x[U]\le r^{\cbb} + 
4 (\log r)^{2\log K}r^{6} + 2r^{13} \le 4r^{13} 
\]
when $r$ is large enough.
\end{proof} 

For the remainder of the proof we work with 
$B=B_r$ for $r=n^{\e_1}$. 
The above estimate gives us one part of the argument for 
\eqref{goal-Y-1}, namely that the Markov chain $Y$ exits
$B=[-n^{\e_1}, n^{\e_1}]^d$ fast enough.

Let $0=V_0<U_1<V_1<U_2<V_2<\dotsm$ be the successive entrance
times $V_i$ into $B$  and exit times $U_i$ from $B$ for the 
Markov chain $Y$, assuming that $Y_0=z\in B$. It is possible that 
some $V_i=\infty$.  But if $V_i<\infty$ then also $U_{i+1}<\infty$
due to assumption \eqref{Y-ell-ass},  as already observed.
The time intervals spent in $B$ are $[V_i, U_{i+1})$ each of length
at least 1. Thus, by applying Lemma \ref{YY-aux-lm-6}, 
\beq\begin{split}
\sum_{k=0}^{n-1} P_z(Y_k\in B) &\le 
\sum_{i=0}^n E_z\bigl[\, (U_{i+1}-V_i) \one\{V_i\le n\}\bigr]\\
&\le \sum_{i=0}^n E_z\bigl[\, E_{Y_{V_i}} (U_1) \one\{V_i\le n\}\bigr]\\
&\le Cn^{13\e_1} E_z\biggl[\, \sum_{i=0}^n   \one\{V_i\le n\}\biggr].
\end{split} 
\label{temp-Y-2}
\eeq

Next we bound the expected number of returns to $B$ by the number of
 excursions outside $B$ that fit in a time of length $n$:
\begin{align} 
E_z\biggl[\, \sum_{i=0}^n   \one\{V_i\le n\}\biggr]
&=
E_z\biggl[\, \sum_{i=0}^n   \one\Bigl\{\,
\sum_{j=1}^i (V_j-V_{j-1})\le n\Bigr\}\biggr]\nn\\
&\le E_z\biggl[\, \sum_{i=0}^n   \one\Bigl\{\,
\sum_{j=1}^i (V_j-U_{j})\le n\Bigr\}\biggr]. 
\label{line-a7}
\end{align}

According to the usual notion of stochastic dominance, we say
 the random vector $(\xi_1,\dotsc,\xi_n)$
dominates $(\eta_1,\dotsc,\eta_n)$ if 
\[ Ef(\xi_1,\dotsc,\xi_n)\ge Ef(\eta_1,\dotsc,\eta_n) \]
for any function $f$ that is 
coordinatewise nondecreasing.  If the process
 $\{\xi_i:1\le i\le n\}$ is adapted to the filtration
$\{\cG_i:1\le i\le n\}$, and 
$P[\xi_i> a\vert\cG_{i-1}]\ge 1-F(a)$ for some
distribution function $F$,  then the 
 $\{\eta_i\}$ can be taken i.i.d.\ $F$-distributed.  

\begin{lemma} There exist positive constants $c_1$, $c_2$
 such that the following holds:   
the excursion lengths $\{V_j-U_j:1\le j\le n\}$ 
stochastically dominate i.i.d.\ variables $\{\eta_j\}$ whose
common distribution satisfies 
$\bfP\{\eta\ge a\}\ge c_1a^{-1/2}$ for $1\le a\le c_2n^{2\e_1(p_1-2)}$. 
\label{stoch-lm} 
\end{lemma}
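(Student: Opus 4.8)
The plan is to reduce the lemma to a single conditional tail estimate on the length of one excursion of $Y$ outside the cube $B=B_r$ (with $r=n^{\e_1}$), and then feed that estimate into the stochastic domination criterion recalled immediately above the statement of the lemma. Since each $U_j$ is a stopping time for $Y$, I would first use the strong Markov property to reduce matters to proving that for some constant $c>0$ and every starting point $x\notin B_r$,
\[
P_x\{\text{the first return of }Y\text{ to }B_r\text{ exceeds }a\}\ \ge\ c\,a^{-1/2}
\qquad\text{whenever }1\le a\le c_2 r^{2(p_1-2)} .
\]
Applying this with $x=Y_{U_j}$ gives $P_z\{V_j-U_j>a\mid\cF_{U_j}\}\ge c\,a^{-1/2}$ in the same range of $a$; since $V_j-U_j\ge1$ always (the chain has just left $B_r$ at time $U_j$), the criterion then shows that $\{V_j-U_j\}_{1\le j\le n}$ stochastically dominates i.i.d.\ variables $\{\eta_j\}$ whose law may be chosen supported on $[1,c_2 r^{2(p_1-2)}]$ with tail at least $c\,a^{-1/2}$ there. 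As $r^{2(p_1-2)}=n^{2\e_1(p_1-2)}$, this is exactly the claimed statement, with $c_1=c$.

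To prove the displayed inequality I would work one coordinate at a time. Given $x\notin B_r$, pick a coordinate $j_0$ with $\lvert x^{j_0}\rvert>r$; by symmetry, using the mirror form of Lemma \ref{YY-aux-lm-1}, assume $x^{j_0}\ge r+1$. The first return of $Y$ to $B_r$ is at least $\sigma=\inf\{k\ge1:Y^{j_0}_k\le r\}$, so it suffices to bound $P_x\{\sigma>a\}$ from below. Here I would invoke the coupling of Lemma \ref{YYbarlm1}: couple the chain $Y$ (run from $x$) with a symmetric random walk $\Ybar$ started at $x$, and let $\bar\sigma=\inf\{k\ge1:\Ybar^{j_0}_k\le r\}$, which is the stopping time to which Lemma \ref{YY-aux-lm-1} applies, with half-space $\cH=\{z:z^{j_0}\le r\}$ and $r_0=r$. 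On the event $\{Y_k=\Ybar_k\text{ for all }1\le k\le\bar\sigma\}$ one has $\sigma=\bar\sigma$, and Lemma \ref{YY-aux-lm-1} bounds the probability of the complement, uniformly over $x\notin\cH$, by $Cr^{2-p_1}$. Hence
\[
P_x\{\sigma>a\}\ \ge\ P_x\{\bar\sigma>a\}-Cr^{2-p_1}.
\]

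The main (and essentially the only substantive) input is then the one-dimensional survival estimate for $\Ybar^{j_0}$. Recall that at the start of this appendix one reduces to the case in which every coordinate walk $\Ybar^i$ is nondegenerate, so $\Ybar^{j_0}$ is a mean-zero lattice random walk with finite third moment, hence with positive finite variance. By the classical Sparre Andersen / Spitzer estimate for such walks (see \citep{spitzer}), the probability that $\Ybar^{j_0}$, started at height $r+1$, stays strictly above level $r$ for its first $a$ steps is at least $c\,a^{-1/2}$ for every $a\ge1$, with $c$ depending only on the law of $\Ybar^{j_0}$; moreover, coupling two copies with common increments shows this lower bound is nondecreasing in the starting height, so $P_x\{\bar\sigma>a\}\ge c\,a^{-1/2}$ for all $x^{j_0}\ge r+1$, uniformly in $r$. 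Combining this with the previous display gives $P_x\{\sigma>a\}\ge c\,a^{-1/2}-Cr^{2-p_1}$. Finally I would set $c_2=(c/2C)^2$: for $a\le c_2 r^{2(p_1-2)}$ one has $Cr^{2-p_1}\le\tfrac12 c\,a^{-1/2}$, so $P_x\{\sigma>a\}\ge\tfrac12 c\,a^{-1/2}$, which is the displayed inequality (with constant $c/2$). What remains — the bookkeeping with the excursion stopping times $U_j,V_j$, the strong Markov reduction, and the invocation of the stochastic domination criterion — is routine; the potential friction point is simply making sure the coupling-error exponent from Lemma \ref{YY-aux-lm-1} and the $a^{-1/2}$ survival rate are combined so as to produce precisely the cutoff $a\le c_2 n^{2\e_1(p_1-2)}$.
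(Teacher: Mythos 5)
Your proposal is correct and follows essentially the same route as the paper's proof: reduce via the strong Markov property to a uniform single-excursion tail bound, pick a coordinate $j_0$ with $|x^{j_0}|>r$, compare the Markov chain's exit of the half-line to the symmetric walk's via the coupling of Lemma~\ref{YYbarlm1}, control the decoupling probability by $Cr^{2-p_1}$ using Lemma~\ref{YY-aux-lm-1}, lower-bound the one-dimensional random-walk survival probability by $ca^{-1/2}$ (Spitzer/Feller), and absorb the error term by restricting $a$ to the stated range. The only cosmetic differences are that the paper tracks a single stopping time $\wbar$ for $\Ybar$ and translates to compare with $\Tbar$ from the origin, whereas you introduce both $\sigma$ and $\bar\sigma$ and argue monotonicity in the starting height by coupling increments; both handle the same point.
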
 

\begin{proof} 
Since $P_z\{V_j-U_j\ge a\vert \cF_{U_j}\}=P_{Y_{U_j}}\{V\ge a\}$
where $V$ means first entrance time into $B$, we shall bound
$P_x\{V\ge a\}$ below uniformly over 
$x\notin B$. 
Fix such an $x$ and an index $1\le j\le d$ such that $x^j\notin[-r,r]$. 
  As before we work through 
the case $x^j>r$ because the argument for the other case
$x^j<-r$  is the same.

Let $\wbar=\inf\{n\ge 1: \Ybar^j_n\le r\}$ 
be the first time the one-dimensional random walk $\Ybar^j$
 enters the half-line 
$(-\infty, r]$.   If both $Y$ and $\Ybar$ start at $x$ and 
stay coupled together  until time $\wbar$, then $V\ge\wbar$.  This 
 way we bound $V$ from below.   Since the random walk
is symmetric and 
can be translated, we can move the origin to $x^j$ and use 
classic results about the first entrance  time
into the left half-line,  $\Tbar=\inf\{ n\ge 1: \Ybar^j_n<0\}$. 
Thus 
\beq\label{rwtail}
P_{x^j}\{\wbar\ge a\}\ge P_{r+1}\{\wbar\ge a\}= P_0\{\Tbar\ge a\}
\ge \frac{\alpha_5}{\sqrt{a}}
\eeq
for a constant $\alpha_5$. 
The last inequality follows for  one-dimensional symmetric walks
 from basic random walk theory. For example, combine 
equation (7) on p.~185 of \citep{spitzer} with a Tauberian theorem 
such as Theorem 5 on p.~447 of Feller \citep{fell-2}.   Or see directly 
Theorem 1a on p.~415 of \citep{fell-2}.  

Now start both $Y$ and $\Ybar$ from $x$.
 Apply Lemma 
\ref{YY-aux-lm-1} and recall that  $r=n^{\e_1}$.   
\begin{align*}
P_x\{V\ge a\}&\ge P_{x,x}\{V\ge a, \text{ $Y_k=\Ybar_k$ for $k=1,\dotsc,\wbar$ }\}\\
&\ge P_{x,x}\{\wbar\ge a, \text{ $Y_k=\Ybar_k$ for $k=1,\dotsc,\wbar$ }\}\\
&\ge P_{x^j}\{\wbar\ge a\} - P_{x,x}\{\text{ $Y_k\ne\Ybar_k$ for some
 $k\in\{1,\dotsc,\wbar\}$ }\}\\
&\ge \frac{\alpha_5}{\sqrt{a}} - Cn^{\e_1(2-p_1)} \ge \frac{\alpha_5}{2\sqrt{a}}   
\end{align*}
if $a\le \alpha_5^2(2C)^{-2}n^{2\e_1(p_1-2)}$. 
This lower bound is independent 
of $x$.  We have proved the lemma. 
\end{proof}

We can assume that the random variables $\eta_j$ given by
the lemma satisfy $1\le \eta_j\le c_2n^{2\e_1(p_1-2)}$, and we can 
assume that $c_2\le 1$ and 
$\e_1$ is small enough to have 
\beq  2\e_1(p_1-2)\le 1 \label{etap1}\eeq
 because this merely weakens the conclusion 
of the lemma. 
 For the renewal process
determined by $\{\eta_j\}$ write  
\[
S_0=0\, ,\; S_k=\sum_{j=1}^k \eta_j\,,
\quad\text{and}\quad  K(n)=\inf\{ k: S_k > n\}
\]
for the renewal times 
and the number of renewals up to time $n$ (counting the 
renewal $S_0=0$).  
Since the random variables are
bounded, Wald's identity gives 
\[  \bfE K(n) \cdot \bfE\eta = 
\bfE S_{K(n)} \le n+c_2n^{2\e_1(p_1-2)} \le 2n,
\]
while 
\[
\bfE\eta \ge \int_1^{c_2n^{2\e_1(p_1-2)}}  \frac{c_1}{\sqrt{s}}\,ds \ge c_3 
n^{{\e_1(p_1-2)}}.
\]
Together these give 
\[ \bfE K(n) \le \frac{2n}{\bfE\eta} \le C_2n^{1-{\e_1(p_1-2)}}. 
\]

Now we pick up the development from line \eqref{line-a7}. 
Since the negative of the function of $(V_j-U_j)_{1\le i\le n}$ 
in the expectation on line \eqref{line-a7} is nondecreasing, 
the stochastic domination of Lemma \ref{stoch-lm} gives
 an upper  bound of  \eqref{line-a7} in terms of the i.i.d.\ $\{\eta_j\}$.  
Then 
we use the renewal bound from  above.
\begin{align*} 
E_z\biggl[\, \sum_{i=0}^n   \one\{V_i\le n\}\biggr]
&\le E_z\biggl[\, \sum_{i=0}^n   \one\Bigl\{\,
\sum_{j=1}^i (V_j-U_{j})\le n\Bigr\}\biggr]\\
&\le \bfE\biggl[\, \sum_{i=0}^n   \one\Bigl\{\,
\sum_{j=1}^i \eta_j\le n\Bigr\}\biggr]
=  \bfE K(n) \le C_2n^{1-\e_1(p_1-2)}.  
\end{align*}
Returning back to \eqref{temp-Y-2} to collect the bounds, we
have shown that 
\begin{align*}
\sum_{k=0}^{n-1} P_z\{Y_k\in B\} &\le 
Cn^{13\e_1} E_z\biggl[\, \sum_{i=0}^n   \one\{V_i\le n\}\biggr] 
\le Cn^{1+13\e_1-\e_1(p_1-2)}
= Cn^{1-\eta}.
\end{align*}
Since $p_1>15$ by assumption, $\eta=\e_1(p_1-15)>0$. 
We can  satisfy 
\eqref{etap1} with $\e_1=(1/2)(p_1-2)^{-1}$ in which case
the last bound is $Cn^{(1/2)+ 13/(2p_1-4)}$. 

\section{Replacing direction of transience}
\label{vectorapp} 

Hypotheses \eqref{dir-trans} and \mom are made for a specific 
vector $\uhat$. 
This appendix shows that, at the expense of a further
factor in the moment required,  the assumption that
 $\uhat$  has integer coordinates entails no
loss of generality. 
This appendix also uses the assumption
\step that the magnitude of a step is bounded by $\M$. 
 We learned the proof below from
Berger and Zeitouni \cite{berg-zeit-07-}.

\def\regenw{\sigma} 
\def\mompp{{p_3}} 
\def\backw{\hat\beta} 

Assume some vector  $\what\in\R^d$ satisfies
 $P_0\{X_n\cdot\what\to\infty\}=1$.  Let $\{\regenw_k\}_{k\ge 0}$
be the regeneration times in the direction $\what$.
Assume $E_0(\regenw_1^{\mompp})<\infty$ for some $\mompp>6$.
As explained in Section \ref{prelim}, 
transience and moments on $\what$ imply the law of large numbers
\beq
\frac{X_n}{n}\to v=
\frac{E_0[X_{\regenw_1}\vert\backw=\infty]}
{E_0[{\regenw_1}\vert\backw=\infty]}
\qquad\text{$P_0$-almost surely,} 
\label{AAdefv}\eeq 
where $\backw=\inf\{n\ge 0: X_n\cdot\what<X_0\cdot\what\}$ is the
first backtracking time in the direction $\what$.  The limiting
velocity $v$ satisfies $\what\cdot v>0$.

\begin{proposition}  Suppose $\uhat\in\R^d$ satisfies
$\uhat\cdot v>0$. 
Then 
 \[P_0\{X_n\cdot\uhat\to\infty\}=1.\]
For  the first regeneration time $\tau_1$
 in the direction $\uhat$ we have the estimate
 $E_0(\tau_1^\momp)<\infty$ for $1\le \momp<\mompp/2-2$. 
\label{momtranspr}
\end{proposition}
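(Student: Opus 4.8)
The plan is to dispatch transience trivially and then spend all the effort on the moment bound by using the $\what$-regeneration times $\{\regenw_k\}$ as a scaffold. Transience in direction $\uhat$ is immediate: by the law of large numbers \eqref{AAdefv} we have $X_n/n\to v$ $P_0$-a.s., and since $\uhat\cdot v>0$ this forces $X_n\cdot\uhat\to\infty$; consequently, as recalled in Section~\ref{prelim}, the first $\uhat$-regeneration time $\tau_1$ is $P_0$-a.s.\ finite. What remains is to produce a finite $\momp$-th moment for $\momp<\mompp/2-2$.

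The backbone of the moment estimate would be a tail bound of the form $P_0\{\tau_1>\regenw_m\}\le Cm^{-q}$ with $q$ as close to $\mompp$ as the argument permits. Granting this, one cashes out by splitting according to which interval $[\regenw_{m-1},\regenw_m)$ contains $\tau_1$ and applying Hölder with exponents $2,2$:
\[
E_0[\tau_1^{\momp}]\;\le\;\sum_{m\ge1}E_0\big[\regenw_m^{\momp}\,;\,\tau_1\ge\regenw_{m-1}\big]
\;\le\;\sum_{m\ge1}E_0[\regenw_m^{2\momp}]^{1/2}\;P_0\{\tau_1>\regenw_{m-2}\}^{1/2}.
\]
The analogue of \eqref{tau-bound} for the $\what$-chain (same Jensen argument, using $E_0[(\regenw_{k+1}-\regenw_k)^{\mompp}]<\infty$) gives $E_0[\regenw_m^{2\momp}]\le Cm^{2\momp}$ provided $2\momp\le\mompp$, while the second factor is $\le Cm^{-q/2}$. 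The series converges once $q/2>\momp+1$; combining this with $2\momp\le\mompp$ and $q$ close to $\mompp$ is exactly what produces the threshold $\momp<\mompp/2-2$, with the loss of two relative to $\mompp/2$ being shared between the Hölder exponent and the summability margin.

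For the tail bound itself, the idea is that a $\what$-regeneration time $\regenw_k$ is automatically a $\uhat$-regeneration time as soon as two further events occur: the past event $(a)_k=\{X_{\regenw_k}\cdot\uhat>X_n\cdot\uhat\text{ for all }n<\regenw_k\}$, which is $\mathcal F_{\regenw_k}$-measurable, and the future event $(b)_k=\{X_n\cdot\uhat\ge X_{\regenw_k}\cdot\uhat\text{ for all }n\ge\regenw_k\}$, which depends only on the post-$\regenw_k$ slabs, hence by the renewal property holds with fixed conditional probability $\rho:=P_0\{\inf_{n\ge0}X_n\cdot\uhat\ge0\mid\backw=\infty\}$ given $\mathcal F_{\regenw_k}$. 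Positivity of $\rho$ is the double-no-backtracking statement $P_0\{\inf_nX_n\cdot\what\ge0,\ \inf_nX_n\cdot\uhat\ge0\}>0$, which holds because $v$ points into the interior of the cone $\{x\cdot\what\ge0\}\cap\{x\cdot\uhat\ge0\}$ and so can be forced by a path construction followed by a regeneration, exactly as in the basic estimates (cf.\ the proof of Lemma~\ref{common-beta-lemma}). Thus $\tau_1\le\regenw_K$ with $K=\inf\{k:(a)_k\cap(b)_k\}$, and it suffices to bound $P_0\{K>m\}$. To handle the fact that the $(b)_k$ overlap, one runs the search as a sequence of bets on disjoint stretches of path: at each index where $(a)_k$ holds one tests $(b)_k$; a failure means the walk dips in $\uhat$ after $\regenw_k$ at some finite (stopping) time $\theta_k$, so the failure is witnessed on $[\regenw_k,\theta_k]$ (a stretch whose length has a polynomial tail, controlled as in \eqref{backtrack-bound}) and one resumes at the next $\what$-regeneration beyond $\theta_k$ where $(a)$ is restored; conditioned on reaching each test, the outcomes are i.i.d.\ Bernoulli$(\rho)$, so the number of tests is geometric, and $K$ is a sum of a geometric number of polynomially-tailed gaps plus the index $K_a=\inf\{k:(a)_k\}$.

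The step I expect to be the main obstacle is the control of $(a)_k$ — the $\uhat$-running-maximum condition — uniformly in $k$, in particular showing $K_a$ (and the gaps between successive indices carrying $(a)$) have polynomial tails. Writing $X_{\regenw_k}\cdot\uhat$ as a random walk with positive drift $\mu=(v\cdot\uhat)\,E_0[\regenw_1\mid\backw=\infty]>0$ decorated by the within-slab up-excursions of the $\uhat$-coordinate, $(a)_k$ is a ladder-type event for the time-reversed increments. The trouble is that a within-$\what$-slab excursion is bounded only by $\M|\uhat|(\regenw_{k+1}-\regenw_k)$, a quantity with mere $\mompp$ moments that can easily exceed the per-slab drift $\mu$; hence a naive comparison of consecutive regeneration times is useless, and one must work with the renewal structure of the ladder epochs of the decorated reversed walk, feeding in the polynomial tail of the slab lengths to show $(a)_k$ first occurs by index $m$ except on an event of probability $O(m^{-q})$, and similarly for the re-establishment of a new running maximum after a dip. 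Threading the exponents through this ladder estimate, the backtracking-type control of the dip stretches, and the Hölder step above is precisely what pins the admissible moment at $\momp<\mompp/2-2$.
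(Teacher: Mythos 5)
Your transience argument is correct, and your Hölder reduction to a tail bound $P_0\{\tau_1>\regenw_m\}\le Cm^{-q}$ is sound as far as it goes, but the proposal has a genuine gap at exactly the point you flag: you never prove the $(a)_k$ running-maximum/ladder estimate, and you acknowledge you do not see how to. That estimate is not a routine fill-in. After a failed test $(b)_k$, the $\uhat$-running maximum of $X$ is sitting at some random height that can be far above $X_{\regenw_k}\cdot\uhat$ (a within-$\what$-slab $\uhat$-excursion of size up to $\M|\uhat|(\regenw_{j+1}-\regenw_j)$, which has only $\mompp$ moments), so the $(a)$-restoration time is a ladder epoch of a drifted walk decorated by heavy-ish-tailed excursion heights, and bounding its tail with the right exponent is itself work of comparable difficulty to the whole proposition. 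It is also far from clear that the resulting $q$ would be close enough to $\mompp$ to recover $\momp<\mompp/2-2$: your Hölder step with exponent pair $(2,2)$ already forces $2\momp\le\mompp$ and needs $q>2\momp+2$, so any loss in $q$ from the ladder argument propagates straight into a worse moment threshold.

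The paper avoids this difficulty altogether by not estimating $\tau_1$ (or the index $K$) directly. Instead it first controls the \emph{height} $X_{\tau_1}\cdot\uhat$: by Sznitman's Lemma 1.2 this is stochastically dominated by a geometric sum of copies of $M=\sup_{n\le\beta}X_n\cdot\uhat$ under $P_0(\cdot\mid\beta<\infty)$, and Lemma~\ref{AAauxlemma1} shows that $\{M>m|\uhat|\M,\ \beta<\infty\}$ can only occur if one of three rare events in the $\what$-regeneration picture occurs (slow $\what$-regenerations \eqref{AAevent1}, a big gap \eqref{AAevent2}, or a large deviation of $X_{\regenw_k}$ from its mean \eqref{AAevent4}), each with probability $O(m^{1-\mompp/2})$ via Burkholder--Davis--Gundy. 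That yields $E_0[(X_{\tau_1}\cdot\uhat)^p]<\infty$ for $p<\mompp/2-1$. Step~3 then converts this height moment into the time moment by contradiction: a heavy tail for $\tau_1$, combined with the almost-sure a priori control $P_0\{|X_n-nv|\ge\delta n\text{ for some }n\ge\ell\}\le C\ell^{1-\mompp/2}$ from Lemma~\ref{AAauxlemma2}, forces $X_{\tau_1}\cdot\uhat\gtrsim\tau_1$ on an event that is not too unlikely, and this would contradict the height moment. Nowhere does the argument need to locate the first $\what$-regeneration that is also a $\uhat$-regeneration, which is precisely the ladder problem that stalls your proposal. If you want to salvage your route, you would need an actual proof of the $(a)_k$ restoration estimate with explicit exponents; otherwise I would recommend switching to the paper's strategy of bounding the height first and then using the LLN-type deviation bound to transfer to the time variable.
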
 

From this lemma we can choose a $\uhat$ with rational 
coordinates and then scale it by a suitable integer to get the
integer vector $\uhat$ assumed in 
 \eqref{dir-trans} and Hypothesis \amom.  To get $\momp>176d $
as required by \mom of course puts an even larger demand on $\mompp$. 

\begin{proof}[Proof of Proposition \ref{momtranspr}]

\medskip

{\bf Step 1.} Transience in direction $\uhat$.
  

Given $n$ choose $k=k(n)$ so that $\regenw_{k-1}<n\le \regenw_k$.
Then by the bounded step Hypothesis \step 
\beq
\Bigl\lvert \,\frac1n{X_n\cdot\uhat}\;-\; 
\frac1n{X_{\regenw_k}\cdot\uhat}\, \Bigr\rvert\  \le\ 
 \frac1n \lvert\uhat\rvert \M (\regenw_k-\regenw_{k-1}).
\label{AAaux5.5}\eeq 
By the moment assumption on $\regenw_1$ 
the right-hand side converges $P_0$-a.s. to zero 
while $n^{-1}{X_{\regenw_k}\cdot\uhat}\to v\cdot\uhat>0$, 
and so in particular $X_n\cdot\uhat\to\infty$. 
From this follows that the regeneration times $\{\tau_k\}$
in direction $\uhat$ are finite. 

\medskip

{\bf Step 2.} Moment bound on the height $X_{\tau_1}\cdot\uhat$ 
 of the first $\uhat$-regeneration slab.

Let $\beta$ be the $\uhat$-backtracking time 
as defined in \eqref{defbeta} and 
\[
M=\sup_{0\le n\le\beta} X_n\cdot\uhat.
\]
Lemma 1.2 in Sznitman \cite{szni-02} shows how the construction
of the regeneration time leads to 
stochastic domination of  $X_{\tau_1}\cdot\uhat$ under $P_0$  by a
sum of geometrically many i.i.d.\ terms, each distributed
like $M$ plus a fixed constant under the measure 
$P_0(\,\cdot\,\vert\beta<\infty)$.
Hence to prove $E_0[(X_{\tau_1}\cdot\uhat)^p\,]<\infty$ 
 it suffices to prove $E_0(M^p\vert\beta<\infty)<\infty$. 
We begin with a lemma that helps control the tail
probabilities  $P\{M>m\vert\beta<\infty\}$. For the arguments it turns out
convenient to 
multiply $m$ by the constant $\abs{\uhat}\M$.

\begin{lemma}
 There exist $\delta_0>0$ such that this holds:
if $\delta\in(0,\delta_0)$ there exists an $m_0=m_0(\delta)<\infty$ 
such that
for $m\ge m_0$ the event $\{M>m\abs{\uhat}\M,\,\beta<\infty\}$ 
lies in the union of these
three events:
\begin{align}
&\regenw_{[\delta m]}\ge m, \label{AAevent1}\\
&\regenw_{k}-\regenw_{k-1} \ge \delta k 
\quad\text{for some $k>[\delta m]$,} \label{AAevent2}\\
&\lvert X_{\regenw_{k}}-E_0(X_{\regenw_{k}})\rvert  \ge \delta k 
\quad\text{for some $k>[\delta m]$.} \label{AAevent4}
\end{align}
\label{AAauxlemma1}\end{lemma}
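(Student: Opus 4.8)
\textbf{Proof plan for Lemma \ref{AAauxlemma1}.}

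The plan is to argue by contraposition: suppose none of the three events \eqref{AAevent1}, \eqref{AAevent2}, \eqref{AAevent4} occurs; I will then show that $M\le m\abs{\uhat}\M$ once $m$ is large, where $M=\sup_{0\le n\le\beta}X_n\cdot\uhat$ on $\{\beta<\infty\}$. The point of the construction of the $\uhat$-regeneration time $\tau_1$ and of $\beta$ is that backtracking happens by some finite time, so on $\{\beta<\infty\}$ there is some index $k$ with $\beta\le\regenw_k$ (in the $\what$-regeneration structure), and then $M\le\abs{\uhat}\M\,\regenw_k$ by the bounded-step hypothesis \step, exactly as in \eqref{AAaux5.5}. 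So it suffices to show that the negation of the three events forces $\regenw_k$ to be small relative to $m$ for the relevant $k$, i.e. that the first backtracking below level $0$ in direction $\uhat$ must occur before time $m$.

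First I would use the negation of \eqref{AAevent4}: for every $k>[\delta m]$ we have $\abs{X_{\regenw_k}-E_0(X_{\regenw_k})}<\delta k$. Since $E_0(X_{\regenw_k})$ grows linearly with slope $v\cdot\what>0$ along the $\what$-renewals (by Wald applied to the i.i.d.\ $\what$-regeneration slabs, which have enough moments because $\mompp>6$), for $\delta$ small enough this pins $X_{\regenw_k}$ into a cone around $kv$; in particular $X_{\regenw_k}\cdot\uhat$ stays bounded below by a positive multiple of $k$ for $k>[\delta m]$, because $\uhat\cdot v>0$ by hypothesis. Combined with the negation of \eqref{AAevent2} — no $\what$-renewal increment after $[\delta m]$ is larger than $\delta k$ — a point between two consecutive $\what$-renewal times $\regenw_{k-1},\regenw_k$ with $k>[\delta m]$ also has $X_n\cdot\uhat$ bounded below by a positive multiple of $k$, since moving within one slab changes $X\cdot\uhat$ by at most $\abs{\uhat}\M(\regenw_k-\regenw_{k-1})<\abs{\uhat}\M\delta k$. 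So after the $[\delta m]$-th $\what$-renewal the walk never again drops below level $0$ in direction $\uhat$; hence $\beta\le\regenw_{[\delta m]}$. Finally the negation of \eqref{AAevent1} gives $\regenw_{[\delta m]}<m$, so $\beta<m$ and therefore $M\le\abs{\uhat}\M\,\beta<m\abs{\uhat}\M$, contradicting $M>m\abs{\uhat}\M$.

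The constants are chosen in the natural order: first $\delta_0$ is taken small enough that the cone estimate works, i.e.\ that $\abs{X_{\regenw_k}-kv}<\delta k$ together with $\regenw_k-\regenw_{k-1}<\delta k$ forces $X_n\cdot\uhat>0$ for all $n>\regenw_{[\delta m]}$ — this only needs $\delta_0$ to be small relative to $v\cdot\uhat$, $v\cdot\what$, $\abs{\uhat}$ and $\M$. Then, given $\delta\in(0,\delta_0)$, one picks $m_0(\delta)$ large enough to absorb the additive error terms coming from the finitely-many-slabs transient behaviour (the discrepancy between $E_0(X_{\regenw_k})$ and $kv$ is $O(1)$, and $E_0(\regenw_k)$ is linear in $k$ with an $O(1)$ error, so the cone bound only kicks in for $k$ past a $\delta$-dependent threshold, which is why the events are stated with the cutoff $k>[\delta m]$ rather than all $k$). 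The main obstacle is bookkeeping: being careful that all three ``bad'' events genuinely cover the region $k>[\delta m]$ while $\regenw_{[\delta m]}$ controls the complementary initial segment, and that the geometric-sum domination from \cite{szni-02} quoted just before the lemma is what converts the tail bound on $M$ into the desired moment bound $E_0(M^p\mid\beta<\infty)<\infty$, hence $E_0[(X_{\tau_1}\cdot\uhat)^p]<\infty$, in the subsequent step of the proof of Proposition \ref{momtranspr}. No single computation here is hard; the care is entirely in the order of quantifiers on $\delta_0$, $\delta$, $m_0$.
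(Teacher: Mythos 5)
Your proposal is correct and takes essentially the same approach as the paper: argue by contraposition, use the negation of \eqref{AAevent4} to pin $X_{\regenw_k}\cdot\uhat$ near $b(k-1)v\cdot\uhat>0$ for $k>[\delta m]$, use the negation of \eqref{AAevent2} to control the intra-slab excursion, and use the negation of \eqref{AAevent1} to relate the renewal index to $m$. The only difference is organizational (you conclude $\beta\le\regenw_{[\delta m]}<m$ directly, whereas the paper fixes the bracketing index $k$ with $\regenw_{k-1}<\beta\le\regenw_k$ and contradicts the resulting two-sided bound on $X_{\regenw_k}\cdot\uhat$); the underlying estimates are identical.
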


\begin{proof}[Proof of Lemma \ref{AAauxlemma1}]
Assume 
that  $\beta<\infty$ and  $M>m\abs{\uhat}\M$, 
but conditions \eqref{AAevent1}--\eqref{AAevent4} fail simultaneously. 
We derive  a contradiction  from this.  

Fix $k\ge 1$ so that 
\beq
\regenw_{k-1}< \beta\le\regenw_{k}.
\label{AAaux7}\eeq
Since the maximum step size is $\M$, at least $m$ steps are needed
to realize the event $M>m\abs{\uhat}\M$ and so $\beta>m$. Thus negating
\eqref{AAevent1} implies 
$k>\delta m$. 
The step bound, \eqref{AAaux7} and the negation of \eqref{AAevent2}
 imply
\beq
X_{\regenw_k}\cdot\uhat \le X_\beta\cdot\uhat + 
\abs{\uhat}\M (\regenw_k-\regenw_{k-1}) < \abs{\uhat}\M \delta k. 
\label{AAaux11}\eeq

Introduce the shorthands  
\beq 
a=E_0(X_{\regenw_1})\quad\text{and}\quad
b=E_0(\regenw_1\vert\backw=\infty). 
\label{AAauxdefab}\eeq
By the i.i.d.\ property of the regeneration slabs from the 
second one onwards [recall the discussion around \eqref{regenslab}]
$E_0(\regenw_k-\regenw_{k-1})=b$ and 
$E_0(X_{\regenw_k})=a+b(k-1)v$ for $k\ge 1$. 
Thus  negating \eqref{AAevent4} gives 
\beq
\begin{split}
X_{\regenw_k}\cdot\uhat &= \bigl(X_{\regenw_k}-a-b(k-1)v\bigr) \cdot\uhat
+a\cdot\uhat +b(k-1)v\cdot\uhat\\
&\ge -\delta k \abs{\uhat}- \abs{a}\cdot\abs{\uhat}+ b(k-1)v\cdot\uhat. 
\end{split}
\label{AAaux12}
\eeq 

Since $v\cdot\uhat>0$,  
comparison of \eqref{AAaux11} and \eqref{AAaux12} reveals that it
is possible to first fix $\delta>0$ small enough and then 
$m_0$ large enough so that, if $m\ge m_0$, then $k>\delta m$ 
forces a contradiction between 
\eqref{AAaux11} and \eqref{AAaux12}. 
This concludes the proof of Lemma \ref{AAauxlemma1}. 
\end{proof}

Next we observe that the union of 
\eqref{AAevent1}--\eqref{AAevent4} has probability $\le$
$Cm^{1-\mompp/2}$. The assumptions of  
$\what$-directional transience and $E_0(\regenw_1^{\mompp})<\infty$
imply that $P_0(\backw=\infty)>0$ and hence 
(by the i.i.d slab property again) for $k\ge 2$, 
\beq
E_0[(\regenw_k-\regenw_{k-1})^\mompp\,]
=E_0[\regenw_1^{\mompp}\vert\backw=\infty]
\le \frac{E_0(\regenw_1^{\mompp})}{P_0(\backw=\infty)}<\infty. 
\label{AAaux13}\eeq

For the next calculation, recall that for i.i.d.\ mean zero summands 
and $p\ge 2$ the Burkholder-Davis-Gundy inequality \cite{burk-73}
 followed by Jensen's inequality  gives
\[
E\Bigl[\,\Bigl\lvert\sum_{j=1}^n Z_j\Bigr\rvert^p\,\Bigr]
\le  
E\Bigl[\,\Bigl(\;\sum_{j=1}^n Z_j^2\Bigr)^{p/2}\,\Bigr]
\le
n^{p/2}E(Z_1^p).
\]
Recall $a$  and $b$ from \eqref{AAauxdefab}. 
Shrink $\delta$ further (this can be done at the expense of increasing
$m_0$ in Lemma \ref{AAauxlemma1}) so that $\delta b<1/4$. 
\beq
\begin{split}
P_0\{\regenw_{[\delta m]}\ge m\} &\le P_0\{\regenw_1\ge m/2\}
+ P_0\Bigl\{\; \sum_{k=2}^{[\delta m]} 
(\regenw_k-\regenw_{k-1}-b)\ge m/4 \Bigr\}
\le Cm^{-\mompp/2}.
\end{split} \label{AAaux14}\eeq

For the second estimate use \eqref{AAaux13}.
\[
\sum_{k>[\delta m]} P_0\{ \regenw_k-\regenw_{k-1}\ge \delta k\}
\le \sum_{k>[\delta m]} C(\delta k)^{-\mompp} 
\le Cm^{1-\mompp}.
\]

For the third estimate use Chebychev and
for the sum of i.i.d\ pieces  repeat the 
 Burkholder-Davis-Gundy estimate:
\beq
\begin{split}
&P_0\{\,\lvert X_{\regenw_{k}}-E_0(X_{\regenw_{k}})\rvert  \ge \delta k\}
\le 
 P_0\{\,\lvert X_{\regenw_{1}}-a\rvert  \ge \delta k/2\}\\
&\qquad +
P_0\{\,\lvert X_{\regenw_{k}}-X_{\regenw_{1}}-(k-1)bv\rvert  
\ge \delta k/2\}
\ \le \  Ck^{-\mompp/2}.
\end{split}\label{AAaux14.5}\eeq
Summing these bounds over $k>[\delta m]$ gives 
$Cm^{1-\mompp/2}$.

Collecting the above bounds for the events 
\eqref{AAevent1}--\eqref{AAevent4} and utilizing 
Lemma \ref{AAauxlemma1} gives the intermediate  bound
$P_0(M>m\vert\backw<\infty)\le Cm^{1-\mompp/2}$ for large enough $m$.
Hence $E_0(M^{p}\vert\backw<\infty)<\infty$ for $p<\mompp/2-1$.  
By the already mentioned appeal to Lemma 1.2 
in Sznitman \cite{szni-02}
we can conclude {\bf Step 2} with the bound
\beq
E_0[ (X_{\tau_1}\cdot\uhat)^p]<\infty \qquad
\text{for $p<\mompp/2-1$. } 
\label{AAaux15}\eeq

\medskip

{\bf Step 3.} Moment bound for $\tau_1$. 
We insert one more lemma. 

\begin{lemma}
For $\ell\ge 1$: 
\[
P_0\{ \text{$\lvert X_n-nv\rvert\ge \delta n$ for some $n\ge \ell$}\} 
\le C\ell^{1-\mompp/2}.
\]
\label{AAauxlemma2}
\end{lemma}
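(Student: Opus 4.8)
Lemma \ref{AAauxlemma2} asserts a uniform-in-time large-deviation bound: the diffusively-normalized deviation $|X_n-nv|\ge\delta n$ is unlikely to occur for \emph{any} $n\ge\ell$, with polynomial rate $\ell^{1-\mompp/2}$.

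The plan is to run the argument of Lemma~\ref{AAauxlemma1} with the backtracking time $\beta$ replaced by an arbitrary large time $n$. Set
\[
E_\ell=\{\,\lvert X_n-nv\rvert\ge\delta n\text{ for some }n\ge\ell\,\}.
\]
I would reduce $E_\ell$, for $\ell$ large, to a finite union of events of the three types appearing in \eqref{AAevent1}, \eqref{AAevent2}, \eqref{AAevent4} --- with the running index bounded below by $[\eps\ell]$ for a small $\eps>0$ and the threshold $\delta$ there replaced by a small constant $\delta_1$ --- together with the two auxiliary events $\{\regenw_1\ge\eps\ell\}$ and $\{\lvert\regenw_k-\regenw_1-b(k-1)\rvert\ge\delta_1 k\text{ for some }k>[\eps\ell]\}$, where $a,b$ are as in \eqref{AAauxdefab}. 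Each piece is then bounded by estimates already in hand: \eqref{AAaux14} (and its Burkholder--Davis--Gundy proof, applied also to the mean-zero sum $\regenw_k-\regenw_1-b(k-1)$), the display following \eqref{AAaux14}, \eqref{AAaux14.5}, and \eqref{AAaux13} with Chebyshev for $\{\regenw_1\ge\eps\ell\}$. Summing over $k>[\eps\ell]$, the dominant contribution is $\sum_{k>[\eps\ell]}Ck^{-\mompp/2}\le C\ell^{1-\mompp/2}$ (which dominates $C\ell^{-\mompp/2}$ and $C\ell^{1-\mompp}$), the claimed bound.

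For the reduction, suppose $\lvert X_n-nv\rvert\ge\delta n$ for some $n\ge\ell$ and let $k=k(n)$ satisfy $\regenw_{k-1}<n\le\regenw_k$. The bounded step Hypothesis \step\ gives
\[
\delta n \;\le\; \lvert X_{\regenw_k}-\regenw_k v\rvert \;+\; (\M+\lvert v\rvert)(\regenw_k-\regenw_{k-1}),
\]
since $X_n$ differs from $X_{\regenw_k}$ by at most $\M(\regenw_k-n)$ and $\lvert n-\regenw_k\rvert\le\regenw_k-\regenw_{k-1}$. If $k\le[\eps\ell]$ then $\regenw_{[\eps\ell]}\ge\regenw_k\ge n\ge\ell$, an \eqref{AAevent1}-type event. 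If $k>[\eps\ell]$, then outside $\{\regenw_k-\regenw_{k-1}\ge\delta_1 k\}$, and using $\regenw_1<\eps\ell$ together with $\lvert\regenw_k-\regenw_1-b(k-1)\rvert<\delta_1 k$, one obtains $c_1 n\le k\le c_2 n$ with constants $c_1,c_2$ depending only on $b$; then taking $\delta_1$ and $\eps$ small relative to $\delta$ forces $\lvert X_{\regenw_k}-\regenw_k v\rvert\ge\delta' k$ for a fixed $\delta'>0$. Since $E_0(X_{\regenw_k})-\regenw_k v=a+(b(k-1)-\regenw_k)v$ by \eqref{AAauxdefab}, the mean term has magnitude at most $\lvert a\rvert+\lvert v\rvert(\regenw_1+\lvert\regenw_k-\regenw_1-b(k-1)\rvert)$, which is $\le\delta' k/2$ for $k$ large once the auxiliary events are excluded; hence $\lvert X_{\regenw_k}-E_0(X_{\regenw_k})\rvert\ge\delta' k/2$, an \eqref{AAevent4}-type event. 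This shows $E_\ell$ is contained in the advertised union.

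The main obstacle is the constant-chasing, exactly as in Lemma~\ref{AAauxlemma1}: one must check that $c_1,c_2$ are independent of $\delta_1$ and $\eps$ (they depend only on $b$ once, say, $\delta_1\le b/2$), so that the finitely many smallness requirements --- first on $\delta_1$, then on $\eps$, then the choice of $\ell_0$ --- can be met without circularity. The only feature genuinely new relative to Lemma~\ref{AAauxlemma1} is that here a \emph{two-sided} comparison $k(n)\asymp n$ is needed, rather than the one-sided control $\beta>[\delta m]$ that sufficed there; this is why the mean-deviation event for $\regenw_k$ must be added to the list, but it is handled by precisely the Burkholder--Davis--Gundy computation already performed for \eqref{AAaux14} and \eqref{AAaux14.5}.
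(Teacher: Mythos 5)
Your proof is correct: the reduction of the event $E_\ell$ to the five listed types is sound, the comparability $c_1n\le k\le c_2n$ does follow as you describe (and the constants can indeed be chosen depending only on $b$ once $\delta_1\le b/2$), and each event-type is bounded by $Ck^{-\mompp/2}$ or better, so the sum over $k>[\eps\ell]$ gives the claimed $C\ell^{1-\mompp/2}$. That said, the paper's own proof is structurally simpler than yours. After fixing a small $\eta$, it writes $P_0\{E_\ell\}\le P_0\{\regenw_{[\eta\ell]}\ge\ell\}+\sum_{j>\eta\ell}P_0\{\lvert X_n-nv\rvert\ge\delta n\text{ for some }n\in[\regenw_{j-1},\regenw_j]\}$ and bounds each term in the sum by just three events: $\{\regenw_{j-1}<\eta j\}$, $\{\lvert X_{\regenw_j}-\regenw_j v\rvert\ge\delta\eta j/2\}$, and $\{(\abs{v}+\M)(\regenw_j-\regenw_{j-1})\ge\delta\eta j/2\}$. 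The key simplification is the choice of \emph{random} centering $\regenw_j v$ rather than the deterministic $E_0(X_{\regenw_j})$: since $v=E_0[X_{\regenw_1}\vert\backw=\infty]/E_0[\regenw_1\vert\backw=\infty]$, the increments $X_{\regenw_i}-X_{\regenw_{i-1}}-(\regenw_i-\regenw_{i-1})v$ are already i.i.d.\ mean-zero, so the Burkholder--Davis--Gundy bound applies directly and no separate control of $\regenw_j$ is needed. Your choice to reuse the event list from Lemma~\ref{AAauxlemma1} forces deterministic centering, hence the two extra auxiliary events $\{\regenw_1\ge\eps\ell\}$ and $\{\lvert\regenw_k-\regenw_1-b(k-1)\rvert\ge\delta_1 k\}$ and the two-sided comparison $k\asymp n$ you correctly identified as the genuine new difficulty; the paper's argument only needs the one-sided bound $n\ge\eta j$ supplied by the complement of $\{\regenw_{j-1}<\eta j\}$. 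Both approaches yield the same exponent, but the paper's version avoids your constant-chasing almost entirely.
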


\begin{proof}[Proof of Lemma \ref{AAauxlemma2}] 
Fix a small $\eta>0$. 
\begin{align*}
&P_0\{ \text{$\lvert X_n-nv\rvert\ge \delta n$ for some $n\ge \ell$}\} \\
&\le P_0\{\regenw_{[\eta\ell]}\ge \ell\} \;+\;
\sum_{j>\eta\ell}
P_0\{ \text{$\lvert X_n-nv\rvert\ge \delta n$ for some 
$n\in[\regenw_{j-1},\regenw_j]$}\}.
\end{align*}
If $\eta$ is small enough
the first probability above is bounded by $C\ell^{-\mompp/2}$ as 
in \eqref{AAaux14}.  For a term in the sum, note first that if 
$\regenw_{j-1}\ge \eta j$ then the parameter $n$ in the probability
satisfies $n\ge \eta j$.  Then replace time $n$ with time 
$\regenw_j$ at the expense of an error of a constant times
$\regenw_j-\regenw_{j-1}$:
\begin{align*}
&P_0\{ \text{$\lvert X_n-nv\rvert\ge \delta n$ for some 
$n\in[\regenw_{j-1},\regenw_j]$}\}\\
&\le P_0\{\regenw_{j-1}<\eta j\} 
+ P_0\{ \lvert X_{\regenw_j}-\regenw_jv\rvert\ge \delta\eta j/2\}
+P_0\{\, (\abs{v}+\M)(\regenw_{j}-\regenw_{j-1}) \ge \delta\eta j/2\,\}.
\end{align*}
The first probability after the inequality gives again 
$Cj^{-\mompp/2}$ as in \eqref{AAaux14}
 if $\eta$ is small enough.   In the second one
the summands $X_{\regenw_j}-X_{\regenw_{j-1}}-(\regenw_j-\regenw_{j-1})v$
are i.i.d.\ mean zero for $j\ge 2$ so we can argue in the same
 spirit as in 
\eqref{AAaux14.5} to get $Cj^{-\mompp/2}$.  The last probability 
gives $Cj^{-\mompp}$ by the moments of $\regenw_j-\regenw_{j-1}$.
Adding the bounds gives the conclusion. 
\end{proof}

Now we finish the proof of Proposition \ref{momtranspr}. 
To get a contradiction, suppose that
$\momp<\mompp/2-2$ and 
 $E_0(\tau_1^{\momp})=\infty$. 
 Pick
 $\e\in(0,\mompp/2-2-\momp)$.  
Then  there exists 
a subsequence $\{k_j\}$ such that 
$P_0(\tau_1> k_j)\ge k_j^{-\momp-\e}$. 
With the above lemma and the choice of $\e$
we have, for large enough $k_j$
\begin{align*}
P_0\{\tau_1> k_j\,,\, &\text{$\lvert X_n-nv\rvert< \delta n$ 
for all $n\ge k_j$}\} 
\ge k_j^{-\momp-\e}-Ck_j^{1-\mompp/2}
\ge Ck_j^{-\momp-\e}
\end{align*} 
On the event above 
\[
X_{\tau_1}\cdot\uhat \ge \tau_1v\cdot\uhat -\delta\tau_1\abs{\uhat} 
\ge \delta_1 k_j
\]
for another small $\delta_1>0$ if $\delta$ is small enough.  Thus we 
have 
\[
P_0\{ X_{\tau_1}\cdot\uhat \ge \delta_1 k_j\}
\ge Ck_j^{-\momp-\e}.
\] 
From this 
\[
\delta_1^{-p}E_0[(X_{\tau_1}\cdot\uhat)^{p}\,]
\ge C \sum_j k_j^{p-1-\momp-\e}.
\]
This sum diverges and contradicts \eqref{AAaux15} if $p$ is
chosen to satisfy 
$\mompp/2-1>p\ge 1+\momp+\e$ which can be done by the earlier
choice of $\e$. 
  This contradiction implies
that  $E_0(\tau_1^{\momp})<\infty$ and completes the proof
of Proposition \ref{momtranspr}. 
\end{proof}

\section{Completion of a technical proof}
\label{Yapp}

In this appendix we finish the proof of  Lemma \ref{Yapplm1}
by deriving  the bounds 
\eqref{Yline-1}--\eqref{Yline-3}.  

\smallskip

{\sl Proof of \eqref{Yline-1}.}  
By Hypothesis (\HR), there exist two nonzero
vectors $w\ne z$ such that $\E\pi_{0,z}\pi_{0,w}>0$. We will distinguish 
several cases. In each case we describe two paths that the two walkers can 
take with positive probability. The two paths will start at 0, reach a fresh common 
level at distinct points, will not backtrack below level 0, and will not have
a chance of a joint regeneration at any previous positive
 level. Then the two walks can 
regenerate with probability $\ge$ $\eta>0$ (Lemma \ref{common-beta-lemma}). 
Note that the part of 
the environment responsible for the two paths and  the part responsible
for regeneration lie in separate half-spaces. Since $\P$ is
product, a positive lower bound for \eqref{YXaux1} is obtained 
and  \eqref{Yline-1} thereby proved.

{\bf Case 1:} $w\cdot\uhat>0$, $z\cdot\uhat>0$, and they are noncollinear.
Let one walk take enough $w$-steps and the other enough $z$-steps.

{\bf Case 2:} $w\cdot\uhat>0$, $z\cdot\uhat>0$, and they are collinear.
Since the walk is not confined to a line (Hypothesis \areg),
there must exist a vector $y$ that is not collinear with
$z,w$ such that $\E\pi_{0y}>0$. 

{\bf Subcase 2.a:} $y\cdot\uhat<0$. Exchanging $w$ and $z$, if necessary,
we can assume $w\cdot\uhat<z\cdot\uhat$.
Let $n>0$ and $m>0$ be such that $nw\cdot\uhat+my\cdot\uhat=0$.
Let one walk take $n$ $w$-steps then $m$ $y$-steps, coming back to level 0,
then $n$ $w$-steps and a $z$-step. The other walk takes $n-1$ $w$-steps
(staying with the first walk), a $z$-step, then a $w$-step.

%
{\bf Subcase 2.b:} $y\cdot\uhat\ge0$. Let $n\ge0$ and $m>0$ be such that 
$nw\cdot\uhat=my\cdot\uhat$. One walk takes a $w$-step, $m$ $y$-steps,
then a $z$-step. The other walk takes a $z$-step,
then $n+1$ $w$-steps. Whenever the walks are on a common level, they
will be at distinct points.

{\bf Case 3:} $w\cdot\uhat=0$ while $z\cdot\uhat>0$. 
The first walk takes a $w$-step then a $z$-step. The second walk takes a $z$-step. 
The case when $w\cdot\uhat>0$ and $z\cdot\uhat=0$ is similar.

{\bf Case 4:} $w\cdot\uhat=z\cdot\uhat=0$. By $\uhat$-transience, 
there exists a $y$ with $y\cdot\uhat>0$
and $\E\pi_{0y}>0$. One walk takes a $w$-step, the other a $z$-step, then both
take a $y$-step.

The rest of the cases treat the situation when $w\cdot\uhat<0$ or $z\cdot\uhat<0$.
Exchanging $w$ and $z$, if necessary, we can assume that $w\cdot\uhat<0$.

{\bf Case 5:} $w\cdot\uhat<0$, $z\cdot\uhat>0$, and they are noncollinear. 
This can be resolved as in 
the proof of Lemma \ref{common-beta-lemma} for $x\ne0$, since now paths
intersections do not matter. More precisely, let $n>0$ and $m>0$ be such that
$nw\cdot\uhat=mz\cdot\uhat$. The first walk takes $m$ $z$-steps, $n$ $w$-steps,
backtracking all the way back to level 0, then $m+1$ $z$-steps. The other walk just
takes $m+1$ $z$-steps.

{\bf Case 6:} $w\cdot\uhat<0$, $z\cdot\uhat>0$, and they are collinear. 
Since the one-dimensional case is excluded, there must exist a vector $y$ noncollinear
with them and such that $\E\pi_{0y}>0$. 

{\bf Subcase 6.a:} $y\cdot\uhat>0$. Let $m>0$ and $n>0$ be such that 
$ny\cdot\uhat+mw\cdot\uhat=0$. Let $k$ be a minimal integer such that 
$kz\cdot\uhat+y\cdot\uhat+mw\cdot\uhat\ge0$. 
The first walk takes $k$ $z$-steps, a $y$-step, $m$ $w$-steps,
$n$ $y$-step, and a $z$-step. The other walk takes $k$ $z$-steps, a $y$-step,
staying so far with the first walk, then splits away and takes a $z$-step.

{\bf Subcase 6.b:} $y\cdot\uhat=0$. Let $m>0$ and $n>0$ be such that 
$nz\cdot\uhat+mw\cdot\uhat=0$. 
The first walk takes $n$ $z$-steps, a $y$-step, $m$ $w$-steps, backtracking
all the way back to level 0, a $y$-step, then takes $n+1$ $z$-step. 
The other walk takes 
$n$ $z$-steps, a $y$-step, staying with the first walk, then takes a $z$-step.

{\bf Subcase 6.c:} $y\cdot\uhat<0$. Let $k>0$, $\ell>0$, $m>0$, and $n>0$ be such that
$\ell z\cdot\uhat=k(w+y)\cdot\uhat$ and $mz\cdot\uhat=ny\cdot\uhat$. The first
walk takes $\ell+m$ $z$-steps, $n$ $y$-steps, $k$ $w$-steps, $k$ $y$-steps,
backtracking back to level 0, then $\ell+m+1$ $z$-steps. The second walk
takes $\ell+m$ $z$-steps, $n$ $y$-steps, staying with the other walk, then
$m+1$ $z$-steps.

{\bf Case 7:} $w\cdot\uhat<0$, $z\cdot\uhat<0$, and they are collinear.
Since the one-dimensional case is excluded, there exists a $u$ noncollinear
with them and such that $\E\pi_{0u}>0$. Furthermore, by $\uhat$-transience,
there exists a $y$ such that $y\cdot\uhat>0$ and $\E\pi_{0y}>0$. It could be
the case that $y=u$.

{\bf Subcase 7.a:} $y$ is not collinear with $w$ and $z$. Let $k$ be the minimal
integer such that $w\cdot\uhat+ky\cdot\uhat>0$. Let $n>0$ and $m>0$ be such that
$n y\cdot\uhat+mz\cdot\uhat=0$. Let $\ell$ by the minimal integer such that
$\ell y\cdot\uhat+w\cdot\uhat+m z\cdot\uhat\ge0$. The first walk takes 
$\ell$ $y$-steps,
a $z$-step, a $w$-step, $m-1$ $z$-steps, then $n+k$ $y$-steps. The other walk
takes $\ell$ $y$-steps, a $w$-step, then $k$ $y$=steps.

{\bf Subcase 7.b:} $y$ is collinear with $w$ and $z$ and $u\cdot\uhat\le0$.
Let $m>0$ and $n>0$ be such that $m(z\cdot\uhat+u\cdot\uhat)=ny\cdot\uhat$. 
Let $k$ be minimal such that $ky\cdot\uhat+w\cdot\uhat+2u\cdot\uhat>0$.
Let $\ell$ be the minimal integer such that 
$\ell y\cdot\uhat+mz\cdot\uhat+w\cdot\uhat+(m+2)u\cdot\uhat\ge0$.
The first walk takes $\ell$ $y$-steps, a $u$-step, a $z$-step, a $w$-step,
$m-1$ $z$-steps, $m+1$ $u$-steps, then $n+k$ $y$-steps.
The other walk takes also $\ell$ $y$-steps and a $u$-step, but then splits
from the first walk taking a $w$-step, a $u$-step, and $k$ $y$-steps.

The subcase when $y$ is collinear with $w$ and $z$ and $u\cdot\uhat>0$ is done
by using $u$ in place of  $y$ in the argument of Subcase 7.a.

{\bf Case 8:}  $w\cdot\uhat<0$, $z\cdot\uhat\le0$, and they are not collinear.
By $\uhat$-transience, 
$\exists y$ 
:  $y\cdot\uhat>0$ and 
$\E\pi_{0y}>0$.

{\bf Subcase 8.a:} $y$ is not collinear with $w$ nor with $z$
and $a y+bw+z\ne 0$ for all integers $a,b>0$.
Let $m>0$ and $n>0$ be such that $mw\cdot\uhat+ny\cdot\uhat=0$. Let $\ell$ be
the minimal integer such that 
$\ell y\cdot\uhat+m w\cdot\uhat+z\cdot\uhat\ge0$.
Let $k$ be the minimal integer such that $z\cdot\uhat+ky\cdot\uhat>0$.
The first walk takes $\ell$ $y$-steps, $m$ $w$-steps, a $z$-step, then $n+k$ $y$-steps.
The other walk takes also $\ell$ $y$-steps, a $z$-step, then $k$ $y$-steps.

{\bf Subcase 8.b:} $y$ is not collinear with $w$ nor with $z$, 
 there exist  integers $a,b>0$ such that   $a y+bw+z=0$ and 
 $z\cdot \uhat=0$.  
One walk takes $a$ $y$-steps, one $z$-step, and one $y$-step.
The other walk takes $a$ $y$-steps, $b$ 
$w$-steps, then $(a+1)$ $y$-steps.

{\bf Subcase 8.c:}  $y$ is not collinear with $w$ nor with $z$, 
 there exist  integers $a,b>0$ such that   $a y+bw+z=0$ and 
 $z\cdot \uhat<0$. 
Pick $k,n>0$  such that $kw\cdot\uhat=nz\cdot\uhat$.  Pick $i,j>0$
 so that $iy\cdot\uhat+jkw\cdot\uhat=0$. 
The first walk takes 
 $i$ $y$-steps, then $jk$ $w$-steps 
followed by $(i+1)$ $y$-steps.
The second walk takes  $i$ $y$-steps, then
 $jn$ $z$-steps followed by $(i+1)$ $y$-steps.

In subcases 8.b and 8.c 
 there are no self-intersections because the pairs $y, w$
and $y,z$  are not collinear.
Also, the two paths cannot intersect because an intersection
together with   $z=-a y-bw$ would force 
 $y$ and $w$ to be collinear.

{\bf Subcase 8.d:} $y$ is collinear with $z$ or with $w$. Exchanging $z$ and
$w$, if necessary, and noting that if $z\cdot\uhat=0$ then $y$ cannot be collinear
with $z$, we can assume that $y$ is collinear with $w$. 
Let $k>0$ and $\ell>0$ be the minimal integers such that 
$kw\cdot\uhat+\ell y\cdot\uhat=0$.
Let $a\ge0$ and $b>0$ be the minimal integers such that $ay\cdot\uhat+bz\cdot\uhat=0$.
Let $m$ be the smallest integer such that $my\cdot\uhat+2z\cdot\uhat>0$. 
Let $n$ be the
smallest integer such that $ny\cdot\uhat+(b+2)z\cdot\uhat+kw\cdot\uhat>0$. 
Now, the first walk takes $n$ $y$-steps, one $z$-step,
$k$ $w$-steps, $(b+1)$ $z$-steps, then $\ell+m+a$ $y$-steps.
The other walk takes $n$ $y$-steps, two $z$-steps, and then $m$
$y$-steps.

\smallskip

{\sl Proof of \eqref{Yline-2}.} 
We appeal here to the construction done in the
 proof of Lemma \ref{common-beta-lemma}.   For 
$x\in\V_d\smallsetminus \{0\}$ the paths constructed
there gave us a bound 
\begin{align*}
P_x[ \,\lvert Y_1\rvert >L\,]=
P_{0,x}\{\beta=\tilde\beta=\infty,\, 
\lvert  \Xtil_{\mutil_1}-X_{\mu_1}\rvert >L\}\ge \delta(x)>0 
\end{align*}
for any given $L$. 
There was a stage in that proof where $x$ may have been 
replaced by $-x$, so the above bound is valid for either $x$ or $-x$.
But  translation shows that 
\[
P_x[ \,\lvert Y_1\rvert =a\,] = P_{-x}[ \,\lvert Y_1\rvert =a\,]
\]
and so we have the estimate for all $x\in\V_d\smallsetminus \{0\}$.  
Considering only 
finitely many $x$ inside a ball gives a uniform lower bound
$\delta=\min_{\abs{x}\le L}\delta(x)>0$. 

\smallskip

{\sl Proof of \eqref{Yline-3}.}  
The proof of Lemma \ref{common-beta-lemma}
 gave us
two paths $\ppath_1=\{0=x_0, x_1, \dotsc, x_{m_1}\}$
 and  $\ppath_2=\{0=y_0, y_1, \dotsc, y_{m_2}\}$ with
positive probability and these additional  properties:
the paths do not backtrack below level $0$, the final points 
$x_{m_1}$ and $y_{m_2}$ are distinct but  
 on a common level $\ell=x_{m_1}\cdot\uhat=y_{m_2}\cdot\uhat>0$,  
  and no level strictly 
between $0$ and $\ell$ can serve as a level
of joint regeneration for the paths. 
  
To recall more specifically from the proof of
Lemma \ref{common-beta-lemma}, these paths were constructed from
two nonzero, noncollinear vectors 
$z,w\in\mathcal J=\{x: \E\pi_{0,x}>0\}$ such that 
$z\cdot\uhat>0$.  If also $w\cdot\uhat>0$, then take 
$\ppath_1=\{(iz)_{0\le i\le m}\}$ and 
$\ppath_2=\{(iw)_{0\le i\le n}\}$ where $m,n$ 
are the minimal positive integers such that $mz\cdot\uhat=nw\cdot\uhat$. 
 In the case $z\cdot\uhat>0\ge w\cdot\uhat$
these paths were given by 
 $\ppath_1=\{(iz)_{0\le i\le m}, (mz+iw)_{1\le i\le n},
(mz+nw+iz)_{1\le i\le m+1}\}$
 and  $\ppath_2=\{(iz)_{0\le i\le m+1}\}$ where now 
$m\ge 0$ and $n>0$ are minimal for  $mz\cdot\uhat=-nw\cdot\uhat$. 

Take $L$ large enough so that 
 $\lvert z_1-z_2\rvert> L$ guarantees that
 paths $z_1+\ppath_1$ and $z_2+\ppath_2$ 
cannot intersect.  Let $\bhat=y_{m_2}-x_{m_1}\in\V_d\smallsetminus\{0\}$. 
 Then by the independence of
environments and \eqref{common-beta},  for $\abs{x}>L$, 
\begin{align*}
&P_x[Y_1-Y_0=\bhat]\ge P_{0,x}\{\beta=\tilde\beta=\infty,\,
 \Xtil_{\mutil_1}=x+y_{m_2},\, X_{\mu_1}=x_{m_1}\}\\
&\ge P_{0,x}\{ X_{0,m_1}=\ppath_1,\, \Xtil_{0,m_2}=x+\ppath_2,\,
\beta\circ\theta^{m_1}=\tilde\beta\circ\theta^{m_2}=\infty\}\\
&\ge \Bigl(\,\prod_{i=0}^{m_1-1}\E\pi_{x_i,x_{i+1}}\Bigr)
\Bigl(\,\prod_{i=0}^{m_2-1}\E\pi_{y_i,y_{i+1}}\Bigr)\cdot\eta>0. 
\end{align*}
The lower bound is independent of $x$.  
The same lower  bound for $P_x[Y_1-Y_0=-\bhat]$ comes by letting
$X$ follow  $\ppath_2$ and $\Xtil$ follow  $x+\ppath_1$. 
This completes the proof of Lemma \ref{Yapplm1}. 


\bibliographystyle{acmtrans-ims}
\bibliography{tmfrefs}

\def\cprime{$'$} \def\cprime{$'$}
\begin{thebibliography}{}
\ifx \url   \undefined \def \url#1{#1}   \fi

\bibitem{berg-zeit-07-}
\textsc{Berger, N.} \textsc{and} \textsc{Zeitouni, O.} (2008).
\newblock A quenched invariance principle for certain ballistic random walks in
  i.i.d.\ environments.
\newblock \emph{\href{http://front.math.ucdavis.edu/math.PR/0702306}{\tt
  http://front.math.ucdavis.edu/math.PR/0702306}\/}.

\bibitem{bolt-szni-02}
\textsc{Bolthausen, E.} \textsc{and} \textsc{Sznitman, A.-S.} (2002a).
\newblock On the static and dynamic points of view for certain random walks in
  random environment.
\newblock \emph{Methods Appl. Anal.\/}~\textbf{9},~3, 345--375.
\newblock Special issue dedicated to Daniel W.\ Stroock and Srinivasa S. R.\
  Varadhan on the occasion of their 60th birthday.
\MR{MR2023130}

\bibitem{bolt-szni-dmv}
\textsc{Bolthausen, E.} \textsc{and} \textsc{Sznitman, A.-S.} (2002b).
\newblock \emph{Ten lectures on random media}. DMV Seminar, Vol.~\textbf{32}.
\newblock Birkh\"auser Verlag, Basel.
\MR{MR1890289}

\bibitem{bric-kupi-91}
\textsc{Bricmont, J.} \textsc{and} \textsc{Kupiainen, A.} (1991).
\newblock Random walks in asymmetric random environments.
\newblock \emph{Comm. Math. Phys.\/}~\textbf{142},~2, 345--420.
\MR{MR1137068}

\bibitem{burk-73}
\textsc{Burkholder, D.~L.} (1973).
\newblock Distribution function inequalities for martingales.
\newblock \emph{Ann. Probability\/}~\emph{1}, 19--42.
\MR{MR0365692}

\bibitem{derr-lin-03}
\textsc{Derriennic, Y.} \textsc{and} \textsc{Lin, M.} (2003).
\newblock The central limit theorem for {M}arkov chains started at a point.
\newblock \emph{Probab. Theory Related Fields\/}~\textbf{125},~1, 73--76.
\MR{MR1952457}

\bibitem{durr-probability}
\textsc{Durrett, R.} (2004).
\newblock \emph{Probability: theory and examples}, Third ed.
\newblock Duxbury Advanced Series. Brooks/Cole--Thomson, Belmont, CA.

\bibitem{ethi-kurt}
\textsc{Ethier, S.~N.} \textsc{and} \textsc{Kurtz, T.~G.} (1986).
\newblock \emph{{Markov} processes}.
\newblock Wiley Series in Probability and Mathematical Statistics: Probability
  and Mathematical Statistics. John Wiley \& Sons Inc., New York.
\newblock Characterization and convergence.
\MR{MR838085}

\bibitem{fell-2}
\textsc{Feller, W.} (1971).
\newblock \emph{An introduction to probability theory and its applications.
  {V}ol. {II}.}
\newblock Second edition. John Wiley \& Sons Inc., New York.
\MR{MR0270403}

\bibitem{gold-07}
\textsc{Goldsheid, I.~Y.} (2007).
\newblock Simple transient random walks in one-dimensional random environment:
  the central limit theorem.
\newblock \emph{Probab. Theory Related Fields\/}~\textbf{139},~1-2, 41--64.
\MR{MR2322691}

\bibitem{maxw-wood-00}
\textsc{Maxwell, M.} \textsc{and} \textsc{Woodroofe, M.} (2000).
\newblock Central limit theorems for additive functionals of {M}arkov chains.
\newblock \emph{Ann. Probab.\/}~\textbf{28},~2, 713--724.
\MR{MR1782272}

\bibitem{rass-sepp-05}
\textsc{Rassoul-Agha, F.} \textsc{and} \textsc{Sepp{\"a}l{\"a}inen, T.} (2005).
\newblock An almost sure invariance principle for random walks in a space-time
  random environment.
\newblock \emph{Probab. Theory Related Fields\/}~\textbf{133},~3, 299--314.
\MR{MR2198014}

\bibitem{rass-sepp-06}
\textsc{Rassoul-Agha, F.} \textsc{and} \textsc{Sepp{\"a}l{\"a}inen, T.} (2006).
\newblock Ballistic random walk in a random environment with a forbidden
  direction.
\newblock \emph{ALEA Lat. Am. J. Probab. Math. Stat.\/}~\emph{1}, 111--147
  (electronic).
\MR{MR2235176}

\bibitem{rass-sepp-07-b-}
\textsc{Rassoul-Agha, F.} \textsc{and} \textsc{Sepp{\"a}l{\"a}inen, T.}
  (2007a).
\newblock An almost sure invariance principle for ballistic random walks in
  product random environment.
\newblock \emph{\href{http://front.math.ucdavis.edu/math.PR/0704.1022}{\tt
  http://front.math.ucdavis.edu/math.PR/0704.1022}\/}.

\bibitem{rass-sepp-07-a}
\textsc{Rassoul-Agha, F.} \textsc{and} \textsc{Sepp{\"a}l{\"a}inen, T.}
  (2007b).
\newblock Quenched invariance principle for multidimensional ballistic random
  walk in a random environment with a forbidden direction.
\newblock \emph{Ann. Probab.\/}~\textbf{35},~1, 1--31.
\MR{MR2303942}

\bibitem{rose}
\textsc{Rosenblatt, M.} (1971).
\newblock \emph{Markov processes. {S}tructure and asymptotic behavior}.
\newblock Springer-Verlag, New York.
\newblock Die Grundlehren der mathematischen Wissenschaften, Band 184.
\MR{MR0329037}

\bibitem{spitzer}
\textsc{Spitzer, F.} (1976).
\newblock \emph{Principles of random walks}, Second ed.
\newblock Springer-Verlag, New York.
\newblock Graduate Texts in Mathematics, Vol. 34.
\MR{MR0388547}

\bibitem{szni-00}
\textsc{Sznitman, A.-S.} (2000).
\newblock Slowdown estimates and central limit theorem for random walks in
  random environment.
\newblock \emph{J. Eur. Math. Soc. (JEMS)\/}~\textbf{2},~2, 93--143.
\MR{MR1763302}

\bibitem{szni-02}
\textsc{Sznitman, A.-S.} (2002).
\newblock An effective criterion for ballistic behavior of random walks in
  random environment.
\newblock \emph{Probab. Theory Related Fields\/}~\textbf{122},~4, 509--544.
\MR{MR1902189}

\bibitem{szni-trieste}
\textsc{Sznitman, A.-S.} (2004).
\newblock Topics in random walks in random environment.
\newblock In \emph{School and Conference on Probability Theory}. ICTP Lect.
  Notes, XVII. Abdus Salam Int. Cent. Theoret. Phys., Trieste, 203--266
  (electronic).
\MR{MR2198849}

\bibitem{szni-zeit-06}
\textsc{Sznitman, A.-S.} \textsc{and} \textsc{Zeitouni, O.} (2006).
\newblock An invariance principle for isotropic diffusions in random
  environment.
\newblock \emph{Invent. Math.\/}~\textbf{164},~3, 455--567.
\MR{MR2221130}

\bibitem{szni-zern-99}
\textsc{Sznitman, A.-S.} \textsc{and} \textsc{Zerner, M.} (1999).
\newblock A law of large numbers for random walks in random environment.
\newblock \emph{Ann. Probab.\/}~\textbf{27},~4, 1851--1869.
\MR{MR1742891}

\bibitem{zeit-stflour}
\textsc{Zeitouni, O.} (2004).
\newblock \emph{Random walks in random environments}. Lecture Notes in
  Mathematics, Vol. \textbf{1837}.
\newblock Springer-Verlag, Berlin.
\newblock Lectures from the 31st Summer School on Probability Theory held in
  Saint-Flour, July 8--25, 2001, Edited by Jean Picard.
\MR{MR2071629}

\bibitem{zernerldp}
\textsc{Zerner, M. P.~W.} (1998).
\newblock Lyapounov exponents and quenched large deviations for
  multidimensional random walk in random environment.
\newblock \emph{Ann. Probab.\/}~\textbf{26},~4, 1446--1476.
\MR{MR1675027}

\end{thebibliography}

\end{document}